\newcommand\brefGM[1]{\ref{GM#1}}
\newcommand\refGM[1]{I.\brefGM{#1}}
\newcommand\prefGM[1]{(\brefGM{#1})}
\newcommand\trefGM[2]{\refGM{#1}~\prefGM{#2}}
\newcommand\recognition{\cite{FeighnHandel:recognition}}
\newcommand\BookZero{\cite{BFH:laminations}}
\newcommand\BookOne{\cite{BFH:TitsOne}}
\newcommand\BookTwo{\cite{BFH:TitsTwo}}
\newcommand\PartOne{Part I \cite{HandelMosher:SubgroupsI}}
\newcounter{enumitemp}
\newenvironment{enumeratecontinue}{
 \setcounter{enumitemp}{\value{enumi}}
 \begin{enumerate}
 \setcounter{enumi}{\value{enumitemp}}
}
{
 \end{enumerate}
}
\newcommand\pref[1]{(\ref{#1})}
\newtheorem{thm}{Theorem}[section]
\newtheorem{theorem}[thm]{Theorem}
\newtheorem{lemma}[thm]{Lemma}
\newtheorem{cor}[thm]{Corollary}
\newtheorem{corollary}[thm]{Corollary}
\newtheorem{proposition}[thm]{Proposition}
\newtheorem*{proposition*}{Proposition}
\newtheorem*{theorem*}{Theorem}
\newtheorem{prop}[thm]{Proposition}
\newtheorem{sublem}[thm]{Sublemma}
\newtheorem{fact}[thm]{Fact}
\theoremstyle{definition}
\newtheorem{IAStep}{Step}
\newtheorem{definition}[thm]{Definition} 
\newtheorem{notn}[thm]{Notation}
\newtheorem{remark}[thm]{Remark}
\theoremstyle{remark}
\newcommand\from\colon
\newcommand\inv{{-1}}
\newcommand\subgroup{<}
\newcommand\infinity\infty
\newcommand\supp{\text{supp}}
\newcommand\disjunion\amalg
\DeclareMathOperator{\Fix}{Fix}
\DeclareMathOperator{\IA}{IA}
\DeclareMathOperator{\bcc}{BCC}
\DeclareMathOperator{\GL}{GL}
\DeclareMathOperator\Isom{Isom}
\newcommand{\Z}{{\mathbb Z}}
\newcommand{\C}{{\mathcal C}}
\newcommand{\T}{{\mathbb T}}
\newcommand{\f}{F_n}
\newcommand{\E}{{\mathcal E}}
\newcommand{\V}{\mathcal V}
\newcommand{\Out}{\mathsf{Out}}
\newcommand{\Aut}{\mathsf{Aut}}
\newcommand{\Stab}{\mathsf{Stab}}
\newcommand{\ffs}{free factor system}
\newcommand{\upg}{UPG}
\newcommand{\F}{\mathcal F}
\DeclareMathOperator\MCG{\mathcal{MCG}}
\newcommand{\A}{\mathcal A}
\newcommand{\h}{\mathcal H}
\renewcommand\k{\mathcal K}
\renewcommand\T{\mathcal T}
\newcommand{\fG} {f : G \to G}
\newcommand{\ti} {\tilde}
\newcommand{\iNp} {indivisible Nielsen path}
\newcommand{\eg}{EG}
\newcommand{\noneg}{NEG}
\renewcommand\neg\noneg
\newcommand{\wt}{\widetilde}
\newcommand{\ct}{CT}
\newcommand\cts{CTs}
\newcommand\free{{\text{f}}}
\newcommand\nonfree{{\text{nf}}}
\DeclareMathOperator\interior{int}
\newcommand\bdy\partial
\newcommand\intersect\cap
\newcommand\union\cup
\newcommand\<\langle
\renewcommand\>\rangle
\newcommand\meet\wedge
\newcommand\composed{\circ}
\newcommand\cross\times
\newcommand\restrict{\bigm |}
\newcommand\wh{\widehat}
\newcommand\inject\hookrightarrow
\newcommand\reals{\mathbf{R}}
\newcommand\abs[1]{\left|#1\right|}
 \DeclareMathOperator\rank{rank}
 \newcommand\surjection\twoheadrightarrow
\newcommand\suchthat{\bigm|}
\newcommand\PGF{\PG^{\cal F}}
\newcommand\UPGF{\PGF \intersect \IA_n(\Z/3)}
\DeclareMathOperator\PG{PG}
\DeclareMathOperator\eigen{Eigen}
\DeclareMathOperator\Arc{Arc}
\newcommand{\ray}{\bdy F_n / F_n}
\newcommand{\cffs}{\F_\supp}
\DeclareMathOperator\core{core}
\DeclareMathOperator\height{height}
\newcommand{\gen}{\kappa}
\newcommand{\cN}{\mathcal N}
\newcommand\B{\mathcal B}
\renewcommand\L{\mathcal L}
\DeclareMathOperator\Eigen{Eigen}
\DeclareMathOperator\Twist{Twist}
\DeclareMathOperator\Axes\Twist
\DeclareMathOperator\Asym{Asym}
\DeclareMathOperator\Lam\Asym
\newcommand\M{\mathcal M}
\title{Subgroup decomposition in $\Out(F_n)$\\ Part II: A relative Kolchin theorem}
\author{Michael Handel and Lee Mosher}
\begin{document}

\maketitle

\begin{abstract}
This is the second in a series of four papers, announced in \cite{HandelMosher:SubgroupsIntro}, that develop a decomposition theory for subgroups of $\Out(F_n)$.

In this paper we relativize the ``Kolchin-type theorem'' of \BookTwo, which describes a decomposition theory for subgroups $\h \subgroup \Out(F_n)$ all of whose elements have polynomial growth. The Relative Kolchin Theorem, Theorem~E from \cite{HandelMosher:SubgroupsIntro}, allows subgroups $\h$ whose elements have exponential growth, as long as all such exponential growth is cordoned off in some free factor system~$\F$ which is invariant under every element of~$\h$. The conclusion is that a certain finite index subgroup of $\h$ has an invariant filtration by free factor systems going from $\F$ up to the full free factor system $\{[F_n]\}$ by individual steps each of which is a ``one-edge extension''. We also study the kernel of the action of $\Out(F_n)$ on $H_1(F_n;\Z/3)$, and we prove Theorem~B of \cite{HandelMosher:SubgroupsIntro} which describes strong finite permutation behavior of all elements of this kernel.
\end{abstract}

\section{Introduction}

The proof of the Tits alternative for subgroups $\h \subgroup \Out(F_n)$ breaks into cases handled separately in \BookOne\ and \BookTwo. The first of those papers focusses on the case that $\h$ has exponential growth, thereby reducing the Tits alternative to the case that each $\phi \in \h$ has polynomial growth, meaning that for some (any) choice of a marked graph $G$ and for any conjugacy class $c$ in~$F_n$, the length of the circuit in $G$ representing~$\phi^i(c)$ has a polynomial upper bound~in~$i$.

The second paper \BookTwo\ applies to subgroups $\h \subgroup \Out(F_n)$ such that each $\phi \in \h$ has polynomial growth and has unipotent image in $\Aut(H_1(F_n;\Z)) \approx \GL(n,\Z)$---such subgroups are said to be UPG, a property useful for ruling out certain finite order phenomena, as long as one is willing to pass to a finite index subgroup. The main theorem of \BookTwo, which we refer to here as the ``absolute Kolchin theorem'' in order to contrast with the main theorem of this article, has several equivalent formulations. We state one that is expressed in terms of the containment relation $\sqsubset$ amongst free factor systems (Section~\refGM{SectionSSAndFFS}).\footnote{``Section I.X.Y.Z'' or ``Theorem I.V.W'' refers to Section X.Y.Z or Theorem V.W of \PartOne.} A nested pair of free factor systems $\F'' \sqsubset \F'$ is said to be a \emph{one edge extension} (Definition~\ref{DefOneEdgExtFFS}) if $\F'',\F'$ are realized simultaneously by subgraphs $G'' \subset G' \subset G$ of some marked graph $G$ such that $G' \setminus G''$ is a single edge of~$G$. 

\begin{theorem*}[The absolute Kolchin theorem] Suppose that $\h \subgroup \Out(F_n)$ is a finitely generated UPG subgroup and that $\emptyset = \F_0 \sqsubset \F_1 \sqsubset \cdots \sqsubset \F_m = \{[F_n]\}$ is a maximal filtration by $\h$-invariant free factor systems. Then each step $\F_{i-1} \sqsubset \F_i$ is a one-edge extension.
\end{theorem*}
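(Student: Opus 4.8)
This statement is a reformulation of the main theorem of \BookTwo, so the plan is to set up the translation. Recall that \BookTwo\ produces, for a finitely generated UPG subgroup $\h \subgroup \Out(F_n)$, a \emph{filtered marked graph}: a marked graph $G$ carrying a filtration by subgraphs $\emptyset = G_0 \subset G_1 \subset \cdots \subset G_K = G$ in which each $G_j \setminus G_{j-1}$ is a single edge, such that every $\phi \in \h$ is represented by a homotopy equivalence $f \from G \to G$ that is upper triangular with respect to the filtration; in particular $f(G_j) \subset G_j$ for all $j$. Let $\F(G_j)$ be the free factor system carried by $G_j$. Since each $\phi \in \h$ is realized by an $f$ preserving $G_j$, each $\F(G_j)$ is $\h$-invariant, giving $\emptyset = \F(G_0) \sqsubseteq \F(G_1) \sqsubseteq \cdots \sqsubseteq \F(G_K) = \{[F_n]\}$; and whenever the relation $\F(G_{j-1}) \sqsubseteq \F(G_j)$ is proper it is a one-edge extension, witnessed by the subgraphs $G_{j-1} \subset G_j$ of $G$ (Definition~\ref{DefOneEdgExtFFS}). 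Deleting repeats, we obtain an $\h$-invariant filtration from $\emptyset$ to $\{[F_n]\}$ each of whose steps is realized by a single new edge of $G$, hence is a one-edge extension.

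The substance of the theorem is the converse: that \emph{every} maximal $\h$-invariant filtration has one-edge-extension steps. Two ingredients are needed. First, a one-edge extension $\F'' \sqsubset \F'$ admits no free factor system strictly between $\F''$ and $\F'$ --- a complexity count (\PartOne) --- so it is a covering relation in the containment poset; in particular the filtration built above is maximal. Second, I would show that the \BookTwo\ filtered marked graph can be taken \emph{compatible} with any prescribed $\h$-invariant filtration $\emptyset = \F_0 \sqsubset \cdots \sqsubset \F_m = \{[F_n]\}$, meaning that one can choose a filtered $G$ with $\h$ acting upper triangularly in which each $\F_i$ is carried by a union of strata $H_i$, so $H_0 \subset H_1 \subset \cdots \subset H_m = G$. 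Given this, fix $i$ and write $H_{i-1} = G_p \subset G_{p+1} \subset \cdots \subset G_q = H_i$. The chain $\F(G_p) \sqsubseteq \cdots \sqsubseteq \F(G_q)$ consists of $\h$-invariant free factor systems and runs from $\F_{i-1}$ to $\F_i$; maximality of the given filtration rules out any term strictly between $\F_{i-1}$ and $\F_i$, so the chain jumps exactly once: there is a single index $r$ with $p < r \le q$, $\F(G_{r-1}) = \F_{i-1}$, and $\F(G_r) = \F_i$. As $G_r \setminus G_{r-1}$ is a single edge, the step $\F_{i-1} \sqsubset \F_i$ is a one-edge extension, which is what we wanted.

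The main obstacle is the compatibility statement of the second ingredient --- equivalently, the assertion that covering relations among $\h$-invariant free factor systems are one-edge extensions. This is where the UPG hypothesis and the real work of \BookTwo\ enter. I expect to prove it by building the filtered marked graph by induction up the filtration $\F_0 \sqsubset \cdots \sqsubset \F_m$, extending the filtered structure across one step at a time using (the construction of) \BookTwo, with the caveat that $\h$ may permute the components of the intermediate $\F_i$, so that one first passes to the finite-index subgroup fixing each component, carries out the construction component-by-component, and then reassembles while keeping track of the permutation action. Verifying this compatibility is precisely what certifies that the $\sqsubset$-formulation stated here is equivalent to the filtered-graph formulation of \BookTwo.
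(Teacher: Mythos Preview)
The paper does not prove this statement: the absolute Kolchin theorem is presented in the introduction as a reformulation of the main theorem of \BookTwo, cited rather than argued. The closest the paper comes to a proof is the equivalence of Theorems~\ref{relKolchin} and~\ref{ThmTreeRelKolchin}, which shows how one passes between the tree formulation (\BookTwo\ Theorem~5.1) and the free-factor-system formulation: for each step $\F_{i-1} \sqsubset \F_i$, restrict $\h$ to a component of $\F_i$ and apply the tree result to the resulting top step.

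Your proposal contains one false claim and one genuine gap. The false claim is your first ingredient: a one-edge extension $\F'' \sqsubset \F'$ \emph{can} admit free factor systems strictly between. In $F_2 = \langle a,b\rangle$ the extension $\{[\langle a\rangle]\} \sqsubset \{[F_2]\}$ is one-edge, yet $\{[\langle a\rangle],[\langle b\rangle]\}$ sits strictly between; the paper itself notes this phenomenon in the Remark following Proposition~\ref{PropSpecialCase}. So one-edge extensions are not covering relations, and a filtration with one-edge steps need not be maximal. Fortunately this does not wreck your main argument, since your deduction of the theorem for an arbitrary maximal filtration uses only the second ingredient, not the first.

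The gap is that second ingredient. You correctly identify compatibility as equivalent to the theorem and then leave it as a sketch: building the filtered graph inductively up a prescribed $\h$-invariant chain is precisely a \emph{relative} version of the \BookTwo\ construction, which \BookTwo\ does not supply --- indeed, developing such relative machinery is the purpose of the present paper. (Your worry about $\h$ permuting components of $\F_i$ is also misplaced: unipotence on $H_1(F_n;\Z)$ forces each component of an invariant free factor system to be fixed, by the same homology argument as in Lemma~\ref{LemmaFFSComponent}.) The cleaner route, implicit in the paper's equivalence proof, avoids any need for a globally compatible filtered graph: restrict to one component of $\F_i$ at a time and invoke \BookTwo\ Theorem~5.1 directly.
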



We are interested in a relative version of the absolute Kolchin theorem in which exponential growth is allowed but is entirely encapsulated in some proper free factor system $\F$ of~$F_n$ which is invariant under each element of the subgroup in question. Given $\phi \in \Out(F_n)$ and a proper, $\phi$-invariant free factor system~$\F$, we say that $\phi$ is of \emph{polynomial growth relative to~$\F$} if for some (any) marked graph $G$ having a subgraph $H \subset G$ that realizes~$\F$, and for any conjugacy class $c$ of $F_n$, there is a polynomial upper bound to the number of times that the circuit in $G$ representing $\phi^i(c)$ crosses edges of $G \setminus H$ (see Section~\ref{SectionPGF} for more details). Let $\PGF$ denote the subset of elements of $\Out(F_n)$ which are of polynomial growth relative to~$\F$; note that this is not a subgroup.

Our theorem also needs a hypothesis to rule out certain finite order phenomena. The concept of unipotence does not seem directly useful in our current relative context. Instead we focus on subgroups of the finite index subgroup $\IA_n(\Z/3) \subgroup \Out(F_n)$ which by definition is the kernel of the natural homomorphism $\Out(F_n) \to \Aut(H_1(F_n;\Z/3)) \approx \GL_n(\Z/3)$. This subgroup occurs in other results about $\Out(F_n)$: the usual proof that $\Out(F_n)$ is virtually torsion free shows that $\IA_n(\Z/3)$ is torsion free; and in \BookTwo\ it is proved in Proposition~3.5 that each polynomially growing element of $\IA_n(\Z/3)$ is unipotent. 
A significant proportion of this paper is devoted to the study of invariance properties of elements of $\IA_n(\Z/3)$; see the discussion below. We often restrict attention to subgroups of $\IA_n(\Z/3)$, but this is a mild restriction, in that any subgroup if $\Out(F_n)$ has a finite index subgroup contained in $\IA_n(\Z/3)$, namely its intersection with $\IA_n(\Z/3)$.

Here is our main theorem. 

\begin{theorem}[The relative Kolchin theorem] \label{relKolchin} 
Let $\h \subgroup \IA_n(\Z/3)$ be finitely generated and let $\F$ be a proper $\h$-invariant free factor system of $F_n$. If $\h \subset \PG^\F$, and if $\F = \F_0 \sqsubset \F_1 \sqsubset \F_2 \sqsubset \ldots \sqsubset \F_m = \{[F_n]\}$ is a maximal filtration by $\h$-invariant free factor systems containing $\F$, then each step $\F_{i-1} \sqsubset \F_i$ is a one-edge extension.  
\end{theorem}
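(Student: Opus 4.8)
The plan is to establish a \emph{relative filtered realization} of $\h$ above $\F$ and then read off the conclusion much as in the absolute case. First I would fix a finite generating set of $\h$ and, using relative train track theory together with completely split relative train track maps (\cts, cf.\ \PartOne\ and \recognition) and the given filtration, build a single marked graph $G$ carrying a filtration by subgraphs $K = G_0 \subset G_1 \subset \cdots \subset G_m = G$ that simultaneously realizes $\F = \F_0 \sqsubset \F_1 \sqsubset \cdots \sqsubset \F_m = \{[F_n]\}$, and such that every generator of $\h$---hence every $\phi \in \h$---is represented by a homotopy equivalence $f_\phi \colon G \to G$ preserving this filtration and restricting to a relative train track map on each filtration element. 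The crux is to arrange that each $f_\phi$ is \emph{upper triangular above $K$}: for every edge $E$ of $G \setminus K$ one has $f_\phi(E) = E \cdot u$ with $u$ an edge path in the filtration element immediately below $E$. Granting this, the theorem follows: if some step $\F_{i-1} \sqsubset \F_i$ were not a one-edge extension, then after the standard collapsing-of-invariant-free-forests and stratum-reordering moves the layer $G_i \setminus G_{i-1}$ has at least two edges, and the upper-triangular structure lets one split off its top edge to obtain a subgraph invariant under every $f_\phi$ whose noncontractible components carry an $\h$-invariant free factor system strictly between $\F_{i-1}$ and $\F_i$, contradicting maximality.

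The hypotheses $\h \subset \PGF$ and $\h \subset \IA_n(\Z/3)$ are precisely what make the structure above $K$ ``relatively unipotent,'' which is what permits the upper-triangular form. Since each $\phi \in \h$ has polynomial growth relative to $\F$, no $\phi$ has an attracting lamination whose generic leaf crosses $G \setminus K$, so $G$ may be taken with every stratum above $K$ non-exponentially growing (NEG). Since $\h \subset \IA_n(\Z/3)$, Proposition~3.5 of \BookTwo\ shows that each polynomially growing element of $\h$ is unipotent, so the transition data above $K$ is unipotent, and the $\IA_n(\Z/3)$-invariance results developed in this paper---the same machinery underlying Theorem~B---rule out the finite-order rotation of edges, periodic directions, and twist coordinates that would otherwise obstruct an upper-triangular representative; this is exactly why no further passage to a finite-index subgroup of $\h$ is required. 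Together these inputs give, above $K$, the relative analogue of the Kolchin realization of \BookTwo: each NEG edge $E_j$ above $K$ satisfies $f_\phi(E_j) = E_j \cdot u_{j,\phi}$ with $u_{j,\phi}$ a path in $G_{j-1}$, for every $\phi \in \h$.

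The main obstacle is the passage from ``each element of $\h$ individually admits such a representative'' to ``one filtered marked graph $G$ realizes all of $\h$ at once in upper-triangular form above $K$.'' For a single element this is essentially the relative train track machinery; for a finitely generated subgroup one must prevent distinct elements from forcing incompatible filtrations, or incompatible linear and axis data on their NEG strata. In the absolute setting \BookTwo\ handled this via a Kolchin-style fixed-point analysis of the affine structures attached to linear and higher-order NEG edges (organized around $\Per$, $\Fix$, and the associated twist and axis invariants); the relative version requires redoing this analysis above $K$ in the presence of the uncontrolled exponential dynamics inside $\F$. The feature that makes it go through is that all of that exponential behavior is confined to the common invariant subgraph $K$ and is therefore invisible to the upper-triangular bookkeeping above $K$, provided the relative train track structures for the several generators are constructed with $K$ as a common bottom. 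A second, substantial obstacle---the reason a large portion of the paper is devoted to $\IA_n(\Z/3)$---is establishing the relative substitute for rotationlessness: that membership in $\IA_n(\Z/3)$ by itself suffices to eliminate the finite-order phenomena that the absolute theorem controlled via unipotence.
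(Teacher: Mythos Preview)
Your outline reduces the theorem to the existence of a single filtered marked graph on which every generator acts in upper-triangular form above $K$, and you correctly identify that passage as ``the main obstacle.'' But that passage \emph{is} the theorem: a simultaneous upper-triangular realization above $K$ is equivalent to the one-edge-extension conclusion, so what you have is a restatement, not a proof. Your description of how to cross the gap---``a Kolchin-style fixed-point analysis of the affine structures attached to linear and higher-order NEG edges \ldots\ the relative version requires redoing this analysis above $K$''---does not match what actually happens either in \BookTwo\ or here, and is not specific enough to be checkable. The paper's proof (following \BookTwo, Section~5) does \emph{not} build a simultaneous upper-triangular graph directly. Instead it works one step at a time on the top extension $\F_{m-1} \sqsubset \{[F_n]\}$: first it shows (Proposition~\ref{PropFCarriesAll}, via limit trees and an ``exponential growth digraph'' for twistors) that $\F_{m-1}$ carries all asymptotic data of every rotationless element of $\h$; then (Proposition~\ref{PropNielsenPairsExist}, via bouncing sequences of trees and a delicate tracking of ``descendant'' vertical elements) it produces a common \emph{Nielsen pair}, which yields a tree with one edge orbit and hence a one-edge extension invariant under rotationless powers of the generators; finally (Theorem~\ref{ThmPeriodicFreeFactor}) it promotes this to $\h$-invariance.

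Two specific errors deserve mention. First, Proposition~3.5 of \BookTwo\ asserts that a \emph{polynomially growing} element of $\IA_n(\Z/3)$ is unipotent; elements of $\PGF$ are only polynomially growing \emph{relative to} $\F$ and can grow exponentially inside $\F$, so that proposition is not available and ``unipotent transition data above $K$'' is not a consequence of it. Second, the $\IA_n(\Z/3)$ input you need is not merely ``ruling out rotation of edges'': the paper requires, and proves as genuinely new results with no analogue in \BookTwo, that every $\psi$-periodic conjugacy class (Theorem~\ref{ThmPeriodicConjClass}) and every $\psi$-periodic free factor system (Theorem~\ref{ThmPeriodicFreeFactor}) is fixed by $\psi \in \IA_n(\Z/3)$. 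The latter is what lets one pass from invariance under rotationless iterates back to invariance under the generators themselves, and its proof occupies an entire section; you cannot simply fold it into ``the same machinery underlying Theorem~B.''
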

\noindent
The hypothesis of maximality may be expressed in different words, saying that $\h$ is \emph{irreducible} relative to each step $\F_{i-1} \sqsubset \F_i$ of the filtration, which means that there is no $\h$-invariant free factor system strictly between $\F_{i-1}$ and $\F_i$. 

As in \BookTwo, Theorem~\ref{relKolchin} can be reformulated in terms of trees. Recall that the group $\Out(F_n)$ acts on the set of minimal actions of $F_n$ on simplicial trees (modulo $F_n$-equivariant homeomorphism). Given such a tree $T$ with trivial edge stabilizers, let $\F(T)$ be the free factor system consisting of the conjugacy classes of nontrivial vertex stabilizers of~$T$. The following theorem can be viewed as a relativization of Theorem~5.1 of \BookTwo. 

\begin{theorem} \label{ThmTreeRelKolchin} Let $\h \subgroup \IA_n(\Z/3)$ be finitely generated, let $\F$ be a proper $\h$-invariant free factor system of $F_n$, and suppose that $\h$ is irreducible relative to the extension $\F \subset \{[F_n]\}$. If $\h \subset \PG^\F$ then there exists an $\h$-invariant simplicial $F_n$-tree $T$ with trivial edge stabilizers, and with exactly one orbit of edges, such that $\F = \F(T)$.
\end{theorem}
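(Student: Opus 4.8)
The plan is to deduce Theorem~\ref{ThmTreeRelKolchin} from Theorem~\ref{relKolchin} by a standard dictionary between one-edge extensions of free factor systems and simplicial $F_n$-trees with trivial edge stabilizers and one orbit of edges. First I would observe that the hypotheses here—$\h$ finitely generated, $\h \subgroup \IA_n(\Z/3)$, $\F$ proper and $\h$-invariant, $\h \subset \PGF$, and $\h$ irreducible relative to $\F \sqsubset \{[F_n]\}$—are exactly the hypotheses of Theorem~\ref{relKolchin} with $m = 1$: irreducibility relative to the single extension $\F \subset \{[F_n]\}$ says precisely that there is no $\h$-invariant free factor system strictly between $\F$ and $\{[F_n]\}$, so $\F = \F_0 \sqsubset \F_1 = \{[F_n]\}$ is a maximal $\h$-invariant filtration. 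Applying Theorem~\ref{relKolchin}, the single step $\F \sqsubset \{[F_n]\}$ is a one-edge extension.

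Next I would unwind the definition of one-edge extension to produce the tree. By Definition~\ref{DefOneEdgExtFFS} there is a marked graph $G$ and subgraphs $H \subset G' \subset G$ with $H$ realizing $\F$, $G'$ realizing $\{[F_n]\}$ (so we may take $G' = G$, after discarding any extraneous part of $G$ not needed to realize $\{[F_n]\}$—concretely, take $G$ so that $G \setminus H$ is the single edge $E$), and $G \setminus H$ equal to a single edge $E$. Collapsing $H$ to a point (or, if $H$ has several components, collapsing each component to a point) and then passing to the Bass–Serre tree of the resulting graph-of-groups decomposition of $F_n$, one obtains a simplicial $F_n$-tree $T$ with one orbit of edges, whose edge stabilizers are trivial (since $E$ is a single edge, not a loop at a collapsed vertex—one must check here whether $E$ is separating or not, giving either an amalgamated product $A *_1 B$ or an HNN extension $A*$, but in both cases the edge group is trivial), and whose nontrivial vertex stabilizers are precisely the conjugacy classes of the fundamental groups of the components of $H$; that is, $\F(T) = \F$.

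Finally I would check $\h$-invariance of $T$. Here the point is that $\F$ is $\h$-invariant and $\F(T)$ determines $T$ up to the natural $\Out(F_n)$-action when $T$ has a single orbit of edges with trivial edge stabilizers: given $\phi \in \h$, the tree $\phi \cdot T$ again has one orbit of edges, trivial edge stabilizers, and $\F(\phi \cdot T) = \phi(\F(T)) = \phi(\F) = \F = \F(T)$, and a one-orbit-of-edges tree with trivial edge stabilizers is determined up to $F_n$-equivariant homeomorphism by its vertex stabilizer system together with the (topological) type of the splitting. The mild subtlety is ruling out the possibility that $\phi$ exchanges an amalgam-type splitting with an HNN-type splitting realizing the same $\F$; but the rank count $\rk(F_n)$ versus the sum of ranks of the factors in $\F$ pins down which case occurs, and this count is $\phi$-invariant, so the type is preserved. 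One may alternatively argue invariance more robustly by noting that $T$ is the unique (up to equivariant homeomorphism and scaling) such tree, or by invoking the relevant uniqueness statement from \PartOne\ on the relation between free factor systems and trees.

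The main obstacle I expect is not the logical skeleton—which is a short reduction—but the bookkeeping in the tree-building step: verifying that the edge $E$ of the one-edge extension really does yield \emph{trivial} edge stabilizers (as opposed to a loop contributing a $\Z$ edge group after collapsing) and that the vertex stabilizer system is exactly $\F$ with no extra trivial-stabilizer orbits mattering, and then the corresponding uniqueness needed for $\h$-invariance. All of this is the standard free-factor-system/simplicial-tree correspondence and should be available from \PartOne; if a clean statement is there, this proof is essentially a two-line citation plus Theorem~\ref{relKolchin}.
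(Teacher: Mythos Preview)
Your proposal is correct and follows essentially the same route as the paper. The paper reduces Theorem~\ref{ThmTreeRelKolchin} to Theorem~\ref{relKolchin} via the same observation you make (irreducibility forces the maximal filtration to have a single step), and then invokes the correspondence between one-edge extensions $\F \sqsubset \{[F_n]\}$ and $\h$-invariant simplicial $F_n$-trees with trivial edge stabilizers and one edge orbit, which it cites from \cite{HandelMosher:distortion}, Section~4.1; you instead sketch this correspondence by hand (collapse the subgraph, take the Bass--Serre tree, and argue uniqueness for $\h$-invariance). One small simplification: your worry about $\phi$ exchanging an amalgam-type and an HNN-type splitting is moot, since the paper records (just after Definition~\ref{DefOneEdgExtFFS}) that the map $T \mapsto \F(T)$ is an $\Out(F_n)$-equivariant \emph{bijection} on the relevant classes, so $\F(T)$ alone already determines $T$ up to equivariant homeomorphism.
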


\begin{proof}[Proof of equivalence of Theorems~\ref{relKolchin} and~\ref{ThmTreeRelKolchin}] \quad This proof is again similar to arguments found in \BookTwo\ on page~57. We need the following relations between trees and free factor systems, taken from \cite{HandelMosher:distortion} Section~4.1 which in turn depends on \BookOne\ Corollary~3.2.2:
\begin{enumerate}
\item For any $\h$-invariant free factor system $\F$ of $F_n$ such that $\F \sqsubset \{[F_n]\}$ is an one-edge extension there exists an $\h$-invariant simplicial tree $T$ having trivial edge stabilizers and just one edge orbit, such that $\F=\F(T)$.
\item For any $\h$-invariant simplicial tree $T$ with trivial edge stabilizers and one edge orbit, the free factor system $\F(T)$ is $\h$-invariant and $\F(T) \sqsubset \{[F_n\}$ is a one-edge extension.
\end{enumerate}
\noindent
Given $\F$ as in Theorem~\ref{ThmTreeRelKolchin}, and extending $\F \sqsubset \{[F_n]\}$ to a maximal $\h$-invariant filtration by free factor systems as denoted in Theorem~\ref{relKolchin}, we have $\F=\F_{m-1}$. The conclusion of Theorem~\ref{relKolchin} together with~(1) then implies the conclusion of Theorem~\ref{ThmTreeRelKolchin}.

For the converse, given a filtration by free factor systems as in Theorem~\ref{relKolchin}, and given $i=1,\ldots,m$, suppose first that $\F_i = \{[A]\}$ has a single component, and so we may consider the restricted subgroup $\h \restrict A \subgroup \Out(A)$ (see Section~\refGM{SectionRestrictedOuts}). The maximality hypothesis of Theorem~\ref{relKolchin} implies that $\h \restrict A$ is irreducible relative to the extension $\F_{i-1} \sqsubset \F_i=\{[A]\}$, and so the desired conclusion that $\F_i$ is a one-edge extension of $\F_{i-1}$ follows by applying (2) together with Theorem~\ref{ThmTreeRelKolchin} to the subgroup $\h \restrict A$. If $\F_i$ has more than one component then one simply works with the restriction of $\h$ to one component of $\F_i$ at a time.
\end{proof}

\subsection*{Invariance properties of $\IA_n(\Z/3)$}
Rotationless elements of $\Out(F_n)$ satisfy several invariance properties described in Lemma~3.30 of \recognition\ (and see Fact~\refGM{FactPeriodicIsFixed}). We prove analogous properties for elements of $\IA_n(\Z/3)$:
\begin{description}
\item[Lemma \ref{LemmaFFSComponent}:] For each $\psi \in \IA_n(\Z/3)$ we have:

$\bullet$ $\psi$ fixes each component of each $\psi$-invariant free factor system.

$\bullet$ $\psi$ fixes each element of its set of attracting laminations $\L(\psi)$.

\item[Theorem~\ref{ThmPeriodicConjClass}:] For each $\psi \in \IA_n(\Z/3)$, every $\psi$-periodic conjugacy class in $F_n$ is fixed by~$\psi$.
\item[Theorem~\ref{ThmPeriodicFreeFactor}:] For each $\psi \in \IA_n(\Z/3)$, every $\psi$-periodic free factor system in $F_n$ is $\psi$-invariant.
\end{description}
These four statement are ordered in increasing level of difficulty of proof. Lemma~\ref{LemmaFFSComponent} is proved quickly in Section~\ref{SectionIA3Elements}. The proof of Theorem~\ref{ThmPeriodicConjClass} takes up the bulk of Section~\ref{SectionIA3Elements}. The proof of Theorem~\ref{ThmPeriodicFreeFactor}, which depends on Theorem~\ref{ThmPeriodicConjClass}, takes up all of Section~\ref{SectionOneEdge}. 

Sections~\ref{SectionOneEdge}--\ref{SectionFCarriesAll} contain applications of Theorems~\ref{ThmPeriodicConjClass} and~\ref{ThmPeriodicFreeFactor} following several different strategies (Section~\ref{SectionNielsenPairsExist} depends on those theorems only by reference to other applications). For example, in applying Theorem~\ref{ThmPeriodicFreeFactor} to prove Theorem~\ref{ThmTreeRelKolchin} at the end of Section~\ref{SectionReduction}, we use the following strategy: given a finitely generated subgroup $\h \subgroup \IA_n(\Z/3)$, if one can produce a certain free factor system which is invariant under rotationless powers of each generator of $\h$, one can then use Theorem~\ref{ThmPeriodicFreeFactor} to conclude that the free factor system is invariant under the generators themselves, and hence is invariant under the entire subgroup~$\h$.


\paragraph{Connections with \BookTwo:} Our proofs of Theorems 1.1 and 1.2 closely follow the outline of the proof of Theorem 5.1 of \BookTwo, but there are many differences, arising from the need to make that outline work in the relative setting. Section~\ref{SectionLimitTrees} here corresponds to Section~4 of \BookTwo, Sections~\ref{SectionFCarriesAll} and~\ref{SectionNielsenPairsExist} here correspond to Section~5 of \BookTwo, and we have tried to indicate where the arguments in those sections have close parallels in \BookTwo. There are nonetheless substantial differences between the proofs here in Sections~\ref{SectionLimitTrees} and~\ref{SectionFCarriesAll} and the corresponding arguments in \BookTwo. 

Sections~\ref{SectionIA3Elements} and~\ref{SectionOneEdge}, on properties of $\IA_n(\Z/3)$, have no counterpart in \BookTwo. The analogue of Theorem~\ref{ThmPeriodicFreeFactor} for UPG subgroups of $\Out(F_n)$ is not used in \BookTwo; and the UPG analogue of Theorem~\ref{ThmPeriodicConjClass}, stated in \BookTwo\ as Proposition~3.16, is essentially a reference to Theorem~5.1.8 of \BookOne\ whose proof is quite different than that of Theorem~\ref{ThmPeriodicConjClass}.

\paragraph{Description of the contents and references to background material.} \quad \\
Section~\ref{SectionReduction} outlines the proofs of Theorems~\ref{relKolchin} and~\ref{ThmTreeRelKolchin} and reduces them to Theorem~\ref{ThmPeriodicFreeFactor} stated above and to two other propositions whose proofs are taken up in later sections. 

Readers familiar with background material on $\Out(F_n)$ such as relative train track theory may need to just lightly skim Section~\ref{SectionTheBasics} before picking up the main thread of the paper in Section~\ref{SectionReduction}. Readers who desire details of background material can consult Section~\refGM{SectionPrelim} of \PartOne\ for a full but terse outline including definitions, notations, and citations and/or quick proofs, regarding much preliminary material such as: free factor systems and more general subgroup systems; principle automorphisms; rotationless outer automorphisms; relative train track maps and \cts; complete splittings; et~cetera. Given the primary concerns of this paper, the preliminary material on attracting laminations and on exponentially growing or \eg\ strata of relative train track maps will be less important than the material on nonexponentially growing or \neg\ strata. 

Full citations to the original sources in \BookOne, \BookTwo, \recognition\ are found in Section~\refGM{SectionPrelim}. While we shall often cite a needed result from \PartOne\ without tracing back to the full citation (which can always be found in Section~\refGM{SectionPrelim}), nonetheless in the expectation that the reader may only have some of the above sources at hand we sometimes try to give a double citation for a major definition or result, for example the definition of a \ct\ is sometimes cited as ``\recognition\ Definition~7.4 (or see Definition \refGM{DefCT})''.

\setcounter{tocdepth}{2}
\tableofcontents

\section{Preliminaries}  
\label{SectionTheBasics}

As noted above this paper will depend heavily on the preliminary material laid out in Section~\refGM{SectionPrelim}. In this section we present additional preliminary material needed only in Part~II, some of which, as in Section~\refGM{SectionPrelim}, is just citations from the literature and/or quick proofs. The material on eigenrays in Section~\ref{SectionEigenrays} has some new material.

\subsection{Polynomial growth relative to a free factor system.} 
\label{SectionPGF}

Recall from Section~\refGM{SectionLineDefs} the set of lines $\B=\B(F_n)$, from Section~\refGM{SectionAttractingLams} the finite set $\L(\psi)$ of attracting laminations of~$\psi$ each of which is a subset of $\B$, and from Section~\refGM{SectionSubgroupLinesAndEnds} the concept of a free factor system carrying a subset of $\B$, which we apply here to the subset $\union\L(\psi) \subset \B$. 

Given $\psi \in \Out(F_n)$ and a $\psi$-invariant free factor system $\F$, we say that $\psi$ is of \emph{polynomial growth relative to~$\F$}, denoted $\psi \in \PGF$, if either of the equivalent conditions in the following lemma holds:

\begin{lemma}\label{LemmaFindingEG} 
Given $\psi,\F$ as above, the following are equivalent:
\begin{enumerate}
\item\label{ItemPGFGraph}
For some (any) marked graph $G$ and any core subgraph $K \subset G$ that realizes the free factor system~$\F$, the following holds: for any conjugacy class $[c]$ in $F_n$, the number of edges of $G \setminus K$ that are crossed by the circuit in $G$ realizing $\psi^i[c]$ is bounded above by a polynomial function of $i$.
\item\label{ItemPGFLam}
$\cup \L(\psi)$ is supported by $\F$.
\end{enumerate}
\end{lemma}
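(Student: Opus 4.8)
The plan is to prove the cycle of implications $(\ref{ItemPGFGraph}\text{ for some }G) \Rightarrow (\ref{ItemPGFLam}) \Rightarrow (\ref{ItemPGFGraph}\text{ for any }G)$, which clearly suffices. The conceptual content is that attracting laminations are exactly the obstruction to relative polynomial growth, so the work is to translate between the combinatorial growth statement in a fixed marked graph and the lamination-theoretic statement. I would set up a \rtt\ (or better, a \ct) $f\colon G\to G$ representing some rotationless power $\psi^k$, with a filtration in which a core subgraph $K$ realizing $\F$ appears as an initial filtration element $G_r$; note first that $\psi\in\PGF$ if and only if $\psi^k\in\PGF$ and that $\union\L(\psi)$ is supported by $\F$ if and only if $\union\L(\psi^k)$ is, so it is harmless to pass to $\psi^k$. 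In this setup, the strata above $G_r$ are of three types: \eg\ strata, \neg\ strata that are "linear" or more generally have polynomially growing (Nielsen-path-driven) behavior, and these interact through the filtration in a controlled way recorded by the \ct\ axioms.

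For the implication $(\ref{ItemPGFLam}) \Rightarrow (\ref{ItemPGFGraph})$ (any $G$): assuming $\union\L(\psi)$ is supported by $\F$, I would argue that no \eg\ stratum of $f$ lies above $G_r$. Indeed, each \eg\ stratum $H_s$ of a \rtt\ carries an attracting lamination $\Lambda_s\in\L(\psi^k)=\L(\psi)$ whose generic leaf crosses edges of $H_s$ with exponentially growing frequency under iteration; if $H_s$ were above $G_r$ then $\Lambda_s$ could not be supported by $\F=\F(G_r)$, contradicting the hypothesis. Hence every stratum above $G_r$ is \neg, and by the standard polynomial-growth estimates for \neg\ strata of a \ct\ (the height of an edge in a completely split path grows polynomially, with the degree bounded by the number of \neg\ strata above $G_r$), the number of times the circuit representing $(\psi^k)^i[c]$, hence $\psi^i[c]$, crosses edges of $G\setminus G_r$ grows polynomially in $i$. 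This part is essentially the "easy" direction modulo citing the right \ct\ machinery.

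The substantive direction is $(\ref{ItemPGFGraph}\text{ for some }G) \Rightarrow (\ref{ItemPGFLam})$: here I would argue the contrapositive. Suppose some $\Lambda\in\L(\psi)$ is not supported by $\F$. Then in the \ct\ $f\colon G\to G$ as above, the \eg\ stratum $H_s$ carrying $\Lambda$ is not contained in $G_r$, so it contains an edge of $G\setminus G_r$, and there is a conjugacy class $[c]$ — e.g. represented by a circuit crossing an edge of $H_s$ — such that the weak attraction / expansion properties of \eg\ strata force the number of $H_s$-edges (hence $G\setminus G_r$-edges) in the circuit representing $(\psi^k)^i[c]$ to grow exponentially in $i$. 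This contradicts polynomial growth relative to $\F$ as measured in $G$ with core subgraph $G_r$. Finally, to get the "for some $G$ $\Rightarrow$ for any $G$" strength, I would observe that the quantity "number of $G\setminus K$-edges crossed by the circuit for $\psi^i[c]$" is, up to multiplicative and additive constants independent of $i$ and $[c]$, a quasi-isometry invariant of the pair (graph, core subgraph realizing $\F$) — this follows from comparing two such marked graphs via a homotopy equivalence that is bounded and maps the realization of $\F$ to the realization of $\F$ — so polynomial growth in one such $G$ is equivalent to polynomial growth in all.

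The main obstacle I anticipate is making the exponential lower bound in the last paragraph genuinely clean: one must produce a specific conjugacy class whose circuit provably crosses $H_s$-edges exponentially often under $(\psi^k)^i$, and control that the \neg\ strata below $H_s$ and the quotient collapse onto $G\setminus G_r$ do not somehow cancel this growth. The cleanest route is probably to invoke the established description of $\L(\psi)$ via \eg\ strata together with the fact that a generic leaf of $\Lambda$ is weakly attracted to itself with exponential expansion factor, and to choose $[c]$ so that its circuit contains a long generic-leaf segment of $\Lambda$ crossing $H_s$; then apply the relevant weak-attraction estimate from \refWA{} to get the exponential lower bound on $H_s$-crossings, which survives collapsing $G_r$ since $H_s\not\subset G_r$. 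Everything else is bookkeeping with \ct\ axioms and standard quasi-isometry comparisons between marked graphs.
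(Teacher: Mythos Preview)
Your proposal is correct and follows essentially the same outline as the paper's proof: pass to a rotationless power, take a \ct\ with $\F$ realized by $G_r$, argue that \pref{ItemPGFLam} forces all strata above $G_r$ to be \neg\ (hence polynomial growth via the standard \neg\ estimates), and for the contrapositive exhibit a conjugacy class with exponential $H_s$-crossings when some \eg\ stratum $H_s$ lies above $G_r$. The paper also handles the ``some vs.\ any $G$'' issue exactly as you suggest, via bounded cancellation for a homotopy equivalence of pairs.

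The one place you take a heavier route than necessary is precisely the exponential lower bound you flag as the main obstacle. Rather than invoking weak attraction or generic-leaf segments, the paper simply constructs an $s$-legal circuit $\sigma \subset G_s$ containing an $H_s$-edge: iterate any edge of $H_s$ until its image contains an $s$-legal subpath of the form $EwE$ with $E$ an oriented edge of $H_s$, and set $\sigma = Ew$. Since $s$-legality is preserved under $f_\#$ by the relative train track axioms, no $H_s$-edges ever cancel, and the number of $H_s$-edges in $f^j_\#(\sigma)$ grows exponentially by irreducibility of the transition matrix. This completely sidesteps the cancellation worry you anticipated, and avoids any appeal to the weak attraction machinery. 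One minor omission in your sketch: when concluding that every stratum above $G_r$ is \neg, you should also dispose of zero strata; the paper does this by citing the (Zero Strata) clause of the \ct\ definition, which forces every zero stratum to sit below some \eg\ stratum and hence below~$G_r$.
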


\begin{proof} The equivalence of the existential and universal quantifiers in item~\pref{ItemPGFGraph} follows from the bounded cancellation lemma (Fact~\refGM{FactBCCSimplicial}) applied to any homotopy equivalence of pairs from $(G,K)$ to any other candidate pair $(G',K')$ such that marking is preserved by that homotopy equivalence.

For proving equivalence of~\pref{ItemPGFGraph} and~\pref{ItemPGFLam} there is no loss in replacing $\psi$ by an iterate, so we may assume  that $\psi$ is rotationless and hence  $\psi$ is represented by a \ct\ $f \from G \to G$ in which $\F$ is realized by a core filtration element $K=G_r$ (\recognition\ Theorem~4.28, or see Theorem~\refGM{TheoremCTExistence}). We note by Fact~\refGM{FactLamsAndStrata} that \pref{ItemPGFLam} holds if and only if every \eg\ stratum is contained in~$G_r$, and the latter implies that every zero stratum is also contained in~$G_r$ by the clause (Zero Strata) in the definition of a \ct\ (\recognition\ Definition~7.4, or see Definition \refGM{DefCT}).

Assuming that \pref{ItemPGFLam} holds and so every stratum above $G_r$ is \neg, let $\sigma$ be any circuit in $G$ representing~$[c]$. The realization in $G$ of $\phi^k[c]$ is the circuit $f^k_\#(\sigma)$, and it follows from Fact~\refGM{FactNEGEdgeImage} that the number of edges of $G \setminus G_r$ that are crossed by the circuit $f^k_\#(\sigma)$ grows polynomially in $k$, with degree bounded by the number of strata above~$G_r$. 

Assuming that \pref{ItemPGFLam} does not hold and so some stratum $H_s$ with $s>r$ is \eg, there is a standard construction, using only the definition of a relative train tracks (Section~\refGM{SectionRTTDefs}), which produces an $s$-legal circuit $\sigma \subset G_s$ containing at least one edge of~$H_s$: any high enough iterate of any edge of $H_s$ contains an $s$-legal subpath in $G_s$ of the form $EwE$ for some oriented edge $E \subset H_s$, and one then takes $\sigma=Ew$. The number of edges of $H_s$ in the sequence of circuits $f^j_\#(\sigma)$ grows exponentially in~$j$, and so the conjugacy class $[c]$ represented by $\sigma$ exhibits that \pref{ItemPGFGraph} does not hold.
\end{proof}

\subsection{$F_n$-trees} 
\label{SectionFnTrees}
An \emph{$F_n$-tree} is an $\reals$-tree $T$ equipped with a minimal isometric action in which no point or end of the tree is fixed by the whole action. If the action of each element of $F_n$ is fixed point free then the tree is \emph{free}. We consider $T$ and $T'$ to be equivalent if there is an isometry $T \mapsto T'$ that conjugates the $F_n$ actions. Formally we use notations like $F \xrightarrow{\A} \Isom(T)$ for the action and $[T]$ for the equivalence class of $T$. Informally the actions are suppressed and the equivalence classes are implicit, and we often just write $T$ when we really mean~$[T]$. 

For each subset $A \subset T$ the \emph{stabilizer} of $A$ is the subgroup of $F_n$ defined by $\Stab(A) = \{g \in F_n \suchthat g \cdot x = x \,\,\,\text{for all}\,\,\, x \in A\}$. For each subgroup $A \subgroup F_n$ the \emph{fixed set of $A \subset F_n$ in $T$} is $\Fix(A) = \{x \in T \suchthat g \cdot x = x \,\,\,\text{for all}\,\,\, a \in A\}$.  

Each $F_n$ tree $T$ determines a translation length function $L_T \from \C(F_n) \to \reals$ that we think of as a point $L_T \in \reals^\C$. By \cite{CullerMorgan:Rtrees}  $T$ and $T'$ are equivalent if and only if  they determine the same length function.  There is an induced embedding of the set of equivalence classes of $F_n$ trees into $\reals^\C$ and we use this to topologize the space of  $F_n$ trees.  Thus $T_i \to T$ means that $L_{T_i} \to L_T$.

An $F_n$-tree $T$ is \emph{small} if for each nondegenerate arc $\alpha$ the subgroup $\Stab(\alpha)$ is trivial or cyclic, and furthermore $T$ is \emph{very small} \cite{CohenLustig:verysmall} if for each triod $\tau \subset T$ the subgroup $\Stab(\tau)$ is trivial, and for each $g \in F_n$ and each $i \ge 1$ we have $\Fix(g) = \Fix(g^i)$. It follows that for each nondegenerate arc $\alpha$ the subgroup $\Stab(\alpha)$ is either trivial or maximal infinite cyclic. 

For each small $F_n$-tree $T$ and each $x \in T$, the subgroup $\Stab(x)$ has finite rank, and there are only finitely many $F_n$-conjugacy classes of such subgroups~\cite{GJLL:index}; this set of conjugacy classes, denoted $\F(T)$, is an example of a \emph{subgroup system} of $F_n$ (Section~\refGM{SectionSSAndFFS}). Assuming furthermore that the stabilizer of every nondegenerate arc in~$T$ is trivial, there are in fact only finitely many $F_n$-orbits of points $x$ for which $\Stab(x)$ is nontrivial, and in this situation $\F(T)$ is called the \emph{vertex group system of $T$} (Section~\refGM{SectionVertexGroupSystems}). 
%
%
%
%
If furthermore the tree $T$ is simplicial---and still assuming edge stabilizers are trivial---then $\F(T)$ is a free factor system (\BookOne\ Section~2.6, or see Section~\refGM{SectionSSAndFFS}).

The group $\Out(F_n)$ acts on equivalence classes of $F_n$-trees, preserving the various properties considered above such as very small, simplicial, etc. Given $\phi \in \Out(F_n)$ and an $F_n$-tree $T$ with action denoted $F_n \xrightarrow{\A} \Isom(T)$, choose $\Phi \in \Out(F_n)$ representing~$\phi$, and define $T\phi$ to have the same underlying tree as $T$ but with the action $F_n \xrightarrow{\Phi} F_n \xrightarrow{\A} \Isom(T)$. We then have $L_{T\phi}[a] = L_T(\phi[a])$ for all $[a] \in \C$. The equivalence class of $T\phi$ is well-defined independent of the choice of~$\Phi$.

\begin{fact}[\cite{BestvinaFeighn:OuterLimits}] \label{LimitIsSimplicial}Suppose that  $T_i$ is a sequence of free simpicial $F_n$-trees and that $L_{T_i} \to L$ for some $L \in \reals^\C$. Suppose further that the set of positive values of $L$  is non-empty and bounded below. Then there is a very small simplicial $F_n$-tree $T$ such that $L = L_T$. \qed
\end{fact}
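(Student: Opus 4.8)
The plan is to prove the statement in two stages. First I would realize $L$ as the translation length function of \emph{some} very small $F_n$-tree $T$, using only that $L$ is a nonzero limit of length functions of trees in the closure of Culler--Vogtmann outer space; the boundedness-below hypothesis plays no role here. Second I would show that this $T$ is forced to be simplicial, and it is in this step that the hypothesis ``$\inf\{L(c): L(c)>0\}>0$'' gets used.

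For the first stage, note $L\neq 0$ since by hypothesis $L$ takes a positive value. By the standard compactness and convergence theorems for actions of a finitely generated group on $\reals$-trees (Paulin, Bestvina, Culler--Morgan; cf.\ \cite{CullerMorgan:Rtrees}), after passing to a subsequence the $F_n$-trees $T_i$ converge in the equivariant Gromov--Hausdorff topology to a minimal $F_n$-tree $T$, and equivariant Gromov--Hausdorff convergence implies convergence of length functions, so $L_T=\lim_i L_{T_i}=L$. Each $T_i$ is free and simplicial, hence very small, so the sequence lies in the closure $\overline{\CV}_n$ of outer space; since that closure consists of very small trees \cite{CohenLustig:verysmall}, the limit $T$ is very small.

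For the second stage, suppose for contradiction that $T$ is not simplicial. By the structure theory of stable actions of finitely generated groups on $\reals$-trees --- the Rips machine, in the forms due to Bestvina--Feighn \StableActions, Gaboriau--Levitt--Paulin, and Levitt --- $T$ decomposes $F_n$-equivariantly as a graph of actions over a simplicial $F_n$-tree whose nondegenerate vertex actions are of simplicial, surface, or thin type. Non-simpliciality of $T$ forces some vertex action $Y$ to be of surface or thin type, and hence to have dense orbits. In the surface case $Y$ is equivariantly isometric to the $\reals$-tree dual to an arational measured foliation $\mathcal F$ on a compact surface with nonempty boundary (so the corresponding vertex group is free), and translation lengths in $Y$ are geometric intersection numbers with $\mathcal F$; closing up longer and longer leaf segments of $\mathcal F$ with short transversals produces essential non-peripheral simple closed curves whose intersection numbers with $\mathcal F$ are positive but tend to $0$. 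The thin case is analogous, thin vertex actions arising as limits of surface-type ones. Since a hyperbolic element of the vertex action $Y$ has the same translation length in $T$, we get $\inf\{L_T(c): c\in\C,\ L_T(c)>0\}=0$, contradicting the hypothesis that the positive values of $L=L_T$ are bounded below. Hence $T$ is simplicial, and $T$ is the tree asserted by the statement. I expect the main obstacle to be precisely this second stage: converting the quantitative hypothesis (positive translation lengths bounded away from $0$) into the structural conclusion that $T$ has no dense-orbit component, which is what pins down simpliciality and which is essentially the content of the cited paper of Bestvina and Feighn \OuterLimits.
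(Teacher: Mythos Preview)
The paper does not prove this statement: it is recorded as a Fact, attributed to \cite{BestvinaFeighn:OuterLimits}, and closed with a \qed. So there is no argument in the paper to compare against; your proposal goes well beyond what the paper does by actually sketching a proof.

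Your two-stage outline is the standard one and is essentially correct. Stage~1 is routine: a nonzero limit of length functions of free simplicial $F_n$-trees lies in the closure of outer space and is realized by a very small $\reals$-tree. Stage~2 is where the content lies, and your idea is right: if $T$ were not simplicial then, via the Levitt/Rips graph-of-actions decomposition for very small $F_n$-trees, some vertex tree would have dense orbits, and in such a piece one produces hyperbolic elements of arbitrarily small positive translation length, contradicting the bounded-below hypothesis. One small caution: your parenthetical that thin components ``arise as limits of surface-type ones'' is not the argument you want here; what you actually need (and what holds) is simply that any minimal action with dense orbits---surface or thin---has translation lengths accumulating at $0$, and that vertex trees embed isometrically in $T$ so these small lengths persist in $T$. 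With that adjustment your sketch is sound, and as you yourself note, it amounts to the content of the cited Bestvina--Feighn paper.
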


The following is a general version of Cooper's bounded cancellation theorem in which the target may be any very small simplicial $F_n$-tree.

\begin{fact}[Bounded Cancellation Lemma \BookZero] 
\label{FactBCCVerySmall} 
Suppose that $S$ is a free simplicial $F_n$-tree, that $T$ is a very small simplicial $F_n$-tree and that $f :S \to T$ is an $F_n$-equivariant map. Then there is a constant $B$ such that for any arc $[x,y] \subset S$ its image $f[x,y]$ is contained in the $B$ neighborhood of the arc $[f(x),f(y)] \subset T$. The smallest such value for $B$ is called the bounded cancellation constant for $f$ and is denoted $\bcc(f)$. \qed
\end{fact}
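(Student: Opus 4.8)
The plan is to reduce the statement first to the case in which the target $T$ is a free simplicial $F_n$-tree, where it is a form of Cooper's original bounded cancellation theorem, and then to bootstrap back to the general very small case by resolving $T$.

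\emph{Reductions.} First I would homotope $f$ equivariantly so that it carries vertices to vertices and each edge to a geodesic edge-path (possibly degenerate); since $S/F_n$ is a finite graph such a ``straightening'' moves points a uniformly bounded amount, so it changes the eventual constant $\bcc(f)$ by at most twice that bound and there is no loss. Write $C$ for the maximum length in $T$ of the $f$-image of an edge of $S$, finite by cocompactness. It then suffices to produce a constant $B_0$ such that for every $z\in S$ and every pair of arcs $[x,z]$, $[z,y]$ with $[x,z]\cap[z,y]=\{z\}$, the geodesics $[f(z),f(x)]$ and $[f(z),f(y)]$ share an initial segment of length at most $B_0$: granting this, for any arc $[x,y]$ and any vertex $z$ on it the median of $f(x),f(z),f(y)$ lies on $[f(x),f(y)]$ and within $B_0$ of $f(z)$, so every vertex image along $f[x,y]$ lies within $B_0$ of $[f(x),f(y)]$, and hence every point of $f[x,y]$ lies within $B_0+C$ of it.

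\emph{Target free, and trivial edge stabilizers.} When $T$ is free, $f$ descends to a map $\bar f\from S/F_n\to T/F_n$ of finite graphs, and $F_n$-equivariance forces $\bar f_*\from\pi_1(S/F_n)\to\pi_1(T/F_n)$ to be an isomorphism of $F_n$ (indeed the identity) under the deck-group identifications; hence $\bar f$ is a $\pi_1$-isomorphism of aspherical complexes, so a homotopy equivalence, so the straightened $f\from S\to T$ is a quasi-isometry. As trees are $0$-hyperbolic, the $f$-image of a geodesic is a quasigeodesic which, by the Morse stability lemma, stays within a distance depending only on the quasi-isometry constants of the geodesic joining its endpoints, and this bounds $\bcc(f)$. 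Next, if $T$ merely has trivial edge stabilizers, then $\F(T)=\{[A_1],\dots,[A_r]\}$ is a free factor system with each $A_j$ finitely generated free, and I would blow up each vertex of $T$ carrying one of the $A_j$ to a free simplicial $A_j$-tree, producing a free simplicial cocompact $F_n$-tree $\hat T$ together with an equivariant collapse $\pi\from\hat T\to T$. Collapses take geodesics to geodesics, so $\bcc(\pi)=0$ and $\pi$ is $1$-Lipschitz; picking any equivariant $\hat f\from S\to\hat T$, the two equivariant maps $f$ and $\pi\hat f$ from $S$ to $T$ differ by an equivariant homotopy which, being equivariant over the compact $S/F_n$, moves points at most some $B_1$. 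Then $\bcc(f)\le\bcc(\pi\hat f)+2B_1$, and since $\pi$ is a $1$-Lipschitz collapse $\bcc(\pi\hat f)\le\bcc(\hat f)$, which is finite by the free case.

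\emph{The general case: the main obstacle.} When $T$ has a nontrivial --- hence maximal cyclic --- edge stabilizer, $T$ cannot be blown up directly to a free tree, and arranging the reduction here is the crux. The plan is to resolve $T$ through a finite zigzag of collapse maps connecting it to a tree with trivial edge stabilizers: blow up each vertex stabilizer of $T$ so that its incident cyclic edge groups become elliptic, then collapse the edges carrying those cyclic stabilizers, and then proceed as in the previous paragraph. The very small hypotheses are exactly what let this go through: the condition $\Fix(g)=\Fix(g^i)$ keeps these blow-ups and collapses mutually compatible and cocompact, and the fact that arc stabilizers are only cyclic --- rather than of higher rank --- is what forces each collapse in the zigzag to have finite bounded cancellation constant, so that the estimates chain together. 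I expect this organization of the resolution in the presence of cyclic edge stabilizers to be the one genuinely delicate step; the free and trivial-edge-stabilizer cases are, by comparison, routine applications of hyperbolicity of trees together with compactness of the quotient graphs. (The argument in full is carried out in \BookZero.)
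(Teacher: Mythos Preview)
The paper does not prove this statement: it is recorded as a Fact with a citation to \BookZero\ and closed with a \qed, nothing more. So there is no in-paper proof to compare your sketch against; you have actually gone further than the paper by outlining the free and trivial-edge-stabilizer cases, and those parts are fine.

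The concern is your plan for the general very small case. Writing $T'$ for the blow-up and $T''$ for the subsequent collapse, you have collapses $\pi_1 \from T' \to T$ (collapsing the inserted blow-up edges) and $\pi_2 \from T' \to T''$ (collapsing the original cyclic-stabilizer edges), and from the previous case you know $\bcc(f'') < \infty$ for any equivariant $f'' \from S \to T''$. But collapses give inequalities only in the forward direction, $\bcc(\pi \circ g) \le \bcc(g)$, so a bound on $\bcc(f'')=\bcc(\pi_2\circ f')$ does not bound $\bcc(f')$ for a lift $f'\from S\to T'$, and hence does not bound $\bcc(f) \approx \bcc(\pi_1 \circ f')$. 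Concretely: cancellation of $f'$ in $T'$ that runs along the original cyclic-stabilizer edges is annihilated by $\pi_2$ and so is invisible in $T''$, yet it survives under $\pi_1$ into $T$ --- and that is exactly the cancellation you are trying to control. (Also, collapses always have $\bcc=0$, so ``each collapse in the zigzag has finite bounded cancellation constant'' is not where the very-small hypothesis can be entering.) Since both you and the paper ultimately defer to \BookZero\ here, the gap is moot for present purposes, but the zigzag as written does not chain.
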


\subsection{One-edge extensions: free factor systems versus graphs}
The following definition incorporates evidently equivalent versions of the concept of a one-edge extension of free factor systems:

\begin{definition}  \label{DefOneEdgExtFFS}
Given a properly nested pair of free factor systems $\F \sqsubset \F'$ of $F_n$, we say that $\F'$ is a \emph{one-edge extension} of $\F$ if either of the following equivalent conditions holds:
\begin{enumerate}
\item There exists a marked graph $G$ and core subgraphs $K \subset K'$ realizing $\F,\F'$ respectively, such that $K' \setminus K$ is one edge of~$G$.
\item One of the following holds: 
\begin{enumerate} 
\item There exists $[F_1] \in \F$ and $[F_2] \in \F'$ such that $\rank(F_2) = \rank(F_1) + 1$ and $\F - \{[F_1]\} = \F' - \{[F_2]\}$; or
\item There exists $[F_1] \ne [F_2] \in \F$ and $[F_3] \in \F'$ such that $\rank(F_3) = \rank(F_1) + \rank(F_2)$ and $\F - \{[F_1],[F_2]\} = \F' - \{[F_3]\}$. 
\end{enumerate} 
\end{enumerate}
If $\F \sqsubset \F'$ is not a one-edge extension then it is a \emph{multi-edge extension}.
\end{definition} 

As alluded to in the introduction, by results of \cite{HandelMosher:distortion}, Section~4.1 it follows that if $\F$ is a free factor system of $F_n$ then $\F \sqsubset \{[F_n]\}$ is a one-edge extension if and only if there exists a simplicial $F_n$-tree $T$ with trivial edge stabilizers such that $\F(T) = \F$ and such that  $T$ has exactly one orbit of edges. 
Furthermore, this defines an $\Out(F_n)$-invariant bijection between the set of free factor systems $\F$ of which $\{[F_n]\}$ is a one-edge extension and the set of $F_n$-equivariant homeomorphism classes of simplicial $F_n$-trees with trivial edge stabilizers and exactly one orbit of edges.

\medskip

Consider a marked graph $G$ and two core subgraphs $G_1 \subset G_2$. We say that $G_2$ is a \emph{one-edge extension} of $G_1$ if $G_2 \setminus G_1$ is either an arc whose endpoints are attached to $G_1$ (possibly to the same point of $G_1$) or a loop disjoint from $G_1$. We say that $G_1$ is a \emph{lollipop extension} of $G_1$ if $G_2 \setminus G_1$ is the union of a loop called the ``lollipop'' and an arc called the ``stem'' such that one endpoint of the stem is identified with a vertex on the lollipop, and the opposite end of the stem is the unique point of $G_1 \intersect (G_2 \setminus G_1)$.

The following simple fact shows that there is a sleight ambiguity in the ``one-edge extension'' terminology which we must keep in mind:

\begin{lemma} 
\label{LemmaOneEdgeVersusLollipop}
If $\F_1 \sqsubset \F_2$ is a one-edge extension of free factor systems, and if $G$ is a marked graph with core subgraphs $G_1 \subset G_2$ realizing $\F_1,\F_2$ respectively, then $G_2$ is either a one-edge extension or a lollipop extension of $G_1$.
\qed\end{lemma}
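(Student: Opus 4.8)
The plan is to compare the combinatorial structure of the subgraph pair $G_1 \subset G_2$ against the rank bookkeeping encoded in the two cases of Definition~\ref{DefOneEdgExtFFS}(2). Since $\F_1 \sqsubset \F_2$ is a one-edge extension, there exists \emph{some} marked graph $H$ with core subgraphs $H_1 \subset H_2$ realizing $\F_1, \F_2$ with $H_2 \setminus H_1$ a single edge; this is Definition~\ref{DefOneEdgExtFFS}(1). Comparing $H_1 \subset H_2$ with the given $G_1 \subset G_2$, the components of $G_1$ and their ranks are determined up to correspondence by $\F_1$ (each component of a core subgraph realizing a free factor system has fundamental group in the prescribed conjugacy class), and likewise for $G_2$ and $\F_2$. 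So the first step is to record: $G_2 \setminus G_1$ is a (possibly disconnected) graph $D$, each component of which is either disjoint from $G_1$ or attached to $G_1$ along finitely many points, and the effect of passing from $G_1$ to $G_2$ on the multiset of component-ranks is exactly one of the two moves in Definition~\ref{DefOneEdgExtFFS}(2): either one component of $G_1$ has its rank raised by one (case (a)), or two distinct components of $G_1$ are merged into one whose rank is the sum (case (b)).

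The second step is to show that in case (a), $D$ is connected and attaches to a single component $C_1$ of $G_1$. Indeed $G_1$ and $G_2$ have the same number of components except that one component of rank $r$ in $G_1$ is replaced by one of rank $r+1$ in $G_2$; so no component of $D$ can be disjoint from $G_1$ (that would add a component to $G_2$ and would have to be a single loop, which is the one-edge case, or more — but then it would add rank to a new component, not to $C_1$), and no component of $D$ can bridge two distinct components of $G_1$ (that would decrease the component count). Hence $D$ attaches only to $C_1$. Since $D \cup C_1$ is connected with $\operatorname{rank} = r+1 = \operatorname{rank}(C_1) + 1$, and $D$ is obtained by attaching $D$ to $C_1$, an Euler characteristic count gives $\chi(D \cup C_1) = \chi(C_1) + \chi(D) - (\#\text{attaching points})$, so $-r = -(r{-}1)\cdot 0$ ... — more cleanly: the first homology of $D \cup C_1$ relative to $C_1$ has rank one, which forces $D$ to be, up to collapsing its forests, either a single arc with both endpoints on $C_1$ (the one-edge extension picture) or a loop joined to $C_1$ by a single arc (the lollipop picture); any other configuration either contributes rank $\geq 2$ or rank $0$. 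This is the heart of the argument and where I expect the only real work: organizing the Euler-characteristic / relative-homology count so that exactly these two topological types survive, after quotienting by the irrelevant spanning forest of $D$.

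The third step handles case (b): two components $C_1, C_2$ of $G_1$ of ranks $r_1, r_2$ are replaced in $G_2$ by one component of rank $r_1 + r_2$. Then $D$ must connect $C_1$ to $C_2$ (to merge them) and must add no extra rank, so by the same relative-homology count $D$ reduces to a single arc running from $C_1$ to $C_2$; no loop component and no lollipop can appear, since that would raise the rank. Thus in case (b) we are always in the one-edge extension situation (an arc with endpoints on $G_1$), never the lollipop situation. Combining the two cases, $G_2$ is either a one-edge extension or a lollipop extension of $G_1$, as claimed.

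The main obstacle is purely the Step~2 bookkeeping: one must rule out, by rank counting alone, configurations of $D$ such as a $\theta$-graph attached at one point, two disjoint arcs both joining $C_1$ to itself, or an arc-plus-two-loops, and show that every configuration realizing the rank increment of exactly one is homotopy equivalent rel $G_1$ to either the single-arc or the loop-and-stem model. This is elementary graph topology but needs to be written carefully; no deep input beyond Definition~\ref{DefOneEdgExtFFS} and the standard fact that a core subgraph realizing a free factor system has its components' fundamental groups in the prescribed conjugacy classes.
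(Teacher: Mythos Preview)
The paper does not prove this lemma: it is stated with an immediate \qed\ as a ``simple fact''. Your Euler-characteristic approach is a correct way to supply the omitted argument.

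Two remarks. First, your worry about ``quotienting by the irrelevant spanning forest of $D$'' is unnecessary: since $G_2$ is core, every vertex of $D = \overline{G_2 \setminus G_1}$ not lying in $G_1$ already has valence $\ge 2$ in $D$. Combined with connectedness of $D$ and the count $\chi(D) = k - 1$ (where $k$ is the number of attaching points), this pins down the topological type directly --- $k=2$ forces an arc, $k=1$ forces either a circle through the attaching point or a lollipop, and $k=0$ forces a disjoint circle --- with nothing to collapse. Second, in your case (a) you should verify that $D$ is connected, not merely that every component attaches to $C_1$; this follows because each component of $D$ attached to $C_1$ contributes at least $1$ to the rank (again using that $G_2$ is core to rule out a tree attached at a single point), so there can be only one component. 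The disjoint-loop situation ($k=0$), which arises when $\F_2$ adds a new rank-one component to $\F_1$, slips between your cases (a) and (b) as you have written them but is immediate.
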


The source of this ambiguity is that any lollipop extension $G_1 \subset G_2 \subset G$ can be converted into a one-edge extension by collapsing the stick of $G_2 \setminus G_1$ to a point. Conversely, for any one edge extension $G_1 \subset G_2 \subset G$, if $G_2 \setminus G_1$ is a single loop edge of $G$ intersecting $G_1$ at a single vertex then this construction can be reversed, pulling the loop apart from the rest of $G$ and inserting a stick.

\subsection{Asymptotic data: attracting laminations, eigenrays, and twistors}
\label{SectionAsymptotic}

Associated to any rotationless $\phi \in \Out(F_n)$ are its \emph{asymptotic data} which are organized into three finite sets: the set of expanding laminations $\L(\phi)$, each of which is a certain closed subset of the set of lines $\B(F_n)$ (see Section~\refGM{SectionLineDefs}); the set of eigenrays $\Eigen(\phi)$, each an element of abstract set of rays $\bdy F_n / F_n$ (also see Section~\refGM{SectionLineDefs}); and the set of twistors $\Twist(\phi)$, each of which is a periodic element of $\B(F_n)$. The union of these three sets is denoted
$$\Asym(\phi) = \L(\phi) \union \Eigen(\phi) \union \Twist(\phi)
$$ 
Each of these three sets has a description in terms of any \ct\ representing~$\phi$, and each has an invariant definition expressed without reference to relative train tracks. Very briefly, given a \ct: \eg\ strata correspond bijectively to expanding laminations; superlinear \neg\ strata correspond bijectively to eigenrays; linear families of linear \neg\ strata correspond bijectively to twistors. For expanding laminations, which are reviewed mathematically in \PartOne, we simply recall the citations. For twistors, we accompany the citations with a brief mathematical review for the readers convenience. For eigenrays we must develop some new material.

\subsubsection{Expanding laminations and twistors}
\label{SectionLaminationsAndTwistors}
\paragraph{Expanding laminations.} The invariant definition of the expanding laminations $\L(\phi)$ is given in \BookOne\ Definition~3.1.5 (see also Definition~\refGM{DefAttractingLaminations}). Given a relative train track representative $f \from G \to G$ of $\phi$, and assuming that $f$ is ``\eg-aperiodic'' meaning that the transition matrix of each \eg\ stratum of $f$ is a Perron-Frobenius matrix (which always holds if $\phi$ is rotationless), the bijection between $\L(\phi)$ and the set of \eg\ strata of $G$, and the definition of the expanding lamination associated to a particular \eg\ stratum, is given in \BookOne\ Definition 3.1.12 (see also Fact~\refGM{FactLamsAndStrata}).

\paragraph{Twistors.} Roughly speaking the ``twistors'' of a rotationless $\phi \in \Out(F_n)$ are the unoriented conjugacy classes around which Dehn twist pieces of $\phi$ do their twisting. The definition of a ``twistor of $\phi$'' in the context of a \ct, with alternate terminology ``axis of~$\phi$'', is given in \cite{FeighnHandel:abelian} just preceding Notation~2.12. The invariant definition, independent of relative train track representatives, is given in \recognition\ just preceding Remark~4.39, but using only the terminology ``axis''. We note also that the ``twistor'' terminology is used in \cite{CohenLustig:DehnTwist}, in the context of a Dehn twist outer automorphism $\phi$, to incorporate both the class around which $\phi$ does its twisting and the numerical amount of twisting.

In order to avoid conflict with other meanings of ``axis'', we adopt here the terminology ``twistor'' for both the \ct\ definition and the invariant definition; see Definitions~\ref{DefTwistorCT} and~\ref{DefTwistorInvariant} below.

Two elements $a,b \in F_n$ are said to be \emph{unoriented conjugate} if $a$ is conjugate to $b$ or~$b^\inv$. The unoriented conjugacy class of $a$ is denoted $[a]_u$. In any marked graph $G$ the circuit representing $[a]_u$ is unique up to orientation reversal. We say that $[a]_u$ is \emph{root free} if for some (any) representative $a$ and any $c \in F_n$, the equation $a=c^k$ implies $k=\pm 1$; equivalently, in some (any) marked graph $G$, the circuit representing $[a]_u$ is root free as in Section~\refGM{SectionLineDefs}, meaning that this circuit is not an iterate of a shorter circuit. Recall also from Section~\refGM{SectionLineDefs} that an \emph{axis} in the set $\B$ is an element $\gamma$ such that its lifts $\ti\gamma \in \wt\B$ are the lines fixed by representatives of some nontrivial conjugacy class, and that the set of axes corresponds one-to-one to the set of root free unoriented conjugacy classes in~$F_n$.

The definition of twistors in the context of a \ct\ is as follows:


\begin{definition} 
\label{DefTwistorCT} 
Consider a \ct\ $f \from G \to G$ representing $\phi$, and a linear edge $E_s \subset G$ of height~$s \ge 2$. From (Linear Edges) in the definition of a \ct\ (\recognition\ Definition~4.7, or Definition~\refGM{DefCT}) we have $f(E_s) = E_s w_s^{d_s}$ where $d_s \ne 0$ is an integer and $w_s$ is a closed, root free circuit of height $\le s-1$ which is a Nielsen path for~$f$. The unoriented conjugacy class $[a]_u$ determined by $w_s$ is called the \emph{twistor for $E_s$}. We also say that $[a]_u$ is a \emph{twistor for $f$} if it is a twistor for some linear edge of $G$, and we let $\Twist(f)$ denote the set of twistors for $f$. 

The definitions of some familiar notions can be formulated in terms of twistors. From (Linear Edges) in the definition of a \ct\ (\recognition\ Definition~4.7, or see Definition~\refGM{DefCT}), two linear edges $E_s,E_t$ of $G$ belong to the same \emph{linear family} of $f \from G \to G$ if and only if they have the same twistor, and if this is so then $f(E_s) = E_s w^{d_s}$, and $f(E_t) = E_t w^{d_t}$ where the root free closed path $w$ represents their common twistor, and where $d_s \ne d_t$ if $E_s \ne E_t$. And from (\recognition\ Definition~4.1, or see Definition~\refGM{DefSplittings}) an \emph{exceptional path} of $f$ is any path of the form $E_s w^p \overline E_t$ where $E_s \ne E_t$ are linear edges in the same linear family, $w$ represents their common twistor, and the exponents $d_s,d_t$ have the same sign.
\end{definition}

Here is the invariant definition of twistor:

\begin{definition}[\recognition, preceding Remark 4.39]
\label{DefTwistorInvariant} 
Given a rotationless $\phi \in \Out(F_n)$, an unoriented root-free conjugacy class $[a]_u$ is a \emph{twistor} of $\phi$ provided there exist $\Phi_1 \ne \Phi_2 \in P(\phi)$ and $c \in F_n$ conjugate to $a$ such that $\Phi_1(c)=\Phi_2(c)=c$. Let $\Twist(\phi)$ denote the set of all twistors of $\phi$.
\end{definition}
\noindent


\begin{fact}\label{FactTwistor} 
For any rotationless $\phi \in \Out(F_n)$ and any \ct\ $f \from G \to G$ representing $\phi$ we have $\Twist(\phi)=\Twist(f)$. Furthermore:
\begin{enumerate}
\item \label{ItemTwistorsFinite}
The set $\Twist(f)$ is finite, and corresponds bijectively to the set of linear families of linear \neg\ edges of $f$.
\item \label{ItemIntersectingFixed}
For all $\Phi \ne \Phi' \in P(\phi)$, if $\Fix(\Phi) \intersect \Fix(\Phi')$ is nontrivial then there exists a root-free $a \in F_n$ such that $[a]_u \in \Twist(\phi)$ and $\Fix(\Phi) \intersect \Fix(\Phi') = \<a\>$.
\end{enumerate}
\end{fact}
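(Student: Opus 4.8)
The plan is to fix a \ct\ $f \from G \to G$ representing the rotationless $\phi$ (one exists by \recognition\ Theorem~4.28) and to prove the three assertions using the Recognition Theorem's dictionary between the set $P(\phi)$ of principal automorphisms of $\phi$ and the combinatorics of $f$: principal vertices and their lifts, linear edges, and linear families. Item~\pref{ItemTwistorsFinite} is essentially an unwinding of Definition~\ref{DefTwistorCT}: by (Linear Edges) the linear edges of $f$ are partitioned into linear families precisely by the relation ``has the same twistor'', so the map sending a linear family to its common twistor is a well-defined bijection onto $\Twist(f)$, and finiteness is immediate since $G$ has finitely many edges. The two points with content are the equality $\Twist(\phi)=\Twist(f)$ and item~\pref{ItemIntersectingFixed}.

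For $\Twist(f)\subseteq\Twist(\phi)$, take a linear edge $E_s$ with $f(E_s)=E_s w_s^{d_s}$, $d_s\ne 0$, and $w_s$ a root-free closed Nielsen path representing the twistor $[a]_u$. Passing to the universal cover $\wt G$, choose the lift $\wt E_s$ whose initial endpoint is a lift $\ti v$ of the (principal) initial vertex of $E_s$, let $\ti f$ be the principal lift of $f$ fixing $\ti v$, and let $c\in F_n$ be the deck transformation associated to the lift of the loop $w_s$ based at the terminal endpoint $\ti u$ of $\wt E_s$, so that $[c]=[a]$, the axis $\ti\mu$ of $c$ passes through $\ti u$, and $\ti f(\ti u)=c^{d_s}(\ti u)$. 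Using $f_\#(w_s)=w_s$ one checks that $\ti f$ commutes with $c$, so the automorphism $\Phi_1$ determined by $\ti f$ fixes $c$; it lies in $P(\phi)$ because $\ti v$ is a lift of a principal vertex. The lift $\ti f_2:=c^{-d_s}\ti f$ also commutes with $c$, fixes $\ti u$ (indeed fixes $\ti\mu$ pointwise), is distinct from $\ti f$ since $c^{d_s}\ne 1$, and determines an automorphism $\Phi_2$ that again fixes $c$; by the Recognition Theorem's analysis of the principal automorphisms attached to a linear edge, $\Phi_2\in P(\phi)$ as well. Since $\Phi_1\ne\Phi_2$ both fix $c$, Definition~\ref{DefTwistorInvariant} gives $[a]_u=[c]_u\in\Twist(\phi)$.

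Item~\pref{ItemIntersectingFixed} and the reverse inclusion go together. Given $\Phi\ne\Phi'\in P(\phi)$ with $\Fix(\Phi)\intersect\Fix(\Phi')$ nontrivial, write $\Phi'=i_T\composed\Phi$ with $1\ne T\in F_n$. Any $c\in\Fix(\Phi)\intersect\Fix(\Phi')$ satisfies $c=\Phi'(c)=TcT^\inv$, so $c$ commutes with $T$; hence $\Fix(\Phi)\intersect\Fix(\Phi')$ lies in the centralizer of $T$, which is the maximal cyclic subgroup $\<b\>$ containing $T$ with $b$ root free, and therefore equals $\<b^j\>$ for some $j\ge 1$. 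Because $\Phi$ and $\Phi'$ fix $b^j$ and free groups have unique roots, $\Phi$ and $\Phi'$ also fix $b$, so $b\in\<b^j\>$, forcing $j=1$. Thus $\Fix(\Phi)\intersect\Fix(\Phi')=\<b\>$ with $b$ root free and, $b$ being fixed by $\Phi\ne\Phi'\in P(\phi)$, Definition~\ref{DefTwistorInvariant} gives $[b]_u\in\Twist(\phi)$; taking $a=b$ proves item~\pref{ItemIntersectingFixed}. Now let $[a]_u\in\Twist(\phi)$ be witnessed by $\Phi_1\ne\Phi_2\in P(\phi)$ and a conjugate $c$ of $a$ with $\Phi_i(c)=c$; the same computation (together with root-freeness of $a$, hence of $c$) shows $\Fix(\Phi_1)\intersect\Fix(\Phi_2)=\<c\>$ and $T=c^m$ with $m\ne 0$. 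Translating $c\in\Fix(\Phi_1)$ into the CT, the lift $\ti f_1$ of $f$ realizing $\Phi_1$ commutes with $c$ and so preserves the axis $\ti\mu$ of $c$; since $\ti f_2=c^m\ti f_1$ differs from $\ti f_1$ by the nontrivial translation $c^m$ of $\ti\mu$, and a tree isometry with a fixed point that preserves $\ti\mu$ must fix the nearest-point projection of that fixed point onto $\ti\mu$, the two lifts cannot both have a fixed point in $\wt G$. Feeding this into the Recognition Theorem's classification of the elements of $P(\phi)$ in CT terms---specifically, that a principal automorphism of a rotationless $\phi$ with infinite fixed subgroup twists around the twistor of some linear family of $f$---identifies $[c]_u=[a]_u$ as a twistor of $f$, completing $\Twist(\phi)\subseteq\Twist(f)$.

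The routine verifications above (that $\ti f$ commutes with $c$, that the lifts in question are genuinely principal, and that two distinct principal automorphisms sharing a nontrivial fixed element force the combinatorial picture of a linear edge) all come down to the fine structure of \cts\ and the Recognition Theorem; the real work is in marshalling that machinery---above all the assertion that a principal automorphism with infinite fixed subgroup must twist around some linear family---rather than in any new idea. The self-contained free-group algebra that yields item~\pref{ItemIntersectingFixed} and the cyclicity of $\Fix(\Phi_1)\intersect\Fix(\Phi_2)$ is the one genuinely elementary ingredient.
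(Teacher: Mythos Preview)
Your treatment of item~\pref{ItemIntersectingFixed} is fine and matches the paper's argument (the paper phrases it as $\Fix(\Phi)\cap\Fix(\Phi')\subset\Fix(\Phi^{-1}\Phi')=\Fix(i_c)$, but this is the same centralizer computation you give). Item~\pref{ItemTwistorsFinite} is indeed immediate.

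The inclusion $\Twist(\phi)\subseteq\Twist(f)$, however, has a genuine error. You write that ``a tree isometry with a fixed point that preserves $\ti\mu$ must fix the nearest-point projection of that fixed point onto $\ti\mu$, [so] the two lifts cannot both have a fixed point in $\wt G$.'' But $\ti f_1$ and $\ti f_2$ are lifts of the homotopy equivalence $f$, not isometries of $\wt G$; the nearest-point projection argument does not apply. In fact both lifts \emph{do} have fixed points: take $F_2=\langle a,b\rangle$ with $f(a)=a$, $f(b)=ba$, so $b$ is linear with twistor $[a]_u$. The lift $\ti f_1$ fixing the initial vertex $\ti v$ of $\ti b$ sends the terminal vertex $\ti u$ to $a\cdot\ti u$, so $\ti f_1$ does \emph{not} fix the projection $\ti u$ of $\ti v$ onto the axis of $a$; meanwhile $\ti f_2=a^{-1}\ti f_1$ fixes $\ti u$. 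Both are principal. More generally, for a \ct\ every principal lift fixes a principal vertex, so your dichotomy cannot hold.

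The fallback statement you invoke --- ``a principal automorphism with infinite fixed subgroup twists around the twistor of some linear family'' --- is also false as stated (the identity automorphism of $F_n$, $n\ge 2$, is principal for $\phi=\mathrm{id}$ with $\Fix=F_n$, yet there are no linear edges). What is true, and what the paper cites, is \recognition\ Lemma~4.40: for each root-free $b$ there is a base principal automorphism $\Phi_0$ fixing $b$, and the remaining $\Phi\in P(\phi)$ fixing $b$ are in bijection with the linear edges in the linear family of $[b]_u$. Both inclusions $\Twist(\phi)=\Twist(f)$ follow at once from this (and your explicit construction of $\Phi_1,\Phi_2$ for the forward inclusion is essentially reproving half of that lemma; your justification that $\Phi_2$ is principal already leans on it). There is no shortcut here: the equality really is Lemma~4.40, and you should cite it directly rather than try to rederive it from softer tree-geometric considerations that don't survive the passage from isometries to \ct\ lifts.
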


\begin{proof} The equation $\Twist(\phi) = \Twist(f)$ follows from \recognition\ Lemma~4.40 which says that for each $[a]_u \in \Twist(\phi)$ and each $b$ representing $[a]_u$ there exists a ``base principal automorphism'' $\Phi_0 \in P(\phi)$ fixing $b$ with the following property: for any \ct\ $f \from G \to G$ representing $\phi$, the linear edges in the linear family of $[a]_u$ correspond bijectively with the set of all $\Phi \ne \Phi_0 \in P(\phi)$ that fix~$b$. Item~\pref{ItemTwistorsFinite} follows from finiteness of the set of linear edges of~$f$; the bijection holds by definition. 

To prove~\pref{ItemIntersectingFixed}, we have $\Fix(\Phi) \intersect \Fix(\Phi') \subset \Fix(\Phi^\inv \Phi') = \Fix(i_c)$ for some inner automorphism $i_c \from x \mapsto cxc^\inv$. Let $c = a^i$ for some root free, nontrivial $a \in F_n$ and some integer $i \ge 1$. Since $c$ determines $a$ it follows that $\Fix(\Phi) \intersect \Fix(\Phi') = \<a\>$.
\end{proof}

\subsubsection{Eigenrays.} 
\label{SectionEigenrays}
Given a \ct\ $f \from G \to G$, each nonfixed \neg\ edge $E$ with fixed initial direction generates a ray fixed by $f_\#$ which has the form $R = E \cdot u \cdot f_\#(u) \cdot f^2_\#(u) \cdot f^3_\#(u) \cdot \ldots$ (\recognition\ Lemma~4.36 or see Fact~\refGM{FactSingularRay}), where $f(E) = E \cdot u$ (Fact~\refGM{FactNEGEdgeImage}). In the case that $E$ is not linear we have introduced the terminology of ``principal direction'' for $E$ and ``principal ray'' for~$R$ (Definition~\refGM{DefSingularRay}).
 
We focus here more narrowly on principal rays generated by nonfixed, nonlinear \neg\ edges (aka \emph{superlinear edges}), referring to such rays as ``eigenrays''. Although the term ``eigenray'' has not appeared before in the literature, this term has been used informally in the relative train track community for some time already, and the concept of an eigenray plays an important role in the Recognition Theorem of \recognition. Our main results here are the \ct\ definition of eigenrays just mentioned, an invariant definition independent of \cts, and a proof of equivalence of the two definitions. 


\begin{definition}\label{DefEigenrayCT}
Consider a \ct\ $f \from G \to G$. For each superlinear \neg\ edge $E_s$ with fixed initial direction, the ray $R$ generated by $E_s$ is called an \emph{eigenray of $f$ in $G$}, and the corresponding abstract ray, denoted $\Eigen(E_s) \in \bdy F_n / F_n$, is called an \emph{eigenray for $f$}. Let $\Eigen(f) \subset \bdy F_n / F_n$ be the set of eigenrays for~$f$.
\end{definition}

Next comes the invariant definition of eigenrays. Consider a ray $\xi \in \bdy F_n / F_n$. By Fact~\refGM{FactFFSPolyglot} the free factor system $\F_\supp(\xi)$ has one component. One may therefore choose a free factor $A \subgroup F_n$ and a point $\ti\xi \in \bdy A \subset \bdy F_n$ so that $\F_\supp(\xi) = \{[A]\}$ and so that $\ti\xi$ is a representative of $\xi$. Alternatively, one may just choose the representative $\ti\xi \in \bdy F_n$, and this determines the choice of $A$ uniquely by the requirement that $\ti\xi \in \bdy A$ (again using Fact~\refGM{FactBoundaries} and malnormality of~$A$).

\begin{definition}\label{DefEigenraysInvariant}
Given a rotationless $\phi \in \Out(F_n)$ and $\xi \in \bdy F_n / F_n$, we say that \emph{$\xi$ is an eigenray of $\phi$} if the following conditions \pref{ItemIsInFixPlus} and \pref{ItemNotInFixPlusRestricted} hold (with choices $A$, $\ti\xi$ as in the previous paragraph):
\begin{enumerate}
\item\label{ItemIsInFixPlus}
There exists a (necessarily unique) $\Phi \in P(\phi)$ such that $\ti\xi \in \Fix_+(\wh\Phi)$. In particular $\phi$ fixes $\xi$ and so $\phi$ fixes $\F_\supp(\xi) = \{[A]\}$. 
\item\label{ItemNotInFixPlusRestricted}
The set $\Fix_N(\Phi) \intersect \bdy A$ is a single point contained in $\Fix_+(\Phi)$.
\end{enumerate}
Let $\Eigen(\phi) \subset \bdy F_n / F_n$ denote the set of eigenrays of~$\phi$.
\end{definition}

\textbf{Remarks on the definition.} Once the choices are made, existence of $\Phi$ implies uniqueness, because $\ti\xi$ is not fixed by any inner automorphism of $F_n$ (Facts~\refGM{FactFPBasics} and~\refGM{LemmaFixPhiFacts}) but any two choices of $\Phi$ differ by inner automorphism. It is easy to see that item~\pref{ItemIsInFixPlus} is independent of the choice of $\ti\xi \in \bdy F_n$, and item~\pref{ItemNotInFixPlusRestricted} is independent of the choice of $A$ within its conjugacy class and of the choice of $\ti\xi$.


\begin{lemma}\label{LemmaEigenrayDefs}
For any rotationless $\phi \in \Out(F_n)$ and any \ct\ $f \from G \to G$ representing $\phi$ we have $\Eigen(\phi) = \Eigen(f)$. This set is finite, and the correspondence $E \mapsto \Eigen(E)$ is a bijection between the nonfixed, nonlinear \neg\ directions of $G$ and the set $\Eigen(f)$.
\end{lemma}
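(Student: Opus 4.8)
The plan is to prove the equality $\Eigen(\phi) = \Eigen(f)$ by showing that both sides are parametrized by the same combinatorial data in $G$, namely the nonfixed nonlinear \neg\ directions, and that under this parametrization the two notions of eigenray agree. Fix a \ct\ $f \from G \to G$ representing the rotationless $\phi$. First I would recall from Fact~\refGM{FactSingularRay} (\recognition\ Lemma~4.36) and from the theory of principal automorphisms that each direction $d$ at a vertex $\ti v$ of the universal cover $\ti G$ which is fixed by some principal lift $\ti f$ of $f$ generates a fixed ray $\ti R_d$, and that the map sending such a fixed direction to the endpoint of $\ti R_d$ in $\bdy F_n$ lands in $\Fix_+(\wh\Phi)$, where $\Phi \in P(\phi)$ is the automorphism associated to $\ti f$. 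Conversely, every point of $\Fix_N(\Phi)$ that is attracting (i.e.\ in $\Fix_+(\wh\Phi)$) and that is not the endpoint of an axis arises this way, by the analysis of $\Fix_N$ in terms of \cts\ (\recognition\ and Section~\refGM{SectionPrelim}). This is the dictionary that converts the invariant definition into ray-generating directions in $G$.

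Next I would match up the conditions. Given a superlinear \neg\ edge $E_s$ with fixed initial direction, let $R = \Eigen(E_s)$ be the eigenray of $f$, with a chosen lift $\ti R \subset \ti G$ emanating from a fixed point $\ti v$ along the fixed direction $E_s$, and let $\Phi \in P(\phi)$ be the principal automorphism fixing $\ti v$ and this direction; then the endpoint $\ti\xi$ of $\ti R$ lies in $\Fix_+(\wh\Phi)$, giving condition~\pref{ItemIsInFixPlus}. For condition~\pref{ItemNotInFixPlusRestricted}, write $A$ for the free factor with $\ti\xi \in \bdy A$ and $\F_\supp(\xi) = \{[A]\}$; I must check that $\Fix_N(\Phi) \intersect \bdy A$ is a single point lying in $\Fix_+(\Phi)$. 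The point is that $\bdy A$ is (conjugate into) the boundary of a core filtration element, and the only principal-lift fixed points in that boundary that are ``visible'' along the ray $R$ are those carried by the support of $\xi$; the superlinearity of $E_s$ (as opposed to linearity) is exactly what rules out the possibility that $\ti\xi$ is the endpoint of an axis of a twistor, which would otherwise produce extra fixed points in $\bdy A$. So the eigenray-of-$f$ data maps into eigenray-of-$\phi$ data. For the reverse inclusion, start from $\xi \in \Eigen(\phi)$ with its $\Phi$ and $A$; realize $\Phi$ by a principal lift $\ti f$ and use that $\ti\xi \in \Fix_+(\wh\Phi)$ is isolated in $\Fix_N(\Phi)$ on the $\bdy A$ side and is not an axis endpoint (by~\pref{ItemNotInFixPlusRestricted}) to conclude via the \ct\ structure that the ray $[\ti v, \ti\xi)$ begins with an edge $E$ which is \neg, nonfixed, and nonlinear, with fixed initial direction, and that $\xi = \Eigen(E)$.

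Finally, finiteness of $\Eigen(f)$ and the bijection statement are immediate once the dictionary is set up: there are only finitely many edges of $G$, hence finitely many nonfixed nonlinear \neg\ directions; distinct such directions generate distinct rays (their initial edges differ, or more carefully: a fixed ray is determined by its initial direction since $f_\#$ is the identity on it), so $E \mapsto \Eigen(E)$ is injective, and it is surjective onto $\Eigen(f)$ by definition. Combined with $\Eigen(\phi) = \Eigen(f)$ this gives the bijection between nonfixed, nonlinear \neg\ directions of $G$ and $\Eigen(\phi)$.

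**Main obstacle.** I expect the crux to be condition~\pref{ItemNotInFixPlusRestricted}, i.e.\ showing that $\Fix_N(\Phi) \intersect \bdy A$ is \emph{exactly} one point and that it is attracting. This requires a careful analysis of which fixed points of $\wh\Phi$ lie in the boundary of the free factor generated (up to conjugacy) by the support of a single eigenray, separating the genuine eigenray endpoint from axis endpoints of linear edges and from fixed points coming from lower strata; the dichotomy between superlinear and linear \neg\ edges, together with the structure of $\Fix_+$ versus $\Fix_-$ and the "rotationless" hypothesis (so that $P(\phi)$ behaves well), is what makes this work, and getting the logic airtight in both directions is the delicate part.
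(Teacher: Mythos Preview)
Your overall strategy matches the paper's: set up the dictionary between principal directions and attracting fixed points via Fact~\refGM{FactSingularRay}, prove both inclusions, and read off finiteness and the bijection. However, two steps in your outline are genuine gaps rather than routine details.

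\textbf{Injectivity of $E \mapsto \Eigen(E)$.} Your parenthetical ``distinct such directions generate distinct rays (their initial edges differ)'' proves only that the rays in $G$ are distinct, not that the corresponding \emph{abstract} rays in $\bdy F_n / F_n$ are distinct. Two rays in $G$ with different initial edges can certainly represent the same $F_n$-orbit in $\bdy F_n$. The paper handles this by lifting: given $E_1 \ne E_2$ with $\Eigen(E_1)=\Eigen(E_2)$, one chooses lifts $\wt R_1, \wt R_2$ converging to the \emph{same} point $\ti\xi \in \bdy F_n$, with associated principal automorphisms $\Phi_1,\Phi_2$. Since $\ti\xi \in \Fix_+(\wh\Phi_i)$ is not fixed by any nontrivial inner automorphism, one gets $\Phi_1=\Phi_2$; then if $\ti v_1 \ne \ti v_2$ the path $[\ti v_1,\ti v_2]$ projects to a Nielsen path of the form $E_1\alpha\overline E_2$, which by (\neg\ Nielsen Paths) forces $E_1,E_2$ to be linear, a contradiction. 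This argument is not recoverable from ``initial edges differ.''

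\textbf{Ruling out \eg\ directions in $\Eigen(\phi) \subset \Eigen(f)$.} Given $\xi \in \Eigen(\phi)$, Fact~\refGM{FactSingularRay} produces a principal direction $E$ generating a ray realizing $\xi$, but principal directions may lie in \eg\ strata, and your outline (``conclude via the \ct\ structure that $E$ is \neg'') does not say how to exclude this. The paper argues by contradiction: if $E \subset H_r$ is \eg\ with lamination $\Lambda$, then Lemma~\refGM{LemmaEGPrincipalRays} produces a leaf $\ti\ell$ of $\Lambda$ with both ends in $\Fix_N(\wh\Phi)$; since $\Lambda$ is the weak accumulation set of $\xi$ one has $\bdy\ti\ell \subset \bdy A$, so $\Fix_N(\wh\Phi)\cap\bdy A$ contains at least two points, violating condition~\pref{ItemNotInFixPlusRestricted}. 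This is exactly the crux you flagged, but the mechanism (lamination leaves forcing multiple fixed points in $\bdy A$) is absent from your plan.

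For the other inclusion $\Eigen(f)\subset\Eigen(\phi)$ your intuition is right; the paper isolates the needed fact as a separate Lemma~\ref{LemmaPointsAtInfinity}, which shows that for a nonlinear \neg\ edge the intersection $\Fix_N(\hat f)\cap\bdy\Gamma$ (where $\Gamma$ is the relevant component of the preimage of $G_{s-1}$) is a singleton in $\Fix_+$, while for a linear edge it is the two-point boundary of a cyclic group.
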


\begin{proof} Once the equation $\Eigen(\phi)=\Eigen(f)$ is proved, finiteness follows finiteness of the graph $G$. Surjectivity onto $\Eigen(f)$ of the map $E \mapsto \Eigen(E)$ is true by definition. 

\smallskip

To prove injectivity of $E \mapsto \Eigen(E)$, consider two superlinear \neg\ edges $E_i \subset G$, $i=1,2$, such that $\Eigen(E_1)=\Eigen(E_2) = \xi \in \bdy F_n / F_n$. Let $v_i$ be the initial vertex of $E_i$, a principal vertex by Definition~\refGM{DefPrincipalVertices} and so $E_i$ is a principal direction by Definition~\refGM{DefPrincipalDirection}. Let $R_i$ be the ray in $G$ generated by $E_i$, so $R_1,R_2$ each realize $\xi$ in $G$. For $i=1,2$, pick a principal lift $\ti f_i \from \wt G \to \wt G$ of $f$ with initial vertex $\ti v_i$ lifting~$v_i$, let $\wt R_i$ be the lift of~$R_i$ with initial vertex $\ti v_i$ and initial direction $\wt E_i$ lifting $E_i$, and let $\ti\xi_i \in \bdy F_n$ be the limit point of~$\wt R_i$. Let $\Phi_i \in \Aut(F_n)$ be the automorphism corresponding to $\ti f_i$. The two points $\ti\xi_1,\ti\xi_2 \in \bdy F_n$ have the same orbit~$\xi \in \bdy F_n / F_n$, and so after translating $\ti\xi_1$ by the appropriate element of $F_n$, which amounts to rechoosing $\Phi_1$ in its isogredience class, we may assume $\ti\xi_1=\ti\xi_2$ denoted~$\ti \xi \in \bdy F_n$. If $\Phi_1 \ne \Phi_2$ then $i_\gamma = \Phi_1 \Phi_2^\inv$ is a nontrivial inner automorphism, $\ti\xi \in \Fix(\hat\gamma)$, and $\gamma \in \Fix(\Phi_1)$ by Fact~\refGM{FactFPBasics}, and so $\xi \in \bdy\Fix(\Phi_1)$; but then by Fact~\trefGM{FactSingularRay}{ItemRayEndsAtAttr} it follows that $E_1$ is not a principal direction, a contradiction. Having shown that $\Phi_1 = \Phi_2$, if $\ti v_1 \ne \ti v_2$ then the path $[\ti v_1,\ti v_2]$ in $\wt G$ projects to a Nielsen path for~$f$ of the form $E_1 \alpha \overline E_2$ where $\alpha$ is a path in~$G_{r-1}$ and $r$ is the maximum of the heights of $E_1,E_2$; but then applying (\neg\ Nielsen Paths) in the definition of a \ct\ (\recognition\ Definition~4.7 or see Definition~\refGM{DefCT}) it follows that $E_1,E_2$ are linear, a contradiction. We have shown that $\ti v_1 = \ti v_2$ and so $\wt R_1 = \wt R_2$, $\wt E_1 = \wt E_2$, and $E_1 = E_2$, completing the proof of injectivity.

\smallskip

We prove next that $\Eigen(\phi) \subset \Eigen(f)$. Consider $\xi \in \Eigen(\phi)$ and adopt the notation of Definition~\ref{DefEigenraysInvariant}. By Fact~\refGM{FactSingularRay} and Definition~\refGM{DefSingularRay}, there exists a principal direction $E \subset G$ with initial vertex $v$ generating a principal ray $R$ which realizes $\xi$ in $G$. We must prove that $E$ is an \neg\ edge of $G$. If not then $E$ is contained in some \eg-stratum $H_r$ associated to some $\Lambda \in \L(\phi)$. Let $\ti f \from \wt G \to \wt G$ be the principal lift of $f$ corresponding to $\Phi$, and so there is a lift $\ti v$ of $v$ fixed by $\ti f$, and a lift $\wt E$ of $E$ which is a principal direction for $\ti f$ generating a ray $\wt R$ which is a lift of $R$ and which converges to $\ti\xi$. Applying Lemma~\refGM{LemmaEGPrincipalRays}, there is a leaf $\ell$ of $\Lambda$ lifting to a line $\ti\ell$ invariant under $\ti f_\#$ such that both ends of $\ti\ell$ are in $\Fix_N(\Phi)$. Applying \recognition\ Lemma~3.26~(2) (or see Fact~\trefGM{FactSingularRay}{ItemEGPrincipalRay}), the lamination $\Lambda$ is the weak accumulation set of $\xi$. Applying Fact~\trefGM{FactWeakLimitLines}{ItemWeakLimitRayAccSame} it follows that $\F_\supp(\ell) \sqsubset \F_\supp(\Lambda) = \F_\supp(\xi) = \{[A]\}$, and so $\bdy\ti\ell \subset \bdy (gAg^\inv)$ for some $g \in F_n$, but since $\bdy\ti\ell \intersect \bdy A \ne \emptyset$ it follows by malnormality of $A$ and Fact~\refGM{FactBoundaries} that $\bdy\ti\ell \in \bdy A$. Both endpoints of $\bdy\ti\ell$ are therefore in $\Fix_N(\Phi) \intersect \bdy A$, contradicting Definition~\ref{DefEigenraysInvariant}~\pref{ItemNotInFixPlusRestricted}.

\smallskip

We prove finally that $\Eigen(f) \subset \Eigen(\phi)$, by reducing it to Lemma~\ref{LemmaPointsAtInfinity} which is stated and proved below. Consider $\xi \in \Eigen(f)$. Adopting the notation of Definition~\ref{DefEigenrayCT}, $\xi$ is represented in $G$ by the ray $R$ generated by the \neg\ superlinear edge $E_s$ with initial principal vertex $v$ and terminal vertex~$w$. In $\wt G$ choose a lift $\ti v$ of $v$, let $\ti f \from \wt G \to \wt G$ be the principal lift of $f$ fixing $\ti v$, and let $\wt E_s$ be the lift of $E_s$ with initial vertex $s$. Let $\Gamma$ be the component of the full pre-image of $G_{s-1}$ in $\wt G$ such that $\Gamma$ contains the terminal endpoint $\ti w$ of $\wt E_s$. Let $B \subgroup F_n$ denote the stabilizer of $\Gamma$, a free factor representing the component of the free factor system $[\pi_1 G_{s-1}]$ that corresponds to the component of $G_{s-1}$ containing~$w$. Let $\wt R$ be the ray in $\wt G$ generated by $\wt E_s$, and let $\ti\xi$ be the endpoint of~$\wt R$, and so $\ti\xi \in \bdy F_n$ is a representative of $\xi \in \bdy F_n / F_n$. We may now apply Lemma~\ref{LemmaPointsAtInfinity} below, concluding that $\ti\xi$ is the unique point of $\Fix_N(\Phi) \intersect \bdy\Gamma = \Fix_N(\Phi) \intersect \bdy B$ and $\ti\xi \in \Fix_+(\Phi)$.
%
%
%
For verifying Definition~\ref{DefEigenraysInvariant}~\pref{ItemNotInFixPlusRestricted}, we may choose $A \subgroup F_n$ to be the unique free factor such that $\ti\xi \in \bdy A$ and $\F_\supp(\xi) = \{[A]\}$. 
Since $\ti\xi \in \bdy B$ it follows that $[A] \sqsubset [B]$, and since $\bdy A \intersect \bdy B \ne \emptyset$, it follows by Facts~\refGM{FactBoundaries} and~\refGM{FactGrushko} that $\bdy A \subset \bdy B$ and so $A \subgroup B$. 
It follows that $\ti\xi$ is the unique point of $\Fix_N(\wh\Phi) \intersect \bdy A$.
This completes the proof of Lemma~\ref{LemmaEigenrayDefs}, subject to stating and proving the next lemma.
\end{proof}

Item~\pref{ItemNonlinearPointsAtInfinity} of the next lemma is all that is needed in the last paragraph of the preceding proof, however both items will be used in Section~\ref{SectionOneEdge}. The proof of the lemma cites several results from \recognition\ Section~3.3.

\begin{lemma}\label{LemmaPointsAtInfinity}  
Let $f \from G \to G$ be a \ct\ and let $E = H_s \subset G$ be a non-fixed \neg\ edge of height~$s$. Let $\wt E \subset \wt G$ be a lift of $E$, let $\ti f \from \wt G \to \wt G$ be the principal lift that fixes the initial endpoint of $\wt E$, and let $ \Gamma \subset \wt G$ be the component of the full pre-image of $G_{s-1}$  that contains the terminal endpoint $\ti w$ of $\wt E$. Then $\Gamma$ is $\ti f$-invariant, $\Fix(\ti f \restrict  \Gamma) = \emptyset$, and the following also hold:
\begin{enumerate}
\item \label{ItemNonlinearPointsAtInfinity}
If $E$ is nonlinear then $\Fix_N(\hat f) \cap \partial \Gamma$ is a singleton, contained in $\Fix_+(\hat f)$, equal to the endpoint of the ray in $\wt G$ generated by $\wt E$.
\item \label{ItemLinearPointsAtInfinity}
If $E$ is linear then $\Fix_N(\hat f) \cap \partial  \Gamma = \bdy A$ where $A \subgroup F_n$ is the infinite cyclic group consisting of all covering translations that commute with $\ti f$ and preserve~$ \Gamma$.
\end{enumerate}
\end{lemma}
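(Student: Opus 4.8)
The plan is to set up the combinatorial picture carefully and then reduce everything to standard facts about principal lifts and the structure of fixed point sets from \recognition\ Section~3.3. First I would establish the basic claims about $\Gamma$. Since $E=H_s$ is an \neg\ edge with $f(E)=E\cdot u$ where $u$ is a path of height $\le s-1$ (Fact~\refGM{FactNEGEdgeImage}), the image of the terminal endpoint $w$ of $E$ lies in $G_{s-1}$; lifting, $\ti f(\ti w)$ lies in the full preimage of $G_{s-1}$, and since $\ti f$ fixes the initial endpoint of $\wt E$ and maps $\wt E$ to a path beginning with $\wt E$, the terminal endpoint $\ti w$ is carried by $\ti f$ to a point in the \emph{same} component $\Gamma$ of that preimage (because $\ti f(\wt E)$ starts with $\wt E$, hence the terminal endpoint of $\wt E$ and of $\ti f(\wt E)$ are joined by a path disjoint from the interior of $\wt E$, namely $\ti f(u)$ lifted appropriately, which lies over $G_{s-1}$). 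So $\Gamma$ is $\ti f$-invariant. That $\Fix(\ti f\restrict\Gamma)=\emptyset$ follows because any point of $\Gamma$ fixed by $\ti f$ would be a fixed point of $f$ in $G_{s-1}$ lifting to $\Gamma$, but the only vertex of $\wt G$ over which $\ti f$ fixes a point and which is connected to $\Gamma$ through $\wt E$ is the initial endpoint of $\wt E$, which is not in $\Gamma$; more precisely one invokes the principal-lift description of $\Fix(\ti f)$ (it is the set of principal vertices of $G$ whose chosen lift is fixed, together with no interior edge points since $f$ has no invariant subsegments in \neg\ strata) — this is exactly the content of the relevant facts in \recognition\ Section~3.3 and Fact~\refGM{FactSingularRay}.

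Next, for item~\pref{ItemNonlinearPointsAtInfinity}: when $E$ is nonlinear, the ray $\wt R$ generated by $\wt E$ is a principal ray for $\ti f$ in the sense of Fact~\refGM{FactSingularRay}/Definition~\refGM{DefSingularRay}, converging to a point $\ti\xi\in\Fix_+(\wh\Phi)$ where $\Phi$ is the automorphism corresponding to $\ti f$. Since $\wt R$ lies in $\wt E\cup\Gamma$ and meets $\Gamma$ in a ray, $\ti\xi\in\bdy\Gamma$. For uniqueness I would argue by contradiction: suppose $\eta\in\Fix_N(\wh\Phi)\cap\bdy\Gamma$ with $\eta\ne\ti\xi$. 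Either $\eta\in\Fix_+$ or $\eta\in\Fix_-$. If $\eta\in\Fix_+(\wh\Phi)$, then by the classification of attracting fixed points (Fact~\trefGM{FactSingularRay}{ItemRayEndsAtAttr} and the surrounding material) $\eta$ is the endpoint of a principal ray generated by some principal direction $\wt E'$ at the fixed vertex $\ti v$; but there is only one principal ray through $\Gamma$ reaching $\bdy\Gamma$, forcing $\wt E'=\wt E$ and $\eta=\ti\xi$. If $\eta\in\Fix_-(\wh\Phi)$ (a repelling point), then applying $\wh\Phi^{-1}$ and using that $\ti f\restrict\Gamma$ has empty fixed set and is "expanding away from $\wt E$", the ray toward $\eta$ would have to exit $\Gamma$ through $\wt E$ and be fixed, which again is impossible since $\wt E$ is not fixed and its interior contains no fixed points. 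This rules out any second point of $\Fix_N(\wh\Phi)$ in $\bdy\Gamma$, and since $\ti\xi\in\Fix_+$, the singleton is contained in $\Fix_+(\wh\Phi)$.

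Finally, for item~\pref{ItemLinearPointsAtInfinity}: when $E$ is linear we have $f(E)=E\,w^{d}$ with $d\ne0$ and $w$ a root-free Nielsen path of height $\le s-1$ representing a circuit in $G_{s-1}$; choosing the lift correctly, the closed path $w$ lifts to a line $\wt w$ in $\Gamma$ invariant under a covering translation $a$ with $\Fix(\wh a)=\bdy A$, and $\ti f$ agrees with $a^{d}$ on the terminal endpoint data so that the composition $a^{-d}\ti f$ fixes $\ti w$ pointwise; the subgroup $A=\langle a\rangle$ consists exactly of the covering translations commuting with $\ti f$ and preserving $\Gamma$ (this is the standard description of the twistor/axis of a linear edge, cf.\ Definition~\ref{DefTwistorCT} and Fact~\ref{FactTwistor}). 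Then $\Fix_N(\wh f)\cap\bdy\Gamma$ contains $\bdy A$ because both endpoints of $\wt w$ are limits of $\ti f$-periodic (indeed fixed) rays, and conversely any point of $\Fix_N(\wh f)\cap\bdy\Gamma$ must lie in $\bdy A$ by the same exhaustion argument as in item~(1): the only invariant directions in $\Gamma$ under $\ti f$ (up to the $A$-action) come from the axis $\wt w$, since $\ti f\restrict\Gamma$ has no fixed points and, modulo the cyclic action of $A$, acts with "north–south"-type dynamics toward $\bdy A$. I expect the main obstacle to be the uniqueness half of item~(1) — pinning down that no repelling fixed point of $\wh\Phi$ can lie in $\bdy\Gamma$ — which requires carefully combining the description of $\Fix_N$ of a principal lift from \recognition\ Section~3.3 with the fact that $\ti f\restrict\Gamma$ is fixed-point-free and "attracted away" from the single non-fixed edge $\wt E$ by which $\Gamma$ hangs off the fixed vertex $\ti v$.
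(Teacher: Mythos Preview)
Your setup for the $\ti f$-invariance of $\Gamma$ is fine, but the argument for $\Fix(\ti f\restrict\Gamma)=\emptyset$ has a real gap. Describing $\Fix(\ti f)$ as ``principal vertices whose chosen lift is fixed'' does not by itself prevent a fixed point from landing in $\Gamma$. The paper's argument is different and essential: if $\ti x\in\Fix(\ti f)\cap\Gamma$, then the path $\wt E\cdot[\ti w,\ti x]$ projects to a Nielsen path $E\tau$ with $\tau\subset G_{s-1}$, and this contradicts the \ct\ property (\neg\ Nielsen Paths), which says that any height-$s$ indivisible Nielsen path has the form $E\beta\overline E$ (and only in the linear case). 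You need this step.

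The larger gap is in your uniqueness arguments for items~(1) and~(2). Your case split for~(1) does not close. For a hypothetical second point $\eta\in\Fix_+(\wh\Phi)\cap\bdy\Gamma$, asserting ``there is only one principal ray through $\Gamma$ reaching $\bdy\Gamma$'' begs the question: principal rays for $\ti f$ start at fixed points of $\ti f$, and you have not ruled out that some ray from a fixed vertex \emph{outside} $\Gamma$ enters $\Gamma$ through a different height-$\ge s$ edge and converges in $\bdy\Gamma$. For $\eta\in\bdy\Fix(\Phi)$, the sentence about the ray ``exiting $\Gamma$ through $\wt E$ and being fixed'' is not an argument; points of $\bdy\Fix(\Phi)$ need not bound any ray with those properties. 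The paper avoids this entire case analysis by a unified dynamical argument: define $S\subset\Fix_N(\hat f)\cap\bdy\Gamma$ to be those $Q$ toward which some point of $\Gamma$ moves under $\ti f$; Lemma~3.23 (or~4.36(2)) of \recognition\ gives $S\ne\emptyset$, Lemma~3.16 of \recognition\ together with $\Fix(\ti f\restrict\Gamma)=\emptyset$ gives $|S|\le 1$, and then Lemma~3.15 of \recognition\ delivers the dichotomy that $\Fix_N(\hat f)\cap\bdy\Gamma$ is either a single point of $\Fix_+$ or equals $\bdy A$ with $A$ infinite cyclic. The linear/nonlinear distinction then only has to decide which alternative holds, which is a short check. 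Your ``north--south'' sketch for~(2) is gesturing at this, but without Lemmas~3.15 and~3.16 you do not have the tool that forces the cardinality to be at most two.
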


\begin{proof} Let $u$ be the oriented path in $G_{s-1}$ with initial point $w$ such that $f(E)=Eu$, and let $\ti u$ be the lift with initial point $\ti w$ such that $\ti f(\wt E) = \wt E \ti u$.  Since the full pre-image of $G_{s-1}$ is $\ti f$-invariant and since $\Gamma$ contains $\ti u$, it follows that $\Gamma$ is $\ti f$-invariant. If $\Fix(\ti f \restrict  \Gamma)$ were nonempty then letting $\ti\tau$ be the path in $\Gamma$ from $\ti w$ to a point of $\Fix(\ti f \restrict  \Gamma)$, it would follows that $\wt E \ti \tau$ projects to a Nielsen path $E \tau$ with $\tau \subset G_{s-1}$, contradicting Property (\neg\ Nielsen Paths) in the definition of a \ct\ (\recognition\ Definition~4.7, or Definition~\refGM{DefCT}). 

Given $\ti z \in  \Gamma$ and $Q \in \bdy  \Gamma$, we say that \emph{$\ti z$ moves toward $Q$ under the action of $\ti f$} if the ray from $\ti f(\ti z)$  to $Q$ does not contain $\ti z$.  Let $S$ be the set of points $Q \in \Fix_N(\hat f) \cap \partial  \Gamma$ for which there exists a point of $ \Gamma$ that moves toward $Q$ under the action of $\ti f$. 
The fact that $S \ne \emptyset$ follows by applying either Lemma~3.23 of \recognition\ or Lemma~4.36~(2) of \recognition\ (for the latter see also Fact~\trefGM{FactSingularRay}{ItemDirectionToAttractor}). By applying Lemma  3.16  of \cite{FeighnHandel:recognition} and the fact that $\Fix(\ti f \restrict  \Gamma) = \emptyset$ it follows that $S$ has at most one point and hence $S = \{Q\}$. 
It follows that if $L_a$ is any non-trivial  covering translation that commutes with $\ti f$ then either $L_a(Q) =  Q$ or $L_a(Q) \not \in \partial  \Gamma$. Letting $A \subgroup F_n$ be the subgroup of all $a \in F_n$ for which $L_a$ commutes with $\ti f$ and $L_a(\Gamma) = \Gamma$, it follows $A$ is trivial or infinite cyclic. By applying Lemma~3.15 of \recognition\ it follows that $\Fix_N(\hat f) \cap \partial  \Gamma$ is either one or two points, the former case occurs if and only if that one point is in $\Fix_+(\hat f)$ which is the conclusion of~\pref{ItemNonlinearPointsAtInfinity}, that the latter case occurs if and only if $A$ is infinite cyclic and $\Fix_N(\hat f) \cap \partial\Gamma = \bdy A$ which is the conclusion of~\pref{ItemLinearPointsAtInfinity}.
 
If $E$ is linear then $u$ is a closed Nielsen path for $f$ and the covering translation $L_a$ that maps the initial endpoint of $\ti u$ to the terminal endpoint of $\ti u$ commutes with $\ti f$ and preserves $\Gamma$, so $A$ is infinite cyclic and the conclusion of~\pref{ItemLinearPointsAtInfinity} follows. If $E$ is not linear then by Lemma~\trefGM{FactSingularRay}{ItemEGPrincipalRay} we have $Q \in \Fix_+(\hat f)$, implying that $Q \not\in \bdy\Fix(\Phi)$ and so $Q$ is not a fixed point of a covering translation that commutes with $\ti f$; the conclusion of~\pref{ItemNonlinearPointsAtInfinity} follows.
\end{proof}

Our last lemma gives a useful carrying property of eigenrays that will be used several times later in the paper.

\begin{lemma}\label{LemmaEigenrayCarried} If $\fG$ is a \ct\ and if $E_s$ is a superlinear \noneg\ edge satisfying $f(E_s) = E_s \cdot u_s$ then for each filtration element $G_r$ with $r \le s$, the eigenray $\Eigen(E_s)$ is carried by $[\pi_1 G_r]$ if and only if $u_s \subset G_r$.
\end{lemma}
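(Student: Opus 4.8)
The plan is to make the carrying condition concrete by working with the ray that $E_s$ actually generates, and then reducing everything to a statement about the heights of the iterated images of $u_s$.

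First I would record the precise shape of the eigenray. By Fact~\refGM{FactSingularRay} together with the complete-splitting structure of a \ct\ (Definition~\refGM{DefCT}: the path $u_s$ is completely split, $f_\#$ of an edge is completely split, and $f_\#$ carries each completely split path to a completely split path with no cancellation between the images of splitting units), the eigenray $\Eigen(E_s)$ is realized in $G$ by the reduced ray $R = E_s \cdot u_s \cdot f_\#(u_s) \cdot f^2_\#(u_s) \cdot \cdots$, in which no cancellation takes place at any junction; hence $R = E_s \cdot T$ with $T := u_s \cdot f_\#(u_s) \cdot f^2_\#(u_s) \cdots$ a reduced ray supported in $G_{s-1}$ which literally contains, in order, every edge of every $f^k_\#(u_s)$.

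Second, I would prove the height identity $\height\bigl(f^k_\#(u_s)\bigr) = \height(u_s)$ for all $k \ge 0$. The inequality ``$\le$'' is just $f$-invariance of the filtration. For ``$\ge$'', pick a top-height edge of $u_s$, say of height $h$: in the complete splitting of $u_s$ it lies in a splitting unit whose $f_\#$-image again contains an edge of the same stratum $H_h$ --- for an \eg\ stratum this is the relative train track property that $H_h$-content does not cancel under iteration, while for an \neg\ stratum $\{E_h^{\pm}\}$ it is the persistence of $E_h^{\pm}$ inside $f_\#(E_h)=E_h u_h$, inside $f_\#$ of a single \neg\ edge, or inside $f_\#$ of an exceptional path $E_i w^p \overline E_h$. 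From this identity it follows that, for every $k$, one has $f^k_\#(u_s) \subset G_r$ iff $u_s \subset G_r$; combined with the first step, $T \subset G_r$ iff $u_s \subset G_r$, and when $u_s \not\subset G_r$ the ray $T$ (hence $R$) meets $G \setminus G_r$ infinitely often, once in each factor $f^k_\#(u_s)$.

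Third, I would translate ``carried by $[\pi_1 G_r]$'' into a statement about $R$. Realizing $\xi := \Eigen(E_s)$ in $G$ and lifting to $\wt G$, the components of the full preimage of $G_r$ are convex subtrees, and (subgraph inclusions being $\pi_1$-injective) the stabilizers of the non-simply-connected ones are exactly the conjugacy classes making up $[\pi_1 G_r]$; so $\xi$ is carried by $[\pi_1 G_r]$ iff some lift of $R$ has a cofinite subray inside one of those components, iff $R$ has a cofinite subray contained in $G_r$ --- any such component is automatically non-simply-connected, since its (finite) image carries an infinite reduced ray. For the ``if'' half one may instead argue directly, as in Lemma~\ref{LemmaPointsAtInfinity}, with the component $\Gamma'$ of the preimage of $G_r$ containing the terminal endpoint of a chosen lift of $E_s$. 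Given this dictionary the lemma is immediate from the first two steps: if $u_s \subset G_r$ then $R = E_s \cdot T$ with $T \subset G_r$, so $\xi$ is carried; and if $u_s \not\subset G_r$ then neither $T$ nor $R$ has a cofinite subray in $G_r$, so $\xi$ is not carried. I expect the main obstacle to be the bookkeeping in the second step --- certifying that the top-height content of $u_s$ survives, uncancelled, in every iterate $f^k_\#(u_s)$ --- which is exactly where the complete-splitting machinery of \cts\ must be invoked with some care; the carrying dictionary of the third step is standard, but should be phrased so as not to circularly re-use the definition of $\F_\supp$.
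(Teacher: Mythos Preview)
Your proof is correct. The paper's ``only if'' argument takes a different and shorter route: rather than tracking heights through the complete splitting of $u_s$, it runs a descent. Assuming $f^k_\#(u_s) \subset G_r$ for some $k \ge 1$, the closed path $f^k_\#(u_s) \subset G_r$ is based at the fixed vertex $x$ (the terminal endpoint of $E_s$); since $f \restrict G_r$ is a homotopy equivalence there is a closed path $\tau \subset G_r$ based at $x$ with $f_\#(\tau) = f^k_\#(u_s)$, and since $f \from G \to G$ is itself a homotopy equivalence this $\tau$ is the \emph{unique} such path in all of $G$, forcing $\tau = f^{k-1}_\#(u_s) \subset G_r$. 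No complete-splitting machinery is invoked at all. Your approach instead proves the sharper intermediate statement $\height\bigl(f^k_\#(u_s)\bigr) = \height(u_s)$ by case analysis on the top-height term in the complete splitting of $u_s$; this works, though you should add the (trivial) case in which that term is an indivisible Nielsen path of height $h$ (fixed by $f_\#$), and note that the top height of a completely split path is never a zero stratum, since taken connecting paths are flanked by edges of the enveloping \eg\ stratum. The paper's argument is more economical and uses only the homotopy-equivalence property of $f \restrict G_r$; yours gives more explicit control over what survives under iteration.
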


\begin{proof}  The  if direction is obvious.  For the only if direction it suffices to assume that  $f^k_\#(u_s) \subset G_r$ for some $k \ge 1$ and prove that $f^{k-1}_\#(u_s) \subset G_r$.  The endpoint $x$  of $u_s$ is a fixed point because $u_s$ is a closed path.  Since $f \restrict G_r $ is a homotopy equivalence, there is a  closed  path $\tau \subset G_r$  based at $x$ such that $f_\#(\tau) = f^k_\#(u_s)$.  Since  $f$ is a homotopy equivalence,  $\tau$ is the unique path in $G$ such that $f_\#(\tau) = f^k_\#(u_s)$.  It follows that   $f^{k-1}_\#(u_s) = \tau \subset G_r$.
\end{proof}

\subsection{Complete splittings rel $G_r$} 
In many of our arguments we work with a \ct\ $f \from G \to G$ representing $\phi \in \Out(F_n)$ and a filtration element $G_r$ which carries every lamination in~$\L(\phi)$. In such a situation we often treat $G_r$ as a black box, in particular we do not wish to worry about details of splittings of paths in $G_r$. For this reason we introduce here the notion of a complete splitting of a path relative to $G_r$.

The next several facts are about properties of \cts\ above the highest \eg\ stratum. The conclusion of our next fact is built into the hypotheses of many of our statements. 
 
\begin{fact}
\label{all neg} 
Suppose that $\fG$ is a \ct\ representing $\phi$ and that $G_r$ carries $\L(\phi)$.  Then every stratum above $G_r$ is \noneg. 
\end{fact}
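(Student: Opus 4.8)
The plan is to invoke the trichotomy built into the definition of a \ct: every stratum of $f$ is exponentially growing (\eg), non-exponentially growing (\neg), or a zero stratum. So to prove that every stratum above $G_r$ is \neg\ it suffices to show that no \eg\ stratum and no zero stratum lies above $G_r$. In fact this is already recorded inside the proof of Lemma~\ref{LemmaFindingEG}, and I would just reuse that reasoning.

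First I would unwind the hypothesis: saying that $G_r$ carries $\L(\phi)$ means that the free factor system $[\pi_1 G_r]$ supports $\union\L(\phi)$, and by Fact~\refGM{FactLamsAndStrata} --- which provides the bijection between $\L(\phi)$ and the set of \eg\ strata of $f$ together with the criterion for when such a lamination is supported by a core filtration element --- this is equivalent to every \eg\ stratum of $f$ being contained in $G_r$. Next, the clause (Zero Strata) in the definition of a \ct\ (\recognition\ Definition~4.7, or see Definition~\refGM{DefCT}) says that each zero stratum is enveloped by some \eg\ stratum, which necessarily lies above it (since $f$ preserves the filtration, a stratum below a zero stratum $H_i$ has its $f$-image in $G_{i-1}$ and so cannot reach $H_i$). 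Hence every zero stratum, lying below an \eg\ stratum that is contained in $G_r$, is itself contained in $G_r$.

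Combining these observations, any stratum $H_s$ with $s > r$ is neither \eg\ nor a zero stratum, so it must be \neg; this proves the statement. There is no genuine obstacle here --- the only point requiring care is citing the \ct\ axioms correctly, in particular that the \eg\ strata are pinned inside $G_r$ by the lamination hypothesis and that zero strata always sit strictly below the \eg\ strata that envelop them, so that confining the \eg\ strata to $G_r$ automatically confines the zero strata there as well.
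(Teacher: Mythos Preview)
Your proposal is correct and follows essentially the same approach as the paper: rule out \eg\ strata above $G_r$ via Fact~\refGM{FactLamsAndStrata}, then rule out zero strata above $G_r$ via the (Zero Strata) clause of the \ct\ definition, leaving only \neg\ strata. The paper's version is terser (it also invokes Fact~\refGM{FactLineRealizedCarried} en route to Fact~\refGM{FactLamsAndStrata}), but the logic is the same.
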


\begin{proof} Since $G_r$ carries $\L(\phi)$, by Fact~\refGM{FactLineRealizedCarried} the realization in $G$ of each line of each lamination in $\L(\phi)$ is contained in $G_r$, and so by Fact~\refGM{FactLamsAndStrata} each \eg\ stratum is contained in $G_r$. By property (Zero Strata) of Definition~\refGM{DefCT}, each zero stratum is also contained in $G_r$.
\end{proof}

The following basic splitting property of paths with \neg\ height applies for more general relative train track maps than we state it for here.

\begin{fact}[Lemma 4.1.4 of \BookOne]
\label{FactBasicNEGSplitting} 
Suppose that $\fG$ is a \ct, that $\sigma$ is a path or circuit with height $s$ and that $H_s$ is \neg.  Then the decomposition of $\sigma$ into subpaths obtained by subdividing at the initial vertex of each occurence of $E_s$ in $\sigma$ and at the terminal vertex of each occurence of $\overline E_s$ in $\sigma$ is a splitting.
\end{fact}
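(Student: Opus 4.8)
The plan is to verify directly that, for every $j \ge 0$, the decomposition $\sigma = \sigma_1 \cdot \sigma_2 \cdots \sigma_k$ produced by the subdivision rule satisfies $f^j_\#(\sigma) = f^j_\#(\sigma_1) \cdot f^j_\#(\sigma_2) \cdots f^j_\#(\sigma_k)$; this is precisely the assertion that the decomposition is a splitting. Since $f^j(\sigma) = f^j(\sigma_1)\cdots f^j(\sigma_k)$ as unreduced edge paths, and since tightening can be performed first within each factor and then across the junctions, the whole matter reduces to showing that for each $i$ and each $j$ the concatenation $f^j_\#(\sigma_i) \cdot f^j_\#(\sigma_{i+1})$ is already reduced, i.e.\ exhibits no cancellation at the junction.

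The first substantive step records the shape of the pieces. Because $f$ is a \ct, the \neg\ stratum $H_s$ is a single edge $E_s$ with $f(E_s) = E_s u_s$ and $u_s \subset G_{s-1}$; hence the subdivision rule (cut just before each occurrence of $E_s$ and just after each occurrence of $\overline E_s$), together with reducedness of $\sigma$, forces each $\sigma_i$ to be of one of the forms $\mu$, $E_s^{\pm 1}$, $E_s\mu$, $\mu\overline E_s$, $E_s\mu\overline E_s$, where $\mu$ is a nontrivial path in $G_{s-1}$; in particular an edge of $H_s$ occurs in $\sigma_i$ only as its first and/or last edge. I would then show that this shape is preserved by $f^j_\#$: since $G_{s-1}$ is $f$-invariant one has $f^j(E_s) = E_s\rho_j$ with $\rho_j$ a path in $G_{s-1}$ (possibly trivial), and since $f^j_\#$ carries reduced paths to reduced paths and does not collapse any nontrivial path to a point (as $f$ restricts to a homotopy equivalence of $G_{s-1}$ and is $\pi_1$-injective), it follows that $f^j_\#(\sigma_i)$ again has edges of $H_s$ only at its extreme ends, begins with $E_s$ exactly when $\sigma_i$ does, and ends with $\overline E_s$ exactly when $\sigma_i$ does.

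The junction analysis is then short. A subdivision point of $\sigma$ arises for exactly one of two reasons --- it is the initial vertex of some occurrence of $E_s$, or the terminal vertex of some occurrence of $\overline E_s$ --- and these are mutually exclusive because $\sigma$ is reduced. In the first case $\sigma_{i+1}$ begins with $E_s$ while $\sigma_i$ does not end with $\overline E_s$, so by the previous step $f^j_\#(\sigma_{i+1})$ begins with $E_s$ while $f^j_\#(\sigma_i)$ does not end with $\overline E_s$; hence the two edges meeting at the junction do not form an inverse pair and, both factors being reduced, their concatenation is reduced. The second case is symmetric, and the circuit case is identical, read cyclically. The step I expect to be the main point of care is the persistence claim in the middle paragraph: in the degenerate subcases where the portion of $f^j(\sigma_i)$ lying in $G_{s-1}$ tightens to the trivial path, $f^j_\#(\sigma_i)$ can collapse down to $E_s$ or to $\overline E_s$, so one must confirm that the surviving edge has the same orientation as the corresponding extreme edge of $\sigma_i$ (for $\sigma_i = E_s\mu\overline E_s$ no such collapse can occur, since $f^j_\#$ does not kill this nontrivial loop); this is exactly what keeps the junction dichotomy valid in every case.
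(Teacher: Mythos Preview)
Your argument is correct and is essentially the standard one; it is also the argument in the cited reference \BookOne, Lemma~4.1.4. The present paper gives no proof of this fact beyond the citation, so there is nothing further to compare.

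One small point of phrasing: your parenthetical ``does not collapse any nontrivial path to a point'' is stated a bit too broadly --- a homotopy equivalence of graphs can certainly collapse a nontrivial \emph{non-closed} path. What actually makes the argument work is that each piece $\mu \subset G_{s-1}$ in your list is a \emph{closed} path based at the initial vertex of $E_s$, which is a fixed point of $f$; since $f\restrict G_{s-1}$ is a homotopy equivalence, $f^j_\#(\mu)$ is then a nontrivial closed path. You use exactly this in your final paragraph, so the argument is fine, but the earlier sentence should be tightened to reflect it.
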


Recall the definition of \emph{complete splittings} of paths, from Definition~4.3 of \recognition\ (and see Definition~\refGM{DefCompleteSplitting}). Item~\pref{ItemRelSplitDef} of the following fact defines our relative version of complete splittings.

\begin{fact}[Definition and properties of complete splittings rel~$G_r$]
\label{FactRelSplit}
Suppose that $\fG$ is a \ct\  and that every edge with height $>r$ is \noneg. 
\begin{enumerate}
\item\label{ItemRelSplitDef}
Every completely split path or circuit has a splitting called the \emph{complete splitting rel~$G_r$}, each term of which is one of the following: a subpath in $G_r$; a single edge of height greater than~$r$; an indivisible Nielsen path of height greater than $r$; or an exceptional path of height greater than~$r$. Moreover, consecutive terms of this splitting are not both subpaths of~$G_{r}$.
\item\label{ItemRelSplitEdge}
For each edge $E$ of $G$ the path $f(E)$ has a complete splitting rel~$G_r$.
\item\label{ItemRelSplitIterate}
For each $\sigma$ which is a circuit or a path with endpoints at vertices, for all sufficiently large $k$ the circuit or path $f^k_\#(\sigma)$ has a complete splitting rel $G_r$. 
\end{enumerate}  
\end{fact}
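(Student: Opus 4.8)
The plan is to establish item~\pref{ItemRelSplitDef} first and then deduce~\pref{ItemRelSplitEdge} and~\pref{ItemRelSplitIterate} formally from it together with standard facts about \cts. For~\pref{ItemRelSplitDef} I would start from a given completely split path or circuit $\sigma$ and fix a complete splitting $\sigma = \sigma_1 \cdot \sigma_2 \cdots \sigma_k$ (Definition~\refGM{DefCompleteSplitting}), in which each $\sigma_i$ is a single edge, an indivisible Nielsen path, an exceptional path, or a (taken) connecting path in a zero stratum. The crucial observation is that the hypothesis --- every edge of height $>r$ is \noneg, equivalently every stratum above $G_r$ is an \noneg\ stratum --- directly rules out both \eg\ strata and zero strata above $G_r$. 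Hence each $\sigma_i$ is of exactly one of two kinds: either it has height $\le r$, and is then a tight path contained in $G_r$ (no matter which of the four types it is); or it has height $>r$, and is then neither a connecting path in a zero stratum nor an indivisible Nielsen path of \eg\ height, so it must be a single edge, an indivisible Nielsen path, or an exceptional path, in each case of height $>r$ and with \noneg\ top stratum.

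The second step is to coarsen this splitting: replace each maximal block of consecutive terms all lying in $G_r$ by their concatenation, and leave the remaining terms untouched. I would check the two routine points that make this legitimate: a coarsening of a splitting is again a splitting, since the junctures of the coarser decomposition form a subset of those of the finer one and so no new cancellation can appear under iteration of $f_\#$; and a concatenation of tight paths in $G_r$ meeting without cancellation is itself a tight path in $G_r$, so each merged term really is a subpath of $G_r$. The resulting splitting then has every term among the four allowed kinds, and by maximality of the blocks no two consecutive terms are both subpaths of $G_r$; this is the complete splitting rel~$G_r$, proving~\pref{ItemRelSplitDef}.

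For~\pref{ItemRelSplitEdge} I would invoke the (Completely Split) property of a \ct\ (Definition~\refGM{DefCT}), by which $f(E)$ is completely split for every edge $E$, and then apply~\pref{ItemRelSplitDef} to $f(E)$; when $E$ has height $\le r$ this simply produces the one-term splitting consisting of $f(E)\subset G_r$ itself, consistent with the statement. For~\pref{ItemRelSplitIterate} I would use the standard fact (\recognition\ Lemma~4.25) that for any circuit or path $\sigma$ with endpoints at vertices there is a $k_0$ with $f^k_\#(\sigma)$ completely split for all $k\ge k_0$, and then apply~\pref{ItemRelSplitDef} to $f^k_\#(\sigma)$ for such $k$. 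I do not expect a genuine obstacle anywhere in this argument: the only things that need attention are the two bookkeeping claims in the coarsening step, both immediate from the definition of a splitting, and the real input is entirely the already-available \ct\ structure theory --- absence of \eg\ and zero strata above $G_r$, complete splitting of edge images, and eventual complete splitting of high iterates.
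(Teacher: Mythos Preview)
Your proposal is correct and follows essentially the same approach as the paper: start from the complete splitting, observe that the hypothesis forces every term of height $>r$ to be a single \noneg\ edge, an indivisible Nielsen path, or an exceptional path (since \eg\ and zero strata are ruled out above $G_r$), and then amalgamate maximal runs of consecutive terms lying in $G_r$; items~\pref{ItemRelSplitEdge} and~\pref{ItemRelSplitIterate} then follow from (Completely Split) and eventual complete splitting respectively. The paper's proof is a one-sentence compression of exactly this argument, citing the same ingredients (Definition~\refGM{DefCompleteSplitting}, the (Completely Split) clause of Definition~\refGM{DefCT}, and Fact~\refGM{FactEvComplSplit}).
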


\begin{proof} Each of these follows from the corresponding result about complete splittings by amalgamating consecutive terms of height at most $r$ in the appropriate completely split circuit or path: for item~\pref{ItemRelSplitDef} see Fact~\ref{all neg} and see Definition~4.4 of \recognition\ (or Definition~\refGM{DefCompleteSplitting}); for item~\pref{ItemRelSplitEdge} see (Completely Split) in the definition of a \ct\ (Definition~4.7 of \recognition\ or Definition~\refGM{DefCT}); and for item~\pref{ItemRelSplitIterate} see Fact~\refGM{FactEvComplSplit}.
\end{proof}



The distinction made between linear and superlinear \neg\ edges in Fact~\ref{FactIsATerm} plays an important role in section~\ref{SectionFCarriesAll}. 


\begin{fact}
\label{FactIsATerm} 
Suppose that $\fG$ is a \ct\ and that every edge with height $> r$  is \noneg.    Let $E_s$ be a non-fixed edge of height $s > r$. If $\sigma$ is a path or circuit with a complete splitting rel~$G_r$, if $E_s$ occurs as a subpath of $\sigma$, and if $\tau$ is the term in the splitting of $\sigma$ that contains~$E_s$, then (following Notation~\ref{DefTwistorCT}) we have:
\begin{enumerate}
\item  \label{FactSuperlinearEdgeTerm}  If $E_s$ is superlinear then $\tau = E_s$. 
\item \label{ItemLinearEdgeTerm}   If $E_s$ is linear and is not the initial edge of a subpath of $\sigma$ of the form $E_s w_s^p \overline E_s$ then either $\tau = E_s$ or $\tau$ is  an exceptional path  $\tau = E_sw_s^q \overline E_u$  where $E_s \ne E_u$ are linear edges in the same linear family.
\end{enumerate}
\end{fact}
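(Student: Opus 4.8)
The plan is to go through the four possibilities for the term $\tau$ supplied by Fact~\ref{FactRelSplit}\pref{ItemRelSplitDef} and eliminate all but the ones in the conclusion. Since the chosen occurrence of $E_s$ lies in $\tau$ and $E_s$ has height $s>r$, the term $\tau$ has height $>r$; so $\tau$ is not a subpath of $G_r$, and because every stratum above $G_r$ is \neg\ it is not an \eg\ indivisible Nielsen path either. Thus $\tau$ is a single edge --- in which case $\tau=E_s$ and we are done in either case --- or an \neg\ indivisible Nielsen path of some height $h:=\height(\tau)>r$, or an exceptional path of height $h>r$. For these last two I would use the structural description coming from the clauses (\neg\ Nielsen Paths) and (Linear Edges) of the definition of a \ct\ (\recognition\ Definition~4.7, or see Definition~\refGM{DefCT}) together with Definition~\ref{DefTwistorCT}: an \neg\ indivisible Nielsen path of height $h$ has the form $E_h w_h^{p}\overline E_h$, where $E_h$ is the single, linear edge of the stratum $H_h$ and $p\ne 0$; an exceptional path of height $h$ has the form $E_a w^{q}\overline E_b$, where $E_a\ne E_b$ are linear edges of height $\le h$ in a common linear family and $w$ is the common root-free closed Nielsen path representing their twistor. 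In both cases the first edge of $\tau$ is a linear edge and every other edge of $\tau$ either is the (linear) last edge or lies in the closed Nielsen path $w_h$, resp.\ $w$; a short orientation check --- using that $E_s$ and the stratum edges carry preferred orientations --- shows that the occurrence of $E_s$ in $\tau$ is either the first edge of $\tau$, or lies strictly below height $h$ inside $w_h$ (resp.\ $w$).

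Suppose first that the occurrence of $E_s$ is the first edge of $\tau$. If $\tau$ is an \neg\ indivisible Nielsen path, then $E_h=E_s$ and $\tau=E_s w_s^{k}\overline E_s$ for some $k\ne 0$, exhibiting $E_s$ as the initial edge of a subpath of $\sigma$ of the form $E_s w_s^{k}\overline E_s$: in case~\pref{ItemLinearEdgeTerm} this is forbidden by hypothesis, and in case~\pref{FactSuperlinearEdgeTerm} it is impossible because $E_s$ would then be linear. So in this situation $\tau$ must be an exceptional path, its first edge $E_a$ equals $E_s$, its twistor path $w$ equals $w_s$ up to orientation, and hence $\tau=E_s w_s^{q}\overline E_u$ with $E_u:=E_b\ne E_s$ a linear edge in the same linear family --- which is the conclusion of~\pref{ItemLinearEdgeTerm}; in case~\pref{FactSuperlinearEdgeTerm} this too is impossible ($E_s=E_a$ being linear), forcing $\tau=E_s$.

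The remaining, and main, point is to rule out the case that the occurrence of $E_s$ lies inside $w_h$ or $w$ --- a closed, root-free Nielsen path of height $<h$. The key observation is that in the complete splitting of a closed Nielsen path every term is again a Nielsen path: the splitting points are fixed vertices, so $f_\#$ sends each term to a path with the same endpoints, the no-cancellation property of a splitting forces these images to reassemble into the path term by term, and an exceptional path cannot occur as a term since $f_\#$ displaces it. Hence every term of the complete splitting of $w_h$ (resp.\ $w$) is a fixed edge or an indivisible Nielsen path, so the term containing the non-fixed edge $E_s$ has height $\ge s>r$, hence \neg\ height, and is therefore an \neg\ indivisible Nielsen path $E_{h'}w_{h'}^{p'}\overline E_{h'}$ with $s\le h'<h$. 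If $h'=s$ then $E_{h'}=E_s$, so, just as in the previous paragraph, $E_s$ appears as the initial edge of a subpath of $\sigma$ of the form $E_s w_s^{p'}\overline E_s$, which is excluded by hypothesis in case~\pref{ItemLinearEdgeTerm} and impossible (as $E_s$ would be linear) in case~\pref{FactSuperlinearEdgeTerm}; if $h'>s$, replace the enclosing closed Nielsen path by $w_{h'}$ and repeat. Since the height of the enclosing closed Nielsen path strictly decreases at each step while staying $\ge s$, the recursion terminates and yields a contradiction in every case. I expect the main obstacle to be precisely this last case: assembling the right structural input about \neg\ indivisible Nielsen paths, exceptional paths, and closed Nielsen paths in a \ct, and then organizing the descent through the nested twistor paths; once that is set up, the case analysis itself is routine.
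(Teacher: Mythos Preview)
Your proof is correct and follows essentially the same approach as the paper: both reduce to analyzing when $E_s$ can occur inside a Nielsen path and then descend by induction on the height of the enclosing \neg\ indivisible Nielsen path $E_i w_i^p\overline E_i$, using (\neg\ Nielsen Paths) at each step. The paper's write-up is slightly more compressed---it first isolates the two bullet claims about Nielsen paths and then does a clean induction on $\height(\mu)$---whereas you carry the exceptional-path case along explicitly at the top level, but the substance is identical.
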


\begin{proof}  By inspecting the possibilities for the term $\tau$ we need only prove the following:
\begin{itemize}
\item If $E_s$ is super-linear then $E_s$ is not a subpath of any Nielsen path~$\mu$~(c.f.~Fact~\refGM{FactNoSuperlinearNielsen}).
\item If $E_s$ is linear and is a subpath of a Nielsen path $\mu$ then $E_s$ is the initial edge of a subpath of $\mu$ of the form $E_s w_s^p \overline E_s$.
\end{itemize}
Supposing that $E_s$ is a subpath of a Nielsen path $\mu$, we proceed by induction on $i=\height(\mu)$. Without loss of generality we may replace $\mu$ by the term in its complete splitting that contains $E_s$, from which it follows that $\mu$ is indivisible, because $E_s$ is not a fixed edge. It follows by (\neg\ Nielsen Paths) in the definition of a \ct\ (\recognition\ Definition~4.7, or Definition~\refGM{DefCT}) that $\mu =  E_i w_i^p \overline E_i$ for some linear edge $E_i$ of height $i \ge s$. The case $i=s$ is clear. If $i > s$ then $E_s$ is a subpath of the Nielsen path~$w_i$, and the induction hypothesis completes the proof. 
\end{proof}

\subsection{Fixed subgroup systems}
\label{SectionFixedSubgroups}

The basic definitions and facts regarding principal automorphisms are reviewed in Section~\refGM{SectionPrincipalRotationless}, in particular Definition~\refGM{DefPrinicipalAndRotationless}. Recall in particular the notation $P(\phi) \subset \Aut(F_n)$ for the set of principal automorphisms representing a rotationless $\phi \in \Out(F_n)$. The \emph{fixed subgroup system} $\Fix(\phi)$ is defined to be the set of conjugacy classes of nontrivial subgroups of the form $\Fix(\Phi)$ for $\Phi \in P(\phi)$. Since there are only finitely many isogredience classes in the set $P(\phi)$, it follows that $\Fix(\phi)$ is a finite set, as required for a subgroup system (see Section~\refGM{SectionSSAndFFS}).

\begin{fact} \label{FactItsOwnNormalizer}  The fixed subgroup $\Fix(\Psi)$ of any $\Psi \in \Aut(F_n)$  is its own normalizer.
\end{fact}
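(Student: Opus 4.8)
The plan is to prove the nontrivial inclusion $N_{F_n}(\Fix(\Psi)) \subseteq \Fix(\Psi)$, the reverse inclusion being immediate. Write $H = \Fix(\Psi)$; I treat the case $H \neq 1$ (which is the only case relevant to our applications, since $\Fix(\phi)$ is built from nontrivial fixed subgroups, and also the only case in which the assertion can hold, as the normalizer of the trivial subgroup is all of $F_n$). The key preliminary observation, used in both cases below, is that for every $h \in N_{F_n}(H)$ the element $c := h^{-1}\Psi(h)$ commutes with every $g \in H$. Indeed $hgh^{-1} \in H$, so $\Psi(hgh^{-1}) = hgh^{-1}$; expanding the left side using $\Psi(g)=g$ gives $\Psi(h)\,g\,\Psi(h)^{-1} = hgh^{-1}$, which rearranges to $cg = gc$. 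Thus $c$ lies in the centralizer $Z_{F_n}(H)$.

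\emph{Case 1: $H$ is nonabelian.} Then $Z_{F_n}(H) = 1$: if some $c \neq 1$ commuted with non-commuting elements $x,y \in H$, then $c$ would lie in both $Z_{F_n}(x)$ and $Z_{F_n}(y)$, which are maximal cyclic; sharing the nontrivial element $c$ forces $Z_{F_n}(x) = Z_{F_n}(y)$, hence $x$ and $y$ commute, a contradiction. Therefore $c = 1$, i.e.\ $\Psi(h) = h$ and $h \in H$.

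\emph{Case 2: $H = \<w\>$ is infinite cyclic, $w \neq 1$.} First, $H$ is root-closed: if $g^k \in H$ with $k \ge 1$ then $\Psi(g)^k = \Psi(g^k) = g^k$, and elements of $F_n$ with the same $k$-th power are equal, so $\Psi(g) = g$ and $g \in H$. Hence $\<w\>$ is a maximal cyclic subgroup and $w$ is not a proper power. Now for $h \in N_{F_n}(H)$ we have $hwh^{-1} \in \{w, w^{-1}\}$, these being the only generators of $H \cong \Z$. If $hwh^{-1} = w$, then $h$ commutes with the root-free element $w$, so $h \in \<w\> = H$. If $hwh^{-1} = w^{-1}$, then $h^2$ commutes with $w$, so $h^2 \in \<w\>$; thus $h$ commutes with a nontrivial power of $w$, hence with $w$ itself, forcing $w = hwh^{-1} = w^{-1}$, which is impossible in the torsion-free group $F_n$. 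So this subcase is vacuous, and again $h \in H$.

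The argument is elementary; the only point requiring a little care — and the place I would expect to slow down — is recognizing that the nonabelian and cyclic regimes call for two different standard facts about free groups (triviality of the centralizer of a nonabelian subgroup; self-normalization of a maximal cyclic subgroup), and establishing root-freeness of the generator $w$ in the cyclic case, which is exactly what rules out the orientation-reversing subcase $hwh^{-1} = w^{-1}$.
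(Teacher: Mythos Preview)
Your proof is correct, but takes a different route from the paper's. The paper gives a one-line argument with no case split: it cites the fact that a nontrivial finitely generated subgroup of $F_n$ has finite index in its normalizer (this is the content of the referenced Fact from Part~I; recall $\Fix(\Psi)$ has rank at most $n$ by Bestvina--Handel). Thus any $d$ normalizing $\Fix(\Psi)$ satisfies $d^k \in \Fix(\Psi)$ for some $k \ge 1$, whence $\Psi(d)^k = \Psi(d^k) = d^k$, and uniqueness of $k$th roots in $F_n$ gives $\Psi(d) = d$.

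Your approach avoids the ``finite index in normalizer'' fact entirely by splitting on whether $H$ is cyclic, and in each case exploiting the commutator $c = h^{-1}\Psi(h)$ together with elementary centralizer facts. This is more self-contained --- you never need that $\Fix(\Psi)$ is finitely generated, nor the structural result on normalizers --- at the cost of a case analysis. The paper's proof is shorter and uniform, but leans on a heavier background fact. Both are valid; yours would be preferable where that fact is not already in hand.

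One minor point: in the cyclic case, when you deduce $h^2 \in \langle w \rangle$ and then say ``$h$ commutes with a nontrivial power of $w$'', you are tacitly using $h^2 \neq 1$. This is fine since $h = 1$ would force $hwh^{-1} = w$, placing you in the other subcase, but it would not hurt to say so explicitly.
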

 
\begin{proof}  If $d \in F_v$ normalizes $\Fix(\Phi)$ then by Fact~\refGM{FactFiniteRankSubgroup} we have $d^k \in \Fix(\Phi)$ for some integer~$k \ge 1$, but $(\Phi(d))^k = \Phi(d^k) = d^k$, and by uniqueness of $k^{\text{th}}$ roots in $F_v$ we have $\Phi(d)=d$, and so $d \in \Fix(\Phi)$. 
\end{proof}

\begin{lemma}
\label{LemmaFixedIsPeriodic}
For any $\phi,\sigma \in \Out(F_n)$ and any representatives $\Phi,\Sigma \in \Aut(F_n)$, respectively, we have:
\begin{enumerate}
\item\label{ItemFixPtsNatural}
(\cite{FeighnHandel:abelian} Lemma 2.6) \, \, $\Fix(\wh{\Sigma \Phi \Sigma^\inv}) = \wh\Sigma(\Fix(\wh\Phi))$ and $\Fix_N(\wh{\Sigma\Phi\Sigma^\inv}) = \wh\Sigma(\Fix_N(\wh\Phi))$. 
\item\label{ItemFixSbgpNatural} $\Fix(\Sigma\Phi\Sigma^\inv) = \Sigma \Fix(\Phi) \Sigma^\inv$
\end{enumerate}
If in addition $\phi$ is rotationless and $\sigma$ commutes with $\phi$ then
\begin{enumeratecontinue}
\item \label{ItemConjugatePrincipal}
If $\Phi \in P(\phi)$ then $\Sigma \Phi \Sigma^\inv \in P(\phi)$.
\item \label{ItemFixPsiPermuted}
$\sigma$ permutes the elements of the subgroup system $\Fix(\phi)$.
\end{enumeratecontinue}
\end{lemma}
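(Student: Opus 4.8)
The statement has four parts, and the plan is to prove them in the given order, each building on the previous. For part~\pref{ItemFixPtsNatural}, this is quoted directly from \cite{FeighnHandel:abelian} Lemma~2.6, so I would simply cite it; the essential content is that conjugating an automorphism conjugates its fixed point set (in $F_n$ and in $\bdy F_n$) by the corresponding boundary homeomorphism, which is a formal unwinding of the definitions of $\Fix$ and $\Fix_N$. For part~\pref{ItemFixSbgpNatural}, the plan is a direct computation: an element $g$ lies in $\Fix(\Sigma\Phi\Sigma^\inv)$ iff $\Sigma\Phi\Sigma^\inv(g) = g$ iff $\Phi(\Sigma^\inv(g)) = \Sigma^\inv(g)$ iff $\Sigma^\inv(g) \in \Fix(\Phi)$ iff $g \in \Sigma(\Fix(\Phi))$, where $\Sigma(\Fix(\Phi))$ is exactly $\Sigma\Fix(\Phi)\Sigma^\inv$ interpreted as an honest subgroup of $F_n$ (not a conjugacy class). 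So this is immediate.

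For part~\pref{ItemConjugatePrincipal}, I would recall the definition of $P(\phi)$ (Definition~\refGM{DefPrinicipalAndRotationless}): a representative $\Phi$ of $\phi$ is principal if $\Fix_N(\wh\Phi)$ is not a single point, unless that point is the endpoint of an eigenray, together with the nondegeneracy condition that $\Fix(\Phi)$ is nontrivial or $\Fix_N(\wh\Phi)$ has at least three points --- the precise list of alternatives as in the definition. The key observation is that since $\sigma$ commutes with $\phi$, the automorphism $\Sigma\Phi\Sigma^\inv$ again represents $\phi$: indeed it represents $\sigma\phi\sigma^\inv = \phi$ in $\Out(F_n)$. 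Then by part~\pref{ItemFixPtsNatural}, $\Fix(\wh{\Sigma\Phi\Sigma^\inv}) = \wh\Sigma(\Fix(\wh\Phi))$, and $\wh\Sigma$ is a homeomorphism of $\bdy F_n$, so it carries the "principal" configuration of $\Fix_N(\wh\Phi)$ to the corresponding configuration of $\Fix_N(\wh{\Sigma\Phi\Sigma^\inv})$, preserving cardinality and preserving the property of being an eigenray endpoint (since $\sigma$ commutes with $\phi$, it permutes $\Eigen(\phi)$ --- or one argues directly that $\wh\Sigma$ carries fixed rays to fixed rays). Hence $\Sigma\Phi\Sigma^\inv$ satisfies the defining conditions for membership in $P(\phi)$.

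Finally, part~\pref{ItemFixPsiPermuted} follows formally: $\sigma$ acting on the subgroup system $\Fix(\phi)$ sends the conjugacy class $[\Fix(\Phi)]$ to $[\Sigma\Fix(\Phi)\Sigma^\inv] = [\Fix(\Sigma\Phi\Sigma^\inv)]$ by part~\pref{ItemFixSbgpNatural}, and by part~\pref{ItemConjugatePrincipal} this is again of the form $[\Fix(\Phi')]$ with $\Phi' \in P(\phi)$, hence again an element of $\Fix(\phi)$; since this action is invertible (apply the same reasoning to $\sigma^\inv$) and $\Fix(\phi)$ is finite, $\sigma$ permutes $\Fix(\phi)$. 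I expect the only real content is in part~\pref{ItemConjugatePrincipal}, specifically in checking that the conjugation $\wh\Sigma$ respects the eigenray-endpoint clause of the definition of a principal automorphism; the rest is bookkeeping. This can be handled either by invoking that $\sigma$ commuting with $\phi$ forces $\wh\Sigma$ to permute $\Fix_N$-endpoints of eigenrays of $\phi$ among themselves, or --- cleanly --- by noting that the invariant characterization of eigenrays in Definition~\ref{DefEigenraysInvariant} is manifestly natural under conjugation once part~\pref{ItemFixPtsNatural} is in hand.
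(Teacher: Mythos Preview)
Your proposal is correct and follows essentially the same route as the paper's proof, which is extremely terse: it declares \pref{ItemFixSbgpNatural} obvious, says \pref{ItemConjugatePrincipal} follows from the definition of principal automorphisms together with \pref{ItemFixPtsNatural}, \pref{ItemFixSbgpNatural}, and the fact that $\Sigma\Phi\Sigma^\inv$ represents~$\phi$, and deduces \pref{ItemFixPsiPermuted} from \pref{ItemFixSbgpNatural} and \pref{ItemConjugatePrincipal}. One small slip: the exclusion clause in the definition of a principal automorphism concerns the two endpoints of a lift of a \emph{generic leaf of an attracting lamination} (and the endpoints of an axis), not eigenray endpoints; your naturality argument works verbatim for that clause since $\sigma$ commuting with $\phi$ implies $\sigma$ permutes $\L(\phi)$ and hence $\wh\Sigma$ carries lifts of generic leaves to lifts of generic leaves.
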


\begin{proof} Item~\pref{ItemFixSbgpNatural} is obvious, \pref{ItemConjugatePrincipal} follows from definition of principal automorphisms using~\pref{ItemFixPtsNatural}, \pref{ItemFixSbgpNatural}, and the fact that $\Sigma\Phi\Sigma^\inv$ is a representative of~$\phi$, and~\pref{ItemFixPsiPermuted} follows from~\pref{ItemFixSbgpNatural} and~\pref{ItemConjugatePrincipal}.
\end{proof}

\paragraph{Conjugacy separation of fixed subgroups.} Define an equivalence relation on subgroups of $F_n$, where $A,B \subgroup F_n$ are \emph{conjugacy inseparable} if the set of $F_n$-conjugacy classes of elements of $A$ equals the set of $F_n$-conjugacy classes of elements of $B$. 

\begin{lemma}\label{LemmaInseparablyFixed}
For any rotationless $\phi \in \Out(F_n)$ and any $\Phi,\Phi' \in P(\phi)$, if $\Fix(\Phi),\Fix(\Phi')$ are nontrivial and conjugacy inseparable then $\Fix(\Phi),\Fix(\Phi')$ are conjugate subgroups.
\end{lemma}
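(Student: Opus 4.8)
The plan is to pass to a \ct\ $f \from G \to G$ representing $\phi$ (using rotationlessness, \recognition\ Theorem~4.28), and to exploit the structure theory of $\Fix(\Phi)$ and $\Fix(\Phi')$ in terms of principal vertices, fixed directions, and the Nielsen path/ray structure of $f$. The key point to unlock is that the conjugacy class of a fixed subgroup $\Fix(\Phi)$ is determined by its ``periodic data'': its rank, the fixed conjugacy classes of its elements, and the fixed points at infinity $\Fix_N(\Phi)$. More precisely, I would first reduce to the case where $\Phi$ and $\Phi'$ are both principal lifts realized at lifts $\ti v, \ti v'$ of principal vertices of $G$, with $\Fix(\Phi)$ carried by a $\Phi$-invariant subgraph structure, and similarly for $\Phi'$. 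Conjugacy inseparability says precisely that for every $a \in \Fix(\Phi)$ there is $b \in \Fix(\Phi')$ with $[a] = [b]$ in $F_n$ (and vice versa), i.e.\ the two fixed subgroups ``see'' exactly the same conjugacy classes.

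The main steps I would carry out are: (1) Use the fact (Fact~\refGM{FactFPBasics} and the cited results from \recognition\ Section~3) that $\Fix(\Phi)$ is a free factor of $F_n$ relative to appropriate filtration data, and that its rank equals the rank of a certain ``fixed core'' in $\wt G$ projecting into $G$; conjugacy inseparability forces $\rk(\Fix(\Phi)) = \rk(\Fix(\Phi'))$ since a proper inclusion of free factors strictly enlarges the set of represented conjugacy classes. (2) Show that a conjugacy class $[c]$ is realized by an element of $\Fix(\Phi)$ if and only if the circuit representing $[c]$ in $G$ lies in the ``fixed subgraph'' $\Fix(f)$ and is appropriately based at the lift $\ti v$; hence conjugacy inseparability says $\Fix(\Phi)$ and $\Fix(\Phi')$ determine the same set of circuits in $\Fix(f)$, which already pins them down up to an inner automorphism moving $\ti v$ to $\ti v'$. (3) Translate $\Phi'$ by an element of $F_n$ (rechoosing within its isogredience class, permissible since $\phi$ is rotationless so isogredience classes in $P(\phi)$ are well-understood) so that $\ti v = \ti v'$; then argue that $\Phi$ and $\Phi'$ fix the same lifted subgraph with the same basepoint, whence $\Fix(\Phi) = \Fix(\Phi')$ as subgroups, giving conjugacy of the originals.

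The main obstacle I anticipate is Step~(2): carefully characterizing which conjugacy classes are represented in $\Fix(\Phi)$ purely in terms of the graph $G$ and the lift $\ti v$, and ruling out the possibility that two genuinely non-conjugate free factors could be conjugacy inseparable. The subtlety is that a free factor can fail to be determined by the conjugacy classes of its elements in general subgroup situations, so one really must use that $\Fix(\Phi)$ is a \emph{free factor} (not merely a finitely generated subgroup) together with a Grushko/Kurosh-type argument: if $A, B$ are free factors of $F_n$ that are conjugacy inseparable, then realizing both simultaneously by subgraphs of a common marked graph and comparing circuits forces them to be conjugate. I would isolate this as the crux and handle it either by the cited index/rank results (\cite{GJLL:index}, Fact~\refGM{FactFiniteRankSubgroup}) or by a direct argument with the action on the Cayley graph: a free factor $A$ is the stabilizer of a component of the complement of its axis bundle, and conjugacy inseparability identifies these bundles.
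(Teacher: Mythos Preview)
Your approach has a genuine gap at its foundation: the claim in Step~(1) that $\Fix(\Phi)$ is a free factor of $F_n$ is false in general. For a concrete counterexample, let $\phi$ be induced by a pseudo-Anosov on a surface with one boundary component and fundamental group $F_n$; the boundary word (a product of commutators) lies in $\Fix(\Phi)$ for an appropriate principal lift, but generates a cyclic subgroup that is not a free factor since the boundary word is not primitive. More generally, fixed subgroups of automorphisms of $F_n$ satisfy the rank bound $\rk(\Fix(\Phi)) \le n$ but are typically \emph{not} free factors, so your ``two conjugacy-inseparable free factors are conjugate'' reduction---while true for genuine free factors---simply does not apply here. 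Your Step~(2) inherits the same problem: circuits representing elements of $\Fix(\Phi)$ are closed Nielsen paths based at a principal vertex, not circuits confined to a ``fixed subgraph,'' and there is no clean subgraph picture that lets you read off the conjugacy class of $\Fix(\Phi)$.

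The paper's proof is shorter and rests on a completely different mechanism: Fact~\ref{FactTwistor}~\pref{ItemIntersectingFixed}, which says that for distinct $\Phi,\Phi' \in P(\phi)$, any nontrivial intersection $\Fix(\Phi) \cap \Fix(\Phi')$ is infinite cyclic and generated by a twistor of $\phi$. Your Step~(3) idea---replacing $\Phi$ within its isogredience class so that a chosen element lies in $\Fix(\Phi) \cap \Fix(\Phi')$---is exactly the right first move, but then the argument splits on rank. If some $\Fix(\Phi)$ has rank~$\ge 2$, the finiteness of $\Twist(\phi)$ lets you choose $c \in \Fix(\Phi)$ whose conjugacy class is not a power of any twistor; conjugacy inseparability puts (a conjugate of) $c$ into $\Fix(\Phi')$ as well, and then Fact~\ref{FactTwistor}~\pref{ItemIntersectingFixed} forces $\Phi = \Phi'$ outright. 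If both fixed subgroups have rank~$1$, the same fact forces them to coincide with a common twistor group after the isogredience adjustment. No free-factor or Grushko machinery is needed.
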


\begin{proof} For proving this claim we may freely replace $\Phi$ by any $i^{\vphantom{-1}}_c \Phi i_c^\inv$ in its isogredience class, with the effect of replacing $\Fix(\Phi)$ by the conjugate subgroup $i_c(\Fix(\Phi)) = c \Fix(\Phi) c^\inv$. 


%

We prove the lemma in two cases, each of which uses Fact~\ref{FactTwistor}.

Suppose both $\Fix(\Phi)$ and $\Fix(\Phi')$ have rank~$1$ then. After replacing $\Phi$ in its isogredience class, we may assume that $\Fix(\Phi) \intersect \Fix(\Phi)$ is nontrivial. Applying Fact~\ref{FactTwistor} it follows that $\Fix(\Phi)=\Fix(\Phi')=\<a\>$ where the unoriented conjugacy class $[a]_u$ is an element of the finite set of twistors $\Twist(\phi)$ and we are done.

Suppose one of $\Fix(\Phi)$ or $\Fix(\Phi')$, say the former, has rank~$\ge 2$. Since $\Twist(\phi)$ is finite, it follows that there exists $c \in \Fix(\Phi)$ whose conjugacy class is not represented by any power of any $a$ for which $[a]_u \in \Twist(\phi)$. After replacing $\Phi$ in its isogredience class we may assume $c \in \Fix(\Phi) \intersect \Fix(\Phi')$. By Fact~\ref{FactTwistor} it follows that $\Phi=\Phi'$ and so $\Fix(\Phi)=\Fix(\Phi')$ and we are again done.
\end{proof}

\section{Reducing Theorem~\ref{relKolchin} to Theorem~\ref{ThmPeriodicFreeFactor} and Propositions \ref{PropFCarriesAll}, \ref{PropNielsenPairsExist}}
\label{SectionReduction}

In this section we state several lemmas and propositions and use them to prove Theorem~\ref{relKolchin}. The proofs of Theorem~\ref{ThmPeriodicFreeFactor} and Propositions \ref{PropFCarriesAll} and \ref{PropNielsenPairsExist}, as well other results underlying their proofs, appear later in the text.  As such, this section is really an outline of the logic of the proof. The reader will note that several results are framed and proved in terms of rotationless outer automorphisms so that the full assortment of facts from relative train track theory can be applied. Other results are framed in terms of $\IA_n(\Z/3)$ outer automorphisms for which we develop some new theory in Sections~\ref{SectionIA3Elements} and~\ref{SectionOneEdge}.

We begin with:

\begin{theorem}  \label{ThmPeriodicFreeFactor}  For any $\psi \in \IA_n(\Z/3)$ and any free factor system $\F$ in $F_n$, if $\F$ is $\psi$-periodic then $\F$ is fixed by~$\psi$.
\end{theorem}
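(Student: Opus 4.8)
The plan is to reduce the statement to the corresponding fact about periodic conjugacy classes, namely Theorem~\ref{ThmPeriodicConjClass}, which asserts that every $\psi$-periodic conjugacy class is $\psi$-fixed when $\psi \in \IA_n(\Z/3)$. So suppose $\F$ is a $\psi$-periodic free factor system, and let $d \ge 1$ be least with $\psi^d(\F) = \F$; the goal is to show $d = 1$. Since $\psi^d$ fixes $\F$ and $\psi^d \in \IA_n(\Z/3)$, Lemma~\ref{LemmaFFSComponent} tells us $\psi^d$ fixes each component of~$\F$, so each component $[A] \in \F$ has a well-defined conjugacy-class cycle $[A], \psi[A], \ldots, \psi^{d-1}[A]$ of length dividing~$d$, permuted cyclically by~$\psi$. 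First I would fix one component $[A]$ and choose a representative free factor $A \subgroup F_n$.

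The key observation is that a free factor is determined by the set of conjugacy classes of elements it contains (this is standard: distinct free factors, even up to conjugacy, contain different sets of conjugacy classes — two conjugates of a free factor that share a nontrivial conjugacy class must coincide by malnormality, and the set of conjugacy classes carried by $A$ determines $[A]$ among all free factors). So it suffices to produce, for each component $[A] \in \F$, a nontrivial conjugacy class $[c]$ carried by $A$ with $\psi[c] = [c]$: then $\psi[A]$ is a free factor containing $[c]$ whose set of carried conjugacy classes must agree with that of some fixed free factor, forcing $\psi[A] = [A]$, and doing this for every component gives $\psi(\F) = \F$. To get such a fixed conjugacy class, first note $\psi^d[A] = [A]$, so pick any nontrivial $[c_0]$ carried by $A$; then $[c_0], \psi^d[c_0], \psi^{2d}[c_0], \ldots$ need not be periodic, but we can instead argue: the restricted outer automorphism $\psi^d \restrict A \in \Out(A)$ is well-defined (Section~\refGM{SectionRestrictedOuts}), and $A$ is a free group of rank $\ge 1$, so $\psi^d \restrict A$ has some periodic — indeed one can arrange a fixed — nontrivial conjugacy class in $A$. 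Concretely, if $\rank(A) = 1$ then the generator's class is fixed by any automorphism up to inversion, and for $\rank(A) \ge 2$ one uses that a \ct\ representative of (a rotationless power of) $\psi^d \restrict A$ has a fixed vertex hence a fixed nontrivial conjugacy class (a closed Nielsen path, or a fixed loop), which is $\psi^{dk}$-periodic in $F_n$ for suitable~$k$; by Theorem~\ref{ThmPeriodicConjClass} it is then $\psi$-fixed.

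More carefully, here is the cleanest route. Let $[A] \in \F$ be a component and let $k \ge 1$ be such that $\psi^{dk}$ is rotationless (such $k$ exists, Fact~\refGM{FactRotationlessExists} type statement). Represent $\psi^{dk}$ by a \ct\ $f \from G \to G$ in which a core filtration element $G_r$ realizes $[A]$ (possible since $\F$, hence $[A]$, is $\psi^{dk}$-invariant); the component of $G_r$ corresponding to $[A]$ is $f$-invariant and contains a fixed vertex, hence (being a core graph of positive rank) contains a closed path that is fixed or at worst represents a $\psi^{dk}$-periodic conjugacy class carried by $A$ — e.g.\ take an edge or indivisible Nielsen path in that component based at the fixed vertex, or simply note $f$ restricted there has nontrivial $\Fix$. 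Thus there is a nontrivial conjugacy class $[c]$, carried by $A$, which is $\psi^{dk}$-periodic and therefore $\psi$-periodic, hence $\psi$-fixed by Theorem~\ref{ThmPeriodicConjClass}. Now $\psi[A]$ carries $\psi[c] = [c]$ and $\psi^{d-1}[A]$ carries it too, and so on; since all of $[A], \psi[A], \ldots, \psi^{d-1}[A]$ carry the same nontrivial $[c]$, and a nontrivial conjugacy class is carried by at most one conjugacy class of free factor of each rank, while the cyclically-permuted factors all have the same rank, we must have $[A] = \psi[A]$. As $[A]$ was arbitrary, $\psi(\F) = \F$.

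The main obstacle I anticipate is the step producing a $\psi$-periodic nontrivial conjugacy class genuinely \emph{carried by the component $A$} (as opposed to merely $\psi^d$-invariant as a free factor): one must make sure the fixed conjugacy class of the restricted automorphism, which a priori lives in $\Out(A)$, is tracked correctly as a conjugacy class in $F_n$ that is carried by $A$ and is $\psi$-periodic in $F_n$ — the bookkeeping between $A$, its conjugates under the $\psi$-cycle, and the ambient group needs care, and one should double-check that "carried by $A$ and fixed by $\psi$" combined with "$A$ is one of a $\psi$-cycle of free factors of equal rank" really does force the cycle to be trivial. A secondary point to verify is the precise statement that a nontrivial conjugacy class lies in at most one conjugacy class of free factor of a given rank, or more simply that if $[c]$ is carried by both $[A]$ and $\psi^j[A]$ and these have equal rank, then repeatedly applying $\psi$ (which cycles them) together with the fact that their union is carried by a single minimal free factor system forces equality; this is where one leans on the basic free factor system facts from \PartOne\ (Fact~\refGM{FactFFSPolyglot} and companions).
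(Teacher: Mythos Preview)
Your approach has a fundamental gap at precisely the step you flagged as needing care. The assertion that ``a nontrivial conjugacy class is carried by at most one conjugacy class of free factor of each rank'' is simply false: in $F_3 = \langle a,b,c \rangle$ the class $[a]$ is carried by both $[\langle a,b\rangle]$ and $[\langle a,c\rangle]$, two non-conjugate rank-$2$ free factors. So even after you produce a $\psi$-fixed conjugacy class $[c]$ carried by $A$ (and hence by every $\psi^j[A]$), nothing in your argument prevents the $\psi^j[A]$ from being pairwise distinct free factors that all happen to carry~$[c]$. Your fallback via free factor support does not help either: $\F_\supp([c])$ is a single component $[B]$ with $[B] \sqsubset \psi^j[A]$ for every~$j$, but having $[B] \sqsubset [A]$ and $[B] \sqsubset \psi[A]$ with $\rank B < \rank A$ in no way forces $[A]=\psi[A]$. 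A single fixed conjugacy class is far too coarse an invariant to pin down a free factor.

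There is a second gap earlier. You assert that the restricted automorphism $\phi \restrict A$ (for $\phi$ a rotationless power of $\psi^d$) always has a nontrivial periodic conjugacy class, arguing via ``a fixed vertex in the \ct''. But if $\phi \restrict A$ is atoroidal --- for instance an atoroidal fully irreducible element of $\Out(A)$, which exists once $\rank A \ge 3$ --- then it has no periodic conjugacy classes whatsoever, and no \ct\ argument will manufacture one. A fixed vertex need not be the base point of any closed Nielsen path.

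The paper's proof is substantially more involved precisely because no finite amount of fixed-conjugacy-class data suffices. It proceeds by induction on rank, reducing (Lemma~\ref{LemmaFreeFactorReduction}) via weak attraction theory and a filtration argument to the special case of a one-edge extension $\F' \sqsubset \F$ satisfying several auxiliary hypotheses (Proposition~\ref{PropSpecialCase}). That case is then handled by a delicate analysis of a $\psi$-invariant set $\Gamma$ of $\phi$-invariant lines (Section~\ref{SectionNielsenAndGamma}) whose ends lie in $\F'$ but which cross the extra edge, tracking how $\psi$ moves their central subpaths using intersection numbers mod~$3$ and invoking Theorem~\ref{ThmPeriodicConjClass} at several points along the way.
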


Underlying this result is Theorem~\ref{ThmPeriodicConjClass} which says that if $\psi \in \IA_n(\Z/3)$ then every $\psi$-periodic conjugacy class in $F_n$ is $\psi$-fixed. The proof of Theorem~\ref{ThmPeriodicConjClass} is in Section~\ref{SectionIA3Elements} and the proof of Theorem~\ref{ThmPeriodicFreeFactor} is in Section~\ref{SectionOneEdge}. 

\smallskip\textbf{Remark.} By applying Theorem~\ref{ThmPeriodicFreeFactor} to a free factor system $\F$ as well as to its individual components, for any $\psi \in \IA_n(\Z/3)$ the equation $\psi(\F)=\F$ is equivalent to saying that $\F$ is $\psi$-periodic, and to saying that each component of $\F$ is $\psi$-fixed.

\medskip

Next we turn to results about the asymptotic data $\Lam(\phi) = \L(\phi) \union \Eigen(\phi) \union \Twist(\phi)$ of a rotationless $\phi \in \Out(F_n)$, described in Section~\ref{SectionAsymptotic}. The following lemma shows that a $\phi$-invariant  free factor system $\F$ carries $\Lam(\phi)$ if and only if $\phi$ is represented by a \ct\ in which $\F$ is realized by a filtration element $G_r$  and all strata above $G_r$ are single edges that can be attached simultaneously to~$G_r$.

\begin{lemma} \label{lem:cofinal}  Suppose that $\phi$ is rotationless and that $\F$ is a $\phi$-invariant proper free factor system. Then $\F$ carries $\Lam(\phi)$ if and only if there exists a \ct\ $\fG$ representing $\phi$ with a filtration element $G_r$ realizing $\F$ such that each stratum $H_i$ with $i > r$ is an \neg-edge $E_i$ satisfying either $f(E_i) = E_i$ or $f(E_i) = E_i \cdot u_i$ for some non-trivial closed path $u_i$ in $G_r$. 
\end{lemma}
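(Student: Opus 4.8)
The plan is to prove both directions by unpacking the definitions of the asymptotic data $\Lam(\phi)$ and the structure theory of \cts. For the easy direction, suppose such a \ct\ $\fG$ exists with $\F$ realized by a filtration element $G_r$ and every stratum above $G_r$ a single \neg-edge $E_i$ with $f(E_i)=E_i$ or $f(E_i)=E_i u_i$ for a closed path $u_i\subset G_r$. I would argue that each piece of $\Lam(\phi)=\L(\phi)\union\Eigen(\phi)\union\Twist(\phi)$ is carried by $\F=[\pi_1 G_r]$. For $\L(\phi)$: since there are no \eg\ strata above $G_r$, every \eg\ stratum lies in $G_r$, and hence (by Fact~\refGM{FactLamsAndStrata}) every $\Lambda\in\L(\phi)$ is carried by $[\pi_1 G_r]$. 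For $\Twist(\phi)$: by Definition~\ref{DefTwistorCT} each twistor is the unoriented conjugacy class of a closed path $w_i$ with $f(E_i)=E_iw_i^{d_i}$ of height $\le i-1$, but in our \ct\ the twisting path is $u_i\subset G_r$, so each twistor is represented by a circuit in $G_r$ and hence is carried by $[\pi_1 G_r]$. For $\Eigen(\phi)$: each eigenray is $\Eigen(E_s)$ for a superlinear \neg-edge $E_s$ with $f(E_s)=E_s u_s$; since $u_s\subset G_r$, Lemma~\ref{LemmaEigenrayCarried} (applied with that same $G_r$) gives that $\Eigen(E_s)$ is carried by $[\pi_1 G_r]$. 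Therefore $\F$ carries $\Lam(\phi)$.

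For the harder (``only if'') direction, assume $\F$ carries $\Lam(\phi)$. Since $\phi$ is rotationless, by \recognition\ Theorem~4.28 (or Theorem~\refGM{TheoremCTExistence}) we can choose a \ct\ $\fG$ representing $\phi$ in which $\F$ is realized by a core filtration element $G_r$. Because $\F$ carries $\L(\phi)\subset\Lam(\phi)$, Fact~\ref{all neg} shows that every stratum above $G_r$ is \neg, so above $G_r$ the \ct\ consists only of \neg-edges; by Fact~\refGM{FactNEGEdgeImage} each such edge $E_i$ of height $i>r$ satisfies $f(E_i)=E_i\cdot u_i$ with $u_i$ of height $<i$ (possibly $u_i$ trivial, i.e.\ $f(E_i)=E_i$). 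The work is to improve the \ct\ so that each nontrivial $u_i$ can be taken to lie in $G_r$ itself, rather than merely below height $i$, and so that all the edges above $G_r$ can be attached simultaneously to $G_r$. I would handle this stratum by stratum, working upward from height $r+1$, splitting into the linear and superlinear cases. In the linear case $f(E_i)=E_i w_i^{d_i}$, where $w_i$ represents a twistor of $\phi$; since $\F$ carries $\Twist(\phi)$, the conjugacy class of $w_i$ is carried by $[\pi_1 G_r]$, i.e.\ $w_i$ is freely homotopic into $G_r$, and I would use this, together with a change of marked graph / sliding move on $E_i$, to arrange $w_i\subset G_r$. In the superlinear case $f(E_i)=E_i u_i$ generates the eigenray $\Eigen(E_i)$; since $\F$ carries $\Eigen(\phi)$, Lemma~\ref{LemmaEigenrayCarried} applied to the filtration element $G_r$ forces $u_i\subset G_r$ already (this is exactly the payoff of that lemma). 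So only the linear edges require genuine modification.

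The main obstacle is the bookkeeping in the ``only if'' direction: one must be careful that the moves used to pull each twisting path $w_i$ into $G_r$ (collapsing/expanding, homotoping endpoints of $E_i$, or re-choosing the \ct\ via a sliding move as in the \ct\ construction) preserve the property of being a \ct, preserve that $G_r$ realizes $\F$, do not disturb the strata already fixed up below height $i$, and do not reintroduce \eg\ strata or raise heights. An alternative, cleaner route to the same end — and the one I would actually try to push through — is to bypass edge-by-edge modification and instead invoke the relative train track machinery directly: build from scratch a \ct\ relative to the nested filtration pair determined by $\F\sqsubset\{[F_n]\}$, so that $G_r$ realizes $\F$ and, since $\F$ carries all of $\L(\phi)$, there are no \eg\ strata above $G_r$; then the remaining linear-family and eigenray data, being carried by $\F$ by hypothesis, automatically organize the edges above $G_r$ into single \neg-edges whose twisting paths lie in $G_r$. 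One then observes that the (Linear Edges) and (\neg\ Nielsen Paths) clauses in the definition of a \ct, combined with $u_i\subset G_r$ for every $i>r$, force each edge above $G_r$ to have both endpoints (the initial vertex, which is principal and hence in $G_r$ after an isotopy, and the terminal vertex) available for simultaneous attachment to $G_r$, i.e.\ the ``single edges attached simultaneously to $G_r$'' conclusion. The contractibility/isotopy argument that the terminal endpoints can be taken in $G_r$ is the last technical point to nail down, and it follows because $f(E_i)=E_iu_i$ with $u_i\subset G_r$ implies the terminal vertex of $E_i$ maps into $G_r$ and is periodic, hence (after collapsing an invariant forest, or directly since $\phi$ is rotationless) fixed and lying in $G_r$.
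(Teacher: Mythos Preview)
Your ``if'' direction and the superlinear part of the ``only if'' direction are correct and match the paper. The gap is in the linear case of ``only if'': you propose sliding moves to pull $w_i$ into $G_r$, treating ``$[w_i]$ is carried by $[\pi_1 G_r]$'' as merely ``$w_i$ is freely homotopic into $G_r$'' and then working to realize that homotopy by modifying the marked graph. But no modification is needed. By the (Linear Edges) clause of a \ct\ (see Definition~\ref{DefTwistorCT}), the closed Nielsen path $w_i$ is a root-free \emph{circuit}, hence already cyclically reduced; since $G_r$ realizes $\F$ and the twistor $[w_i]_u$ is carried by $\F$, the unique circuit in $G$ representing $[w_i]$ lies in $G_r$ --- and that circuit \emph{is} $w_i$. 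So $u_i=w_i^{d_i}\subset G_r$ already holds in whatever \ct\ you chose at the outset. This is exactly the paper's argument, and the Remark immediately following its proof records the payoff: the conclusion holds for \emph{every} \ct\ with a filtration element realizing $\F$, not merely for some specially modified one.

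Your ``alternative, cleaner route'' is therefore closer to the mark than your first plan, but the reason it works is this immediate containment of the twisting circuit in $G_r$, not any special feature of the \ct\ construction that you would need to invoke. The closing discussion of endpoints and ``simultaneous attachment'' is also extraneous: the lemma as stated asserts only that each $u_i\subset G_r$, and says nothing about where the initial vertex of $E_i$ sits.
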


\begin{proof}   Assume that $\F$  carries $\Lam(\phi)$. By \recognition\ Theorem~4.28 (or see Theorem~\refGM{TheoremCTExistence}) there exists a \ct\ $\fG$ representing $\phi$ in which $\F$ is realized by a filtration element~$G_r$. For any such \ct, since $\F$ carries $\L(\phi)$ it follows by Facts~\ref{all neg} and~\refGM{FactNEGEdgeImage} that each stratum $H_i$ above $G_r$ is \neg, and so either $f(E_i)=E_i$ or $f(E_i)=E_i \cdot u_i$ where the closed path $u_i$ is known only to be in~$G_{i-1}$. 

If $E_i$ is linear then by Fact~\ref{FactTwistor} the circuit formed by $u_i$ represents an element of $\Twist(\phi)$ and so is carried by $\F$, and since $G_r$ realizes $\F$ it follows that $u_i \subset G_r$.  If $E_i$ is non-linear then $E$ generates an eigenray of $f$ in $G$ of the form $R = E \cdot u_i \cdot f_\#(u_i) \cdot f_\#^2(u_i) \cdot \ldots$. By Fact~\ref{LemmaEigenrayDefs} the ray $R$ is a realization in $G$ of some element of $\Eigen(\phi)$, and since $\Eigen(\phi)$ is carried by $\F = [\pi_1 G_r]$ it follows by Lemma~\ref{LemmaEigenrayCarried} that $u_i \subset G_r$. This  completes the proof of  the only if direction.
 
Choose a \ct\ $\fG$ representing $\phi$ with a filtration element $G_r$ realizing~$\F$ such that the strata above $G_r$ satisfy the properties of the lemma. In particular every \eg\ stratum is in $G_r$ and so $\F=[\pi_1 G_r]$ carries $\L(\phi)$. If $E$ is any nonfixed \neg\ edge with height $>r$ and if $f(E) =E\cdot  u$ for some non-trivial closed path $u$ then by hypothesis both the circuit $u$ and the ray $u \cdot f_\#(u) \cdot f_\#^2(u) \cdot \ldots$ are contained in $G_r$: the former proves that $\F$ carries $\Twist(\phi)$ and the latter proves that $\F$ carries $\eigen(\phi)$.
\end{proof}

\subparagraph{Remark.} The proof shows that one may replace the existential quantifier with a universal quantifier to get yet another equivalent statement, namely: ``for all \cts\ $f \from G \to G$ representing $\phi$ with a filtration element $G_r$ realizing $\F$, each stratum $H_i$ with $i>r$ is \ldots''.

\begin{definition} \label{DefIrreducibleRelF}
Suppose that $\F$ is a proper free factor system. A subgroup $\h \subgroup \IA_n(\Z/3)$ that leaves $\F$ invariant is said to be \emph{irreducible rel $\F$} if there does not exist a proper free factor system that properly contains~$\F$ and that is invariant under every element of $\h$ (equivalently, by Theorem~\ref{ThmPeriodicFreeFactor}, ``that is periodic under every element of~$\h$''). 
\end{definition}

\begin{prop}\label{PropFCarriesAll} 
Suppose that $\h \subgroup \IA_n(\Z/3)$ is finitely generated, $\F$ is a proper $\h$-invariant free factor system, $\h \subset \PGF$, and $\h$ is irreducible rel $\F$. Then $\F$ carries $\Lam(\phi)$ for each rotationless $\phi \in \h$.
\end{prop}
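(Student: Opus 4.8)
The plan is to argue by contradiction: suppose some rotationless $\phi \in \h$ has asymptotic data not carried by $\F$, i.e., $\F$ does not carry $\Lam(\phi) = \L(\phi) \union \Eigen(\phi) \union \Twist(\phi)$. The strategy is to extract from this failure a proper free factor system strictly containing $\F$ that is invariant under every element of $\h$, contradicting irreducibility rel $\F$. The natural candidate is the free factor system $\F' = \F_\supp(\F \union \Lam(\phi))$, the smallest free factor system carrying both $\F$ and the asymptotic data of $\phi$ (using Fact~\refGM{FactFFSPolyglot}-type statements on meets/joins of subgroup systems). Since $\phi \in \PGF$, part~\pref{ItemPGFLam} of Lemma~\ref{LemmaFindingEG} tells us $\union\L(\phi)$ is already supported by $\F$, so the failure must come from $\Eigen(\phi)$ or $\Twist(\phi)$; in particular $\F'$ is a proper free factor system (the eigenrays and twistors of a single $\phi \in \Out(F_n)$, together with $\F$, cannot force the full free factor system when $\phi$ is of relative polynomial growth — one must check this using the \ct\ picture from Lemma~\ref{lem:cofinal}, where all strata above $G_r$ are single \neg\ edges, so the join $\F'$ corresponds to collapsing down through only finitely many such edges and stays proper because $\F$ was proper and $\h$ is irreducible rel $\F$, hence $\F \neq \{[F_n]\}$). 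Actually the cleanest route: $\F'$ is proper because if $\F' = \{[F_n]\}$ then $\F \sqsubset \{[F_n]\}$ would be realized by a \ct\ with all higher strata single \neg\ edges whose images lie in $G_r$, making $\F$ already carry $\Lam(\phi)$ — contradiction — so one shows properness by a direct combinatorial argument on the \ct.

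The heart of the argument is showing $\F'$ is invariant under every $\theta \in \h$. First, $\F'$ is invariant under $\phi$ itself, since $\F$ is $\phi$-invariant and $\Lam(\phi)$ is $\phi$-invariant (attracting laminations, eigenrays and twistors are all canonically associated to $\phi$, hence permuted — and since $\phi$ is rotationless, fixed — by $\phi$), so their join is $\phi$-invariant. Next, for general $\theta \in \h$: the key point is that $\h$ is abelian-ish only in the sense that all elements of $\h$ lie in $\PGF$ and fix $\F$, so $\theta$ and $\phi$ need not commute. Instead, the plan is to use that $\theta$ conjugates $\phi$ to another rotationless element $\theta\phi\theta^\inv \in \h \subset \PGF$, and conjugation by $\theta$ carries $\Lam(\phi)$ to $\Lam(\theta\phi\theta^\inv)$; since $\theta$ fixes $\F$, it carries $\F' = \F_\supp(\F \union \Lam(\phi))$ to $\F_\supp(\F \union \Lam(\theta\phi\theta^\inv))$. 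To close the loop I need these two joins to coincide — this is where the hypothesis that $\h$ is finitely generated and the full strength of $\PGF$ and irreducibility get used. The way to do it: consider the join $\F''$ of $\F$ with $\Lam(\psi)$ over \emph{all} rotationless $\psi \in \h$ simultaneously. This $\F''$ is manifestly $\h$-invariant (the set of rotationless elements of $\h$ is conjugation-invariant, and $\F$ is $\h$-invariant). By irreducibility rel $\F$, either $\F'' = \F$ — which would say every rotationless $\psi \in \h$ has $\Lam(\psi)$ carried by $\F$, which is exactly the conclusion — or $\F'' = \{[F_n]\}$, and one must rule this out.

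The main obstacle is ruling out $\F'' = \{[F_n]\}$. This requires showing that the join over all rotationless $\psi \in \h$ of $\F$ with $\Lam(\psi)$ stays proper. The idea is: pick a \ct\ $\fG$ for a single $\phi$ as in Lemma~\ref{lem:cofinal}'s setup (or in the general non-carrying case, the \ct\ with $G_r$ realizing $\F$ and all higher strata \neg\ by Fact~\ref{all neg}), so that $G \setminus G_r$ is built from finitely many \neg\ edges; then the join $\F_\supp(\F \union \Lam(\phi))$ is realized by collapsing a \emph{proper} subset of the strata above $G_r$ and sits properly below $\{[F_n]\}$ unless every edge above $G_r$ contributes, in which case $\F_\supp(\F \union \Lam(\phi)) = [\pi_1 G] = \{[F_n]\}$ only when $G_r$ together with those asymptotic data fill — but since $\phi$ has relative polynomial growth and $\F \neq \{[F_n]\}$, the eigenrays/twistors are carried by proper sub-filtration-elements. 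The finitely-generated hypothesis is then used to reduce the join over all rotationless $\psi \in \h$ to a join over rotationless powers of a finite generating set, and one argues that finitely many applications keep us proper, OR — more likely the intended route — one simply invokes Lemma~\ref{lem:cofinal} in the contrapositive: if $\F$ fails to carry $\Lam(\phi)$, build the \ct, take $G_{r'} \supsetneq G_r$ the smallest filtration element carrying $\Lam(\phi)$; this $[\pi_1 G_{r'}]$ is $\phi$-invariant and proper, and then the global argument above with $\F''$ forces $\F'' = [\pi_1 G_{r'}] \supsetneq \F$ or escalates, and irreducibility rel $\F$ gives the contradiction directly once we know $\F'' \neq \{[F_n]\}$. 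I expect the properness of $\F''$ to be handled by a rank/Euler-characteristic count combined with the structural fact that relative polynomial growth confines all exponential behavior — and all laminations — into $\F$, leaving only $\Eigen$ and $\Twist$, which individually are carried by proper free factors, so no finite join with $\F$ (proper) can exhaust $F_n$ without violating irreducibility at an intermediate stage.
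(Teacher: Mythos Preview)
Your argument has a genuine gap at the crucial step: ruling out $\F'' = \{[F_n]\}$. You correctly observe that $\F'' = \F_\supp\bigl(\F \cup \bigcup_\psi \Lam(\psi)\bigr)$ is $\h$-invariant, and that for a \emph{single} rotationless $\phi$ the join $\F_\supp(\F \cup \Lam(\phi))$ is proper (it sits inside $[\pi_1 G_{N-1}]$ in a \ct\ for $\phi$). But $\F''$ involves the full $\h$-orbit of each eigenray and twistor, and there is no reason this orbit cannot fill. Your proposed fixes do not work: the ``intermediate stages'' of a finite join are not $\h$-invariant, so irreducibility says nothing about them; and the $\PGF$ hypothesis constrains only laminations, not eigenrays or twistors, so it does not by itself prevent $\h$-orbits of the latter from filling. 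In short, showing $\F'' \ne \{[F_n]\}$ is not a bookkeeping matter --- it is the entire content of the proposition.

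The paper's proof confronts this filling case head-on via a dichotomy you never reach. For eigenrays (Lemma~\ref{LemmaNonLinearCase}): if the $\h$-orbit of an eigenray endpoint $P$ together with $\F$ has proper support, irreducibility gives the contradiction; if it fills, one finds $\eta \in \h$ with $\eta(P)$ crossing the offending superlinear edge $E_s$, and a feedback construction produces some $\psi \in \h$ under whose iteration a circuit grows exponentially many copies of $E_s$ --- an attracting lamination not carried by $\F$, contradicting $\h \subset \PGF$. For twistors the analogous feedback can fail (occurrences of a linear edge may be trapped in Nielsen paths), so the paper builds an exponential growth digraph on twistors not carried by $\F$ (Definition~\ref{DefExpGrowthGraph}), shows it has no directed cycles using $\PGF$ (Lemma~\ref{LemmaNoClosedEdgePaths}), and then runs a bouncing sequence of limit trees (Section~\ref{SectionFollowTheBouncingBall}, using Propositions~\ref{prop:grower} and~\ref{prop:nongrower}) to extract a proper $\h$-invariant free factor system containing $\F$. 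None of this machinery is avoidable by the soft join argument you sketch.
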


Underlying this proposition (as well as Proposition~\ref{PropNielsenPairsExist} to follow) are the concepts of ``Limit trees'' developed in Section~\ref{SectionLimitTrees}. The proof of Proposition \ref{PropFCarriesAll} is given in section~\ref{SectionFCarriesAll}.

\medskip

We next recall from Definition~5.11 of \BookTwo\ the concept of a Nielsen pair of an outer automorphism. Given a simplicial $F_n$-tree $T$ with trivial edge stabilizers recall from Section~\ref{SectionFnTrees} that $\F(T)$ denotes the free factor system consisting of conjugacy classes of nontrivial vertex stabilizers.

  
\begin{definition} \label{DefNielsenPairs}  Consider $\phi \in \Out(F_n)$ and a free factor system~$\F$
each of whose components is fixed by~$\phi$ (for example $\phi \in \UPGF$). Let $\wt\V = \{V \suchthat [V] \in \F\}$ be the set of free factors whose conjugacy classes are components of~$\F$ and let~$\wt\V^{(2)}$ be the set of unordered pairs $(V,W)$ of distinct elements of $\wt\V$ (by an ``unordered pair of distinct elements'' we simply mean a two-element subset, and we abuse notation by writing $(V,W)$ instead of $\{V,W\}$). The action of $F_n$ on $\wt\V$ by inner automorphism determines a diagonal action of $F_n$ on $\wt\V^{(2)}$. The quotient set of this action is denoted $\V^{(2)}$ and the image of $(V,W)$ in $\V^{(2)}$ is denoted $[[V,W]]$. Each automorphism $\Phi$ representing $\phi$ induces a permutation of the elements of $\wt\V^{(2)}$ that respects the decomposition into $F_n$-orbits, and so there is an induced action of $\phi$ on $\V^{(2)}$. We say that $(V,W) \in \wt\V^{(2)}$ is a \emph{Nielsen pair for $\phi$} if $[[V,W]]$ is $\phi$-invariant. 

This definition can be formulated equivalently in any simplicial $F_n$-tree $T$ with trivial edge stabilizers for which $\F(T)=\F$. The vertices of $T$ with nontrivial stabilizers correspond bijectively to $\wt\V$, and the unoriented paths between such vertices correspond bijectively to~$\wt\V^{(2)}$. Then $(V,W) \in \wt\V^{(2)}$ is a Nielsen pair for $\phi$ if and only if for some (any) such $F_n$-tree~$T$ and for some (any) automorphism $\Phi$ representing $\phi$, the unoriented path in $T$ connecting the vertices corresponding to $\Phi(V)$ and $\Phi(W)$ is a translate, by some element of $F_n$, of the unoriented path $\ti \gamma \subset T$ connecting the vertices corresponding to $V$ and $W$.

Given a subgroup $\h \subgroup \Out(F_n)$ such that each component of~$\F$ is fixed by each~$\phi \in \h$, if the unordered pair $(V,W)$ is a Nielsen pair for each $\phi \in \h$ then we say that $(V,W)$ is \emph{a Nielsen pair for $\h$ associated to~$\F$}.
\end{definition}

In Definition~\ref{DefNielsenPairs} and in Definition~\ref{DefNPSS} to follow, the concepts and the notations being defined depend implicitly on~$\F$, but this dependence is suppressed in the notation because $\F$ is constant in applications. 
 
\begin{definition}\label{DefNPSS} Continuing with the notation of Definition~\ref{DefNielsenPairs}, given $(V,W) \in \wt\V^{(2)}$ the \emph{subgroup system $S(V,W)$ associated to~$\F$ and $(V,W)$} is defined as follows. Both $[V]$ and $[W]$ are elements of $\F$. If $[V] \ne [W]$ then $S(V,W)$ is obtained from $\F$ by removing $[V]$ and $[W]$ and replacing them with $[\<V,W\>]$.  If $[V] = [W]$ then $W=V^a$ for some $a \in F_n$ and  $S(V,W)$ is obtained from $\F$ by removing $[V]$ and replacing it with $[\langle V,a \rangle]$. In general, $S(V,W)$ is not a free factor system (but see Lemma~\ref{LemmaNielsenPairRelevance}). 
\end{definition} 

In the next proposition we shift attention away from a general finitely generated subgroup of $\IA_n(\Z/3)$ and towards a subgroup generated by finitely many rotationless elements.

\begin{prop}\label{PropNielsenPairsExist} For any subgroup $\k \subgroup \IA_n(\Z/3)$ generated by a finite number of rotationless elements and for any proper $\k$-invariant free factor system~$\F$, if $\F$ carries $\Lam(\phi)$ for each rotationless $\phi \in \k$ then there exists a Nielsen pair $(V,W)$ for $\k$ associated to~$\F$. Moreover, following the notation of Definition~\ref{DefNielsenPairs}, one may choose the tree $T$ and Nielsen pair $(V,W)$ so that $\ti \gamma$ is an edge of $T$.
\end{prop}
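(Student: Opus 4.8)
The plan is to produce a single marked graph carrying all the generators of $\k$ in the normal form of Lemma~\ref{lem:cofinal}, and then to read a common Nielsen pair off an edge lying immediately above the subgraph that realizes~$\F$.

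\emph{Reductions.} Fix finitely many rotationless elements $\phi_1,\dots,\phi_m$ generating $\k$. Each $\phi_j$ lies in $\IA_n(\Z/3)$ and preserves the $\k$--invariant $\F$, so by Lemma~\ref{LemmaFFSComponent} it fixes every component of~$\F$; hence the action of each $\phi_j$, and so of $\k$, on the set $\V^{(2)}$ of Definition~\ref{DefNielsenPairs} is defined. For a fixed $(V,W)\in\wt\V^{(2)}$ the set of $\phi\in\k$ for which $(V,W)$ is a Nielsen pair is the stabilizer of $[[V,W]]$ under the $\k$--action on $\V^{(2)}$, hence a subgroup; so it suffices to find one $(V,W)$ that is a Nielsen pair for each of $\phi_1,\dots,\phi_m$. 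I will also use the following unwinding of Definition~\ref{DefNielsenPairs}: if $T$ is a simplicial $F_n$--tree with trivial edge stabilizers and $\F(T)=\F$, and $x,y$ are vertices of $T$ with nontrivial stabilizers $V,W$, then $(V,W)$ is a Nielsen pair for $\phi$ as soon as \emph{some} representative $\Phi$ of $\phi$ satisfies $\Phi(V)=V$ and $\Phi(W)=W$ — then the vertices ``corresponding to $\Phi(V),\Phi(W)$'' are $x,y$ themselves, so the connecting path is a translate (by $1$) of itself.

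\emph{A common filtered graph.} Apply Lemma~\ref{lem:cofinal} to each $\phi_j$: since $\F$ carries $\Lam(\phi_j)$, above the filtration element realizing $\F$ there are only \neg\ edges $E$ with $f_j(E)=E$ or $f_j(E)=E\cdot u$ for a closed path $u$ of lower height. Collapsing each component of that filtration element to a point, $f_j$ then fixes every remaining vertex and carries every remaining edge to itself, so $\phi_j$ acts trivially on the cofactor $F_n/\nc{A_1\union\cdots\union A_\ell}\cong F_k$, where $\F=\{[A_1],\dots,[A_\ell]\}$ and $F_n=A_1\ast\cdots\ast A_\ell\ast F_k$. Now fix core graphs $\Gamma_s$ with $\pi_1\Gamma_s=A_s$, basepoints $p_s\in\Gamma_s$, and form the marked graph $G$ with core subgraph $K=\coprod_s\Gamma_s$ realizing $\F$ by attaching the remaining $\ell-1+k$ edges so that all their endpoints lie among $p_1,\dots,p_\ell$ (for instance: an edge from $p_1$ to each of $p_2,\dots,p_\ell$, together with $k$ loops at~$p_1$). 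Since each $\phi_j$ preserves every $[A_s]$ and is trivial on $F_k$, each $\phi_j$ is represented by a homotopy equivalence $f_j:G\to G$ with $f_j(K)\subseteq K$ and, for every edge $E$ of $G\setminus K$, $f_j(E)=E\cdot u_{j,E}$ with $u_{j,E}$ a closed path in $K$ based at the terminal endpoint of~$E$. \textbf{This simultaneous realization on a common $G$ is the main obstacle}: it is the relative analogue of the common--filtered--graph constructions of \BookTwo, and it uses the triviality of $\phi_j$ on the cofactor together with the standard description of automorphisms of free products (the kernel of the restriction homomorphism from cofactor--trivial, $\F$--componentwise--preserving outer automorphisms to $\prod_s\Out(A_s)$ being generated by edge twists), so that the twisting data $u_{j,E}$ can be chosen to make $f_j$ represent $\phi_j$ exactly.

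\emph{Extracting the Nielsen pair.} Let $T$ be the simplicial $F_n$--tree obtained by collapsing each component of the full preimage of $K$ in $\wt G$; then $\F(T)=\F$, edge stabilizers are trivial, and the vertices with nontrivial stabilizer are the images $[\Gamma]$ of the components $\Gamma$ of the preimage of~$K$, with $\Stab([\Gamma])=\Stab(\Gamma)\in\wt\V$. Choose an edge $E_0$ of $G\setminus K$ (all have both endpoints in $K$), a lift $\ti v$ of its initial endpoint, the lift $\wt E_0$ of $E_0$ at $\ti v$, and let $\Gamma_v,\Gamma_w$ be the components of the preimage of $K$ containing the initial and terminal endpoints of $\wt E_0$; put $V=\Stab(\Gamma_v)$, $W=\Stab(\Gamma_w)$, which are nontrivial with $[V],[W]\in\F$. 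Fix $j$ and let $\ti f_j$ be the lift of $f_j$ fixing $\ti v$, with corresponding representative $\Phi_j$ of $\phi_j$. Since $f_j(E_0)=E_0\cdot u_{j,E_0}$ with $u_{j,E_0}\subset K$ closed, $\ti f_j$ fixes the $\wt E_0$--direction at $\ti v$, so $\ti f_j(\wt E_0)=\wt E_0\cdot\wt u$ with $\wt u$ the lift of $u_{j,E_0}$ at the terminal endpoint of $\wt E_0$; as $u_{j,E_0}\subset K$ this $\wt u$ stays in $\Gamma_w$, so $\ti f_j(\Gamma_w)=\Gamma_w$, and likewise $\ti f_j(\Gamma_v)=\Gamma_v$. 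Passing to $T$, the induced $\Phi_j$--equivariant map fixes the vertices $[\Gamma_v]$ and $[\Gamma_w]$, hence $\Phi_j(V)=V$ and $\Phi_j(W)=W$; by the reduction above $(V,W)$ is a Nielsen pair for $\phi_j$. As $j$ was arbitrary, $(V,W)$ is a Nielsen pair for $\k$. Finally, in $T$ the path $\ti\gamma$ joining the vertices with stabilizers $V$ and $W$ is exactly the image of $\wt E_0$, a single edge of $T$, which gives the ``moreover'' clause.
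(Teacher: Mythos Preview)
Your argument has a genuine gap at exactly the step you flag as ``the main obstacle'': the simultaneous realization of all the $\phi_j$ on a single graph $G$ in the form $f_j(E)=E\cdot u_{j,E}$ with $u_{j,E}$ a closed path in $K$. This claim is false. Take $F_3=\langle x,y,z\rangle$ with $\F=\{[\langle x\rangle],[\langle y\rangle]\}$ and consider the two outer automorphisms $\phi_1\colon z\mapsto zx^3$ and $\phi_2\colon z\mapsto x^3z$ (both fixing $x,y$). Each lies in $\IA_3(\Z/3)$, is rotationless, and has $\Asym=\{[x]_u\}$ carried by $\F$. On your graph $G$ (with $K=\Gamma_1\sqcup\Gamma_2$, an edge from $p_1$ to $p_2$, and a loop $L$ at $p_1$ marked as $z$), any map of your form has $f(L)=L\cdot u_L$ with $u_L$ a closed path in $K$ based at $p_1$; since $K$ is disconnected and $p_1\in\Gamma_1$ this forces $u_L\in\Gamma_1$, so the induced automorphism sends $z\mapsto z\,x^m$. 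Any inner automorphism identifying this with $z\mapsto x^3z$ must centralize both $x$ and $y$, hence be trivial; so $\phi_2$ is not realizable on $G$ in your form (reversing the orientation of $L$ captures $\phi_2$ but then loses $\phi_1$). The underlying error is your description of the kernel of the restriction to $\prod_s\Out(A_s)$: by the Fouxe--Rabinovitch presentation this kernel contains partial conjugations and left-transvections, not just ``edge twists'' at the terminal endpoint, and these cannot all be absorbed into a single choice of $G$. Even the two-sided form $f_j(E)=v\cdot E\cdot u$ with $v,u\in K$ fails: for $F_4=\langle x,y,z,w\rangle$, $\F=\{[\langle x\rangle],[\langle y\rangle]\}$, the map $w\mapsto z^{-1}x^3z\,w$ (fixing $x,y,z$) satisfies all hypotheses but its twisting word lies in no component of $K$.

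This is precisely why the paper does \emph{not} attempt a common filtered graph. Each generator $\phi_i$ gets its own \ct\ $f_i\colon G^i\to G^i$ (Notation~\ref{NotationSectionSeven}), and the proof follows a circuit through the sequence $P_{s+1}=(h_{s,s+1}f_s^{N_s})_\#(P_s)$, tracking which ``vertical'' pieces (maximal subpaths in $G^i_{r(i)}$) survive from one graph to the next. A careful counting/labelling argument (Lemmas~\ref{choose N}--\ref{event}) sets up a dichotomy: either some adjacent pair of long-surviving vertical elements is ``inactive'' for every generator, which by Corollary~\ref{CorInactiveIsNielsen} and Lemma~\ref{induces same pairs} produces a Nielsen pair for $\k$; or the number of horizontal pieces grows exponentially, giving via Lemma~\ref{LemmaFindingEG} an attracting lamination of some $\theta\in\k$ not carried by $\F$, contradicting the hypothesis. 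The ``moreover'' clause is then obtained separately using the bouncing sequence of trees from Section~\ref{SectionLimitTrees}.
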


The proof of Proposition~\ref{PropNielsenPairsExist} is given in section~\ref{SectionNielsenPairsExist}. The moreover part of the proposition is motivated by the following lemma.


\begin{lemma}  \label{LemmaNielsenPairRelevance} 
For any subgroup $\k \subgroup \IA_n(\Z/3)$ and any proper $\k$-invariant free factor system~$\F$, if $(V,W)$ is a Nielsen pair for $\k$ associated to~$\F$ then the subgroup system $S(V,W)$ is \hbox{$\k$-invariant}. Moreover, if $T$ and $\ti \gamma$ are as in Definition~\ref{DefNielsenPairs} and if $\ti \gamma$ is an edge of $T$ then $S(V,W)$ is a free factor system that is a one-edge extension of $\F(T)=\F$.
\end{lemma}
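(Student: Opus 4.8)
The plan is to first establish $\k$-invariance of the subgroup system $S(V,W)$ directly from the Nielsen-pair hypothesis, and then to use the tree $T$ to identify $S(V,W)$ with a vertex group system of a one-edge collapse of $T$, which will make both the ``free factor system'' and the ``one-edge extension'' assertions transparent.

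For $\k$-invariance: fix $\phi \in \k$ and an automorphism $\Phi$ representing it. Since $(V,W)$ is a Nielsen pair for $\phi$, there is $g \in F_n$ such that, up to the unordered-pair ambiguity, $(\Phi(V),\Phi(W))$ and $(gVg^\inv, gWg^\inv)$ determine the same class $[[V,W]] = [[\Phi(V),\Phi(W)]]$; replacing $\Phi$ by $i_{g^\inv}\composed\Phi$ (which still represents $\phi$) I may assume $\{\Phi(V),\Phi(W)\} = \{V,W\}$ as a set of subgroups. Now consider the two cases of Definition~\ref{DefNPSS}. If $[V]\neq[W]$, then $\Phi$ either fixes both $V$ and $W$ or swaps them, and in either case $\Phi(\<V,W\>) = \<V,W\>$; combined with the fact that $\phi$ fixes each component of $\F$ (which holds by hypothesis, since $\F$ is $\k$-invariant and $\k \subgroup \IA_n(\Z/3)$, invoking Theorem~\ref{ThmPeriodicFreeFactor} / the Remark after it) one gets that $\Phi$ preserves $\F - \{[V],[W]\}$ and sends $[\<V,W\>]$ to itself, so $S(V,W)$ is $\phi$-invariant. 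If $[V]=[W]$, write $W = V^a$; then $\Phi(V) = hVh^\inv$ for some $h$, and after the normalization above $\Phi$ preserves $V$ (up to conjugacy by an element of $\<V,a\>$, using malnormality/Fact~\refGM{FactItsOwnNormalizer}-type reasoning), and $\Phi(\<V,a\>)$ is conjugate to $\<V,a\>$; again $S(V,W)$ is $\phi$-invariant. Since $\phi \in \k$ was arbitrary, $S(V,W)$ is $\k$-invariant. I expect this case analysis, in particular pinning down the conjugating elements in the $[V]=[W]$ case, to be the main obstacle — one must be careful that the replacement of $\Phi$ within its representing class is compatible with the simultaneous normalization needed for the other components of $\F$.

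For the ``moreover'' part, suppose $T$ is a simplicial $F_n$-tree with trivial edge stabilizers, $\F(T)=\F$, and $\ti\gamma$ is a single edge of $T$ joining the vertices $v_V, v_W$ with stabilizers $V$ and $W$ (so $\Phi(\ti\gamma)$ is an $F_n$-translate of $\ti\gamma$ for every $\Phi$ representing every $\phi\in\k$, which is exactly the Nielsen-pair condition restated in the tree). Form the quotient tree $T'$ by collapsing the orbit $F_n\cdot\ti\gamma$ to points. Then $T'$ still has trivial edge stabilizers and one fewer orbit of vertex orbits' worth of edges; its vertex group system is obtained from $\F(T)$ exactly by the recipe of Definition~\ref{DefNPSS} — the stabilizer of the collapsed vertex is $\<V,W\>$ when $v_V,v_W$ are in distinct $F_n$-orbits, and is $\<V,a\>$ (where $W=V^a$, so $a$ carries $v_V$ to $v_W$) when they are in the same orbit. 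Because $T'$ is a simplicial $F_n$-tree with trivial edge stabilizers, its vertex group system $\F(T')$ is a free factor system by the fact quoted in Section~\ref{SectionFnTrees} (``if furthermore the tree $T$ is simplicial \dots then $\F(T)$ is a free factor system'', citing \BookOne\ Section~2.6); hence $S(V,W) = \F(T')$ is a free factor system. Finally, $T$ is obtained from $T'$ by re-expanding a single edge orbit, so $\F(T) \sqsubset \F(T')$ is realized by core subgraphs differing by one edge (equivalently, by Definition~\ref{DefOneEdgExtFFS}(2), the ranks match up as in (a) when $[V]=[W]$ — rank increases by one — and as in (b) when $[V]\neq[W]$ — the two factors are replaced by their join of the summed rank); thus $\F \sqsubset S(V,W)$ is a one-edge extension. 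Combining, $S(V,W)$ is a $\k$-invariant free factor system that is a one-edge extension of $\F(T)=\F$, as claimed.
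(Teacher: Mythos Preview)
Your proposal is correct and follows essentially the same approach as the paper: normalize a representative $\Phi$ so that $\{\Phi(V),\Phi(W)\}=\{V,W\}$, check invariance of the new subgroup, and for the ``moreover'' part collapse the edge orbit of $\ti\gamma$ in $T$ to obtain a tree $T'$ with trivial edge stabilizers whose vertex group system is exactly $S(V,W)$.

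Two small remarks. First, your worry about ``simultaneous normalization for the other components of $\F$'' is unnecessary: the other components of $S(V,W)$ are conjugacy classes, and $\phi$ already fixes each of them by Lemma~\ref{LemmaFFSComponent}; no choice of $\Phi$ is needed for those. Second, in the case $[V]=[W]$ the paper's computation is cleaner than your sketch: after arranging $\Phi(V)=V$ and $\Phi(W)=W$, from $W=V^a$ one gets $V^{\Phi(a)}=\Phi(V)^{\Phi(a)}=\Phi(V^a)=V^a$, so $a^{-1}\Phi(a)$ normalizes $V$; malnormality of the free factor $V$ then gives $a^{-1}\Phi(a)\in V$, hence $\Phi(a)\in\<V,a\>$ and $\<V,a\>$ is $\Phi$-invariant. (If instead $\Phi$ swaps $V$ and $W$, the same conclusion follows from the analogous computation $\Phi(a)a\in V$.) Your hand-wave ``$\Phi(\<V,a\>)$ is conjugate to $\<V,a\>$'' is in the right direction but should be replaced by this explicit step.
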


\begin{proof}     Each $\phi \in \k$ is represented by an automorphism $\Phi$  that preserves both $V$ and $W$. The subgroup $\<V,W\>$ is therefore $\Phi$-invariant. If $W= V^a$ then $V^a = V^{\Phi(a)}$ and hence $\bar a \Phi(a)$ normalizes $V$. It follows that  $\bar a \Phi(a) \in V$ and hence that  $\Phi(a) \in \langle a,V\rangle$; in particular, $\langle a,V\rangle$ is $\Phi$-invariant. This proves that $S(V,W)$ is $\phi$-invariant for all $\phi \in \k$.

For the last sentence, let $T'$ be the tree obtained from $T$ by collapsing the orbit of the edge $\ti \gamma$.  Then $T'$ has trivial edge stabilizers and    $\F(T') =  S(V,W)$, proving that $S(V,W)$ is a free factor system, and clearly $S(V,W)$  is a one-edge extension of~$\F$. 
\end{proof}

\subparagraph{Proof of Theorem~\ref{relKolchin}.}  As in the proof of the equivalence of Theorems~\ref{relKolchin} and \ref{ThmTreeRelKolchin}, by applying the results of \cite{HandelMosher:distortion} Section~4.1 it suffices to show that $\{[F_n]\}$ is a one-edge extension of $\F_{m-1}$. Since the given increasing chain of $\h$-invariant free factor systems is maximal, $\h$ is irreducible rel~$\F_{m-1}$, in other words $\h$ is irreducible relative to the extension $\F_{m-1} \sqsubset \{[F_n]\}$. Let $\psi_1,\ldots, \psi_\gen$ be generators of $\h$, and let $\k$ be the subgroup generated by  rotationless iterates $\phi_1,\ldots,\phi_\gen$, respectively. Proposition~\ref{PropFCarriesAll} implies  that  $\F_{m-1}$ carries $\Lam(\phi)$ for each rotationless $\phi \in \h$ and hence also for each rotationless $\phi \in \k$. 
Applying Proposition~\ref{PropNielsenPairsExist} to the subgroup $\k$ we obtain a simplicial $F_n$-tree $T$ with trivial edge stabilizers such that $\F(T) = \F_{m-1}$  and a pair of vertices $v,w \in T$ with non-trivial stabilizers $V$ and $W$ such that $V,W$ is a Nielsen pair for $\k$ associated to~$\F_{m-1}$ and such that the path $\ti\gamma$ with endpoints $v,w$ is an edge of $T$. Lemma~\ref{LemmaNielsenPairRelevance} implies that $S(V,W)$ is a $\k$-invariant free factor system and is a one-edge extension of $\F_{m-1}$.  Theorem~\ref{ThmPeriodicFreeFactor} implies that $S(V,W)$ is $\psi_i$-invariant for each $i$ and hence $S(V,W)$ is $\h$-invariant. Since $\h$ is irreducible rel $\F_{m-1}$, it follows that $S(V,W)= \{[F_n]\}$.\qed

\section{Periodic conjugacy classes under some $\theta \in \IA_n(\Z/3)$}
\label{SectionIA3Elements}

This is the first of two sections devoted to results about the subgroup $\IA_n(\Z/3)$. The main result of this section is the following invariance property, which is an important component of many later arguments in this paper, in particular it is used in the proof of Theorem~\ref{ThmPeriodicFreeFactor} in next section:

\begin{theorem}\label{ThmPeriodicConjClass}  For each $\theta \in \IA_n(\Z/3)$, every $\theta$-periodic conjugacy class in $F_n$ is fixed by~$\theta$. 
\end{theorem}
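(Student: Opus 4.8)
The plan is to reduce the statement, via a rotationless iterate, to a question about a $\ct$ and then exploit the defining property of $\IA_n(\Z/3)$ together with known invariance properties of rotationless elements. Let $\theta \in \IA_n(\Z/3)$ and suppose $[c]$ is a $\theta$-periodic conjugacy class, so $\theta^k[c]=[c]$ for some $k \ge 1$. Replacing $\theta$ by its smallest rotationless power $\phi = \theta^{k_0}$ (whose existence and uniqueness are standard), the class $[c]$ is $\phi$-periodic and, since $\phi$ is rotationless, $\phi$-fixed (Fact~\refGM{FactPeriodicIsFixed} applied in the appropriate rotationless context, or directly because periodic conjugacy classes of a rotationless automorphism are fixed). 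So the content is to promote ``$[c]$ fixed by a rotationless power of $\theta$'' to ``$[c]$ fixed by $\theta$ itself'': this is exactly where the hypothesis $\theta \in \IA_n(\Z/3)$ must be used, since in general a periodic class need only be permuted within its orbit by $\theta$.

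The key step, then, is the following: if $\phi$ is a rotationless power of $\theta$ and $[c]$ is $\phi$-fixed, then $[c]$ is $\theta$-fixed. First I would pass to a $\ct$ $f\from G\to G$ representing $\phi$ and realize $[c]$ by a circuit which, after replacing $[c]$ by a further iterate if necessary, is completely split; being a fixed circuit of $f$ it is in fact a Nielsen path (a concatenation of fixed edges, indivisible Nielsen paths, and exceptional paths). The orbit of $[c]$ under $\theta$ has some size $d \mid k_0$, and $\theta$ acts on this finite orbit as a cyclic permutation; the action of $\theta$ on $H_1(F_n;\Z/3)$ being trivial, I would extract from the set of distinct conjugacy classes $\{[c],\theta[c],\dots,\theta^{d-1}[c]\}$ a homological obstruction to $d>1$. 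Concretely, the abelianization classes of these conjugacy classes in $H_1(F_n;\Z)$ are all equal (since $\theta$ acts trivially on $H_1$ mod $3$, and one promotes this using the structure of $\PG$ elements, which are unipotent on $H_1(\Z)$ by \BookTwo\ Prop.~3.5); pairing this against the cyclic-permutation structure forces $d=1$ provided one controls the ``finite order'' phenomenon, and the role of the modulus $3$ (as opposed to $2$) is precisely to kill order-two ambiguities of the sort that arise when a conjugacy class and its inverse get interchanged.

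More carefully, the obstacle I expect is handling the case where $\theta$ might interchange $[c]$ with $[c]$ (as an unoriented class) versus genuinely permuting distinct oriented classes, and more generally ruling out small finite-order permutations of the building blocks of the Nielsen path representing $[c]$. I anticipate the proof works by a dichotomy on the growth type of the relevant pieces: the ``linear / exceptional'' pieces of a completely split fixed circuit are governed by twistors and linear edges, and there $\theta$-invariance of the underlying root-free class follows because twistors of $\phi$ (a power of $\theta$) are forced to be $\theta$-invariant via the $\IA_n(\Z/3)$ hypothesis applied to the primitive root (which has nonzero image in $H_1(\Z/3)$ unless it were a third power, which a root-free class is not); the ``fixed-edge'' pieces are handled by tracking the $\theta$-action on the fixed point set in $\bdy F_n$ and using that $\IA_n(\Z/3)$ elements fix the components of their invariant free factor systems (Lemma~\ref{LemmaFFSComponent}). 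Assembling these local invariance statements, $\theta$ fixes each indecomposable piece of the Nielsen path representing $[c]$, hence fixes the concatenation, hence fixes~$[c]$.

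The main obstacle is thus the bookkeeping of the permutation action of $\theta$ on the terms of the complete splitting together with the delicate use of the prime $3$ to eliminate order-$2$ (and order-coprime-to-$3$) ambiguities; the rest—reduction to a rotationless power and to a $\ct$, and the recognition of a fixed circuit as a Nielsen path—is routine relative train track machinery already available in the cited background.
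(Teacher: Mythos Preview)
Your proposal has several genuine gaps that prevent it from going through.

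First, the homological reduction is too coarse. You assert that the images of $[c],\theta[c],\ldots,\theta^{d-1}[c]$ in $H_1(F_n;\Z)$ coincide by invoking unipotence via \BookTwo\ Proposition~3.5, but that proposition applies only to polynomially growing elements, and $\theta$ here is an \emph{arbitrary} element of $\IA_n(\Z/3)$; it may well have exponentially growing strata. Even granting equal images in $H_1(\Z/3)$ (which does follow from the hypothesis), there is no mechanism to conclude $d=1$: distinct conjugacy classes routinely share a homology class mod~$3$. Your parenthetical that a root-free element has nonzero image in $H_1(\Z/3)$ is simply false (e.g.\ any commutator, or any element of the form $a^3b$ minus its obvious cancellation; more directly, $[a,b]$ is root-free and null-homologous), so the twistor step collapses as well.

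Second, the ``assemble local invariance of the pieces of the complete splitting'' strategy cannot work as stated, because $\theta$ is not represented by the \ct\ $f$; there is no reason $\theta$ should respect the complete splitting of a circuit in $G$, permute its terms, or preserve the set of fixed edges or indivisible Nielsen paths. The \ct\ encodes $\phi$, not $\theta$, and bridging from $\phi$-structure to $\theta$-invariance is exactly the hard part.

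The paper's proof is substantially more involved: it proceeds by induction on rank, first reducing to the case that the $\theta$-periodic classes fill $F_n$, then splitting according to whether $\cup\L(\theta)$ fills. In the filling case the top stratum is \eg-geometric and the argument uses a geometric model and a Mayer--Vietoris computation with $\Z/3$ coefficients on the surface piece to pin down orientation and boundary permutations. In the nonfilling case one shows $F_n$ is a one-edge extension of a $\theta$-invariant system carrying $\L(\theta)$ and then analyzes the fixed subgroup $A_x=\Fix(\Phi)$ as a free product $A_y*A_z$, reducing to a finite-order, $H_1(\Z)$-unipotent outer automorphism of $A_x$ which must be trivial. None of this structure is captured by your outline; in particular the geometric/\eg\ case requires genuine surface input that a purely homological or splitting-term argument does not supply.
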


Preparatory to the proof of Theorem~\ref{ThmPeriodicConjClass} we establish some simpler invariance properties of $\IA_n(\Z/3)$, each of which is already known for rotationless elements (\recognition\ Lemma~3.30).

\begin{lemma} \label{LemmaFFSComponent} 
If $\theta \in \IA_n(\Z/3)$ then:
\begin{enumerate}
\item\label{ItemIAThreeFFS}
For any $\theta$-invariant free factor system $\F= \{[F^1],\ldots, [F^k]\}$, each $[F^i]$ is fixed by~$\theta$.
\item\label{ItemIAThreeLams}
Each element of $\L(\theta)$ is fixed by $\theta$.
\end{enumerate}
\end{lemma}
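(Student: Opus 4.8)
The key tool is that $\theta \in \IA_n(\Z/3)$ acts trivially on $H_1(F_n;\Z/3)$, which forces any finite-order permutation behavior to be trivial because $\IA_n(\Z/3)$ is torsion-free (the argument underlying virtual torsion-freeness of $\Out(F_n)$) and, more to the point, because the relevant permutation representations factor through $\GL_n(\Z/3)$ in a way that $\theta$ cannot see. So the strategy in both parts is: identify a finite set on which $\theta$ acts by a permutation, find that the permutation is detected by the action on $H_1(F_n;\Z/3)$, and conclude it is trivial.

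For part \itemref{ItemIAThreeFFS}: let $\F = \{[F^1],\dots,[F^k]\}$ be $\theta$-invariant, so $\theta$ permutes the components. I would pass to a power $\theta^d$ that fixes each $[F^i]$ and is rotationless (using that a suitable power lands in the rotationless elements, cf.\ Fact~\refGM{FactPeriodicIsFixed} and the standard existence of rotationless powers). The issue is to rule out a nontrivial permutation by $\theta$ itself. Here the plan is homological: choosing a marked graph $G$ with a subgraph $H$ realizing $\F$, each component $H^i$ of $H$ contributes a direct summand to $H_1(G;\Z/3) = H_1(F_n;\Z/3)$ whose image (the span of the loops in that component) is a well-defined subspace; more carefully one uses that the free factors $F^i$ span complementary subspaces mod~$3$. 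Since $\theta$ acts trivially on $H_1(F_n;\Z/3)$, it cannot nontrivially permute a collection of subspaces which together with a complement form a direct-sum decomposition into distinct summands — any permutation fixing the ambient space pointwise must fix each summand that it permutes among a set of pairwise-distinct summands. Hence $\theta$ fixes each $[F^i]$. (The one subtlety is when two components have the same rank and one must check the mod-$3$ images are genuinely distinct subspaces; this holds because distinct components of a free factor system have complementary, hence distinct, spans.)

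For part \itemref{ItemIAThreeLams}: $\theta$ permutes the finite set $\L(\theta)$ of attracting laminations. I would reduce to the rotationless case as follows. Each $\Lambda \in \L(\theta)$ has an associated free factor support $\F_\supp(\Lambda)$, and the collection of these supports is permuted by $\theta$ in the same way that $\theta$ permutes $\L(\theta)$ (since $\F_\supp$ is natural). Moreover an attracting lamination is determined by its support together with finitely many additional discrete data that are again natural. If $\theta(\Lambda) = \Lambda'$ with $\Lambda \ne \Lambda'$, then either their supports differ — contradicting part \itemref{ItemIAThreeFFS} applied to an appropriate $\theta$-invariant free factor system built from these supports — or the supports agree, in which case $\Lambda,\Lambda'$ are distinct laminations with the same support, so they are distinguished by their dilatation/expansion data or by the stratum structure of a \ct\ representing a rotationless power $\theta^d$, and one checks that $\theta$'s triviality mod~$3$ prevents a nontrivial swap of such data. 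I would organize this by taking a \ct\ $f \from G \to G$ representing a rotationless power, in which each $\Lambda$ corresponds to an \eg\ stratum $H_{r_\Lambda}$; the permutation of $\L(\theta)$ by $\theta$ induces a permutation of these strata preserving Perron–Frobenius eigenvalues, and a minimal-height stratum in an orbit gives a $\theta$-invariant free factor system realized below it, contradicting minimality unless the orbit is a singleton.

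The main obstacle I anticipate is making the ``$\theta$ trivial on $H_1(F_n;\Z/3)$ forbids this permutation'' step fully rigorous in part \itemref{ItemIAThreeLams}, since a lamination is not itself a homological object; the clean route is to funnel everything through part \itemref{ItemIAThreeFFS} (which is genuinely homological) by exhibiting, for any nontrivial $\theta$-orbit in $\L(\theta)$, a $\theta$-invariant free factor system — e.g.\ the join of supports over the orbit, or the support of a minimal-height stratum in the orbit — whose component structure would have to be nontrivially permuted, contradicting \itemref{ItemIAThreeFFS}. I expect the authors' proof to be short precisely because it leans on \itemref{ItemIAThreeFFS} plus naturality of $\F_\supp$ and the correspondence between laminations and \eg\ strata.
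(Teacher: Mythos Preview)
Your argument for part~\pref{ItemIAThreeFFS} is correct and essentially matches the paper's: the components $[F^i]$ determine pairwise distinct subspaces of $H_1(F_n;\Z/3)$ (since a free factor system gives a direct sum decomposition in homology), and an element acting trivially on the vector space cannot permute distinct subspaces nontrivially.

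For part~\pref{ItemIAThreeLams} your overall strategy---produce a finite collection of homologically distinguishable objects transitively permuted by $\theta$---is the right one, but none of your specific implementations works as stated. The free-factor-support route fails because distinct laminations can have identical support $\F_\supp(\Lambda)$, so a nontrivial $\theta$-orbit in $\L(\theta)$ need not produce a nontrivial permutation of supports; your fallback to ``dilatation/expansion data'' has no visible link to $H_1(\,\cdot\,;\Z/3)$. The \ct-for-a-rotationless-power route fails because $\theta$ itself is not represented by that \ct: the filtration elements $G_{r-1}$ are $\theta^d$-invariant free factor systems, not $\theta$-invariant ones, so ``a minimal-height stratum in an orbit gives a $\theta$-invariant free factor system realized below it'' is not justified.

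The paper's fix is to work with a relative train track map $f\from G\to G$ representing $\theta$ \emph{itself} (not a rotationless power). If $\Lambda$ has $\theta$-period $A$, the corresponding irreducible \eg\ stratum $H_r$ for $f$ decomposes into $A$ aperiodic \eg\ strata $H^1_r,\ldots,H^A_r$ for $f^A$, with $f(H^a_r)\subset G_{r-1}\cup H^{a+1}_r$. Letting $G^a_r$ be the noncontractible component of $G_{r-1}\cup H^a_r$ containing $H^a_r$, one gets a cycle $G^1_r\xrightarrow{f}\cdots\xrightarrow{f}G^A_r\xrightarrow{f}G^1_r$ of $\pi_1$-injections which, by Scott's lemma, are all homotopy equivalences. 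The cores $\core(G^a_r)$ are pairwise distinct (since $\core(G^a_r)\cap H_r = H^a_r$), hence represent pairwise distinct subspaces of $H_1(F_n;\Z/3)$ transitively permuted by $\theta$---forcing $A=1$. This is the homological object you were looking for; the key missing idea is to use the stratum decomposition of a train track for $\theta$, not for a power.
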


\begin{proof}  To prove \pref{ItemIAThreeFFS}, the natural homomorphism $F_n \mapsto H_1(F_n;\Z/3)$ induces a natural map from conjugacy classes of free factors of $F_n$ to subspaces of the vector space $H_1(F_n;\Z/3)$, and the images of $[F^1],\ldots,[F^k]$ are pairwise distinct subspaces of the vector space $H_1(F_n;\Z/3)$, each of which is fixed by $\theta \in \IA_n(\Z/3)$. It follows that each of $[F^1],\ldots,[F^k]$ is fixed by $\theta$.

The proof of \pref{ItemIAThreeLams}, while more intricate, has a similar idea at its base. Picking $\Lambda \in \L(\theta)$ and letting $A$ be the period of $\Lambda$ under the action of~$\theta$, we prove \hbox{$A=1$}. Let $\Lambda=\Lambda^1,\ldots,\Lambda^A$ be the orbit of $\Lambda$ under $\theta$, with $\theta(\Lambda^a) = \Lambda^{a+1}$, where $a$ varies over $\Z/A\Z$. Picking any relative train track map $f \from G \to G$ representing $\theta$, there is an irreducible \eg-stratum $H_r$ that decomposes into $A$ distinct irreducible \eg-aperiodic strata $H^1_r,\ldots,H^A_r$ for the relative train track map $f^A_\# \from G \to G$, with indexing chosen so that $\Lambda^a$ is the lamination corresponding to $H^a_r$ (see \BookOne, Section~3). We have $H^{a+1}_r \subset f(H^a_r) \subset G_{r-1} \union H^{a+1}_r$. The subgraph $G_{r-1} \union H^a_r$ has a noncontractible component $G^a_r$ containing each generic leaf of $\Lambda^a$, and furthermore $H^a_r \subset G^a_r$. It follows that $H^{a+1}_r \subset f(G^a_r) \subset G^{a+1}_r$. Each of the restricted maps $f \from G^a_r \to G^{a+1}_r$ is $\pi_1$-injective, because $f$ is $\pi_1$-injective. These maps fit into a cycle of maps
$$G^1_r \xrightarrow{f} \cdots \xrightarrow{f} G^A_r \xrightarrow{f} G^1_r
$$
such that any complete trip around the cycle induces a $\pi_1$-injection $f^A_\# \restrict G^a_r \from G^a_r \to G^a_r$ and furthermore, by Scott's lemma (\BookOne\ Lemma~6.0.6), each $f^A_\# \restrict G^a_r$ is a $\pi_1$-isomorphism. It follows that each of the restricted maps $f \from G^a_r \to G^{a+1}_r$ is a $\pi_1$-isomorphism and therefore a homotopy equivalence. The subgraphs $\core(G^a_r)$ are pairwise distinct for $a \in \{1,\ldots,A\}$, because $\core(G^a_r) \intersect H_r = H^a_r \not\subset \core(G^b_r)$ if $a \ne b \in \{1,\ldots,A\}$. It follows that the subgraphs $G^a_r$ represent pairwise distinct subspaces of $H_1(F_n;\Z/3)$ which are transitively permuted by~$\theta$, but $\theta \in \IA_n(\Z/3)$ fixes each of these subspaces and so $A=1$.
\end{proof}

The rest of the section is devoted the proof of Theorem~\ref{ThmPeriodicConjClass}. We proceed by induction on the rank $n$, starting with the case $n=1$ which is obvious. We may therefore assume the induction hypothesis, that the proposition is true in all ranks $<n$, and we must prove it in rank~$n$.

For the rest of the proof we fix choices of $\theta \in \IA_n(\Z/3)$ and of a rotationless power $\phi = \theta^k$, $k \ge 1$. We shall proceed in steps, combining the hypothesis of Theorem~\ref{ThmPeriodicConjClass} and the induction hypothesis to slowly build up to the conclusions. 

\subsection{Reduction to: $F_n$ is filled by the $\theta$-periodic conjugacy classes}

\begin{IAStep}\label{IAStepNonfillingOrbits} We shall reduce to the case that the union of the $\theta$-periodic conjugacy classes fills~$F_n$. Under this reduction, it follows that for every \ct\ $f \from G \to G$ representing $\phi$, the graph $G$ is the union of the fixed edges and indivisible Nielsen paths of~$f$, and furthermore every point of $G$ is contained in a closed Nielsen path of~$f$.
\end{IAStep}

To carry out this reduction, let $\F$ be the free factor support of the $\theta$-periodic classes. Clearly $\F$ is $\theta$-invariant, and so by Fact~\ref{LemmaFFSComponent}~\pref{ItemIAThreeFFS} each component of $\F$ is fixed by~$\theta$. If $\F$ is a proper free factor system then, applying induction on rank to the restriction of $\theta$ to each component of $\F$ (see Fact~\refGM{FactMalnormalRestriction}), it follows that $\theta$ fixes each $\theta$-periodic class. We are therefore reduced to the case that $\F = \{[F_n]\}$. 

For proving the ``furthermore'' clause, given a \ct\ $f \from G \to G$ representing $\phi$, 
since $\F=\{[F_n]\}$ it follows that $G$ is the union of all circuits representing $\theta$-periodic classes, but each such circuit is fixed by $f_\#$ (\refGM{FactPNPFixed}), each $f_\#$ fixed circuit is a concatenation of fixed edges and indivisible Nielsen paths (\refGM{FactNielsenCircuit}), and so each $f_\#$ fixed circuit decomposes at some vertex as a closed Nielsen path. This completes Step~\ref{IAStepNonfillingOrbits}.

\medskip

Recall that \eg\ strata of a \ct\ are classified as \emph{geometric} and \emph{nongeometric} (\BookOne\ Definition~5.1.4, and in slightly restructured form in Definition~\refGM{DefGeometricStratum}). We will need various equivalent characterizations of geometricity and nongeometricity found in \PartOne; these are reviewed as needed.

\subsection{Each \ct\ $f \from G \to G$ representing $\phi$ is geometric/linear/fixed.}

\begin{IAStep}\label{IAStepGLF} That is, each stratum of $G$ is either \eg-geometric, \neg-linear, or a fixed edge.
\end{IAStep}
\noindent
For the proof, consider first an \eg-stratum $H_r \subset G$. If $H_r$ is nongeometric then, by Step~\ref{IAStepNonfillingOrbits}, there exists a height~$r$ indivisible Nielsen path $\rho_r$ which is not a closed path. By \BookOne\ Lemma~5.1.7 (or see Fact~\trefGM{FactEGNielsenCrossings}{ItemNongeomFFS}) there exists a proper free factor system $\F$ such that a line is carried by $\F$ if and only if its realization in $G$ is a concatenation of edges of $G \setminus H_r$ and copies of $\rho_r$. By Fact~\refGM{FactNielsenCircuit} this set of lines includes the bi-infinite iterates of all $f_\#$ fixed circuits, and so $\F$ carries all periodic conjugacy classes, contradicting Step~\ref{IAStepNonfillingOrbits}. Each \eg\ stratum must therefore be geometric. If $H_r = \{E_r\}$ is a zero stratum or an \neg-superlinear stratum then by Fact~\refGM{FactNoSuperlinearNielsen} no Nielsen path crosses $E_r$, again contradicting Step~\ref{IAStepNonfillingOrbits}. This completes Step~\ref{IAStepGLF}. 

\bigskip

For the rest of the proof of Theorem~\ref{ThmPeriodicConjClass} we consider two cases depending on whether the set $\cup\L(\theta) = \{$all leaves of all laminations in $\L(\theta)\}$ fills $F_n$. Step~\ref{IAStepGeometricDoesntFill} considers the case that this set does fill, and Steps~\ref{IAStepOneEdge} and~\ref{IAStepFinal} consider the case that it does not fill.

\subsection{The case that $\cup \L(\theta)$ fills.}

\begin{IAStep}\label{IAStepGeometricDoesntFill} If $\cup \L(\theta)$ fills then the conclusion of Theorem~\ref{ThmPeriodicConjClass} holds
\end{IAStep}

\noindent
Assuming that $\cup\L(\theta)$ fills $F_n$ we have the following as well:
\begin{itemize}
\item The top stratum $H_r$ of the \ct\ $f \from G \to G$ is \eg, and by Step~\ref{IAStepGLF} it is \eg-geometric. Applying Fact~\refGM{FactGeometricCharacterization} and Fact~\refGM{FactEGNielsenCrossings}~\prefGM{ItemEGNielsenPointInterior} there is a unique height~$r$ indivisible Nielsen path $\rho_r$, and this is a closed path with base point $p_r \in H_r - G_{r-1}$. 
\end{itemize}
We give a quick review of geometric strata and their models, referring the reader to Section~\refGM{SectionGeometric} for details. 

\smallskip \textbf{Geometric models of $H_r$ (review: Section~\refGM{SectionGeometricModelsAndStrata}).} Henceforth in Step 3 we fix a geometric model for $H_r$ as given in Definition~\refGM{DefGeomModel}. Since $H_r$ is the top stratum, a geometric model for $H_r$ is equivalent to a weak geometric model as given in Definition~\refGM{DefWeakGeomModel}. Here are some details.

For the ``static data'' of a weak geometric model of $H_r$ one is given a finite 2-complex $X$ expressed as the quotient of the graph $G_{r-1}$ and a compact surface $S$ whose boundary is nonempty and has components $\bdy S = \bdy_0 S \union\cdots\union \bdy_m S$ ($m \ge 0$), where $\bdy_0 S$ is the \emph{top boundary} and $\bdy_1 S,\ldots,\bdy_m S$ are the \emph{lower boundaries}. The quotient map $j \from G_{r-1} \disjunion S \to X$ is defined by gluing each lower boundary $\bdy_i S$ to $G_{r-1}$ via a closed, homotopically nontrivial edge path $\alpha_i \from \bdy_i S \to K$ ($1 \le i \le m$). Note that $j$ embeds the graph $G_{r-1}$ and circle $\bdy_0 S$ as disjoint subcomplexes of~$X$; the union of these two subcomplexes is referred to in \refGM{DefComplSubgraph} as the \emph{complementary subgraph} $K \subset X$ of the geometric model. One is also given an embedding $G=G_r \inject X$ which extends the embedding $G_{r-1} \inject X$, and a deformation retraction $d \from X \to G$. It is required that $G \intersect \bdy_0 S$ be a single point $p_r$ and that the restriction of $d$ to $\bdy_0 S$ with base point $p_r$ is a parameterization of a closed indivisible Nielsen path~$\rho_r$ for $f$; from this requirement it follows that $d \restrict \bdy_0 \from \bdy_0 \to G$ is an immersion, and so $d \restrict K \from K \to G$ is an immersion. As a consequence of these conditions one obtains a further static conditions saying that the interior of $H_r$ in $G$ equals $H_r - G_{r-1} = H_r \intersect (X - G_{r-1}) = H_r \intersect (\interior(S) \union \{p_r\})$. 

For the ``dynamic data'' of a weak geometric model of $H_r$ one is given a homotopy equivalence $h \from X \to X$ and a homeomorphism $\Psi \from S \to S$ with pseudo-Anosov mapping class, subject to the requirement that the composed maps $d \composed h, \, f \composed d \from X \to G_r$ are homotopic, and that the composed maps $j \composed \Psi, \, h \composed j \from S \to X$ are homotopic.

We shall assume that orientations have been chosen for each of the boundary components $\bdy_i S$, and when the surface $S$ is orientable we specify that these boundary orientations are induced by a chosen orientation of~$S$. Each $\bdy_i S$ therefore determines a well-defined conjugacy class in $F_n$ denoted $[\bdy_i S]$, the inverse of which is denoted $[\bdy_i S]^\inv$. Note that $[\bdy_i S] \ne [\bdy_i S]^\inv$ because no element in $F_n$ is conjugate to its inverse. We denote $[\bdy_i S]^\pm = \{[\bdy_i S], [\bdy_i S]^\inv\}$; and we denote $[\bdy S]^\pm = \union_{i=0}^m [\bdy_i S]^\pm$, called the set of \emph{peripheral conjugacy classes} of $X$. 

The immersion $d \restrict K \from K \to G$ is $\pi_1$-injective on each component, and the images of these injections define a subgroup system of $F_n$ denoted $[\pi_1 K]$. 

We will cite various results of Sections~\refGM{SectionGeometric} and~\refGM{SectionVertexGroups} regarding properties of the complementary subgraph $K$ and the immersion $d \from K \to G$. To start with, like any graph immersion, $d \from K \to G$ is $\pi_1$-injective on each component of $K$; the conjugacy classes of the images of these injections define a subgroup system of $F_n$ denoted $[\pi_1 K]$. By Lemma~\refGM{LemmaLImmersed}, for each noncontractible component of $K$ the corresponding subgroup is malnormal, and for distinct components the two subgroups are ``mutually malnormal'' in that any conjugates of those two subgroups intersect trivially. We describe this by saying that $[\pi_1 K]$ is a malnormal subgroup system with one component for each noncontractible component of $K$. 

\smallskip

The following lemma, besides its immediate application here in the geometric case, will be applied also in Section~\ref{SectionOneEdge} in both the geometric and nongeometric cases. The proof is an application of results from \BookOne\ Section~6 including the Weak Attraction Theorem, and the results from Section~\refGM{SectionVertexGroups} on vertex group systems.

\begin{lemma}\label{LemmaANAReview}
Consider any $\theta \in \IA_n(\Z/3)$, 
any rotationless power $\phi = \theta^k$, and any \ct\ $f \from G \to G$ representing $\phi$ whose top stratum $H_r$ is \eg. If $H_r$ is not geometric let $K=G_{r-1}$, whereas if $H_r$ is geometric let $K$ be the complementary subgraph of any geometric model for~$H_r$, and in either case let $[\pi_1 K]$ denote the corresponding subgroup system. The following hold:
\begin{enumerate}
\item\label{ItemANAPeriodic}
If $c$ is a $\theta$-periodic conjugacy class then $c$ is carried by $[\pi_1 K]$. If $H_r$ is nongeometric it follows that $c$ is carried by $G_{r-1}$. If $H_r$ is geometric it follows that either $c$ is carried by $G_{r-1}$ or $c$ is an iterate of $[\bdy_0 S]$ or $[\bdy_0 S]^\inv$.
\item\label{ItemANAKFixed}
The action of $\theta$ on subgroup systems fixes $[\pi_1 K]$.
\end{enumerate}
\end{lemma}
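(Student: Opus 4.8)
The strategy is to reduce both conclusions to a single application of the Weak Attraction Theorem from \BookOne\ Section~6 together with the structural results on vertex group systems from Section~\refGM{SectionVertexGroups}. First I would set up notation uniformly: whether $H_r$ is geometric or nongeometric, $K$ is a subgraph of $G$ (equal to $G_{r-1}$ in the nongeometric case, or the complementary subgraph $G_{r-1} \union \bdy_0 S$ of a fixed geometric model in the geometric case), and $d \from K \to G$ is a $\pi_1$-injective immersion on each component, so $[\pi_1 K]$ is a malnormal subgroup system. The key dichotomy for a conjugacy class $c$ is whether $c$ is attracted to the lamination $\Lambda_r \in \L(\phi)$ associated to $H_r$. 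If $c$ is \emph{not} attracted to $\Lambda_r$, then the Weak Attraction Theorem (\BookOne\ Theorem~6.0.1, or its packaging in \PartOne) tells us that $c$ is carried by one of the subgroup systems appearing in the ``non-attracted'' alternative, and in the present situation the combinatorics of a \ct\ with $H_r$ on top --- using Step~\ref{IAStepGLF}, which is available since we are in the middle of the proof of Theorem~\ref{ThmPeriodicConjClass} --- forces that carrying subgroup to be $[\pi_1 K]$. In the geometric case the ``non-attracted but crossing $H_r$'' possibility is exactly the case where $c$ is an iterate of the top boundary class $[\bdy_0 S]^{\pm}$, which is why that clause appears in~\pref{ItemANAPeriodic}.

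For~\pref{ItemANAPeriodic} proper, I would argue that a $\theta$-periodic (hence by \trefGM{FactPNPFixed}{} essentially $f_\#$-fixed) conjugacy class $c$ cannot be attracted to $\Lambda_r$: an $f_\#$-fixed circuit is a concatenation of fixed edges and indivisible Nielsen paths (Fact~\refGM{FactNielsenCircuit}), so its $H_r$-edge content is bounded under iteration by $f_\#$ --- whereas every conjugacy class attracted to $\Lambda_r$ has $H_r$-content growing without bound. Hence $c$ falls into the non-attracted alternative of the Weak Attraction Theorem, and by the preceding paragraph $c$ is carried by $[\pi_1 K]$. The refinements are then bookkeeping: in the nongeometric case $K = G_{r-1}$ literally, so ``carried by $[\pi_1 K]$'' means ``carried by $G_{r-1}$''; in the geometric case $K = G_{r-1} \union \bdy_0 S$ and a component subgroup of $[\pi_1 K]$ is either a component of $[\pi_1 G_{r-1}]$ or the cyclic group $\<\bdy_0 S\>$, giving the stated trichotomy.

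For~\pref{ItemANAKFixed} I would use that $[\pi_1 K]$ is characterized dynamically rather than combinatorially, so that it is visibly a $\theta$-equivariant invariant. Concretely: $[\pi_1 K]$ is the vertex group system (in the sense of Section~\refGM{SectionVertexGroupSystems}) of a limit object attached canonically to $\Lambda_r$ --- in the nongeometric case it is the free factor support of the set of lines that cross $H_r$ only finitely, and in the geometric case it is read off from the peripheral structure of the geometric model, whose pieces (the lower boundary subgroups, realized in $G_{r-1}$, and the top boundary class) are each determined by $\Lambda_r$. Since $\theta$ fixes $\Lambda_r \in \L(\theta)$ by Lemma~\ref{LemmaFFSComponent}~\pref{ItemIAThreeLams}, it fixes anything canonically built from $\Lambda_r$, in particular the subgroup system $[\pi_1 K]$; alternatively, one invokes the uniqueness in the vertex group results of Section~\refGM{SectionVertexGroups} together with $\theta$-invariance of $\Lambda_r$. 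The one point requiring care --- and the step I expect to be the main obstacle --- is verifying in the geometric case that $[\pi_1 K]$ really is intrinsic to $\Lambda_r$ and not an artifact of the chosen geometric model; this is where I would lean on the results of Section~\refGM{SectionGeometric} identifying the complementary subgraph's subgroup system with a model-independent invariant (the ``peripheral'' data of the attracting lamination), after which $\theta$-invariance is immediate from fixing~$\Lambda_r$.
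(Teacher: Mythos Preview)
Your overall strategy matches the paper's --- Weak Attraction Theorem plus $\theta$-invariance of $\Lambda_r$ --- but there is one genuine problem and one point that needs sharpening.

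The problem is your appeal to Step~\ref{IAStepGLF}, justified by ``we are in the middle of the proof of Theorem~\ref{ThmPeriodicConjClass}.'' The lemma is a standalone result, applied again in Section~\ref{SectionOneEdge} (for instance in the proof of Lemma~\ref{LemmaFreeFactorReduction}) where neither Step~\ref{IAStepNonfillingOrbits} nor Step~\ref{IAStepGLF} is in force; your argument as written would not apply there. Fortunately the appeal is also unnecessary: because $H_r$ is the \emph{top} stratum, \BookOne\ Theorem~6.0.1 together with Remark~6.0.2 already give that $c$ is not weakly attracted to $\Lambda_r$ if and only if the representing circuit lies in $G_{r-1}$ or (in the geometric case) is an iterate of $\rho_r^{\pm}$ --- there are no further alternatives to eliminate, and this is precisely the condition ``carried by $[\pi_1 K]$.''

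For \pref{ItemANAKFixed}, your ``canonically built from $\Lambda_r$'' heuristic is not yet a proof, and the model-dependence worry you flag can be sidestepped entirely. The paper makes your ``alternatively'' clause precise: $[\pi_1 K]$ is a vertex group system (Proposition~\refGM{PropGeomVertGrSys} in the geometric case; a free factor system, hence a vertex group system, in the nongeometric case), and by Lemma~\refGM{LemmaVSElliptics} a vertex group system is determined by the conjugacy classes it carries. Those classes are exactly the ones not weakly attracted to $\Lambda_r$; since $\theta(\Lambda_r)=\Lambda_r$ by Lemma~\ref{LemmaFFSComponent}~\pref{ItemIAThreeLams} and $\theta\phi\theta^{-1}=\phi$, a one-line computation shows $[\pi_1 K]$ and $\theta^{-1}[\pi_1 K]$ carry the same classes and hence coincide.
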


\begin{proof} Let $\Lambda_r \in \L(\phi)$ be the attracting lamination corresponding to $H_r$. If $H_r$ is geometric let $\rho_r$ be the closed, indivisible Nielsen path of height~$r$. Applying Lemma~\ref{LemmaFFSComponent}~\pref{ItemIAThreeLams} we have $\theta(\Lambda_r)=\Lambda_r$.

The proofs of both items~\pref{ItemANAPeriodic} and~\pref{ItemANAKFixed} use the Weak Attraction Theorem, \BookOne\ Theorem~6.0.1, and Remark~6.0.2 following that theorem, which together imply that a conjugacy class $c$ in $F_n$ is not weakly attracted to $\Lambda_r$ under iteration by $\phi$ if and only if the circuit in $G$ representing $c$ is carried by $G_{r-1}$ or (in the geometric case) is an iterate of $\rho$ or $\bar\rho$; by construction of $K$ this is equivalent to saying that $c$ is carried by $[\pi_1 K]$.

Item~\pref{ItemANAPeriodic} follows immediately, noting that each $\theta$-periodic conjugacy class is fixed by $\phi$ and so is not weakly attracted to $\Lambda_r$.

To prove item~\pref{ItemANAKFixed}, we use the fact that the subgroup system $[\pi_1 K]$ is a vertex group system as defined in Section~\refGM{SectionVertexGroupSystems}: if $H_r$ is geometric this follows by applying Proposition~\refGM{PropGeomVertGrSys}; and if $H_r$ is nongeometric then $[\pi_1 K] = [\pi_1 G_{r-1}]$ is a free factor system which is a special case of a vertex group system. By Lemma~\refGM{LemmaVSElliptics}, a vertex group system is characterized by the conjugacy classes that it carries. It therefore suffices to observe that the vertex group systems $[\pi_1 K]$ and $\theta^\inv[\pi_1 K]$ carry the same conjugacy classes: a conjugacy class $c$ is carried by $[\pi_1 K]$ if and only if $c$ is not weakly attracted to $\Lambda^+_\phi$ under iteration of $\phi$, if and only if $\theta(c)$ is not weakly attracted to $\theta(\Lambda_r)=\Lambda_r$ under iteration by $\theta\phi\theta^\inv=\phi$, if and only if $\theta(c)$ is carried by $[\pi_1 K]$, if and only if $c$ is carried by $\theta^\inv[\pi_1 K]$.

\end{proof}

\subparagraph{Free boundary circles (review: end of Section~\refGM{SectionGeomModelComplement}).} Define $\bdy_i S$ to be a \emph{free boundary circle} of $X$ if there exists an open collar neighborhood $U \subset S$ of $\bdy_i S$ such that the map $j \from S \to X$ restricts to an embedding of $U$ onto an open subset of $X$. Each free boundary circle is identified homeomorphically by $j$ with its image in $X$. The top boundary $\bdy_0 S$ is a free boundary circle (this uses that $H_r$ is the top stratum). Each free boundary circle is a component of~$K$, and so any lower boundary $\bdy_i S$ which is a free boundary circle is a component of $G_{r-1}$.

Fix a subsurface $S' \subset S$ called the \emph{free subsurface} which is characterized up to ambient isotopy by saying that $S-S'$ is the union of a pairwise disjoint collection of open collar neighborhoods of the nonfree boundary circles of $S$, so $S'$ is identified homeomorphically by~$j$ with its image in~$X$. Note that $S'$ contains each free boundary circle, and that the inclusion $S' \inject S$ is homotopic to a homeomorphism relative to the free boundary circles. 

Let $\bdy^\free X \subset X$ be the union of free boundary circles, each of which may be regarded simultaneously as a component of $\bdy S$ and of $\bdy S'$, amongst which is included $\bdy_0 S$. After reordering the components of $\bdy S$ by a permutation of $\{0,\ldots,m\}$ that fixes~$0$, there exists $l \in 0,\ldots,m$ such that
$$\bdy^\free X = \bigcup_{0 \le i \le l} \bdy_i S = \bigcup_{0 \le i \le l} \bdy_i S' 
$$
and we define the \emph{nonfree boundary circles} to be 
$$\bdy^\nonfree X = \bdy S' - \bdy^\free S = \bigcup_{l < i \le m} \bdy_i S'
$$
Let $[\bdy^\free X]^\pm, [\bdy^\nonfree X]^\pm \subset [\bdy S]^\pm$ be the corresponding sets of conjugacy classes. 

Free boundary circles are addressed in Lemma~\refGM{LemmaFreeDefRetr}. The hypothesis of that lemma is that $\theta$ preserves $[\pi_1 K]$ which we know to be true by Lemma~\ref{LemmaANAReview}~\pref{ItemANAKFixed}. From the conclusion of that lemma we have:

\begin{fact}\label{FactFreePreserved}
There is a homotopy equivalence $\Theta \from X \to X$ representing $\theta$ such that $\Theta$ preserves the $K$, the free subsurface $S'$, and its complement $X \setminus S'$, the restriction $\Theta \restrict K$ is a self-homotopy equivalence, and the restriction $\Theta \restrict S'$ is a self-homeomorphism. In particular, $\Theta$ preserves the images in $X$ of the free boundary circles of~$S$.
\end{fact}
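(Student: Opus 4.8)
The plan is to obtain Fact~\ref{FactFreePreserved} as a direct application of Lemma~\refGM{LemmaFreeDefRetr} of Part~I, which is tailored to produce exactly such a homotopy equivalence $\Theta \from X \to X$ with the stated invariance properties, under the single hypothesis that the outer automorphism $\theta$ preserve the subgroup system $[\pi_1 K]$ attached to the complementary subgraph $K$ of the geometric model.

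First I would check that hypothesis. This is precisely Lemma~\ref{LemmaANAReview}~\pref{ItemANAKFixed}, established above for any $\theta \in \IA_n(\Z/3)$, any rotationless power $\phi = \theta^k$, and any \ct\ $f \from G \to G$ whose top stratum $H_r$ is \eg; in the geometric case at hand, $K$ is the complementary subgraph of the geometric model for $H_r$ fixed at the start of Step~3, and that item asserts that the action of $\theta$ on subgroup systems fixes $[\pi_1 K]$.

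Then I would invoke Lemma~\refGM{LemmaFreeDefRetr} and read off its conclusion: a homotopy equivalence $\Theta \from X \to X$ representing $\theta$, preserving $K$, preserving the free subsurface $S'$ and its complement $X \setminus S'$, restricting to a self-homotopy equivalence of $K$ and to a self-homeomorphism of $S'$. For the last clause, since $S'$ contains every free boundary circle and since the inclusion $S' \inject S$ is homotopic to a homeomorphism relative to the free boundary circles, the homeomorphism $\Theta \restrict S'$ must carry the union $\bdy^\free X$ of free boundary circles onto itself.

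The only real work is bookkeeping: aligning the notation of the cited Part~I lemma with the weak-geometric-model notation fixed in Step~3, and confirming we are in its setting --- in particular that $H_r$ being the top stratum makes $\bdy_0 S$ a free boundary circle, so that the top boundary is among the circles whose images $\Theta$ preserves. I do not anticipate any substantive obstacle beyond this.
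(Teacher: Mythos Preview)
Your proposal is correct and matches the paper's own argument essentially verbatim: the paper states that the hypothesis of Lemma~\refGM{LemmaFreeDefRetr} is that $\theta$ preserves $[\pi_1 K]$, verifies this via Lemma~\ref{LemmaANAReview}~\pref{ItemANAKFixed}, and then reads off Fact~\ref{FactFreePreserved} from the conclusion of that lemma.
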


We now turn to the proof of Step~\ref{IAStepGeometricDoesntFill}. Consider a $\theta$-periodic conjugacy class $c$. If the $\theta$-orbit of $c$ does not include an iterate of $[\bdy_0 S]^\pm$ then by Lemma~\ref{LemmaANAReview}~\pref{ItemANAPeriodic} the entire $\theta$-orbit of $c$ is carried by the proper free factor system $[\pi_1 G_{r-1}]$, and so $\theta$ fixes $c$ by Step~\ref{IAStepNonfillingOrbits}. If the $\theta$-orbit of $c$ does include an iterate of $[\bdy_0 S]^\pm$ then by Fact~\ref{FactFreePreserved} the $\theta$-orbit of $c$ is contained in the set $[\bdy^\free X]^\pm$. All that remains is therefore to prove that $\Theta$ preserves each free boundary circle of $S'$ and preserves the orientation on that circle. 

We claim that one of the following holds:
\begin{description}
\item[Case (i):] The map $[\bdy^f S']^\pm \to H_1(X;\Z/3)$ is injective.
\item[Case (ii):] $S'$ is orientable and $l=0$.
\end{description}
For the proof of (and later application of) this claim, consider the subcomplex $N = X \setminus S'$ which deformation retracts to $G_{r-1}$, and consider the decomposition $X = S' \union N$ where $S' \intersect N = \bdy^\nonfree S' = \bdy_{l+1} S' \union \cdots \union \bdy_m S'$. The Mayer-Vietoris sequence of this decompositition, an exact sequence of vector spaces over the field $\Z/3$, has the form
$$H_1(\bdy^\nonfree S') \mapsto H_1(S') \oplus H_1(G_{r-1}) \mapsto H_1(X) \mapsto \wt H_0(\bdy^\nonfree S') \mapsto \wt H_0(N) \oplus \wt H_0(S')=0
$$
where the coefficient field $\Z/3$ is understood in the notation. The kernel of the inclusion induced homomorphism $H_1(S') \to H_1(X)$ is therefore contained in the span $V' \subset H_1(S')$ of the homology classes of $\bdy_{l+1} S',\ldots,\bdy_m S'$. 

If $S'$ is nonorientable then the homology classes of boundary components $\bdy_0 S',\ldots,\bdy_m S'$ are linearly independent in $H_1(S')$, and after modding out by $V'$ it follows that the homology classes of $\bdy_0 S',\ldots,\bdy_l S'$ remain linearly independent in $H_1(X)$, and Case~(i) follows.

If $S'$ is orientable then the kernel of the inclusion induced homomorphism 
$$(\Z/3)^{m+1} \approx H_1(\bdy_0 S') \oplus\cdots\oplus H_1(\bdy_m S') = H_1(\bdy S') \to H_1(S')
$$
is the 1-dimensional vector subspace $W$ generated by the homology class of the cycle $\bdy_0 S' + \cdots +\bdy_m S'$. If $\bdy_0 S',\ldots,\bdy_l S'$ are linearly independent in $H_1(X)$ then Case~(i) follows. 

Suppose that $\bdy_0 S',\ldots,\bdy_l S'$ are not linearly independent in $H_1(X)$. Let $V \subset H_1(\bdy S')$ be the subspace spanned of the homology classes of $\bdy_{l+1}S',\ldots,\bdy_m S'$, so the kernel of $H_1(\bdy S') \to H_1(X)$ is $W+V$. Consider any coefficient sequence $a_0,\ldots,a_l \in \Z/3$, not all zero, such that $\sum_{i=0}^l a_i \, \bdy_i S'$ is trivial in $H_1(X)$. It follows that there exists an element of $V$ of the form $\sum_{i=l+1}^m b_i \, \bdy_k S'$, where $b_{l+1},\ldots,b_m \in \Z/3$, and there exists $c \in \Z/3$, such that in $H_1(\bdy S')$ we have the following equation:
$$a_0 \, \bdy_0 S' + \cdots + a_l \, \bdy_l S' = c(\bdy_0 S' + \cdots + \bdy_l S') + b_{l+1} \, \bdy_{l+1} S' + \cdots + b_m \, \bdy_m S'
$$
But this implies that $a_0=\ldots=a_l = c$ and $b_{l+1}=\ldots=b_{m}=-c$ in $\Z/3$. We have therefore shown that the \emph{only} linear relation amongst $\bdy_0 S',\ldots,\bdy_l S'$ in $H_1(X)$ is that their sum can equal zero. If $l \ge 1$ it follows that Case~(i) holds; if $l=0$ then Case~(ii) holds. This completes the proof of the claim.

Using the claim we now prove that $\Theta$ preserves each free boundary circle and its orientation, breaking the proof into the Cases (i) and (ii) described above.

\textbf{Case~(i):} The set $[\bdy^\free S']^\pm$ is invariant by $\Theta$, and the map from this set to $H_1(X;\Z/3)$ is $\Theta$-equivariant and injective. Since $\Theta$ acts as the identity on the range, it follows that $\Theta$ acts as the identity on the domain $[\bdy^\free S']^\pm$, and we are done. 

\textbf{Case~(ii):} It follows that $\Theta$ permutes the two-element set $[\bdy_0 S']^\pm$, and this permutation is the identity if and only if $\Theta$ preserves orientation of $S'$. To prove the latter we break into subcases depending on the value of~$m$.

\textbf{Case~(ii)(a): $m=0$.} It follows that $\bdy^\nonfree S' = \emptyset$ which implies that $X=S$ deformation retracts to $S'$ by removing a collar neighborhood of $\bdy_0 S$. We therefore have identifications $F_n \approx \pi_1(X) \approx \pi_1(S')$. The homeomorphism $\Theta \from S' \to S'$ therefore acts as the identity on $H_1(S';\Z/3)$. Since $l=0$ we have a natural isomorphism $H_1(S') \approx H_1(S',\bdy S')$ and so $\Theta$ acts as the identity on both sides of this isomorphism. It follows that $\Theta$ preserves the intersection pairing $H_1(S';\Z/3) \oplus H_1(S',\bdy S';\Z/3) \to \Z/3$, which implies that $\Theta$ preserves orientation of~$S'$.

\textbf{Case~(ii)(b): $m=1$.} It follows that $\bdy^\nonfree S' = \bdy_1 S'$, and so the action of $\theta$ on conjugacy classes in $F_n$ preserves the two element set $\{[\bdy_1 S'], [\bdy_1 S']^\inv\}$. But these two elements are not conjugate in $F_n$ and so by induction $\theta$ fixes $[\bdy_1 S']$. It follows that $\Theta$ preserves the orientation of $\bdy_1 S'$ and so $\Theta$ preserves the orientation of $S'$.

\textbf{Case~(ii)(c): $m \ge 2$.} We claim that $\Theta \from S \to S$ preserves the lower boundary circle $\bdy_i S'$ for each $i=1,\ldots,m$. Once this claim is established then we are done, for then the action of $\theta$ preserves the two element set of conjugacy classes $\{[\bdy_i S'],[\bdy_i S']^\inv\}$ and the proof is completed exactly as in Case~(ii)(b).

Arguing by contradiction, suppose that $\Theta$ maps some lower boundary circle $\bdy_i S'$ to some other lower boundary circle $\bdy_j S'$. Permuting indices we may assume that $i=1$ and $j=2$ so $\Theta(\bdy_1(S'))=\bdy_2(S')$. We now use the map $H_1(X) \mapsto \wt H_0(\bdy^\nonfree S')$ from the Mayer-Vietoris sequence. This map is natural with respect to the action of $\theta$ on the domain $H_1(X)$, which is the identity, and the action of $\Theta$ on the range $\wt H_0(\bdy^\nonfree S')$, which is therefore also the identity. For each $i=1,\ldots,m$ pick $x_i \in \bdy_i S'$, and so each element of $\wt H_0(\bdy^\nonfree S')$ is represented uniquely by a 0-cycle $\sum_{i=1}^m a_i x_i$ where $a_i \in \Z/3$ and $\sum_{i=1}^m a_i=0$. Consider in particular the cycle $x_1 - x_2$. Choosing $k$ so that $\Theta(\bdy_2 S') = \bdy_k S'$, we have $\Theta[x_1-x_2]=[x_2-x_k] \ne [x_1-x_2]$, contradicting that $\Theta$ acts as the identity on $\wt H_0(\bdy^\nonfree S')$, and we are done. Tracing back through the Mayer-Vietoris theorem, the nonfixed element of $H_1(X)$ which gives the contradiction is represented by a circle in $X$ which starts from the point~$x_1$, goes through $S'$ to the point $x_2$, and then goes back through $N$ to the point $x_1$.

This completes Step~\ref{IAStepGeometricDoesntFill}.


\subsection{The case that $\cup\L(\theta)$ does not fill: a one-edge extension.}

\begin{IAStep}\label{IAStepOneEdge} If $\cup\L(\theta)$ does not fill $F_n$ then $F_n$ is a one-edge extension of some \break $\theta$-invariant free factor system that carries $\L(\theta)$.
\end{IAStep}

For the proof we note that the free factor support of $\cup\L(\theta)$ is a $\theta$-invariant proper free factor system. Let $\M$ be a \emph{maximal} $\theta$-invariant, proper free factor system that supports~$\L(\theta)$. We must show that $F_n$ is a one-edge extension of $\M$.

We need some results from \recognition\ regarding existence of relative train track maps for arbitrary outer automorphisms such that the zero strata and \neg\ strata share certain important features of \cts. In our context with $\M$ as described above,
by \recognition\ Theorem~2.19 and Lemma~2.20, there exists a relative train track representative $h \from K \to K$ of $\theta$ with strata $L_r = K_r \setminus K_{r-1}$ having the following properties:
\begin{enumerate}
\item\label{ItemNoEG}
$\M$ is represented by a core filtration element $K_s$ (see Theorem~2.19, particularly (F) and the final clause). 
Since $\M$ carries $\union\L(\theta)$ it follows that there is no \eg\ stratum of height~$>s$.
\item There is no zero stratum of height~$>s$ (see Theorem~2.19 (Z)).
\item\label{ItemPeriodicValenceOne}
For each periodic stratum $L_r$ of height $r > s$, if $L_r$ is a forest then every valence~$1$ vertex of $L_r$ is in $K_s$ (see Lemma~2.20~(1)).
\item\label{ItemTerminalIsIn}
For each \neg\ nonperiodic stratum $L_r$ of height $r>s$, the terminal endpoint of each edge of $L_r$ is in $K_s$ (see Theorem~2.19~(NEG)).
\end{enumerate}
We deduce one further property in our context:
\begin{enumeratecontinue}
\item\label{ItemEachTouchesKs}
Each edge $E$ of $K \setminus K_s$ has at least one endpoint in $K_s$. Furthermore, up to reversing orientation in each periodic stratum we may assume that the terminal endpoint of $E$ is in $K_s$.
\end{enumeratecontinue}
To prove \pref{ItemEachTouchesKs}, assuming $E$ has no endpoint in $K_s$ it follows from the previous properties that $E$ is a periodic edge. The union of periodic edges disjoint from $K_s$ is invariant under~$k$, and so there is a stratum $L_r$ of such edges with $r>s$. If $L_r$ is not a forest then $[L_r]$ is a nontrivial free factor system and $\M \union [L_r]$ is a proper free factor system invariant by $\theta$ that properly contains $\M$, contradicting maximality of $\M$. And if $L_r$ is a forest then item~\pref{ItemPeriodicValenceOne} is contradicted. This proves the first sentence of \pref{ItemEachTouchesKs}, and the second sentence follows from~\pref{ItemTerminalIsIn}.

We now break the proof of Step~\ref{IAStepOneEdge} into two cases depending on whether or not there is a unique stratum of height~$>s$.

\smallskip

\textbf{Case 1: $L_{s+1}$ is the unique stratum of height~$>s$.} Recall from Definition~\refGM{DefRelTT} that the edges of $L_{s+1}$ can be oriented and listed as $E_1,\ldots,E_I$ so that for each $i \in \Z / I\Z$ we have $h(E_i) = E_{i+1} u_i$ for some (possibly empty) path $u_i$ in $K_s$. Let $p_i, q_i$ be the initial and terminal points of $E_i$, so $h(p_i)=p_{i+1}$, and we may regard $u_i$ as a (possibly constant) path in $K_s$ from $q_{i+1}$ to $h(q_i)$. 

If $I=1$ then we are done, for in that case $K=K_s \union E_1$ and $p_1 \in K_s$, so $F_n$ is a one-edge extension of~$\M$. Henceforth in Case 1 we assume $I \ge 2$, so $E_1$ and $E_2$ exist and are distinct, and we work towards a contradiction.

Note that since $\theta$ fixes the components of the free factor system~$\M$, the map $h$ preserves each component of the graph $K_s$, and so all of the points $q_i$ and paths $u_i$ are in the same component of $K_s$, which we denote $\Gamma_q$. There are two subcases to consider: either all of the $p_i$ are in $K_s$, or none of them are. 

\smallskip

\textbf{Subcase 1:} None of the $p_i$ are in $K_s$. The period of the $h$-orbit $p_1,\ldots,p_I$ is an integer $J \ge 1$ dividing $I$, so for $1 \le i \le I$ we have $h^J(p_i) = p_{i+J} = p_i$, and furthermore if $1 \le j < J$ we have $h^j(p_i) = p_{i+j} \ne p_i$. Furthermore we must have $J < I$, otherwise each $p_i$ has valence~$1$ in the core graph $K$, an impossibility. It follows that $E_1 \ne E_{1+J}$ and that $p_1 = p_{1+J}$, and so we may concatenate to get a path $\overline E_1 * E_{1+J}$. Since $q_1,q_{1+J} \in \Gamma_q$, there is a path $\delta$ in $\Gamma_q$ from $q_{1+J}$ to $q_1$ and we obtain a circuit $\gamma = \overline E_1 * E_{1+J} * \delta$ whose straightened image may be written as
$$h_\#(\gamma) = \overline E_2 * E_{2+J} * \underbrace{[u_{1+J} * h_\#(\delta) * \bar u_1]}_{\text{in $K_s$}}
$$
Clearly $\gamma$ and $h_\#(\gamma)$ have different relative homology classes in $H_1(K,K_s;\Z/3)$. Using the natural homomorphism $H_1(K;\Z/3) \to H_1(K,K_s;\Z/3)$ it follows that $\gamma$ and $h_\#(\gamma)$ have different classes in $H_1(K;\Z/3)$, contradicting that~$\theta \in \IA_n(\Z/3)$. 

\smallskip
 
\textbf{Subcase 2:} All of the $p_i$ are in $K_s$, and as above they are in the same component which we denote $\Gamma_p$. There is a path $\epsilon$ in $\Gamma_p$ from $p_1$ to $p_2$ and a path $\delta$ in $\Gamma_q$ from $q_2$ to $q_1$ and we obtain a circuit $\gamma = \overline E_1 * \epsilon * E_2 * \delta$ with straightened image
$$h_\#(\gamma) = \overline E_2 * \underbrace{h_\#(\epsilon)}_{\textbf{in $K_s$}} * E_3 * \underbrace{[u_2 * h_\#(\delta) * \bar u_1]}_{\textbf{in $K_s$}}
$$ 
(If $I=2$ then $3=1$ and $E_3=E_1$). Again we get the contradiction that $\gamma$ and $h_\#(\gamma)$ have different homology classes in $H_1(K,K_s;\Z/3)$ and so also in $H_1(K;\Z/3)$. 

\smallskip

\textbf{Case 2: There is more than one stratum of height~$>s$.} After some preliminaries we will be able to arrange a picture quite similar to Case~1, leading to a similar conclusion.

For each stratum $L_r$ with $r>s$, it follows from \pref{ItemEachTouchesKs} that each component of $K_s \union L_r$ intersects $K_s$ nontrivially, and from \pref{ItemNoEG}--\pref{ItemTerminalIsIn} that $K_s \union L_r$ is $h$-invariant. By maximality of $\M$ it follows that $K_s \union L_r$ deformation retracts to $K_s$. Since by \pref{ItemEachTouchesKs} each edge of $L_r$ has its terminal endpoint on $K_s$, its initial endpoint has valence~$1$ in $K_s \union L_r$. Since $K$ is a core graph, there exist two strata $L_r,L_{r'}$ of heights $r \ne r' > s$ such that some edge of $L_r$ and some edge of $L_{r'}$ share a common initial vertex. The subgraph $K_s \union L_r \union L_{r'}$ therefore does not deformation retract to $K_s$, but this subgraph is $h$-invariant. By maximality of $\M$ it follows that $\{r,r'\}=\{s+1,s+2\}$ and that $K = K_s \union L_{s+1} \union L_{s+2}$. Furthermore, for $a=1,2$ the sets of initial vertices of $L_{s+a}$, each of valence~$1$ in $K_s \union L_{s+i}$ and valence~$\ge 2$ in $K$, must be bijectively identified. It follows that there is an integer $I \ge 1$ and for each $a=1,2$ an enumeration of the edges of $L_{s+a}$ as $E_{a,1},\ldots,E_{a,I}$ so that for $i = \Z / I\Z$ the edges $E_{1,i}$ and $E_{2,i}$ have the same initial vertex $p_i$, the edge $E_{a,i}$ has terminal vertex $q_{a,i} \in K_s$, and $h(E_{a,i}) = E_{a,i+1} u_{a,i}$ for some (possibly trivial) path $u_{a,i}$ in $K_s$ from $q_{a,i+1}$ to~$h(q_{a,i})$. 

If $I=1$ then we have completed Step 4. 

Assuming $I \ge 2$ we shall derive a contradiction. Since $h$ preserves each component of~$K_s$, all the points $q_{1,i}$ are in the same component $\Gamma_1$ of $K_s$, and all the points $q_{2,i}$ are in the same component $\Gamma_2$. Choose a path $\epsilon$ in $\Gamma_1$ from $q_{1,1}$ to $q_{1,2}$ and a path $\delta$ in $\Gamma_2$ from $q_{2,2}$ to $q_{2,1}$ and so we have a closed path $\gamma = \overline E_{2,1} \, E_{1,1} \, \epsilon \, \overline E_{1,2}  \, E_{2,2} \, \delta$ with straightened image
$$h_\#(\gamma) = \overline E_{2,2}  \, E_{1,2} \, \underbrace{[u_{1,1} \, h_\#(\epsilon) \, \bar u_{1,2} ]}_{\text{in $K_s$}} \, \overline E_{1,3} \, E_{2,3} \,  \underbrace{[u_{2,2} \, h_\#(\delta) \, \bar u_{2,1}]}_{\text{in $K_s$}}
$$
(where again $3=1$ if $I=2$) and as before we get the contradiction that $\gamma$ and $h_\#(\gamma)$ have different homology classes in $H_1(K;\Z/3)$. This completes step~5.

\subsection{The case that $\cup\L(\theta)$ does not fill: conclusion.}

\begin{IAStep}\label{IAStepFinal} If $\cup\L(\theta)$ does not fill $F_n$ then the conclusion of Theorem~\ref{ThmPeriodicConjClass} holds.
\end{IAStep}
\noindent
%
%
Applying Step~\ref{IAStepOneEdge}, there is a $\theta$-invariant \ffs\ $\M$ that carries $\L(\theta) = \L(\phi)$ and so that $F_n$ is a one-edge extension of $\M$.  Let $\f \from G \to G$ be a \ct\ representing $\phi$ in which $\M$ is realized by a core filtration element $G_r$, and let $G_s$ be the highest proper core filtration element. By Lemma~\ref{LemmaOneEdgeVersusLollipop}, the graph $G$ is either a one-edge extension or a lollipop extension of $G_r$. By Step~\ref{IAStepGLF} each stratum above $G_r$ is a fixed or linear edge. Combining this with (Periodic Edges) and (Linear Edges) in the definition of a \ct, either $G_s = G_r$ and $G$ is a one-edge extension of $G_r$ with $E = G \setminus G_r$, or $G_s=G_r \union C$ and $G$ is a lollipop extension of $G_r$ where $C = H_{r+1}$ is the loop edge of the lollipop, a fixed circle disjoint from $G_r$, and $E = H_{r+2} = G \setminus G_s$ is the stem of the lollipop. In either case $E = G \setminus G_s$ is a single topological arc with endpoints $y, z \in G_s$. Furthermore, we have $f(E) = \bar u E v$ where $u$, $v$ are each either trivial or a closed Nielsen path. Note that in the lollipop case we may orient $E$ and choose the notation so that $y \in C$ is the initial endpoint of $E$, in which case the Remark following Fact~\refGM{FactPrincipalVertices} implies that $u$ is trivial.
 
We choose $x \in \Fix(f) \cap E$ as follows. If both $u$ and $v$ are trivial then $E$ is a single fixed edge of $G$ that we may orient to have initial endpoint $y$, and we set~$x = y$; in this case $f(E)=E$. If exactly one of $u$ and $v$ is non-trivial then $E$ is a single linear edge of $G$ that we assign its \neg-orientation (see Definition~\refGM{DefRelTT}), we choose the notation so that $y$ is the initial vertex, and we set~$x=y$; in this case we have $f(E)=Ev$. If both $u$ and $v$ are non-trivial then, choosing either orientation of $E$, it follows that $E= \overline E_1 E_2$ is a union of two linear edges of $G$ that we assign their \neg-orientations, and we set $x$ to be their common initial vertex in the interior of $E$; in this case we have $f(E_1)=E_1u$ and $f(E_2) = E_2 v$.

Under the marking of $G$ we may identify $\pi_1(G,x) \approx F_n$ in a manner well-defined up to inner automorphism. Induced by the map $f \from (G,x) \to (G,x)$ we obtain an automorphism $\Phi \in \Aut(F_n)$ representing $\phi$ given by 
$$\Phi \from F_n \approx \pi_1(G,x) \xrightarrow{f_*} \pi_1(G,x) \approx F_n
$$
Let $A_x = \Fix(\Phi)$. The endpoints $y,z$ of $E$ are fixed by $f$ and belong to $f$-invariant components $C_y$, $C_z$ of $G_s$ respectively. Identify $\pi_1(C_y,y)$ with a free factor $B_y$  of $\pi_1(G,x)\cong F_n$ using the subpath of $E$ connecting $x$ to $y$. Then $B_y$  is $\Phi$-invariant and $\Phi \restrict B_y$ is the automorphism of $\pi_1(C_y,y)$ induced by $f$ and the path $u$ from $y$ to $f(y)=y$. Similarly identify $\pi_1(C_z,z)$ with a free factor $B_z$ of $F_n$ using the subpath of $E$ connecting $x$ to $z$, note that $B_z$ is $\Phi$-invariant, and note that $\Phi \restrict B_z$ is the automorphism of $\pi_1(C_z,z)$ induced by $f$ and the path $v$ from $z$ to $f(z)=z$.  Define $A_y \subgroup B_y$ and $A_z \subgroup B_z$ to be the fixed subgroups of $\Phi \restrict B_y $ and $  \Phi \restrict B_z$ respectively. Note that $A_y, A_z \subgroup A_x$. 

By Fact~\ref{IAStepNonfillingOrbits}, there exists a $\phi$-invariant conjugacy class 
that is not carried by $[\pi_1 G_s]$. Any such class is realized by a circuit in $G$ that crosses~$E$. Applying Fact~\ref{FactBasicNEGSplitting}, it follows that any circuit in $G$ that crosses $E$ splits into closed subpaths based at $x$, each of which must be a Nielsen path. If $G_s = G_r \cup C$ then the same is evidently true for any circuit contained in~$C$. Thus any $\phi$-invariant conjugacy class that is not carried by $[\pi_1 G_r] = \M$ is represented by some element of the subgroup $A_x$, and in particular $A_x$ is nontrivial.
     
\begin{lemma}  \label{Ax is invariant} Assuming notation as above, $[A_x]$ is $\theta$-invariant and $\theta \restrict A_x$ is well defined.
\end{lemma}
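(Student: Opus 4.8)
The plan is to prove the two assertions separately; the first (well-definedness) will follow formally from the second ($\theta$-invariance of $[A_x]$). For well-definedness, note that $A_x=\Fix(\Phi)$ is its own normalizer by Fact~\ref{FactItsOwnNormalizer}, so any two representatives of $\theta$ that preserve $A_x$ differ by an inner automorphism $i_g$ with $g$ normalizing $A_x$, hence with $g\in A_x$; thus they induce the same element of $\Out(A_x)$, and once $[A_x]$ is $\theta$-invariant the restriction $\theta\restrict A_x\in\Out(A_x)$ is unambiguous.

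For $\theta$-invariance of $[A_x]$: since $\phi=\theta^k$ commutes with $\theta$, for any representative $\Theta$ of $\theta$ the automorphism $\Theta\Phi\Theta^\inv$ represents $\phi$ and $\Fix(\Theta\Phi\Theta^\inv)=\Theta(A_x)$ by Lemma~\ref{LemmaFixedIsPeriodic}\pref{ItemFixSbgpNatural}; it therefore suffices to show $\Theta(A_x)$ is conjugate to $A_x$. First I would show that the set $\mathcal Q$ of $F_n$-conjugacy classes of nontrivial elements of $A_x$ is $\theta$-invariant --- equivalently, since $\Theta(A_x)$ carries exactly the classes $\theta(\mathcal Q)$, that $A_x$ and $\Theta(A_x)$ are conjugacy inseparable. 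For this, split $\mathcal Q=\mathcal Q_{\na}\disjunion\mathcal Q_{\M}$ according to whether a class is not carried by $\M$ or is carried by $\M$. Every element of $\Fix(\Phi)$ has $\phi$-fixed conjugacy class, and by the paragraph preceding the lemma every $\phi$-fixed class not carried by $\M$ is realized by an element of $A_x$; hence $\mathcal Q_{\na}$ is exactly the set of $\phi$-fixed conjugacy classes not carried by $\M$, which is preserved by $\theta$ because $\theta$ commutes with $\phi$ and fixes $\M$ (Step~\ref{IAStepOneEdge}). For a class in $\mathcal Q_{\M}$, choose a representative $b$ lying in a free factor $B$ with $[B]\in\M$; since $\theta$ fixes $[B]$ (Lemma~\ref{LemmaFFSComponent}\pref{ItemIAThreeFFS}) the restriction $\theta\restrict B$ is defined, acts trivially on $H_1(B;\Z/3)$ (as $B$ is a free factor), and satisfies $(\theta\restrict B)^k=\phi\restrict B$. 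The $B$-conjugacy class of $b$ is fixed by $\phi\restrict B$, hence $(\theta\restrict B)$-periodic, and since $B$ is a proper free factor, $\rank(B)<n$, so the induction hypothesis of Theorem~\ref{ThmPeriodicConjClass} forces it to be fixed by $\theta\restrict B$; thus $\theta$ fixes the $F_n$-conjugacy class of $b$. So $\theta$ fixes $\mathcal Q_{\M}$ pointwise and $\theta(\mathcal Q)=\mathcal Q$.

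It remains to promote conjugacy inseparability to conjugacy. If $\rank(A_x)\ge 2$ then $\Fix_N(\wh\Phi)\supseteq\bdy A_x$ is infinite, so $\Phi\in P(\phi)$; then $\Theta\Phi\Theta^\inv\in P(\phi)$ as well (Lemma~\ref{LemmaFixedIsPeriodic}\pref{ItemConjugatePrincipal}), and Lemma~\ref{LemmaInseparablyFixed} applied to the principal automorphisms $\Phi,\Theta\Phi\Theta^\inv$ gives that $A_x=\Fix(\Phi)$ and $\Theta(A_x)=\Fix(\Theta\Phi\Theta^\inv)$ are conjugate. If instead $\rank(A_x)=1$, say $A_x=\<a\>$, then $\mathcal Q=\{[a^m]:m\ne 0\}$ and $\theta(\mathcal Q)=\mathcal Q$ forces $[\theta(a)]=[a^{\pm1}]$, so $[\theta(a)]_u=[a]_u$ and $\Theta(\<a\>)=\<\theta(a)\>$ is conjugate to $\<a\>$. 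Either way $\theta([A_x])=[\Theta(A_x)]=[A_x]$.

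I expect the main obstacle to be the handling of $\mathcal Q_{\M}$: this is where the inductive hypothesis of Theorem~\ref{ThmPeriodicConjClass} must be fed in, through the restrictions of $\theta$ to the proper free factors carrying $\M$, and it requires care with the bookkeeping of restricted outer automorphisms and of the $\IA_n(\Z/3)$ condition under passage to free factors. The ancillary point that $\rank(A_x)\ge 2$ forces $\Phi\in P(\phi)$ follows directly from the definition of principal automorphism in \recognition.
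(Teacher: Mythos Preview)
Your argument is correct, but it takes a longer route than the paper's. The paper does not attempt to prove that $A_x$ and $\Theta(A_x)$ are conjugacy inseparable; instead it first notes that $\Phi$ is principal (by Fact~\refGM{FactUsuallyPrincipal}, since $x$ is an endpoint of an \neg\ edge and hence a principal vertex, independently of $\rank(A_x)$), so both $[A_x]$ and $\theta[A_x]$ lie in $\Fix(\phi)$. It then picks a \emph{single} root-free $\phi$-invariant conjugacy class $[c]$ not carried by $\M$; as in your $\mathcal Q_{\na}$ argument, $[c]$ is carried by $[A_x]$, and $\theta[c]$ is again not carried by $\M$ and hence also carried by $[A_x]$. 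So $\theta[c]$ is carried by both $[A_x]$ and $\theta[A_x]$. The decisive extra observation is that $\theta[c]_u$ cannot be a twistor of $\phi$, because every twistor is carried by $\M=[\pi_1 G_r]$ (each linear edge has its terminal endpoint in $G_r$). Fact~\ref{FactTwistor}\pref{ItemIntersectingFixed} then forces $[A_x]=\theta[A_x]$ directly: after conjugating to make the fixed subgroups intersect in a representative of $\theta[c]$, two distinct principal lifts would have intersection generated by a twistor.

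Thus the paper sidesteps both your decomposition $\mathcal Q=\mathcal Q_{\na}\disjunion\mathcal Q_{\M}$ and, more notably, your appeal to the induction hypothesis of Theorem~\ref{ThmPeriodicConjClass} for the $\mathcal Q_{\M}$ part. Your approach does establish the stronger fact of full conjugacy inseparability, and it avoids having to know that all twistors are $\M$-carried in this setup; but neither of these gains is used downstream, and the one-class-plus-twistor trick is considerably shorter. Your separate rank-$1$ case is also unnecessary once you know $\Phi$ is principal regardless of rank, since Lemma~\ref{LemmaInseparablyFixed} already handles both cases uniformly.
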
 
  
\begin{proof} Fact~\refGM{FactUsuallyPrincipal} implies that $\Phi$ is principal, and combined with nontriviality of $A_x$ it follows that $[A_x] \in \Fix(\phi)$. By Fact~\ref{LemmaFixedIsPeriodic} we have $\theta[A_x] \in \Fix(\phi)$. Choose any root-free $\phi$-invariant conjugacy class $[c]$ that is not carried by $\M$. As noted above, $[c]$ is carried by~$[A_x]$. Since $\M$ is $\theta$-invariant, $\theta[c]$ is not carried by $\M$ and so is also carried by $[A_x]$. Thus $\theta[c]$ is carried by both $[A_x]$ and $\theta[A_x]$. To prove that $[A_x] = \theta[A_x]$ it suffices by Fact~\ref{FactTwistor}~\pref{ItemIntersectingFixed} to check that the unoriented conjugacy class $\theta[c]_u$ is not a twistor. This follows from the fact that $\theta[c]$ is not carried by $\M = [\pi_1 G_r]$ and the fact that $G_r$ contains the terminal endpoint of every linear edge in $G$ and so carries all twistors (by Fact~\ref{FactTwistor} and Definition~\ref{DefTwistorCT}). Finally $\theta \restrict A_x$ is well defined by Fact~\ref{FactItsOwnNormalizer} and Fact~\refGM{FactMalnormalRestriction}. 
\end{proof}
  
Conjugacy classes that are carried by $\M$ are $\theta$-invariant by the inductive hypothesis.  All other conjugacy classes are carried by $[A_x]$ so  we need only show that   $\theta \restrict [A_x]   $ is trivial.   The following criterion for triviality will be useful.
   
\begin{lemma} \label{finite order}  For any finite rank free group~$A$, if $\psi \in\Out(A)$ has finite order and if the action of $\psi$ on $H_1(A, \Z)$ is unipotent then $\psi$ is trivial.
\end{lemma}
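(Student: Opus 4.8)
The plan is to reduce the statement to the torsion-freeness of $\IA_m(\Z/3)$ for a free group of rank $m$, which is the standard fact recalled in the introduction (the usual ingredient in the virtual torsion-freeness of $\Out(F_n)$). Write $m = \rank(A)$ and identify $H_1(A;\Z)$ with $\Z^m$, so that $\psi$ induces a matrix $\psi_* \in \GL_m(\Z)$ which is unipotent by hypothesis and of finite order because $\psi$ is. First I would record the elementary linear-algebra observation that a unipotent integer (indeed rational) matrix of finite order must be the identity: if $\psi_*$ has order $N$, then its minimal polynomial over $\Q$ divides $x^N - 1$ and so is separable, hence $\psi_*$ is diagonalizable over $\overline\Q$; but unipotence forces $1$ to be the only eigenvalue of $\psi_*$, and a diagonalizable matrix whose sole eigenvalue is $1$ equals the identity.

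Having shown $\psi_* = \Id$, it follows that $\psi$ acts trivially on $H_1(A;\Z)$, hence a fortiori trivially on $H_1(A;\Z/3)$. Viewing $A$ as a free group $F_m$, this places $\psi$ in the kernel $\IA_m(\Z/3)$ of the natural homomorphism $\Out(F_m) \to \GL_m(\Z/3)$. Since that kernel is torsion free and $\psi$ has finite order, $\psi$ is trivial, completing the proof.

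Essentially no real obstacle is expected: the only non-formal input is the cited torsion-freeness of $\IA_m(\Z/3)$, and the remaining steps are routine. If one preferred to avoid that citation, an alternative would be to realize the finite-order $\psi$ by an isometry of a finite graph with fundamental group $A$ and run a Lefschetz fixed-point count with $\Z/3$ coefficients to derive a contradiction from nontriviality; but since the paper already relies on torsion-freeness of $\IA_n(\Z/3)$, the shorter route above is preferable.
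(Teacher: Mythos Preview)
Your proof is correct. The linear-algebra step (finite-order unipotent implies identity) is fine, and the passage from triviality on $H_1(A;\Z)$ to membership in $\IA_m(\Z/3)$ and then to triviality via torsion-freeness is exactly as you describe; the torsion-freeness of $\IA_m(\Z/3)$ is indeed invoked in the paper's introduction, so the citation is legitimate.

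The paper takes a different route: it realizes the finite-order $\psi$ by a homeomorphism of an $A$-marked graph (Culler's realization theorem), and then quotes Lemma~4.47 of \BookTwo\ to conclude that this homeomorphism is isotopic to the identity, hence $\psi$ is trivial. This is precisely the ``alternative'' you sketched at the end of your proposal. What your main argument buys is that it stays entirely within facts already stated in this paper's introduction, avoiding the external citation to \BookTwo; what the paper's argument buys is that it is self-contained once one has graph realization, and it does not pass through the intermediate observation about unipotent finite-order matrices. Under the hood the two are close cousins, since the standard proof of torsion-freeness of $\IA_m(\Z/3)$ itself goes through graph realization.
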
  

\begin{proof} The outer automorphism $\psi$ may be realized as a homeomorphism of an $A$-marked graph (see \cite{Vogtmann:OuterSpaceSurvey} and the references therein).  Lemma 4.47 of \BookTwo\ implies that this homeomorphism is isotopic to the identity and hence that $\psi$ is trivial.  
\end{proof}

Since $(\theta \restrict A_x)^k = \theta^k \restrict A_x = \phi \restrict A_x$ is the identity element of $\Out(A_x)$, we are reduced to showing that the induced action of $\theta$ on $H_1(A_x, \Z)$ is unipotent.  

Most of the work is done  in Lemmas~\ref{LemmaAxProduct} and~\ref{LemmaInvariantGroups}. The proofs of these two lemmas could be cast in a general setting, using the machinery of the proof of the Scott Conjecture in \cite{BestvinaHandel:tt}, specifically the graph $\Sigma$ and the map $p \from \Sigma \to G$ used in Proposition~6.3 of that paper, which gives an algorithmic description of all fixed subgroups of all automorphisms representing an outer automorphism. In our present situation, however, rather than employing that machinery, we give simpler ad hoc arguments by exploiting the fact that the edges playing the primary role in our argument are \neg\ edges and using their ``basic splitting property'' recounted above in Fact~\ref{FactBasicNEGSplitting}.
  
\begin{lemma} \label{LemmaAxProduct} 
Assuming notation as above,  $A_y$ and $A_z$ are non-trivial and  $A_x = A_y \ast A_z$.
\end{lemma}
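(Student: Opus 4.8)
The plan is to establish the two inclusions $A_y * A_z \subseteq A_x$ and $A_x \subseteq A_y * A_z$ separately, and then to read off nontriviality of the two factors from the fact, noted just before the lemma, that $A_x$ contains an element whose conjugacy class is not carried by $[\pi_1 G_s]$.

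The inclusion $A_y * A_z \subseteq A_x$, together with the assertion that $\<A_y,A_z\>$ is the internal free product $A_y * A_z$, is the routine half: we already know $A_y,A_z \subseteq A_x$, and $A_y \subseteq B_y$, $A_z \subseteq B_z$, where $B_y$ and $B_z$ sit inside $\pi_1(G,x)$ either as the two free factors of a decomposition $\pi_1(G,x) = B_y * B_z$ (this is the case $C_y \ne C_z$, where $G_s = C_y \disjunion C_z$ and $E$ is a contractible bridge), or as $B_y$ and a conjugate $B_z = \ell B_y \ell^\inv$ in a decomposition $\pi_1(G,x) = B_y * \<\ell\>$ with $\ell$ the loop crossing $E$ (the case $C_y = C_z$). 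In either case the normal form in the ambient free product shows that an alternating product of nontrivial elements of $A_y \subseteq B_y$ and $A_z \subseteq B_z$ is nontrivial.

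The heart of the matter is the reverse inclusion $A_x \subseteq A_y * A_z$. An element of $A_x$ is represented by a loop $\sigma$ at $x$ with $f_\#(\sigma)=\sigma$. First I would apply the basic splitting property for \neg\ edges (Fact~\ref{FactBasicNEGSplitting}) to the one or two \neg\ strata above $G_s$ --- the single fixed or linear edge $E$, or the two linear edges $E_1,E_2$ when $E = \overline E_1 E_2$, applied to the higher one first. The cut points of this splitting all lie at the fixed point $x$ (and at $y,z$), and because $f_\#$ carries splittings to splittings while neither creating nor destroying occurrences of these top edges, the splitting of $f_\#(\sigma)=\sigma$ has the same cut set; hence each term is itself $f_\#$-fixed. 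Iterating down the (at most two) top strata, each resulting term represents either an element of $B_y$ (a fixed loop of the form $E_i \epsilon \overline E_i$, or $E \epsilon \overline E$, with $\epsilon$ a loop in $C_y$) or of $B_z$ (similarly, with $\epsilon$ in $C_z$), and the alternating concatenation of these terms recovers $[\sigma]$; when $C_y = C_z$ one must in addition treat the ``mixing'' terms of the form $E_2 \delta \overline E_1$. A fixed term representing an element of $B_y$ lies in $\Fix(\Phi\restrict B_y) = A_y$: unwinding $f_\#(E_i\epsilon\overline E_i)=E_i \lfloor u_i \, f_\#(\epsilon)\, \overline{u_i}\rfloor \overline E_i$ shows $\epsilon$ is fixed by $i_{[u_i]}\composed f^{(y)}_*$, which is exactly $\Phi\restrict B_y$; symmetrically for $B_z$. (Equivalently, one can pass to the complete splitting rel $G_s$ of Fact~\ref{FactRelSplit}, using that $\sigma$ is completely split and that an $f_\#$-fixed completely split loop has every term a fixed edge or indivisible Nielsen path by \refGM{FactNielsenCircuit}; the terms above $G_s$ are then forced by (\neg\ Nielsen Paths) and Fact~\ref{FactIsATerm} to be $E^{\pm 1}$ or $E_i w_i^{p}\overline E_i$.) In all cases one concludes $[\sigma] \in A_y * A_z$.

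Granting both inclusions gives $A_x = A_y * A_z$. For nontriviality: $[B_y] = [\pi_1 C_y]$ and $[B_z] = [\pi_1 C_z]$ are components of the free factor system $[\pi_1 G_s]$, and $A_x$ contains an element whose conjugacy class is not carried by $[\pi_1 G_s]$; therefore $A_x \not\subseteq B_y$ and $A_x \not\subseteq B_z$. If $A_z$ were trivial we would get $A_x = A_y \subseteq B_y$, and if $A_y$ were trivial we would get $A_x = A_z \subseteq B_z$ --- both absurd --- so $A_y$ and $A_z$ are nontrivial. The main obstacle is the reverse inclusion: executing the term-by-term analysis uniformly across the sub-cases (single fixed edge / single linear edge / two linear edges, each crossed with $C_y = C_z$ versus $C_y \ne C_z$), and in particular handling the mixing terms when $C_y = C_z$, where the relation $f_\#(\delta) = \overline v\, \delta\, u$ with $u,v$ powers of non-conjugate root-free twistors pins $\delta$ down enough to place $[\sigma]$ in $A_y * A_z$; the simplification flagged by the authors is that wherever such a twist occurs the relevant factor is just $\langle[\text{twistor}]\rangle$, which is fixed by the twisted restriction because a root-free element commutes with its own powers.
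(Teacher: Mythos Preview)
Your overall architecture is right, and the nontriviality deduction at the end is fine. But there is a genuine gap in the reverse inclusion in the fixed-edge sub-case (both $u$ and $v$ trivial), and it is precisely the step that the paper singles out for a separate argument.

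In that sub-case your proposed mechanism for handling ``mixing terms'' collapses. You intend to constrain a mixing term via the relation $f_\#(\delta)=\bar v\,\delta\,u$ with $u,v$ nontrivial twistors, but when $E$ is a fixed edge we have $u=v=\text{trivial}$ and the relation degenerates to $f_\#(\delta)=\delta$, i.e.\ $\delta$ is just an arbitrary Nielsen path in $G_s$ from $z$ to $y$. If such a $\delta$ exists then $[E\delta]$ lies in $A_x$, yet it cannot lie in $A_y*A_z$: every element of $A_y$ is represented by a loop in $C_y$ at $x=y$ and every element of $A_z$ by a loop $E\tau\bar E$ with $\tau\subset C_z$, so any product crosses $E$ an even number of times, while $E\delta$ crosses $E$ exactly once. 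Thus the equality $A_x=A_y*A_z$ would actually be false in this situation; you must rule the situation out, not ``handle'' it.

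The paper does exactly this: it proves as a separate claim that no Nielsen path in $G_s$ connects $z$ to $y$. The argument uses the ambient setup that you have not touched --- that $G_s$ is the \emph{highest} proper core filtration element of the CT. If $\mu\subset G_s$ were such a Nielsen path, then $E\mu$ would represent an $f_\#$-invariant element $\alpha$ which, together with a basis of $\pi_1(G_s,x)$, gives a basis of $\pi_1(G,x)$; hence $\{[\langle\alpha\rangle]\}\cup[\pi_1 G_s]$ is a $\phi$-invariant proper free factor system strictly containing $[\pi_1 G_s]$, contradicting the (Filtration) property of a CT. Once this claim is in hand, your term-by-term analysis goes through verbatim.

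A smaller point: in the single-linear and two-linear sub-cases your concern about mixing is unnecessary. The property (\neg\ Nielsen Paths) already forces every indivisible Nielsen path of the relevant height to have the form $E_s w_s^k\bar E_s$, so a path such as $E_2\delta\bar E_1$ (or, in the single-linear case, $E\delta$ with $\delta$ landing at the wrong endpoint) is simply not a Nielsen path, and no twistor argument is needed.
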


\begin{proof} Once we prove that $A_x = A_y \ast A_z$, non-triviality of $A_y$ and $A_z$ follows from the fact that every conjugacy class represented by an element of $A_y$ or $A_z$ is carried by $\M$, but $A_x$ has a conjugacy class that is not carried by~$\M$.
 

We consider cases depending on the triviality or non-triviality of $u$ and $v$. In each case, each element of $A_x$ is represented by a closed Nielsen path $\sigma$ based at~$x$.

For the first case, assume that $u$ and $v$ are trivial, equivalently $E$ is a fixed edge.  We claim that there does not exist a Nielsen path $\mu \subset G_s$ connecting $z$ to $y$.   Suppose to the contrary that such a $\mu$ exists. Then $C_y =  C_z  = G_s$ and the closed path $E\mu$ determines an $f_\#$-invariant element $\alpha \in \pi_1(G,x)$.  Moreover, since $E \mu$ crosses $E$ exactly once, there is a basis for $\pi_1(G,x)$ consisting of $\alpha$ union some basis for $\pi_1(G_s,x)$.  But then $\{[\<\alpha\>]\} \union [\pi_1 G_s]$ defines a $\phi$-invariant proper free factor system that properly contains $[\pi_1 G_s]$, in contradiction to our choice of $G_s$ and the (Filtration) property of a \ct. This contradiction verifies the claim.   
    
Continuing with the first case and applying Fact~\ref{FactBasicNEGSplitting}, any path $\sigma$ as above can be written as a concatenation $\sigma =\sigma_1 \cdots \sigma_m$ of Nielsen subpaths~$\sigma_i$, each of which is either  $E$ or $\overline E$ or is contained in $G_s$. By the above claim, we can amalgamate terms in this decomposition to write  $\sigma$  as an alternating concatenation of closed Nielsen paths $\alpha_i \subset C_y$ based at $x$ and closed Nielsen paths $\beta_i$ based at $x$ and of the form $\beta_i = E \tau_i \overline E$ for some closed Nielsen path $\tau_i \subset C_z$ based at $z$. The $\alpha_i$'s determine elements of $A_y$ and the $\beta_i$'s determine elements of $\A_z$ so every element of $A_x$ can be written as an alternating concatenation of elements of $A_y$ and elements of $A_z$. Since $E$ is contained in the complement of $C_y$, any alternating concatenation of nontrivial Nielsen paths $\alpha_j \subset C_y$ and $\beta_j = E  \tau_j  \overline E$, with $\tau_j \subset C_z$, is immersed in $G$ and so represents a nontrivial element of~$A_x$. This completes the proof that $A_x = A_y \ast A_z$. 
    
For the second case, $u$ is trivial, $E$ is a linear edge with initial vertex $x=y$ and $f(E) =  Ev = E v'{}^d$ where $v'$ is a root free closed Nielsen path in $C_z$ and $d \ne 0$. By the basic splitting property (Fact~\ref{FactBasicNEGSplitting}) and the property (\neg\ Nielsen Paths) from the definition of a \ct, any $\sigma$ as above can be written as a concatenation of closed Nielsen paths based at $x$ each of which is either contained in $C_y$ or has the form $E{v'}^t \overline E$. Letting $a$ be the element of~$A_z$ represented by $Ev'\bar E$, we have that every element of $A_x$ can be written as an alternating concatenation of elements of $A_y$ and elements of $ \langle a \rangle$. As in the first case, the fact that $E$ is contained in the complement of  $C_y$ completes the proof that $A_x = A_y \ast \langle a \rangle $. It is an immediate consequence of the definitions that a closed path $\tau \subset C_z$ based at $z$ determines an element of $A_z$ if and only if $E\tau\overline E$ determines an element of $\pi_1(G,x)$ that is fixed by $f_\#$. Since the latter is equivalent to $E \tau \overline E$ being a Nielsen path, (\neg\ Nielsen Paths) implies that $A_z = \langle a \rangle $ and so $A_x = A_y \ast A_z$.
        
The final case is that $E  =  \overline E_1 E_2$ where $E_1$ and $E_2$ are linear edges with initial endpoint $x$ and satisfying $f(E_1) = E_1   u$ and $f(E_2) = E_2v$. Define $v'$ as in the previous case and define $u'$ similarly with respect to $u$.   Let $a_1$ and $a_2$ be the elements of $A_x$ represented by $E_1  u'\overline E_1$ and $ E_2 v' \overline E_2$ respectively.  Applying the basic splitting property (Fact~\ref{FactBasicNEGSplitting}) and the property  (\neg\ Nielsen Paths) as in the previous case and observing that $E_1 \ne E_2$, we conclude that $A_y = \langle a_1 \rangle  $, that $A_z = \langle a_2 \rangle $ and that $A_x = A_y \ast A_z$.
\end{proof}

\begin{lemma} \label{LemmaInvariantGroups} 
With notation as above, 
\begin{enumerate}
\item \label{ItemRestrictWDAndTrivial}
If $G_s = G_r$ then $[A_y],[A_z]$ are fixed by $\theta$, and the restrictions $\theta \restrict A_y \in \Out(A_y)$ and $\theta \restrict A_z \in \Out(A_z)$ are well defined and trivial.
\item \label{ItemRestrictWDAndTrivialLollipop}
If $G_s = G_r \union C$ then $A_y = \pi_1(C,y)$, $[A_z]$ is fixed by $\theta$, and the restriction $\theta \restrict A_z \in \Out(A_z)$ is well defined and trivial.
\end{enumerate}
\end{lemma}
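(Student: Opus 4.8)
The plan is to treat the two items in parallel, since the only structural difference between them is the nature of the component $C_y \subset G_s$: in item~\pref{ItemRestrictWDAndTrivial} it is an arbitrary $f$-invariant component of $G_r$, and in item~\pref{ItemRestrictWDAndTrivialLollipop} it is the fixed loop edge $C = H_{r+1}$, which carries the free factor $A_y = \pi_1(C,y) \cong \Z$ outright. Recall from the construction preceding the lemma that $E = G \setminus G_s$ is a single arc with endpoints $y \in C_y$ and $z \in C_z$, with $f(E) = \bar u E v$ where $u,v$ are trivial or closed Nielsen paths, and that $B_y = \pi_1(C_y,y)$, $B_z = \pi_1(C_z,z)$ are $\Phi$-invariant free factors (identified into $F_n$ via subpaths of $E$), with $A_y = \Fix(\Phi \restrict B_y)$ and $A_z = \Fix(\Phi \restrict B_z)$.

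First I would handle the $\theta$-invariance of $[A_y]$ and $[A_z]$. The key observation is that each of $B_y$ and $B_z$ corresponds to a component of the free factor system $[\pi_1 G_s]$: in item~\pref{ItemRestrictWDAndTrivial}, $[B_y],[B_z]$ are components of $\M = [\pi_1 G_r]$, which is $\theta$-invariant with each component $\theta$-fixed by Lemma~\ref{LemmaFFSComponent}~\pref{ItemIAThreeFFS}; in item~\pref{ItemRestrictWDAndTrivialLollipop} the same applies to $[B_z]$, and $[B_y] = [\pi_1 C]$ is a $\theta$-periodic conjugacy class, hence $\theta$-fixed by Theorem~\ref{ThmPeriodicConjClass} (which is available since we are inducting on rank and Steps 1--5 together with this case complete the proof; note $B_y$ is a proper free factor of $F_n$). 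Given that $[B_y]$ is $\theta$-fixed, the restriction $\theta \restrict B_y \in \Out(B_y)$ is well defined by Fact~\refGM{FactMalnormalRestriction}, and $\theta \restrict B_y$ lies in $\IA(B_y;\Z/3)$ because the natural map $H_1(B_y;\Z/3) \to H_1(F_n;\Z/3)$ is injective (split by the free factor complement) and $\theta$ acts trivially on the target. The subgroup $A_y = \Fix(\Phi \restrict B_y)$ is a $(\phi \restrict B_y)$-periodic --- indeed $(\phi\restrict B_y)$-invariant --- subgroup of $B_y$; applying Lemma~\ref{LemmaFixedIsPeriodic}~\pref{ItemFixPsiPermuted} (or directly, via the characterization of $A_y$ as the set of classes not weakly attracted to the relevant laminations, exactly as in Lemma~\ref{Ax is invariant}) together with Theorem~\ref{ThmPeriodicFreeFactor} applied inside $\Out(B_y)$ in the smaller rank, we get that $[A_y]$ is $\theta \restrict B_y$-invariant, hence $\theta$-invariant in $\Out(F_n)$, and symmetrically for $[A_z]$. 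Then $\theta \restrict A_y$ and $\theta \restrict A_z$ are well defined by Fact~\ref{FactItsOwnNormalizer} and Fact~\refGM{FactMalnormalRestriction}.

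For triviality of $\theta \restrict A_y$ and $\theta \restrict A_z$: these are finite-order elements of $\Out(A_y)$, $\Out(A_z)$ respectively, since $(\theta \restrict A_y)^k = \phi \restrict A_y = (\Phi \restrict B_y)\restrict A_y$ is inner on $A_y$ as $A_y$ is the fixed subgroup of $\Phi \restrict B_y$ — more precisely $\phi \restrict A_y$ is trivial in $\Out(A_y)$ for the same reason $\phi \restrict A_x$ was trivial in the discussion preceding Lemma~\ref{finite order}. By Lemma~\ref{finite order}, it suffices to show that the action of $\theta$ on $H_1(A_y;\Z)$ is unipotent, equivalently (since the action has finite order mod the unipotent radical) that $\theta$ acts trivially on $H_1(A_y;\Z/3)$. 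But $A_y \subgroup B_y \subgroup F_n$, and by the structure results of Lemma~\ref{LemmaAxProduct} applied in rank $\rank(B_y) < n$ — or, more elementarily, because $A_y$ is a free factor of $B_y$ when $C_y \subset G_r$ (being the fixed subgroup of a linear/unipotent restriction realized on a subgraph), so $H_1(A_y;\Z/3) \inject H_1(B_y;\Z/3) \inject H_1(F_n;\Z/3)$ — the map $H_1(A_y;\Z/3) \to H_1(F_n;\Z/3)$ is injective, and $\theta$ acts trivially on the target. Hence $\theta$ acts trivially on $H_1(A_y;\Z/3)$, so $\theta \restrict A_y$ is unipotent, and Lemma~\ref{finite order} finishes it; likewise for $A_z$. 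In item~\pref{ItemRestrictWDAndTrivialLollipop}, $A_y = \pi_1(C,y) = B_y$ holds because $C$ is a fixed loop edge so $\Phi \restrict B_y = \Id$, and the argument for $A_z$ is identical to the above.

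The main obstacle I anticipate is making rigorous the claim that the inclusion $H_1(A_y;\Z/3) \to H_1(F_n;\Z/3)$ is injective — i.e., that $A_y$ sits as a free factor (or at least homology-injectively) inside $F_n$. In the linear/fixed-edge setting of Step~\ref{IAStepGLF} this should follow from the explicit description of $A_y$, $A_z$ as generated by the Nielsen-path elements $a_1, a_2$ (or their analogues) exhibited in the proof of Lemma~\ref{LemmaAxProduct}, since those elements are visibly part of a basis obtained from a spanning tree of $G$; but one must check the free-factor claim carefully rather than just assert it, and route through Lemma~\ref{LemmaAxProduct} and the $A_x = A_y * A_z$ decomposition to get it cleanly.
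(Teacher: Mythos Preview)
Your argument has a genuine gap, and it is exactly the obstacle you flag at the end: the claim that $H_1(A_y;\Z/3) \to H_1(F_n;\Z/3)$ is injective, or equivalently that $A_y$ is a free factor of $B_y$, is false in general. Fixed subgroups of automorphisms are typically not free factors. Concretely, take $B_y = \langle a,b\rangle$ and $\Phi\restrict B_y$ the linear automorphism $a\mapsto a$, $b\mapsto ba$; then $A_y = \Fix(\Phi\restrict B_y) = \langle a,\, bab^{-1}\rangle$ has rank~$2$, is a proper subgroup hence not a free factor, and its image in $H_1(B_y;\Z/3)$ is the rank~$1$ span of $[a]$. This Dehn-twist picture is exactly what arises when $f\restrict C_y$ has linear edges, which is permitted by Step~\ref{IAStepGLF}. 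So your triviality argument via $H_1$-injectivity collapses. The same false premise undermines your invariance argument: you go from ``$[A_y]$ is $\psi$-periodic'' (via Lemma~\ref{LemmaFixedIsPeriodic}\pref{ItemFixPsiPermuted}) to ``$[A_y]$ is $\psi$-invariant'' by invoking Theorem~\ref{ThmPeriodicFreeFactor} in smaller rank, but that theorem is about free factor systems and does not apply to $[A_y]$.

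The paper's proof circumvents $H_1$-injectivity by working with conjugacy classes instead of homology classes. For invariance, it uses the inductive hypothesis on Theorem~\ref{ThmPeriodicConjClass} in $\Out(B_y)$ to see that every $B_y$-conjugacy class carried by $A_y$ is $\psi$-invariant; then $[A_y]$ and $\psi[A_y]$ are two elements of $\Fix(\phi\restrict B_y)$ carrying identical conjugacy classes, and Lemma~\ref{LemmaInseparablyFixed} forces $[A_y]=\psi[A_y]$. For triviality, the paper shows that if two root-free elements of $A_y$ are $F_n$-conjugate but not $A_y$-conjugate then their class is a twistor of $\phi$; since twistors are finite, a cofinite set $S$ of root-free $A_y$-conjugacy classes maps injectively to $F_n$-conjugacy classes, and since $\theta$ fixes the latter (induction), $\theta\restrict A_y$ fixes each class in $S$. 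Choosing a basis of $A_y$ with conjugacy classes in $S$ then gives trivial action on $H_1(A_y;\Z)$, and Lemma~\ref{finite order} finishes. The point is that while $A_y$ need not inject into $F_n$ on the level of $H_1$, it \emph{nearly} injects on the level of conjugacy classes, the defect being controlled by the finitely many twistors.
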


\begin{proof}   We work first in case~\pref{ItemRestrictWDAndTrivial} where $G_s = G_r$.  We will prove the desired conclusion for~$A_y$ giving an argument that applies equally well to $A_z$, based on the fact that $[B_y], [B_z] \in [\pi_1 G_r] = \M$. 
Since $\M$ is $\theta$-invariant, Lemma~\ref{LemmaFFSComponent}~\pref{ItemIAThreeFFS} implies that $[B_y]$ is  $\theta$-invariant. Fact~\refGM{FactMalnormalRestriction} therefore implies that  $\psi  = \theta \restrict B_y \in \Out(B_y)$ is well defined and that $A_y$ is $\theta$-invariant if and only if it is $\psi$-invariant.  Moreover, Fact~\refGM{FactMalnormalRestriction}  and the inductive hypothesis imply that every conjugacy class in $A_y$ is $\psi$-invariant. If $A_y$ has rank~$1$ then $[A_y]$ is $\psi$-invariant because the conjugacy class of the generator of $A_y$ is $\psi$-invariant.   

If  $A_y $ has rank~$\ge 2$ then $ \Phi \restrict B_y \in \Aut(B_y)$ is a principal lift  of $\phi \restrict B_y \in \Out(B_y)$ by Remark~3.3 of \cite{FeighnHandel:recognition} and  so $[A_y] \in \Fix(\phi \restrict B_y)$. By Fact~\ref{LemmaFixedIsPeriodic}~\pref{ItemFixPsiPermuted} it follows that $\psi[A_y] \in \Fix(\phi \restrict B_y)$.  Since $[A_y]$ and $\psi[A_y]$ carry the same conjugacy classes, Lemma~\ref{LemmaInseparablyFixed} implies that $[A_y]$ is $\psi$-invariant.  Fact~\refGM{FactMalnormalRestriction} and Fact~\ref{FactItsOwnNormalizer}  imply that the $\psi \restrict A_y = (\theta \restrict B_y) \restrict A_y$ is well defined and hence that $\theta \restrict A_y$ is well defined.

If $a_1, a_2 \in A_y$ are root-free elements that are conjugate in $F_n$ but not conjugate in $A_y$ then $a_2 = i_c(a_1)$ for some $c \not \in A_y$.  Since $B_y$ is malnormal, $c \in B_y$ and so $\Phi(c) \ne c$.  The automorphism $\Phi' =i_c \Phi i_c^{-1} = i_{c\Phi(c^{-1})}\Phi \ne \Phi$ represents $\phi$ and fixes $a_2$.    Since $a_2$ is fixed by distinct automorphisms representing $\phi$, $[a_2]_u$ is a twistor of $\phi$ (by Definition~\ref{DefTwistorInvariant}).  As there are only finitely many twistors (by Fact~\ref{FactTwistor}), and since each conjugacy class in $F_n$ is represented by only finitely many $A$-conjugacy classes (because $A$ is realized by an immersion of a finite graph $K$  into a marked graph $G$ and $K$ has only finitely many circuits of a given length), it follows that there is a cofinite set $S$ of root-free conjugacy classes in $A_y$ that represent distinct conjugacy classes in $F_n$.  Since $\theta$ preserves the $F_n$-conjugacy classes of elements of $A_y$, it preserves the $A_y$-conjugacy classes of elements of $S$. Since $\rank(A_y) \ge 2$ one may choose a free basis for $A_y$ so that their conjugacy classes miss any given finite set, and in particular to be contained in $S$; it follows that  $\theta \restrict A_y$ acts trivially on $H_1(A, \Z)$. By construction, $\theta \restrict A$ has finite order $k$ so Lemma~\ref{finite order} implies that  $\theta \restrict A$ is trivial.  This completes the proof of~\pref{ItemRestrictWDAndTrivial}.

Consider now case~\pref{ItemRestrictWDAndTrivialLollipop} where we have a disjoint union $G_s = G_r \cup C$ and $C$ is a circle.  In this case, $x=y \in C$ and $E$ is either a single fixed edge or a single linear edge with initial vertex  $y \in C$.
By construction, $A_y$ is the subgroup of $\pi_1(C,y)$ consisting of elements that are represented by a Nielsen path based at $y$, and this is evidently all of $\pi_1(C,y)$. Since $[B_z] \in [\pi_1 G_r] = \M$, we ,ay use the same arguments as in case~\pref{ItemRestrictWDAndTrivial} to conclude that the restriction $\theta \restrict A_z$ is well-defined and trivial. This completes the proof of~\pref{ItemRestrictWDAndTrivialLollipop}.
\end{proof}

\textbf{Remark.} In case~\ref{ItemRestrictWDAndTrivialLollipop} of the preceding proof, since $[B_y] \not\in[\pi_1 G_r] = \M$ we \emph{may not} use the previous arguments to conclude that $\theta$ preserves~$[A_y]$.

\smallskip

We can now complete the proof of Step 5.  If $G_s= G_r$ then $A_x = A_y \ast A_z$ by Lemma~\ref{LemmaAxProduct} and the restriction of $\theta$ to both $A_y$ and $A_z$ is trivial by Lemma~\ref{LemmaInvariantGroups}.    It follows that the action induced by $\theta$ on $H_1(A_x, \Z)$ is trivial so Lemma~\ref{finite order} completes the proof.

If $G_s= G_r \cup C$ then Lemma~\ref{LemmaAxProduct} and  Lemma~\ref{LemmaInvariantGroups} imply that  $A_x =   \langle c \rangle \ast A_z$ where $F_n = \langle c \rangle \ast B_z$.   Let $M$ be the matrix determined by the action   induced by $\theta$ on $H_1(A_x,\Z)$ with respect to a basis whose first element is  $c$ and whose remaining elements are contained in $A_z$.  By Lemma~\ref{finite order} it suffices to show that $M$ is lower triangular.  The square submatrix associated to the elements of $A_z$ is the identity so it suffices to show that the entry in the first row and column is $1$.  

Since $\M = [\pi_1 G_r]$ is $\theta$-invariant, there exists a homotopy equivalence $h \from G \to G$ representing $\theta$ such that $h(G_r)\subset G_r$. Since $G_r \union E$ deformation retracts to $G_r$ we may further assume that $h(G_r \union E) \subset G_r \union E$. Let $\gamma$ be a closed path based at $x$ going once around $C$ and representing the generator $c$ of $A_y$. We may assume that $h(\gamma)$ is a path, and applying Corollary~3.2.2 of \BookOne\ we have $h(\gamma) = w_1\gamma^{\pm} w_2$ for some paths $w_1,w_2 \subset  G_r \cup E$. Since $\theta$ induces the trivial action on $H_1(F_n,\Z_3)$ we must have the plus sign, $h(\gamma) = w_1\gamma w_2$. Straightening $h(\gamma)$ as a circuit we have $h_\#(\gamma) = \gamma w'$. Since $\theta$ preserves $[A_x]$, the circuit $h_\#(\gamma)$ splits into Nielsen paths for $f$, one of which is $\gamma$ and so represents $c$ and the rest of which is $w'$ which is a closed Nielsen path continued in $G_r \cup E$ and so determines an element of $A_z$.   This proves that the homology class in $H_1(A_x,\Z)$ determined by $\theta[c]$ is the sum of the homology class of $[c]$ and a homology class in $A_z$ as desired.  

\smallskip

This completes Step~\ref{IAStepFinal} and so completes the proof of Theorem~\ref{ThmPeriodicConjClass}. \qed



\section{Periodic free factors under some $\theta \in \IA_n(\Z/3)$: Proof of Theorem~\ref{ThmPeriodicFreeFactor}}  
\label{SectionOneEdge}

In this section we prove:

\medskip
\noindent{\bf Theorem~\ref{ThmPeriodicFreeFactor} }\ \  \emph{ If $\psi \in \IA_n(\Z/3)$ then every $\psi$-periodic free factor system  is $\psi$-invariant.  
}

\subsection{Reduction to one-edge extensions: Proposition~\ref{PropSpecialCase}} 

We prove Theorem~\ref{ThmPeriodicFreeFactor} by first reducing it to a special case, and then for the remainder of the section we prove the proposition in that case.

\begin{prop} \label{PropSpecialCase} Suppose that $\psi \in\IA_n(\Z/3)$, that $\phi$ is a rotationless iterate of $\psi$, and that $\F' \sqsubset \F$ are free factor systems, such that the following hold:
\begin{enumerate}
\item \label{ItemOneEdge} $\F' \sqsubset \F$ is a one-edge extension.
\item \label{ItemSomeInvariance}
$\F'$ is $\psi$-invariant and $\F$ is $\phi$-invariant and proper.
\item \label{ItemCarriesAsym} $\F'$ carries $\Lam(\phi)$.
\item \label{ItemReducedExtension} No $\phi$-invariant free factor system is properly contained between $\F'$ and~$\F$.
\item \label{ItemNoNewInvConjClass} Each $\phi$-invariant conjugacy class that is carried by $\F$ is carried by $\F'$.
\end{enumerate}
Then $\F$ is $\psi$-invariant.
\end{prop}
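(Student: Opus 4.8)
The plan is to use the tree associated to the one-edge extension $\F' \sqsubset \F$ together with the strengthened invariance information coming from Theorem~\ref{ThmPeriodicConjClass}. First I would invoke the results of \cite{HandelMosher:distortion}, Section~4.1 (quoted after Definition~\ref{DefOneEdgExtFFS}), to obtain, in the case that $\F$ has a single component, a simplicial $F_n$-tree $T$ with trivial edge stabilizers and exactly one orbit of edges such that $\F(T) = \F'$, with two special vertices $v,w$ (possibly in the same orbit) with nontrivial stabilizers $V,W$, so that $\F = S(V,W)$ in the notation of Definition~\ref{DefNPSS}; the general case reduces to this one by working one component of $\F$ at a time, so I would set that up first and then assume $\F$ has one component. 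Since $\F'$ is $\psi$-invariant and in particular $\phi$-invariant, and since $\F=S(V,W)$ is $\phi$-invariant, the pair $(V,W)$ is a Nielsen pair for $\phi$ (this is essentially the converse half of Lemma~\ref{LemmaNielsenPairRelevance}, read off from the collapsing construction). The goal becomes: show that $(V,W)$ is a Nielsen pair for $\psi$ itself, i.e.\ that $[[V,W]]$ is $\psi$-invariant, since then $S(V,W)=\F$ is $\psi$-invariant by Lemma~\ref{LemmaNielsenPairRelevance}.

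The heart of the argument is to extract, from hypotheses \pref{ItemCarriesAsym}, \pref{ItemReducedExtension}, and \pref{ItemNoNewInvConjClass}, a $\phi$-invariant conjugacy class that ``sees'' the edge $\ti\gamma$ between $v$ and $w$ and pins it down. Here I would choose a \ct\ $f \from G \to G$ representing $\phi$ with $\F'$ realized by a core filtration element $G_r$; by \pref{ItemCarriesAsym} and Lemma~\ref{lem:cofinal}, the strata above $G_r$ are all \neg\ edges $E_i$ with $f(E_i)=E_i$ or $f(E_i)=E_i u_i$, $u_i \subset G_r$, and by \pref{ItemReducedExtension} (maximality of $\F'$ below $\F$) together with Lemma~\ref{LemmaOneEdgeVersusLollipop} the graph $G$ is a one-edge or lollipop extension of $G_r$ — exactly the picture analyzed in Step~\ref{IAStepFinal} of the proof of Theorem~\ref{ThmPeriodicConjClass}. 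Hypothesis \pref{ItemNoNewInvConjClass} says no $\phi$-invariant conjugacy class newly appears in $\F$ that wasn't already in $\F'$; this is precisely what forces the relevant fixed subgroup structure at the stem vertex $x$ to behave. I would then reuse the machinery developed there — the subgroups $A_x = A_y \ast A_z$ of Lemma~\ref{LemmaAxProduct}, the invariance Lemma~\ref{Ax is invariant} (which uses Fact~\ref{FactTwistor}\pref{ItemIntersectingFixed} and that $G_r$ carries all twistors), and Lemma~\ref{LemmaInvariantGroups} — to conclude that $\theta=\psi$ (or rather $\psi$ restricted appropriately) preserves the relevant vertex group data, and hence that $\psi$ fixes $[[V,W]]$.

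I expect the main obstacle to be the bookkeeping that translates between the tree-theoretic language of Nielsen pairs $(V,W)$ and the \ct\ language of the stem edge $E = G \setminus G_s$ with its endpoints $y,z$ and the free factors $B_y, B_z, A_y, A_z$: one must match $V,W$ (up to the $F_n$-diagonal action) with the conjugacy classes $[B_y],[B_z]$ (or, in the lollipop case, $[\pi_1 C]$ and $[B_z]$), and then show that $\psi$-invariance of the unordered pair $\{[B_y],[B_z]\}$ follows. The subtlety flagged in the Remark after Lemma~\ref{LemmaInvariantGroups} — that in the lollipop case one cannot directly conclude $\psi$ preserves $[A_y]=[\pi_1 C]$ — is exactly the kind of thing that needs care here; I would handle it by the homological argument with the matrix $M$ at the end of Step~\ref{IAStepFinal}, which shows $\psi[c]$ differs from $[c]$ by an element of $A_z$, hence $\psi$ cannot swap the two vertices of $\ti\gamma$ nor move $[[V,W]]$ off itself. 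Once the correspondence and the lollipop case are pinned down, invoking Theorem~\ref{ThmPeriodicConjClass} to upgrade ``$\phi$-periodic'' statements to ``$\psi$-fixed'' statements closes the argument.
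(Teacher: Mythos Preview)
Your proposal has a genuine gap: the machinery from Step~\ref{IAStepFinal} of Theorem~\ref{ThmPeriodicConjClass} that you want to reuse is built on the \emph{existence} of $\phi$-invariant conjugacy classes not carried by $\F'$. Recall that Step~\ref{IAStepNonfillingOrbits} there reduced to the case where the $\theta$-periodic conjugacy classes fill $F_n$, and this is used throughout Step~\ref{IAStepFinal}: the proof of Lemma~\ref{Ax is invariant} begins by choosing ``any root-free $\phi$-invariant conjugacy class $[c]$ that is not carried by~$\M$'', and the nontriviality of $A_y,A_z$ in Lemma~\ref{LemmaAxProduct} likewise comes from $A_x$ containing a class not carried by $\M$. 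But hypothesis~\pref{ItemNoNewInvConjClass} of Proposition~\ref{PropSpecialCase} says exactly the opposite: every $\phi$-invariant conjugacy class carried by $\F$ is already carried by $\F'$. So there is no $\phi$-invariant circuit crossing $E^*$ at all; the subgroup $A_x$ is carried by $\F'$ (and may well be trivial), and knowing $[A_x]$ is $\psi$-invariant gives no leverage whatsoever on $\F$. Your reading of hypothesis~\pref{ItemNoNewInvConjClass} as ``what forces the fixed subgroup structure to behave'' is backwards---it is what makes that structure useless here.

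The paper's proof confronts this head-on by replacing circuits with lines. It introduces a $\psi_\#$-invariant set $\Gamma$ of $\phi$-invariant bi-infinite lines, each of which crosses $E^*$ and has both ends in $G_r$ with ideal endpoints in $\Fix_N(\Phi)$ for a principal $\Phi$. One then picks $\gamma\in\Gamma$ with central subpath $E^*$ and proves that $\psi_\#(\gamma)$ is still carried by $G_s$; since $\F$ is the free factor support of $\F'\cup\{\gamma\}$, this gives $\psi(\F)\sqsubset\F$. The analysis of $\psi_\#(\gamma)$ requires new tools---a $\psi$-induced permutation of $G_r$-Nielsen classes, intersection numbers mod~$3$ with edges above $G_r$, and a case analysis according to whether $E_{r+1}$ is fixed, linear, or superlinear---none of which is present in Step~\ref{IAStepFinal}. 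The passage from circuits to lines is not cosmetic; it is forced precisely by hypothesis~\pref{ItemNoNewInvConjClass}.
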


\subparagraph{Remark.} We note that \pref{ItemReducedExtension} is not a formal consequence of \pref{ItemOneEdge}, because by Lemma~\ref{LemmaOneEdgeVersusLollipop} it is possible that $\F' \sqsubset \F$ are realized in a filtered marked graph $G$ by filtration elements $G_{i-2} \subset G_i$ where $H_{i-1} = G_{i-1} \setminus G_{i-2}$ is a loop disjoint from $G_{i-2}$, and $G_i$ is a lollipop extension of $G_{i-2}$.

%

\begin{lemma} 
\label{LemmaFreeFactorReduction}
Proposition~\ref{PropSpecialCase} implies Theorem~\ref{ThmPeriodicFreeFactor}.
\end{lemma}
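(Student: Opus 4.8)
The plan is to reduce, by a double induction, to a situation in which Proposition~\ref{PropSpecialCase} applies directly. Throughout, fix a $\psi$-periodic free factor system $\F$, which we may assume is proper, and fix a rotationless power $\phi=\psi^K$, so that $\F$ is $\phi$-invariant and $\phi$ fixes each of its components (Lemma~\ref{LemmaFFSComponent}\pref{ItemIAThreeFFS}); let $\F'$ be the largest $\psi$-invariant free factor system with $\F'\sqsubset\F$, so that it suffices to prove $\F'=\F$. Since $\psi$ commutes with $\phi$, it permutes each of the canonical finite sets $\L(\phi)$, $\Eigen(\phi)$, $\Twist(\phi)$, so the free factor support $\F_\supp(\Lam(\phi))$ is $\psi$-invariant. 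The outer induction will be on $n=\rank(F_n)$ (base case $n=1$ trivial); a secondary induction will be on the complexity of $\F$, measured primarily by $\rank(\F)$ (the sum of the ranks of its components) and secondarily by the negative of the number of components, so that any proper $\sqsubset$-predecessor of $\F$ has strictly smaller complexity (strictly smaller $\rank$, or the same $\rank$ but strictly more components).

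First I record that it suffices to arrange that $\F$ carries $\Lam(\phi)$ and that $\F'\sqsubsetneq\F$ is a $\phi$-reduced one-edge extension. Indeed, then hypotheses \pref{ItemOneEdge} and \pref{ItemReducedExtension} of Proposition~\ref{PropSpecialCase} hold by fiat; \pref{ItemSomeInvariance} holds because $\F'$ is $\psi$-invariant and $\F$ is $\phi$-invariant and proper; \pref{ItemCarriesAsym} holds because $\F_\supp(\Lam(\phi))$, being $\psi$-invariant and contained in $\F$, is contained in $\F'$; and \pref{ItemNoNewInvConjClass} holds because any $\phi$-invariant conjugacy class carried by $\F$ is $\psi$-periodic, hence $\psi$-fixed by Theorem~\ref{ThmPeriodicConjClass}, so its free factor support is $\psi$-invariant and contained in $\F$, hence in $\F'$. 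Thus the proposition gives $\F=\F'$.

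Next comes the case that $\F$ carries $\Lam(\phi)$, handled by the secondary induction and assuming $\F'\sqsubsetneq\F$. The extension $\F'\sqsubsetneq\F$ is $\phi$-reduced: any $\phi$-invariant free factor system strictly between $\F'$ and $\F$ is $\psi$-periodic, still carries $\Lam(\phi)$ since it contains $\F'\sqsupseteq\F_\supp(\Lam(\phi))$, and has strictly smaller complexity, so by the secondary inductive hypothesis it is $\psi$-invariant, contradicting maximality of $\F'$. Since the extension is $\phi$-reduced and $\F'$ carries $\Lam(\phi)$, Lemma~\ref{lem:cofinal} produces a \ct\ for $\phi$ in which $\F'$ is a core filtration element $G_t$ and every higher stratum is an \neg-edge $E_i$ with $f(E_i)=E_i$ or $f(E_i)=E_iu_i$ and $u_i\subset G_t$; peeling these edges off one at a time, each intermediate subgraph is $f$-invariant, so $\phi$-reducedness forces the realized free factor system to change only at a single edge. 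Hence $\F'\sqsubsetneq\F$ is realized by one added edge (or a lollipop) and is therefore a one-edge extension of free factor systems, and the previous paragraph completes this case.

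Finally, the general case, by the outer induction on $n$. If $\F$ carries $\Lam(\phi)$ we are done by the previous step. Otherwise $\widehat\F:=\F\vee\F_\supp(\Lam(\phi))$ strictly contains $\F$, is $\psi$-periodic, and carries $\Lam(\phi)$, hence is $\psi$-invariant by the previous step (which, being a complete induction on complexity, applies to $\widehat\F$ even though its complexity exceeds that of $\F$). If $\widehat\F$ is proper, its components have rank $<n$ and are $\psi$-fixed; the components of $\F$ contained in a component $[B]$ of $\widehat\F$ form a $(\psi\restrict B)$-periodic free factor system of $B$, which by the outer induction is $(\psi\restrict B)$-invariant, and assembling over the components of $\widehat\F$ gives $\psi(\F)=\F$. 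This leaves the single case $\F\vee\F_\supp(\Lam(\phi))=\{[F_n]\}$ --- equivalently, that $\{[F_n]\}$ is the only $\psi$-invariant free factor system containing $\F$ --- which I expect to be the main obstacle: here no proper free factor system both carries $\Lam(\phi)$ and contains $\F$, so neither Proposition~\ref{PropSpecialCase} nor the restriction device applies, and one must work inside a \ct\ for $\phi$, extracting a $\psi$-invariant subgroup system from the geometric model of a top \eg\ stratum via Lemma~\ref{LemmaANAReview} when such a stratum exists, and otherwise exploiting the eigenray and twistor data (Lemmas~\ref{LemmaPointsAtInfinity} and~\ref{LemmaEigenrayCarried}) together with Theorem~\ref{ThmPeriodicConjClass}. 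That filling case is where I expect the real work to lie.
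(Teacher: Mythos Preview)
Your proposal is incomplete: you explicitly leave the filling case $\F\vee\F_\supp(\Lam(\phi))=\{[F_n]\}$ unhandled, and that case is not optional---it genuinely occurs and is exactly where the paper's Case~1 sits. (Incidentally, your parenthetical ``equivalently, that $\{[F_n]\}$ is the only $\psi$-invariant free factor system containing $\F$'' is wrong: there can be $\psi$-invariant free factor systems strictly containing $\F$ that simply fail to carry $\Lam(\phi)$.) Your guess about the ``otherwise'' subcase with no EG top stratum is also off: if $\Lam(\phi)\cup\F$ fills then already $\L(\phi)\cup\F$ fills (eigenrays and twistors are always carried by $[\pi_1 G_{s-1}]$ for their own strata, hence by any filtration element above all EG strata), so in the filling case the top stratum in a CT realizing $\F$ is necessarily EG, and Lemma~\ref{LemmaANAReview} is the only tool needed.

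The paper's argument differs from yours in one structural way that makes the filling case short: it reduces at the outset to a \emph{single} $\phi$-invariant free factor $[F]$ rather than the full system~$\F$. With that reduction, Case~1 is just: pick a CT with $[F]$ realized by some $G_s$; the top stratum $H_r$ is EG; Lemma~\ref{LemmaANAReview} gives $\psi[\pi_1 K]=[\pi_1 K]$ for the complementary subgraph $K$; in the geometric case one uses Theorem~\ref{ThmPeriodicConjClass} to pin down the $[\langle\rho\rangle]$ component, and in either case one deduces $\psi[\pi_1 G_{r-1}]=[\pi_1 G_{r-1}]$. Then $[F]\sqsubset[\pi_1 G_{r-1}]$ lies in a single component of a proper $\psi$-invariant free factor system, and induction on rank finishes. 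Your non-filling argument is otherwise close in spirit to the paper's Case~2, though the paper builds an explicit maximal $\phi$-invariant filtration from $\F_0=\F_\supp(\Asym(\phi))$ upward and invokes Proposition~\ref{PropSpecialCase} at each step, rather than maximizing $\F'$ below $\F$ and running a complexity induction.
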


\begin{proof} Theorem~\ref{ThmPeriodicFreeFactor} is obvious for $n=1$. Assuming the induction hypothesis that Theorem~\ref{ThmPeriodicFreeFactor} holds in all ranks~$<n$, we must prove it for $\psi \in \IA_n(\Z/3)$. Let $\phi = \psi^k$ be a rotationless power,~$k \ge 1$. Any component of a $\psi$-periodic free factor system is $\phi$-periodic and so is $\phi$-invariant, by \recognition\ Lemma~3.30. 

Fixing henceforth a proper, nontrivial, free factor $F \subgroup F_n$ such that $[F]$ is $\phi$-invariant, it remains to show that $[F]$ is $\psi$-invariant.

\smallskip
\textbf{Case 1: $\L(\phi) \cup \{[F]\}$ fills~$F_n$.}  
Choose a \ct\ $\fG$ with top filtration element $G=G_r$ in which $[F]$ is realized by a connected core filtration element~$G_s$, $s \le r-1$, and so $[F] \sqsubset [\pi_1 G_{r-1}]$. Since $\L(\phi) \union \{[F]\}$ fills~$F_n$ it follows that the top stratum $H_r$ is \eg.

We claim that $\psi [\pi_1 G_{r-1}] = [\pi_1 G_{r-1}]$. Using this claim it follows by Lemma~\ref{LemmaFFSComponent}~\pref{ItemIAThreeFFS} that each component of $[\pi_1 G_{r-1}]$ is $\psi$-invariant. The entire $\psi$-orbit of $[F]$ is therefore supported in a single component of $[\pi_1 G_{r-1}]$. It follows that the free factor support of the $\psi$-orbit of $F$ is a proper free factor system~$\F'$. Furthermore $\F'$ has a single component because each of its components is $\psi$-invariant (by Lemma~\ref{LemmaFFSComponent}~\pref{ItemIAThreeFFS}) and so the component of $\F'$ carrying $[F]$ carries the entire $\psi$-orbit of $[F]$. Restricting $\psi$ to $\F'$ and applying induction on rank it follows that $[F]$ is $\psi$-invariant, finishing the proof of Lemma~\ref{LemmaFreeFactorReduction} in Case~1.

The proof of the claim in the case that the stratum $H_r$ is nongeometric is an immediate application of Lemma~\ref{LemmaANAReview}. Consider the case that $H_r$ is geometric, and so there exists a closed, indivisible, height~$r$ periodic Nielsen path $\rho$ (\BookOne\ Theorem~5.1.5 item~eg-(iii), or see Fact~\refGM{FactGeometricCharacterization}). Let $[\<\rho\>]$ denote the conjugacy class in $F_n$ of the infinite cyclic subgroup generated by $\rho$. By applying Lemma~\ref{LemmaANAReview} we conclude that $\psi$ preserves the vertex group system $[\pi_1 G_{r-1}] \union \{[\<\rho\>]\}$. The conjugacy class in $F_n$ represented by the circuit $\rho$ is $\phi$-invariant and therefore $\psi$-periodic, and so by applying Theorem~\ref{ThmPeriodicConjClass} it is fixed by $\psi$. It follows that $[\<\rho\>]$ is fixed by $\psi$, and so $\psi[\pi_1 G_{r-1}] = [\pi_1 G_{r-1}]$.

\smallskip
\textbf{Case 2: $\L(\phi) \cup \{[F]\}$ does not fill~$F_n$.} 
Choose a \ct\ $f \from G \to G$ with $G=G_r$ in which $\L(\phi) \cup \{[F]\}$ is carried by a proper filtration element, and so in particular is carried by $G_{r-1}$. The top stratum $H_r$ must therefore be \neg. 

Following Section~\ref{SectionAsymptotic}, in addition to $\L(\phi)$ the other two subsets comprising the asymptotic data $\Asym(\phi) = \L(\phi) \union \Eigen(\phi) \union \Twist(\phi)$ are also carried by $[\pi_1 G_{r-1}]$: by Fact~\ref{FactTwistor} each element of $\Twist(\phi)$ is the twistor corresponding to some \neg-linear edge $H_s = E_s$, and by Definition~\ref{DefTwistorCT} that twistor is carried by $[\pi_1 G_{s-1}] \sqsubset [\pi_1 G_{r-1}]$; also, by Fact~\ref{LemmaEigenrayDefs} each element of $\Eigen(\phi)$ is the eigenray generated by some \neg-superlinear edge $H_s=E_s$, and by Definition~\ref{DefEigenrayCT} that eigenray is again carried by $[\pi_1 G_{s-1}] \sqsubset [\pi_1 G_{r-1}]$. Note that $\Asym(\phi) \union \{[F]\}$ has proper free factor support, being carried by $[\pi_1 G_{r-1}]$. It follows that the free factor support of $\Asym(\phi)$ is a proper free factor system that we shall denote~$\F_0$. 
Since $\psi$ commutes with $\phi$, the set $\Asym(\phi)$ is $\psi$-invariant, and so $\F_0$ is $\psi$-invariant, by Fact~\trefGM{FactFFSPolyglot}{ItemFFSPolyNatural}). Each component of $\F_0$ is therefore $\psi$-invariant (Fact~\ref{LemmaFFSComponent}~\pref{ItemIAThreeFFS}), and so by induction on rank we may assume that $\F_0$ does not carry $[F]$.   

Extend $\F_0$ to a $\phi$-invariant filtration by free factor systems $\F_0 \sqsubset \F_1 \sqsubset \ldots \sqsubset \F_K = \{[F_n]\}$ such that the free factor support of $\Asym(\phi) \cup \{[F]\}$ equals $\F_j$ for some $1 \le j <K$, and such that this filtration is maximal in that there is no $\phi$-invariant free factor system strictly between $\F_{i-1}$ and $\F_i$ for any $i=1,\ldots,K$. We shall show that $\F_i$ is $\psi$-invariant for each $0 \le i \le j$; as in Case 1, applying induction on rank to the component of $\F_j$ supporting $[F]$ then finishes the proof of Lemma~\ref{LemmaFreeFactorReduction} in Case~2.

Knowing already that $\F_0$ is $\psi$-invariant, assume by induction that $\F_{i-1}$ is $\psi$-invariant for $1 \le i \le j$. 

Suppose at first that there exists a $\phi$-invariant conjugacy class $[a]$ that is supported in $\F_i$ but not $\F_{i-1}$. It follows that $\F_i$ is the free factor support of $\F_{i-1}$ and $[a]$. Since $\F_{i-1}$ is $\psi$-invariant, as is $[a]$ (by Theorem~\ref{ThmPeriodicFreeFactor}), so is $\F_{i}$.

We have reduced to the case that every $\phi$-invariant conjugacy class supported by $\F_i$ is supported by $\F_{i-1}$. We want to complete the proof by applying Proposition~\ref{PropSpecialCase} with $\F' = \F_{i-1}$ and \hbox{$\F=\F_i$}. Items \pref{ItemSomeInvariance}--\pref{ItemNoNewInvConjClass} are satisfied so it suffices to show that $\F_{i-1} \sqsubset \F_{i}$ is a one edge extension. By \recognition\ Theorem~4.28 (or see Theorem~\refGM{TheoremCTExistence}), there is a \ct\ $\fG$  representing $\phi$ in which $\F_{i-1}$ and $\F_{i}$ are realized by core filtration elements $G_{k(i-1 )}$ and $G_{k(i)}$.  Facts \ref{all neg} and \refGM{FactNEGEdgeImage} imply that each stratum between $G_{k(i-1)}$ and $G_{k(i)}$ is a single edge. Each time one these edges is attached, the free factor system represented is either unchanged or changes by a one-edge extension. Letting $G_{k'}$ be the lowest filtration element  whose corresponding free factor system $[\pi_1 G_{k'}]$ properly contains $\F_{i-1}$, we have $k(i-1) < k' \le k(i)$, 
$[\pi_1 G_{k'}] \sqsubset \F_{i}$, and $[\pi_1 G_{k'}]$ is a one-edge extension of $\F_{i-1}$.  Since $[\pi_1 G_{k'}]$ can not be properly contained between $\F_{i-1}$ and $\F_{i}$, it follows that $[\pi_1 G_{k'}]=\F_{i}$ and so $\F_{i-1} \sqsubset \F_{i}$ is a one edge extension as desired.
\end{proof}

\subsection{Relative Nielsen classes and the path set $\Gamma$}
\label{SectionNielsenAndGamma}

After some preliminaries regarding relative Nielsen classes, following Definition~\ref{DefEStar} we give a brief idea of the proof of Proposition~\ref{PropSpecialCase}, which will motivate the description of a special set of $\phi$-invariant bi-infinite lines denoted $\Gamma$. We then establish a series of lemmas regarding~$\Gamma$, which lays the foundation for the proof to be carried out in the following section. 

\begin{definition}[Nielsen classes relative to subgraphs]
\label{DefGrNielsenClass}  
Given $f \from G \to G$ a homotopy equivalence of a connected graph, recall that $x,y \in \Fix(f)$ are in the same \emph{Nielsen class} if they are endpoints of a Nielsen path of $f$. Also, if $\ti f \from \wt G \to \wt G$ is a lift of $f$ and if $\ti x \in \Fix(\ti f)$ is a lift of $x \in \Fix(f)$ then $\Fix(\ti f)$ projects to exactly the Nielsen class of~$x$.    

Consider an $f$-invariant subgraph $K \subset G$.  If $x,y \in \Fix(f) \cap K$  are in the same component $K_0$ of $K$ and in the same Nielsen class of $f \restrict K_0$---equivalently, if $x,y$ are connected by a Nielsen path in $K$---then we say that $x,y$ are in the \emph{same $K$-Nielsen class}. If $\ti f, \ti x, x$ are as above and if $\wt C \subset \wt G$ is the component of the full pre-image of $K$ that contains $\ti x$ then $\Fix(\ti f \restrict \wt C) $ projects to exactly the $K$-Nielsen class of $x$. 
\end{definition}  

The following notation will be assumed for the rest of Section~\ref{SectionOneEdge}.

\begin{notn}[A \ct\ representative of $\phi$] 
\label{NotationSection8}
From the hypotheses of Proposition~\ref{PropSpecialCase}, fix $\psi \in \IA_n(\Z/3)$, a rotationless power $\phi = \psi^k$, and an inclusion of free factor systems $\F' \sqsubset \F$ satisfying hypotheses \pref{ItemOneEdge}--\pref{ItemNoNewInvConjClass}. By Theorem~\refGM{TheoremCTExistence}, Fact~\ref{all neg} and Fact~\refGM{FactNEGEdgeImage} there is a \ct\  $\fG$  representing $\phi$ such that  $\F' \sqsubset \F$ are realized by core  filtration elements $G_r \subset G_s$ respectively, and such that each stratum $H_i$ with $i > r$ is a single oriented edge $E_i$ satisfying either $f(E_i) = E_i$ or $f(E_i) = E_iu_i$ for some closed path $u_i \subset G_r$. Recal some properties of the definition of a \ct\ (Definition 4.7 of \recognition\ or see Definition~\refGM{DefCT}): the (Periodic Edges) property implies that if $E_i$ is fixed but not a loop then both endpoints of $E_i$ are contained in $G_{i-1}$; and the (Linear Edges) property implies that $u_i$ is a closed Nielsen path of height~$< i$, in which case $E_i u_i \overline E_i$ is a closed Nielsen path of height~$i$.

\medskip

By items \pref{ItemOneEdge} and~\pref{ItemReducedExtension} in the hypotheses of Proposition~\ref{PropSpecialCase}, combined with the definition of a \ct, we obtain the following case analysis for the proof of Proposition~\ref{PropSpecialCase}, with additional conclusions that shall be justified below.
\begin{description}
\item[One stratum:] $s = r+1$ and both endpoints of  $E_{r+1}$ are contained in~$G_r$. 

\emph{Additionally:} if $E_{r+1}$ is a fixed or linear edge then its endpoints are distinct and are contained in distinct $G_r$-Nielsen classes and  at least one of these endpoints is not the base point of a closed Nielsen path in $G_r$. 

 \item[Two strata:]  $s=r+2$, neither $E_{r+1}$ nor $E_{r+2}$ is fixed, and $ \overline E_{r+2} E_{r+1}$ is a topological arc with both endpoints in $G_r$. 
 
\emph{Additionally:} At least one of $E_{r+1}$, $E_{r+2}$ is non-linear.

\item[Disjoint loop stratum:]  $s = r+1$  and  $E_{r+1}$ is a fixed loop that is a component of~$G_s$. 

\emph{However:} This case ``Disjoint loop stratum'' does not occur, by item \pref{ItemNoNewInvConjClass} in the hypotheses of Proposition~\ref{PropSpecialCase}.
\end{description}
\end{notn}
 
\noindent We verify the additional conclusions of the cases ``One stratum'' and ``Two strata'' by a separate case analysis. 

\textbf{Case (i):} Suppose first that $E_{r+1}$ is a fixed edge.  If there is a  (possibly trivial) Nielsen path   $\rho \subset G_r$ connecting the terminal endpoint of $E_{r+1}$ to the initial endpoint of $E_{r+1}$, let   $\sigma = E_{r+1} \rho$.  If there are closed Nielsen paths $\tau_1$ and $\tau_2$ based at the initial and terminal endpoints of $ E_{r+1}$ respectively, let $\sigma =  \tau_1  E_{r+1} \tau _2 \bar  E_{r+1}$.  In both cases $\sigma$ is a closed Nielsen path that is contained in $G_s$ but not $G_r$.  This contradicts item~\pref{ItemNoNewInvConjClass} of the hypotheses of Proposition~\ref{PropSpecialCase}. The additional conclusions of ``One stratum'' follow when $E_{r+1}$ is fixed.  
 
 \textbf{Case (ii):} Suppose next that $E_{r+1}$ is a linear edge.  If $\rho$ exists as in Case~(i), let  $\sigma =  E_{r+1} u_{r+1} \overline E_r \bar \rho u_{r+1} \rho$.    If $\tau_1$ exists as in Case~(i), let $\sigma =  \tau_1  E_{r+1} u_{r+1} \bar  E_{r+1}$.  The additional conclusions of ``One stratum'' follow by contradiction as in Case~(i).
 
 \textbf{Case (iii):} Suppose finally that  $s=r+2$ and that both $E_{r+1}$ and  $E_{r+2}$ are linear. In this case   $\sigma = E_{r+1}u_{r+1}\overline E_{r+1} E_{r+2}u_{r+2}\overline E_{r+2}$ provides the contradiction that proves the additional conclusions of ``Two strata''.
 
\begin{definition}[The arc $E^*$]
\label{DefEStar} 
In the case ``One stratum'' let $E^* = E_{r+1}$, and in the case ``Two strata'' let $E^* = \overline E_{r+2} E_{r+1}$. In both cases $E^*$ is a topological arc with both endpoints in $G_r$ and interior disjoint from $G_r$.  
\end{definition}
 
\smallskip
  
The idea of the proof of Proposition~\ref{PropSpecialCase} is simple. We will extend $E^*$ to a certain line $\gamma$ by adding rays in $G_r$ (Corollary~\ref{LemmaExistenceOfGamma}) and then show that $\psi_\#(\gamma)$ is also an extension of $E^*$ by rays in $G_r$.  Since $\F$ is both the free factor support of $\F' \cup \{\gamma\}$ and the free factor support of $\F' \cup \{\gamma'\} = \psi_\#(\F' \cup \{\gamma\})$,  it follows that $\F$ is $\psi$-invariant. The hard work is to prove $\psi_\#$-invariance of an appropriate and sufficiently rich class of lines~$\gamma$.

\begin{notn}[The class of lines $\Gamma$]
\label{principal ray}  Let $\Gamma$ be the set of $\phi$-invariant lines $\gamma$ such that 
\begin{enumerate}
\item \label{item:height} $\gamma$ has height greater than $r$ and both ends of $\gamma$ have height less than $ r$.
\item  \label{item:not repelling} For any lift $\ti \gamma$ there exists an automorphism $\Phi$ representing $\phi$ such that the endpoints of $\ti \gamma$ are contained in $\Fix_N(\Phi)$ (such an automorphism $\Phi$ is unique by by Lemma~\ref{uniqueness of Phi}).
\end{enumerate}
Let $\ti \gamma$ and $\Phi$ be as in \pref{item:not repelling}. Let $\ti f \from \wt G \to \wt G$ be the lift of $f$ corresponding to~$\Phi$. Let $\wt V$ be the set of endpoints of fixed edges and initial points of \neg-edges, over all edges of height~$>r$ in $\ti\gamma$.  By \pref{item:height} the set $\wt V$ is finite and non-empty. Lemma 4.1.4 of \BookOne\ implies that  the decomposition of $\ti \gamma$ obtained by subdividing at each element of $\wt V$ is  a splitting.  Since $\ti \gamma$ is $\ti f_\#$-invariant, it follows that  $\wt V \subset \Fix(\ti f)$. Let $\ti \rho$ be the (possibly trivial) subpath of $\ti \gamma$ connecting the first and  last elements of $\wt V$ and let $\wt R_-$ and $\wt R_+$ be the complementary rays in~$\ti \gamma$.  Obviously, $\ti \rho$  is a Nielsen path for $\ti f$. Its projection $\rho$ is a Nielsen path for $f$ called the \emph{connecting Nielsen path} of $\gamma$.   We say that  $\ti \gamma = \wt R_-^{-1} \ti \rho \wt R_+$  is \emph{the highest edge splitting} of~$\ti \gamma$ and that the projected splitting $\gamma = R_-^{-1} \rho R_+$ is \emph{the highest edge splitting of~$\gamma$}. Finally, we say that the maximal subpath $\sigma$ of $\gamma$ that begins and ends with edges of height greater than $r$ is the \emph{central subpath} of $\gamma$. Thus $\rho\subset\sigma$ and $\sigma$ is obtained from $\rho$ by adding at most one initial and one terminal edge. More precisely, the following three conditions are equivalent: the initial edge of $R_+$ is appended to the terminal end of $\rho$ to become the terminal edge of $\sigma$; the initial edge of $R_+$  equals $E$ (not $\overline E$) for some non-fixed edge $E$ of height greater than $r$; $R_+ \not\subset G_r$. Similarly the following are equivalent: the inverse of the initial edge of $R_-$ is appended to the initial end  of $\rho$ to become the initial edge of $\sigma$; the initial edge of $R_-$   equals $E$ (not $\overline E$) for some non-fixed edge of height greater than $r$;   $R_- \not\subset G_r$.
\end{notn}
   
\begin{lemma}  \label{uniqueness of Phi} For each $\gamma \in \Gamma$ and each lift $\ti \gamma= \wt R_-^{-1} \ti \rho \wt R_+$ the automorphism $\Phi$ satisfying Notation~\ref{principal ray}\pref{item:not repelling} is unique and  principal.   The lift $\ti f$ corresponding to $\Phi$ is the unique lift that fixes some (and hence every) element of $\wt V$.  
\end{lemma}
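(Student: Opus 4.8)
The plan is to read everything off the structure already set up in Notation~\ref{principal ray}, together with one elementary rigidity fact: two lifts of $f$ to the universal cover $\wt G$ that agree at a single point of $\wt G$ are equal, since their ``ratio'' is a deck transformation with a fixed point and $F_n$ acts freely on $\wt G$. The three ingredients I would pull from Notation~\ref{principal ray} are that the set $\wt V$ depends only on $\ti\gamma$ and not on the choice of $\Phi$; that $\wt V$ is finite and non-empty, by Notation~\ref{principal ray}\pref{item:height}; and that for \emph{every} automorphism $\Phi$ representing $\phi$ with both endpoints of $\ti\gamma$ in $\Fix_N(\Phi)$, the corresponding lift $\ti f$ satisfies $\wt V\subset\Fix(\ti f)$ --- this last is precisely the computation already carried out there.

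Granting these, I would dispatch the first and third assertions together. The lift $\ti f$ corresponding to the given $\Phi$ fixes every element of $\wt V$; and if $\ti g$ is any lift of $f$ fixing even a single element $\ti v\in\wt V$, then $\ti g$ and $\ti f$ both fix $\ti v$, so $\ti g=\ti f$ by the rigidity fact. This gives the final sentence of the lemma (``unique lift fixing some, hence every, element of $\wt V$''). It also gives uniqueness of $\Phi$: if $\Phi$ and $\Phi'$ both satisfy Notation~\ref{principal ray}\pref{item:not repelling} for the same lift $\ti\gamma$, their corresponding lifts $\ti f$ and $\ti f'$ both contain the common non-empty set $\wt V$ in their fixed sets, hence $\ti f=\ti f'$ and so $\Phi=\Phi'$.

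For principality of $\Phi$ I would argue by contradiction from the definition of a principal automorphism (Definition~\refGM{DefPrinicipalAndRotationless}). Let $\ti\xi_-,\ti\xi_+$ be the two distinct endpoints of $\ti\gamma$; by hypothesis both lie in $\Fix_N(\Phi)$. If $\Phi$ were not principal then, since $\Fix_N(\Phi)$ already contains two distinct points, the only possibility left open by the definition is that $\Fix_N(\Phi)=\{\ti\xi_-,\ti\xi_+\}$ and that these are the two endpoints of the axis in $\wt G$ of some nontrivial $c\in\Fix(\Phi)$. But $\ti\gamma$ is the unique bi-infinite geodesic of $\wt G$ joining $\ti\xi_-$ to $\ti\xi_+$, so $\ti\gamma$ is that axis and $\gamma$ is a periodic line --- the bi-infinite iterate of a closed path $w$, so that every edge occurring in $\gamma$ occurs in $w$. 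Then both ends of $\gamma$, as well as $\gamma$ itself, have height $\height(w)>r$, contradicting the requirement in Notation~\ref{principal ray}\pref{item:height} that both ends of $\gamma$ have height less than $r$. Hence $\Phi$ is principal. The bookkeeping with lifts is routine; the one place needing care is this last step, where I must invoke the precise form of the definition of ``principal'' to see that the two-point axis configuration is the only non-principal possibility consistent with $|\Fix_N(\Phi)|\ge 2$, and then use the standard fact that a periodic line has both of its ends of the height of its periodic block.
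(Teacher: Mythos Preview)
Your argument is largely correct but diverges from the paper's in two places, and in one of them there is a small gap.

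For uniqueness, you argue that any $\Phi$ satisfying condition~\pref{item:not repelling} has its corresponding lift fixing $\wt V$, and two lifts agreeing at a point coincide. This is valid: the computation in Notation~\ref{principal ray} indeed applies to any such $\Phi$, since it uses only that $\ti f_\#(\ti\gamma)=\ti\gamma$. The paper instead argues directly at the boundary: if distinct $\Phi,\Phi'$ both satisfied~\pref{item:not repelling}, their ratio would be a nontrivial inner automorphism fixing both endpoints of $\ti\gamma$, forcing $\gamma$ to be periodic; but a periodic line of height $>r$ has both ends of height $>r$, contradicting item~\pref{item:height}. Curiously, this periodicity argument is exactly what you deploy for principality rather than for uniqueness --- the two proofs have essentially swapped tools.

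For principality, the paper takes a shorter route: each point of $\wt V$ projects to an endpoint of an \neg\ edge of height $>r$, and such a vertex is principal by Fact~\refGM{FactUsuallyPrincipal}; since $\ti f$ fixes such a point, $\ti f$ is a principal lift and $\Phi$ is principal. Your argument via the definition has a gap: when $\Fix_N(\Phi)$ is a two-point set, Definition~\refGM{DefPrinicipalAndRotationless} excludes \emph{two} configurations, not one --- the endpoint set of the axis of a covering translation, and the endpoint set of a lift of a generic leaf of some $\Lambda\in\L(\phi)$. You address only the axis case. The fix is immediate in this setting: by hypothesis~\pref{ItemCarriesAsym} of Proposition~\ref{PropSpecialCase} (absorbed into Notation~\ref{NotationSection8}), $\L(\phi)$ is carried by $[\pi_1 G_r]$, so every generic leaf has height $\le r$, again contradicting item~\pref{item:height}. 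But as written, that case is missing.
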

   
\begin{proof} A line whose endpoints are fixed by distinct automorphisms representing the same outer automorphism is fixed by an inner automorphism and so is periodic. Item  \pref{item:height} of Notation~\ref{principal ray} therefore implies that  $\Phi$ is unique.   
   
As noted in Notation~\ref{principal ray}, the lift $\ti f$ corresponding to $\Phi$ fixes every point in $\wt V$.  Since each of these points project to an endpoint of an \neg\ edge, and since each such endpoint is principal (by Fact~\refGM{FactUsuallyPrincipal}), it follows that $\Phi$ is principal.
\end{proof}
    
\begin{lemma} \label{psi invariance}     $\Gamma$ is $\psi_\#$-invariant.  
 \end{lemma}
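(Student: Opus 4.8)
The plan is to show that $\psi_\#$ carries each $\gamma \in \Gamma$ to another element of $\Gamma$, by verifying the two defining conditions of Notation~\ref{principal ray} for the line $\gamma' = \psi_\#(\gamma)$. The key structural input is that $\psi$ commutes with $\phi$, which makes the whole package of asymptotic data ($\L(\phi)$, $\Eigen(\phi)$, $\Twist(\phi)$, the fixed subgroup system $\Fix(\phi)$, the sets $\Fix_N(\Phi)$) behave naturally under $\psi$; the other main input is Theorem~\ref{ThmPeriodicConjClass}, which says that $\psi$ fixes every $\phi$-periodic conjugacy class, so that Nielsen paths for $f$ and their heights are essentially preserved at the level of conjugacy classes.

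First I would check condition~\pref{item:height}: $\gamma$ being $\phi$-invariant and $\psi$ commuting with $\phi$ gives that $\gamma' = \psi_\#(\gamma)$ is $\phi$-invariant. For the height condition I would use hypothesis~\pref{ItemCarriesAsym} of Proposition~\ref{PropSpecialCase}, that $\F' = [\pi_1 G_r]$ carries $\Lam(\phi)$, together with Lemma~\ref{LemmaFFSComponent}~\pref{ItemIAThreeFFS} which tells us $\psi$ fixes $\F'$; so a line has an end of height $< r$ precisely in $\psi$-invariant terms. Concretely: the two ends of $\gamma$ are each carried by $G_r = \F'$ (an ``end carried by $\F'$'' is a condition on the weak accumulation set of the ray, which is $\psi$-natural), and since $\psi$ fixes $\F'$, both ends of $\psi_\#(\gamma)$ are carried by $\F'$, hence have height $< r$. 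Meanwhile $\gamma$ itself is not carried by $\F'$ — it has height $> r$ — and again since $\psi$ fixes $\F'$, neither is $\psi_\#(\gamma)$, so $\gamma'$ has height $> r$. This is the routine half.

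Next I would verify condition~\pref{item:not repelling} for $\gamma'$. Fix a lift $\ti\gamma$ of $\gamma$ and let $\Phi \in P(\phi)$ be the (unique, principal) automorphism of Lemma~\ref{uniqueness of Phi}, so both endpoints of $\ti\gamma$ lie in $\Fix_N(\Phi)$. Choose a representative $\Psi \in \Aut(F_n)$ of $\psi$. Then $\Psi\ti\gamma$ is a lift of $\gamma'$, and I claim $\Phi' := \Psi\Phi\Psi^\inv$ works. Since $\psi$ commutes with $\phi$, $\Phi'$ represents $\phi$; by Lemma~\ref{LemmaFixedIsPeriodic}~\pref{ItemConjugatePrincipal}, $\Phi' \in P(\phi)$; and by Lemma~\ref{LemmaFixedIsPeriodic}~\pref{ItemFixPtsNatural}, $\Fix_N(\wh{\Phi'}) = \wh\Psi(\Fix_N(\wh\Phi))$, so $\wh\Psi$ carries the two endpoints of $\ti\gamma$ (which lie in $\Fix_N(\wh\Phi)$) to two points of $\Fix_N(\wh{\Phi'})$; these are exactly the endpoints of $\Psi\ti\gamma$. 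That establishes~\pref{item:not repelling}, and the uniqueness of the associated automorphism is automatic from Lemma~\ref{uniqueness of Phi} (or just because $\gamma'$ is non-periodic by~\pref{item:height}).

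The step I expect to be the main obstacle is the careful bookkeeping in the ``end of height $<r$ versus line of height $>r$'' argument — making precise that ``an end of a line is carried by a free factor system'' is a genuinely $\psi$-natural notion and that it corresponds to the combinatorial height conditions of Notation~\ref{principal ray}. This requires invoking the ``polyglot'' facts about free factor supports (Facts~\refGM{FactFFSPolyglot}, \refGM{FactBoundaries}, and the weak-limit machinery Fact~\refGM{FactWeakLimitLines}) in the style already used in the proof of Lemma~\ref{LemmaEigenrayDefs}, and checking that the realization of $\gamma'$ in $G$ does cross some edge of height $>r$ while both its rays eventually stay in $G_r$. Everything else — the commutation bookkeeping and the appeal to Lemma~\ref{LemmaFixedIsPeriodic} — is formal once that is in hand. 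I would present the proof in the order: (1) $\phi$-invariance of $\gamma'$; (2) the height conditions via $\psi$-naturality of ``carried by $\F'$''; (3) the $\Fix_N$ condition via $\Phi' = \Psi\Phi\Psi^\inv$.
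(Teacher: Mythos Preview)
Your proof is correct and follows essentially the same approach as the paper: set $\Phi' = \Psi\Phi\Psi^{-1}$, use $\psi$-invariance of $\F' = [\pi_1 G_r]$ for the height condition~\pref{item:height}, and use $\Fix_N(\wh{\Phi'}) = \wh\Psi(\Fix_N(\wh\Phi))$ for condition~\pref{item:not repelling}. The paper's proof is a three-line version of exactly this; in particular it dispatches the height condition with the single remark ``since $G_r$ realizes $\F'$ and $\psi$ preserves $\F'$,'' so the ``end carried by $\F'$'' bookkeeping you flag as the main obstacle is treated as immediate. Your invocations of Theorem~\ref{ThmPeriodicConjClass} and hypothesis~\pref{ItemCarriesAsym} are not actually needed here, and principality of $\Phi'$ need not be checked since item~\pref{item:not repelling} only asks for an automorphism representing $\phi$.
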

 
  \begin{proof}   Let $\Psi$ be any automorphism representing $\psi$.   Given $\gamma \in \Gamma$,    a lift $\ti \gamma$ and $\Phi$ as in item \pref{item:not  repelling} of Notation~\ref{principal ray}, let $\gamma' = \psi_\#(\gamma)$, $\ti \gamma' = \Psi_\#(\ti \gamma)$ and $\Phi' = \Psi \Phi \Psi^{-1}$.    Since $G_r$ realizes $\F'$  and $\psi$ preserves $\F'$, $\gamma'$  satisfies item \pref{item:height} of Notation~\ref{principal ray}.  Item \pref{item:not repelling} follows from the fact that $\Fix_N(\Phi') = \Psi(\Fix_N(\Phi))$.  
 \end{proof}
 
%

\begin{lemma}[Constructing paths in $\Gamma$] \quad
\label{LemmaExistenceOfGamma} 
\begin{enumerate}
\item \label{ItemGammaFromNielsenPath}
Every non-trivial Nielsen path $\mu$ whose endpoints are contained in $G_r$ and whose first and last edges have height greater than $r$ is the central subpath of some element $\gamma \in \Gamma$.
\item \label{ItemGammaFromEStar}
The arc $E^*$ (Definition~\ref{DefEStar}) is the central subpath of some element of $\gamma \in \Gamma$.
\end{enumerate}
\end{lemma}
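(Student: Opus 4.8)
The plan is to build the lines $\gamma$ explicitly by attaching eigenrays and Nielsen rays to the two ends of the given central path, using the structural results already established for eigenrays (Definition~\ref{DefEigenrayCT}, Lemma~\ref{LemmaEigenrayDefs}, Lemma~\ref{LemmaPointsAtInfinity}) together with Fact~\ref{FactBasicNEGSplitting} and the (\neg\ Nielsen Paths) clause of the definition of a \ct. Item \pref{ItemGammaFromEStar} will be reduced to item \pref{ItemGammaFromNielsenPath} together with a separate construction, or handled uniformly; I will treat \pref{ItemGammaFromNielsenPath} first since it is the more substantive statement.

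For \pref{ItemGammaFromNielsenPath}: given the nontrivial Nielsen path $\mu$ with endpoints in $G_r$ whose first and last edges $E$, $E'$ have height $>r$, I would first pass to a lift $\ti\mu$ in $\wt G$ and let $\ti f$ be the lift of $f$ fixing $\ti\mu$ pointwise (it fixes $\ti\mu$ since $\mu$ is Nielsen), with corresponding automorphism $\Phi$, which is principal because the endpoints of $E,E'$ are principal vertices (Fact~\refGM{FactUsuallyPrincipal}), exactly as in the proof of Lemma~\ref{uniqueness of Phi}. The initial edge of $\ti\mu$ is some lift $\wt E$ of an \neg\ edge $E$ of height $>r$; I claim its initial direction is fixed by $\ti f$, so by Fact~\refGM{FactSingularRay} there is a principal ray $\wt R_-^{-1}$ issuing from the initial endpoint of $\ti\mu$, running ``backwards'' through $\overline E$, i.e.\ $\overline E$ followed by a ray in $\wt G_{s-1}$ (here $s=\height(E)$). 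By Lemma~\ref{LemmaPointsAtInfinity} this ray limits to a point of $\Fix_N(\wh\Phi)$, and it lies in $G_r$ beyond its first edge since $E$ is the unique edge of height $s$ it crosses and all strata between $G_r$ and $G_s$ are \neg\ edges whose image data lie in $G_r$ (Notation~\ref{NotationSection8}): more precisely, iterating $\ti f_\#$ pushes the ray into the $\ti f$-invariant component $\Gamma$ of the preimage of $G_{s-1}$, and one shows inductively using $f(E_i)=E_iu_i$ with $u_i\subset G_r$ that the tail lies in $G_r$. Doing the same at the terminal end with $E'$, and concatenating $\wt R_-^{-1}$, $\ti\mu$, $\wt R_+$, gives a line $\ti\gamma$ whose two endpoints lie in $\Fix_N(\Phi)$, hence $\gamma\in\Gamma$ by Notation~\ref{principal ray}; and by construction $\ti\mu$ (hence $\mu$) is exactly the maximal subpath beginning and ending with edges of height $>r$, i.e.\ the central subpath.

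For \pref{ItemGammaFromEStar}: the arc $E^*$ is a topological arc with interior disjoint from $G_r$, whose first and last edges ($E_{r+1}$, or $E_{r+1}$ and $\overline E_{r+2}$) have height $>r$; although $E^*$ is not itself a Nielsen path, its endpoints are still fixed, and the key point is that its two terminal directions are fixed principal directions — this uses the ``Additionally'' clauses of Notation~\ref{NotationSection8} in the linear/fixed case, and in the cases where a terminal edge of $E^*$ is superlinear it is automatic since the initial direction of a superlinear edge of a \ct\ is fixed (it generates an eigenray, Definition~\ref{DefEigenrayCT}). So I can run the identical argument: attach the principal ray $\wt R_-^{-1}$ emanating ``backward through'' the first edge of a lift $\wt{E^*}$, and $\wt R_+$ emanating forward through the last edge, getting $\ti\gamma=\wt R_-^{-1}\wt{E^*}\wt R_+$ with endpoints in $\Fix_N(\Phi)$ where $\Phi$ is the principal automorphism fixing $\wt{E^*}$, and with $E^*$ as central subpath. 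One must check $E^*$ itself is the central subpath and not a proper subpath of it, i.e.\ that the first edges of $R_\pm$ are not further edges of height $>r$ appended to $E^*$ — this is exactly why we append $R_\pm^{-1}$ running through $\overline E$ (so that in $\gamma$ these edges appear inverted), matching the equivalent conditions spelled out at the end of Notation~\ref{principal ray}.

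The main obstacle I anticipate is verifying that the appended principal rays are genuinely eventually contained in $G_r$, rather than merely in $G_{s-1}$: Fact~\refGM{FactSingularRay} produces a ray in $\wt G_{s-1}$, and one must leverage the special structure of the strata above $G_r$ (each a single \neg\ edge $E_i$ with $f(E_i)=E_iu_i$, $u_i\subset G_r$, guaranteed by $\F'$ carrying $\Lam(\phi)$ via Lemma~\ref{lem:cofinal} and Notation~\ref{NotationSection8}) to descend from $G_{s-1}$ all the way to $G_r$. The cleanest route is induction on the height $s$: if the first edge of the appended ray beyond $\overline E$ has height between $r+1$ and $s-1$, apply Fact~\ref{FactIsATerm} and the complete splitting rel $G_r$ to see that in the ray $R = \overline E \cdot u \cdot f_\#(u)\cdots$ the path $u$ (equal to $\bar u_s$ up to the initial point) lies in $G_r$, so $R$ after its first edge already lies in $G_r$; the superlinear-vs-linear distinction of Fact~\ref{FactIsATerm} guarantees no exceptional-path complications propagate upward. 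A secondary, more bookkeeping-heavy obstacle is confirming the ``central subpath'' identification precisely matches Notation~\ref{principal ray}, i.e.\ tracking orientations so that the edges we append appear as $\overline E$ (height $>r$, but inverted) and hence are \emph{not} absorbed into the central subpath; the equivalent conditions listed at the end of Notation~\ref{principal ray} are exactly the checklist to run through here.
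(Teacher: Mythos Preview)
Your construction has a genuine directional error that makes the concatenation fail to be a tight line. In item~\pref{ItemGammaFromNielsenPath}, you correctly observe that the initial direction of the first edge $\wt E$ of $\ti\mu$ is fixed by $\ti f$; but the principal ray generated by that fixed direction then runs \emph{through $\wt E$ into $\ti\mu$}, not ``backwards through $\overline E$'' away from it. Consequently your concatenation $\wt R_-^{-1}\,\ti\mu\,\wt R_+$ has $\overline E$ (the last edge of $\wt R_-^{-1}$) immediately followed by $E$ (the first edge of $\ti\mu$), so it is not a reduced path. The same confusion recurs in item~\pref{ItemGammaFromEStar}: you posit a principal $\Phi$ ``fixing $\wt{E^*}$'', but when $E^*$ is not a Nielsen path (the generic case here) no lift of $f$ fixes $\wt{E^*}$ pointwise---the lift fixing the initial vertex of $\wt E_{r+1}$ moves the terminal vertex---so rays based at the two endpoints of $\wt E^*$ cannot both have ideal endpoints in $\Fix_N(\wh\Phi)$ for a single $\Phi$.

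The paper's route avoids both the directional problem and your anticipated height-lowering obstacle entirely. For~\pref{ItemGammaFromNielsenPath}, since the endpoints $x,y$ of $\mu$ are principal vertices lying in the $f$-invariant core subgraph $G_r$, one can choose fixed directions $d,d'$ \emph{in $G_r$} at $x,y$; the rays they generate lie wholly in $G_r$ from the start (Fact~\refGM{FactSingularRay}), and $\gamma=\overline R_-\,\mu\,R_+$ is immediately tight with $\mu$ as central subpath. For~\pref{ItemGammaFromEStar}, the paper bases both rays at the single fixed vertex $z$ (the initial vertex of $E_{r+1}$, which equals the initial vertex of $E_{r+2}$ in the two-strata case): $R_+$ is the ray with initial edge $E_{r+1}$, and $R_-$ is either a ray in $G_r$ (one stratum) or the ray with initial edge $E_{r+2}$ (two strata). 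Then $\gamma=\overline R_-\,R_+$, and $E^*$ appears not as a middle block but as the last edge of $\overline R_-$ concatenated with the first edge of $R_+$; since $u_{r+1},u_{r+2}\subset G_r$ by Notation~\ref{NotationSection8}, the tails of $R_\pm$ are already in $G_r$ and no induction on height is needed.
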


\begin{proof}   The endpoints $x$ and $y$ of $\mu$ are principal fixed points in $G_r$ and so are the basepoints of fixed directions $d$ and $d'$ in $G_r$.  Choose a lift $\ti \mu$ of $\mu$ and let $\ti f$ be the lift of $f$ that fixes the initial endpoint $\ti x$ and the  terminal endpoint $\ti y$ of $\ti \mu$. Lifting $d$ and $d'$ to directions based at  $\ti x$ and $\ti y$, and applying Fact~\refGM{FactSingularRay}~\prefGM{ItemFixedDirectionRay} and~\prefGM{ItemRayHeight}, there exist rays $\wt R_-$ based at $\ti x$ and $\wt R_+$ based at $\ti y$ that project into $G_r$ and that have ideal endpoints in  $\Fix_N(\hat f)$. The line $\gamma=\overline R_- \mu R_+$ satisfies the conclusions of~\pref{ItemGammaFromNielsenPath}. 

For~\pref{ItemGammaFromEStar} we may assume by~\pref{ItemGammaFromNielsenPath} that $E^*$ is not a single fixed edge.  Let $\wt E_{r+1}$ be a lift of $E_{r+1}$,  let $\ti z$ be its initial  endpoint, let  $z$ be the initial endpoint of $E_{r+1}$, and let $\ti f$ be the lift of $f$ that fixes $\ti z$. Following Notation~\ref{NotationSection8}, consider first the subcase ``One stratum'', so $E^*=E_{r+1}$ is a nonfixed \neg\ edge. Let $d$ be a fixed direction in $G_r$ that is based at $z$ and let $d'$ be the initial direction of $E_{r+1}$.  Applying Fact~\refGM{FactSingularRay}~\prefGM{ItemFixedDirectionRay} and~\prefGM{ItemRayHeight} to the lift of $d$ based at $\ti z$ we obtain $\wt R_-$ based at $\ti z$ that projects into $G_r$. Applying Fact~\refGM{FactSingularRay}~\prefGM{ItemRayHeight} to the lift of $d'$ based at $\ti z$ we obtain a ray $\wt R_+$ with initial edge $\wt E_{r+1}$ that projects into $G_{r+1}$ and has subray $\wt R_+ \setminus E_{r+1}$ projecting into~$G_r$, and so $\gamma = \overline R_- R_+$ satisfies the conclusions of~\pref{ItemGammaFromEStar}. Consider next the subcase ``Two strata'', so $E^* = E_{r+1} \union E_{r+2}$. By (Periodic Edges) in the definition of a \ct, neither $E_{r+1}$ nor $E_{r+2}$ is a fixed edge. We may take $d'$ and $\wt R_+$ as in the previous subcase. Taking $d$ to be the initial direction of $E_{r+2}$, we may apply Fact~\refGM{FactSingularRay}~\prefGM{ItemRayHeight} to obtain a ray $\wt R_-$ with initial edge $E_{r+2}$ such that $\wt R_-$ projects into $G=G_{r+2}$ and has subray $\wt R_- \setminus E_{r+2}$ projecting into $G_r$, and again $\gamma = \overline R_- R_+$ satisfies the conclusions of~\pref{ItemGammaFromEStar}. 
\end{proof}

Recall $G_r$-Nielsen classes as defined in Definition~\ref{DefGrNielsenClass}.

\begin{definition} \label{DefInducedNielsen} Let $\cN$ be the set of $G_r$-Nielsen classes of principal vertices of $f$ contained in $G_r$. There is a $\psi$-induced permutation $N \mapsto \psi N$ of $\cN$ defined as follows. Choose $x \in N$ and a lift $\ti x \in\wt G$. Let $C$ be the component of $G_r$ that contains $x$, let $\wt C$ be the component of the full pre-image of $C$ that contains $\ti x$, let $\ti f : \wt G \to \wt G$ be the principal lift of $f$ that fixes $\ti x$ and hence preserves $\wt C$ and let $\Phi \in P(\phi)$ be the automorphism corresponding to $\ti f$. Fact~\ref{LemmaFFSComponent}~\pref{ItemIAThreeFFS} implies that $\psi$ preserves $[\pi_1(C)]$. We may therefore chose an automorphism $\Psi$ representing $\psi$ that preserves $\partial \wt C$. The automorphism $\Phi' = \Psi \Phi \Psi^{-1}$ represents $\phi$, is principal by Fact~\ref{LemmaFixedIsPeriodic}~\pref{ItemConjugatePrincipal} and preserves $\partial \wt C$. The corresponding lift $\ti f'$ of $f$ preserves $\wt C$ and is principal. Define $\psi N$ to be the $G_r$-Nielsen class that is the image in $C$ of $\Fix(\ti f') \cap \wt C$. 
\end{definition}

\begin{lemma}   With notation as in Definition~\ref{DefInducedNielsen}, $\psi N$ is a well defined $G_r$-Nielsen class whose elements are principal points. If $\Fix(\Phi)$ is non-trivial then $\psi N  = N$.
\end{lemma}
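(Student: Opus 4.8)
The statement has two parts: that $\psi N$ is well-defined (independent of the choices of $x \in N$, of the lift $\ti x$, and of the automorphism $\Psi$ representing $\psi$ that preserves $\bdy\wt C$), with elements that are principal points; and that $\psi N = N$ when $\Fix(\Phi)$ is nontrivial. The plan is to handle well-definedness first by tracking how each choice affects the output, and then to use Theorem~\ref{ThmPeriodicConjClass} together with Fact~\ref{FactTwistor} to pin down the fixed case.

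First I would verify that $\psi N$ is a $G_r$-Nielsen class of principal points. The lift $\ti f'$ corresponding to $\Phi' = \Psi\Phi\Psi^\inv$ is principal by Lemma~\ref{LemmaFixedIsPeriodic}\pref{ItemConjugatePrincipal} (using that $\psi$ commutes with $\phi$), so its fixed points in $\wt G$ project to principal points of $f$; restricting to the $\ti f'$-invariant component $\wt C$ and projecting to $C$, one gets the $C$-Nielsen class of any such point, which by Definition~\ref{DefGrNielsenClass} is a single $G_r$-Nielsen class (one must note here that $\Fix(\ti f' \restrict \wt C)$ is nonempty, which holds because $\Phi'$ is principal and $\wt C$ is $\ti f'$-invariant, hence $\Fix(\Phi') \intersect \bdy\wt C \ne \emptyset$ forces the tree-level fixed set to meet $\wt C$ — alternatively one argues directly that a principal lift preserving a noncontractible invariant subgraph has a fixed vertex there). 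For well-definedness: changing $\ti x$ to another lift of $x$ replaces $\wt C$ by a translate $g\wt C$ and $\ti f$ by $L_g \ti f L_g^\inv$, i.e. $\Phi$ by $i_g \Phi i_g^\inv$; feeding this through the construction and choosing the translated $\Psi$ accordingly replaces $\Fix(\ti f') \intersect \wt C$ by its $g$-translate, so the image in $C$ is unchanged. Changing $x$ to another point $x'$ of the same $G_r$-Nielsen class $N$: then $x'$ lies in the same component $C$, and a principal lift fixing a lift of $x'$ inside $\wt C$ must coincide with $\ti f$ (since $\Fix(\ti f \restrict \wt C)$ projects exactly to the $C$-Nielsen class of $x$, which contains $x'$), so the construction produces the same $\Phi$ up to the translation already handled. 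Finally, changing $\Psi$: two choices of $\Psi$ preserving $\bdy\wt C$ differ by precomposition with an automorphism $i_c$ with $c \in \pi_1(C)$ fixing $\bdy\wt C$, equivalently $L_c$ preserves $\wt C$; then $\Psi' \Phi \Psi'^\inv$ and $\Psi\Phi\Psi^\inv$ differ by conjugation by such an $i_c$, hence their corresponding lifts differ by $L_c$, which preserves $\wt C$, so $\Fix(\ti f' \restrict \wt C)$ is merely translated within $\wt C$ by $L_c$ and its image in $C$ is unchanged.

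For the fixed case, suppose $\Fix(\Phi)$ is nontrivial. I would argue that then $\ti f' = \ti f$, i.e. $\Phi' = \Phi$, which immediately gives $\psi N = N$. The point is that $\Phi'$ and $\Phi$ are two principal automorphisms representing $\phi$, both preserving $\bdy\wt C$ (hence both having fixed subgroups with nontrivial intersection with $\pi_1(C) = \Stab(\wt C)$ in a suitable sense). Pick $1 \ne w \in \Fix(\Phi) \intersect \pi_1(C)$ — such $w$ exists because $\Fix(\ti f \restrict \wt C)$ is nonempty (as $x \in C$ is a fixed point whose lift $\ti x \in \wt C$ is fixed by $\ti f$) so $\Stab(\wt C) \intersect \Fix(\Phi)$ contains the stabilizer of a fixed vertex, which is typically trivial; so instead I pick $w \in \pi_1(C)$ whose conjugacy class $[w]$ is $\phi$-invariant — this follows because $[\pi_1 C]$ is a $\phi$-fixed free factor and $\phi$ restricted to it is rotationless, so it has $\phi$-invariant conjugacy classes, and we may further arrange $[w]$ not to be (a power of) any twistor of $\phi$ by finiteness of $\Twist(\phi)$ (Fact~\ref{FactTwistor}\pref{ItemTwistorsFinite}) provided $\pi_1(C)$ has rank $\ge 2$; the rank-one case is trivial since then $N$ is a single point. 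Now $[w]$ is $\psi$-periodic, hence $\psi$-fixed by Theorem~\ref{ThmPeriodicConjClass}. Choose the lift $\wt w$ of $w$ stabilizing $\wt C$; after adjusting $\Phi$ within its isogredience class we may take $w \in \Fix(\Phi)$, and since $[w]$ is $\psi$-fixed and $\Psi$ preserves $\bdy\wt C$, we get $\Psi(w)$ conjugate to $w$ by an element of $\pi_1(C)$, so after the corresponding adjustment $w \in \Fix(\Phi')$ as well. Thus $w \in \Fix(\Phi) \intersect \Fix(\Phi')$ with $[w]_u$ not a twistor, so Fact~\ref{FactTwistor}\pref{ItemIntersectingFixed} forces $\Phi = \Phi'$.

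The main obstacle I anticipate is the bookkeeping in the fixed case: one must carefully match up the isogredience-class adjustments of $\Phi$ and $\Phi'$ with the requirement that the adjusted automorphisms both still correspond to lifts preserving the \emph{same} component $\wt C$, so that Fact~\ref{FactTwistor}\pref{ItemIntersectingFixed} applies cleanly and the conclusion $\Phi = \Phi'$ really does yield $\Fix(\ti f') \intersect \wt C = \Fix(\ti f) \intersect \wt C$ and hence $\psi N = N$ on the nose. A secondary technical point to nail down is the nonemptiness of $\Fix(\ti f' \restrict \wt C)$ (and of $\Fix(\ti f \restrict \wt C)$), which underlies both that $\psi N$ is an honest Nielsen class and that the fixed-case argument has a nontrivial $w$ to work with; this should follow from the fact that a principal lift preserving a noncontractible $f$-invariant subgraph has a fixed vertex in it, combined with the rotationless hypothesis, but it is worth stating explicitly.
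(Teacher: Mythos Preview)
Your well-definedness sketch is fine and matches the paper's (which also leaves it as an exercise). There are, however, two genuine gaps elsewhere.

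\textbf{Nonemptiness of $\Fix(\ti f' \restrict \wt C)$.} Your proposed reason (``$\Fix(\Phi') \cap \bdy\wt C \ne \emptyset$ forces the tree-level fixed set to meet $\wt C$'') does not hold up: in the case $\Fix(\Phi)$ is trivial, so is $\Fix(\Phi')$, and the statement is vacuous. The alternative (``a principal lift preserving a noncontractible invariant subgraph has a fixed vertex there'') is not a theorem---principality gives fixed points somewhere in $\wt G$, not necessarily in $\wt C$. The paper's argument is concrete and different: using \recognition\ Lemma~2.20(4) it finds two fixed directions in $C$ at $x$, lifts them to rays in $\wt C$ whose endpoints lie in $\Fix_N(\wh\Phi)\cap\bdy\wt C$ (in fact in $\Fix_+$ since $\Fix(\Phi)$ is trivial), pushes these by $\wh\Psi$ to two points of $\Fix_N(\wh\Phi')\cap\bdy\wt C$, and then invokes \recognition\ Lemma~3.16 to conclude $\Fix(\ti f')\cap\wt C\ne\emptyset$.

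\textbf{The case $\Fix(\Phi)$ nontrivial.} Your route via Fact~\ref{FactTwistor}\pref{ItemIntersectingFixed} breaks down. First, ``$\phi\restrict\pi_1 C$ is rotationless, so it has $\phi$-invariant conjugacy classes'' is false in general (an atoroidal outer automorphism has none). Second, and more seriously, even granting a $w\in\Stab(\wt C)$ with $[w]$ $\phi$-invariant, you cannot in general adjust $\Phi$ \emph{within its isogredience class} so that $\Phi(w)=w$: that is possible iff some $F_n$-conjugate of $w$ lies in $\Fix(\Phi)$, whereas ``$[w]$ is $\phi$-invariant'' only says some \emph{representative} of $\phi$ fixes $w$, not one isogredient to $\Phi$. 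So you never get $w\in\Fix(\Phi)\cap\Fix(\Phi')$ for the relevant $\Phi,\Phi'$, and Fact~\ref{FactTwistor} gives nothing. The paper's argument uses the hypothesis directly and avoids twistors altogether: pick $a\in\Fix(\Phi)$; by Theorem~\ref{ThmPeriodicConjClass} choose $\Psi$ with $\Psi(a)=a$; then $\Phi'=\Psi\Phi\Psi^{-1}$ also fixes $a$, so $\Phi'=i_a^k\Phi$. Since $\Psi^m$ represents $\phi$ and fixes $a$, $\Psi^m=i_a^l\Phi$ commutes with $\Phi$, giving $\Psi^m\Phi\Psi^{-m}=\Phi$; but by induction $\Psi^m\Phi\Psi^{-m}=i_a^{mk}\Phi$, so $k=0$ and $\Phi'=\Phi$, hence $\ti f'=\ti f$ and $\psi N=N$.
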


\begin{proof}   It is straightforward to check that $\psi N$ is independent of the choices of $x, \ti x$ and $\Psi$. For example, if $x$ is replaced by another element $y \in N$ then there is a Nielsen path $\rho \subset C$ connecting $x$ to $y$.  Lift this to $\ti \rho \subset \wt C$ connecting $\ti x$ to a lift $\ti y \in \Fix(\ti f)$.   This shows that $\Phi$ is unchanged and that   $\Psi$ need not be changed so $\ti f'$ need not be changed.   We leave the verification that $\psi N$ does not depend on the choices of $\ti x$ and $\Psi$ to the reader.

It remains only to show that $\Fix(\ti f') \cap \wt C \ne \emptyset$ and that if $\Fix(\Phi)$ is non-trivial then $\psi N  = N$. Suppose first that $\Fix(\Phi) \ne \emptyset$ and choose a non-trivial element $a \in F_n$. By Theorem~\ref{ThmPeriodicConjClass}, we may choose $\Psi$ so that $\Psi$ fixes~$a$. Thus $\Phi' = \Psi \Phi \Psi^{-1}$ fixes $a$ and $\Phi' = i_a^k\Phi$ for some $k$. Choosing $m > 0$ so that $\psi^m = \phi$, we have  $\Psi^m \Phi \Psi^{-m} = i_a^{mk} \Phi$. On the other hand, $\Psi^m = i_a^l \Phi$ for some $l$ because $\Psi^m  $ fixes $a$ and represents $\Phi$. This implies that $\Psi^m$ commutes with $\Phi$ and hence that $mk = 0$. Thus $k = 0$ and $\Phi' = \Phi$.  Equivalently $\ti f' = \ti f$ and we are done.   

Suppose now that $\Fix(\Phi)$ is trivial. By \recognition\ Lemma~2.20~(4), there exist directions $d_1 \ne d_2$ in $C$ based at $x$ that are $f$-periodic, and since $x$ is a principal vertex of $f$ it follows by (Rotationless) in the definition of a \ct\ that $d_1,d_2$ are each fixed by~$f$. They lift to directions $\ti d_1 \ne \ti d_2$ in $\wt C$ based at $\ti x$ that are fixed by $\ti f$. Since there does not exist a nontrivial $\gamma \in F_n$ such that $\Fix(\wh\Phi) \intersect \Fix(\hat\gamma) \ne \emptyset$ (by Fact~\refGM{FactFPBasics}), we may apply Fact~\refGM{FactSingularRay}, with the conclusion that $\ti d_1,\ti d_2$ are the initial directions of rays in $\wt C$ that are fixed by $\ti f_\#$ and that end at points of $\Fix_N(\wh\Phi) \cap\partial\wt C$. Since $\Fix(\Phi)$ is trivial, these two endpoints are in $\Fix_+(\wh\Phi)$ by Fact~\refGM{LemmaFixPhiFacts}.
It follows that $\Fix_N(\wh\Phi') \cap\partial\wt C = \wh\Psi(\Fix_N(\wh\Phi) \cap\partial\wt C)$ contains two points of $\Fix_+(\wh\Phi)$, and applying \recognition\ Lemma~3.16 it follows in turn that $\Fix(\ti f') \cap \wt C \ne \emptyset$. Since $\ti f'$ is principal, Fact~\refGM{FactPrincipalLift} implies that points of $\Fix(\ti f') \cap \wt C$ are principal.
 \end{proof}

\begin{lemma}\label{fixed class}  Suppose that the $G_r$-Nielsen class $N \in \cN$ determined by $x$ is $\psi$-invariant and that $\ti f, \ti x, \wt C$ and $\Phi$ are as in Definition~\ref{DefInducedNielsen}.  Then there exists an automorphism $\Psi_0$ representing $\psi$ that commutes with $\Phi$ and such that $\wh\Psi_0(\partial \wt C) =\partial \wt C$.  \end{lemma}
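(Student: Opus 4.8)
The plan is to produce $\Psi_0$ by correcting the automorphism $\Psi$ of Definition~\ref{DefInducedNielsen} by an inner automorphism $i_d$ with $d$ lying in $\Stab(\wt C)\cong\pi_1(C)$ (so that correcting by $i_d$ costs nothing on $\partial\wt C$), where $d$ is read off from the hypothesis $\psi N=N$. First I would unwind Definition~\ref{DefInducedNielsen} in the situation at hand: $\Psi$ represents $\psi$ with $\wh\Psi(\partial\wt C)=\partial\wt C$; the lift $\ti f'$ of $f$ corresponding to $\Phi'=\Psi\Phi\Psi^{-1}$ preserves $\wt C$; and the hypothesis $\psi N=N$ says exactly that the image in $C$ of $\Fix(\ti f'\restrict\wt C)$ equals $N$, which is also the image in $C$ of $\Fix(\ti f\restrict\wt C)$. (Note $\Fix(\ti f'\restrict\wt C)\ne\emptyset$, this being part of what makes $\psi N$ well defined in the preceding lemma.)

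The key step is to upgrade this equality of $G_r$-Nielsen classes to the assertion that $\ti f'$ is the conjugate of $\ti f$ by a covering translation that preserves $\wt C$. To do this I would pick $\ti x\in\Fix(\ti f\restrict\wt C)$ and $\ti x_1\in\Fix(\ti f'\restrict\wt C)$, projecting to points $x,x_1\in C$ that both lie in $N$ by the previous paragraph; choose a Nielsen path $\rho\subset C$ of $f$ from $x$ to $x_1$; and lift $\rho$ to the path $\ti\rho\subset\wt C$ starting at $\ti x$. Since $\ti f$ fixes $\ti x$ and $f_\#(\rho)=\rho$, we get $\ti f_\#(\ti\rho)=\ti\rho$, so the terminal endpoint $\ti x_2$ of $\ti\rho$ — a lift of $x_1$ — lies in $\Fix(\ti f\restrict\wt C)$. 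Writing $\ti x_2=L_d\ti x_1$ for the deck transformation $L_d$ with $d\in\Stab(\wt C)$, the conjugate $L_d\ti f'L_d^{-1}$ fixes $\ti x_2$; but $\ti f$ fixes $\ti x_2$ as well, and two lifts of $f$ to the connected cover $\wt G$ that agree at a point agree everywhere, so $\ti f'=L_d^{-1}\ti f L_d$. Passing to automorphisms of $F_n$ via the standard correspondence between lifts of $f$ and representatives of $\phi$, this says $\Phi'$ is conjugate to $\Phi$ by $i_d$.

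Finally I would conclude: combining $\Psi\Phi\Psi^{-1}=\Phi'=i_d^{-1}\Phi\, i_d$ gives $(i_d\Psi)\,\Phi\,(i_d\Psi)^{-1}=\Phi$, so $\Psi_0:=i_d\Psi$ represents $\psi$ and commutes with $\Phi$. Because $d\in\Stab(\wt C)$ the covering translation $L_d$ preserves $\wt C$, hence preserves $\partial\wt C$; since $\wh\Psi(\partial\wt C)=\partial\wt C$ by construction, also $\wh\Psi_0=\wh{i_d}\circ\wh\Psi$ preserves $\partial\wt C$, as required.

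The main obstacle is precisely the middle step: turning the ``relative'' datum $\psi N=N$ — which is a statement about the fixed sets of the \emph{restrictions} $\ti f\restrict\wt C$ and $\ti f'\restrict\wt C$, hence about the deck group $\Stab(\wt C)$ — into the ``global'' statement that $\Phi'$ is an $i_d$-conjugate of $\Phi$ \emph{with $d\in\Stab(\wt C)$}. It is essential that the deck transformation extracted by matching Nielsen classes inside $C$ is the restriction of a deck transformation of the ambient tree $\wt G$ lying in $\Stab(\wt C)$, since only then does correcting $\Psi$ by $i_d$ leave $\partial\wt C$ invariant; the lifted-Nielsen-path argument above is exactly what supplies this. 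A routine point to pin down once at the outset is the orientation convention in the correspondence $\ti f\rightsquigarrow\Phi$, so that conjugation of lifts by $L_d$ matches conjugation of automorphisms by $i_d$ (the side on which $i_d$ acts is immaterial for the conclusion, since $\Stab(\wt C)$ is a subgroup).
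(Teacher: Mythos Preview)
Your argument is correct and is essentially the paper's own proof, carried out with more detail. The paper simply asserts the existence of a covering translation $L_a$ with $L_a(\Fix(\ti f'\restrict\wt C))=\Fix(\ti f\restrict\wt C)$ and then concludes $L_a\ti f'L_a^{-1}=\ti f$ exactly as you do; your Nielsen-path construction is a concrete way to exhibit that covering translation and to verify it lies in $\Stab(\wt C)$.
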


\begin{proof}  Begin with any automorphism $\Psi$ representing $\psi$ that preserves $\partial \wt C$.  Following 
Definition~\ref{DefInducedNielsen} let $\Phi' = \Psi \Phi \Psi^\inv$ and let $f'$ be the lift of $f$ corresponding to $\Phi'$.  
Since $N$ is $\psi$-invariant, there is a covering translation $L_a$ such that $L_a(\Fix(\ti f' \restrict \wt C)) = \Fix(\ti f \restrict \wt C)$.  It follows that $i_a(\partial \wt C) = \partial \wt C$ and  that $L_a \ti f ' L_a^\inv $ fixes each element of $\Fix(\ti f \restrict \wt C)$ and so equals $\ti f$.  Translating back to the language of automorphisms, $i_a \Phi' i_a^\inv  =\Phi$.     Thus $\Psi_0 := \ i_a\Psi$ satisfies  $\Phi = \Psi_0 \Phi \Psi_0^{-1}$ and $\wh\Psi_0(\partial \wt C) = \partial \wt C$.
\end{proof}

Recall from Lemma~\ref{psi invariance} that $\Gamma$ is $\psi$-invariant.

\begin{lemma}\label{action of psi}    Suppose that   $\gamma = R_-^{-1} \rho R_+$ is the highest edge splitting of $\gamma \in \Gamma$ (see Notation~\ref{principal ray}) and that $\gamma' = {R_-'}^{-1}\rho' R_+$   is the   highest edge splitting of  $\gamma' = \psi_\#(\gamma)$.   If $R_-  \subset G_r$ and the basepoint $x$ of $R_-$ is contained in the $G_r$-Nielsen class $N$ then   $R'_-  \subset G_r$ and the basepoint  of $R'_-$ is contained in  the $G_r$-Nielsen class $ \psi N$.  The analogous statement holds for $R_+$.
\end{lemma}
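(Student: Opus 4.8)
The plan is to unwind the definitions of the highest edge splitting (Notation~\ref{principal ray}) and of the $\psi$-induced permutation of $G_r$-Nielsen classes (Definition~\ref{DefInducedNielsen}), and to track how the relevant data transform under an automorphism $\Psi$ representing $\psi$. First I would fix a lift $\ti\gamma = \wt R_-^{-1}\ti\rho\wt R_+$ of $\gamma$ together with the automorphism $\Phi \in P(\phi)$ satisfying Notation~\ref{principal ray}\pref{item:not repelling}, and let $\ti f$ be the corresponding lift. Choosing a representative $\Psi$ of $\psi$, set $\Phi' = \Psi\Phi\Psi^\inv$, $\ti f'$ the corresponding lift of $f$, and $\ti\gamma' = \Psi_\#(\ti\gamma)$; by the proof of Lemma~\ref{psi invariance} this $\ti\gamma'$ is a lift of $\gamma'$ with endpoints in $\Fix_N(\Phi')$, and $\Phi'$ is the automorphism attached to $\gamma'$ via Notation~\ref{principal ray}\pref{item:not repelling} (it is principal by Lemma~\ref{LemmaFixedIsPeriodic}\pref{ItemConjugatePrincipal}, and unique by Lemma~\ref{uniqueness of Phi}). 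Since the highest edge splitting is defined by subdividing at the set $\wt V$ of endpoints of fixed edges and initial points of $\neg$-edges of height $>r$ occurring in the line, and since $\Psi_\#$ carries edges to edges preserving heights and orientations (as $\psi$ preserves $\F' = [\pi_1 G_r]$), the set $\wt V' $ for $\ti\gamma'$ is exactly $\Psi_\#(\wt V)$. Consequently the ray $\wt R'_-$ is $\Psi_\#(\wt R_-)$ and likewise $\wt R'_+ = \Psi_\#(\wt R_+)$; in particular $R_- \subset G_r$ forces $R'_- \subset G_r$, and the basepoint $\ti x'$ of $\wt R'_-$ equals $\Psi_\#(\ti x)$ where $\ti x$ is the basepoint of $\wt R_-$.

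Next I would identify the component data. Let $C$ be the component of $G_r$ containing $x$, let $\wt C$ be the component of the full preimage of $C$ containing $\ti x$, so $\ti f$ preserves $\wt C$; then $\ti f' = \Psi_\# \ti f \Psi_\#^\inv$ preserves $\Psi_\#(\wt C)$, which is the component of the full preimage of $C$ containing $\ti x' = \Psi_\#(\ti x)$. The point $\ti x$ lies in $\Fix(\ti f\restrict\wt C)$ (it is an element of $\wt V$, hence fixed by $\ti f$ as noted in Notation~\ref{principal ray}), so $\ti x'\in\Fix(\ti f'\restrict\Psi_\#\wt C)$. By Definition~\ref{DefInducedNielsen}, applied with this choice of $x$, $\ti x$, and $\Psi$ (note $\Psi$ automatically preserves $\partial\wt C$ because $\Phi' = \Psi\Phi\Psi^\inv$ and $\ti f' = \Psi_\#\ti f\Psi_\#^\inv$ preserves $\Psi_\#\wt C$, which is the component of the preimage of $C$ determined by $\partial(\Psi_\#\wt C) = \wh\Psi(\partial\wt C)$), the Nielsen class $\psi N$ is precisely the image in $C$ of $\Fix(\ti f'\restrict\Psi_\#\wt C)\cap\Psi_\#\wt C = \Fix(\ti f'\restrict\Psi_\#\wt C)$. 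Since $\ti x'$ belongs to this set and projects to the basepoint $x'$ of $R'_-$, we conclude $x'\in\psi N$. The argument for $R_+$ is identical, using the terminal ray in place of the initial one.

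The one point requiring a little care—and what I expect to be the main obstacle—is matching up the choices implicit in Definition~\ref{DefInducedNielsen} with the choices forced by the highest edge splitting of $\gamma$: Definition~\ref{DefInducedNielsen} builds $\psi N$ from \emph{some} principal vertex $x\in N$, \emph{some} lift $\ti x$, and \emph{some} automorphism $\Psi$ preserving $\partial\wt C$, and one must check that the particular $\ti x$, $\wt C$, $\Psi$ arising from the splitting of $\gamma$ are legitimate choices (in particular that the basepoint $x$ of $R_-$ is a principal vertex lying in $G_r$, which holds because $x$ is the projection of an element of $\wt V$ and, being the basepoint of a $\neg$-edge or a fixed edge of height $>r$ or else an interior subdivision point, is principal by Fact~\refGM{FactUsuallyPrincipal}; and that $x\in N$ by hypothesis). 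The lemma already cited—that $\psi N$ is independent of these choices—then guarantees the answer agrees with any other computation of $\psi N$, so no further verification is needed. I would close by remarking that the ``analogous statement for $R_+$'' is obtained by reversing the orientation of $\gamma$, which interchanges $R_-$ and $R_+$ and leaves $\Gamma$, $\cN$, and the $\psi$-action invariant.
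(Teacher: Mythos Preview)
Your proposal rests on the claim that ``$\Psi_\#$ carries edges to edges preserving heights and orientations,'' and this is false. The automorphism $\Psi\in\Aut(F_n)$ acts on $\partial F_n$ and thereby on bi-infinite lines in $\wt G$, but it has no action on vertices, edges, or finite subsets such as $\wt V\subset\wt G$. Even if you represent $\psi$ by a homotopy equivalence $g\from G\to G$ with $g(G_r)\subset G_r$ and lift it to $\ti g\from\wt G\to\wt G$, the map $\ti g$ is not cellular and $\ti g_\#(\ti\gamma)=\ti\gamma'$ is obtained only after tightening, which destroys any edge-by-edge correspondence; there is no reason for $\wt V'$ to equal $\ti g(\wt V)$, nor for $\wt R'_-$ to be an ``image'' of $\wt R_-$. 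Expressions such as $\Psi_\#(\ti x)$, $\Psi_\#(\wt C)$, and $\ti f'=\Psi_\#\ti f\Psi_\#^{-1}$ are therefore not well defined, and the argument collapses at its first step.

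The paper's proof avoids this entirely by working with boundary dynamics and Lemma~\ref{LemmaPointsAtInfinity}. One chooses $\Psi$ as in Definition~\ref{DefInducedNielsen} so that $\wh\Psi(\partial\wt C)=\partial\wt C$, where $\wt C$ is the component of the full preimage of $G_r$ containing the basepoint $\ti x$ of $\wt R_-$. The ideal endpoint of $\wt R'_-$ (being $\wh\Psi$ applied to the ideal endpoint of $\wt R_-\subset\wt C$) then lies in $\partial\wt C$, and the well-definedness lemma following Definition~\ref{DefInducedNielsen} gives $\Fix(\ti f'\restrict\wt C)\ne\emptyset$. If $R'_-\not\subset G_r$, the description in Notation~\ref{principal ray} forces the initial edge of $\wt R'_-$ to be a lift $\wt E_i$ of a non-fixed \neg\ edge of height $i>r$ with $\ti f'$ fixing its initial vertex; Lemma~\ref{LemmaPointsAtInfinity} then yields $\Fix(\ti f'\restrict\Gamma)=\emptyset$ for the component $\Gamma$ of the preimage of $G_{i-1}$ containing the terminal vertex of $\wt E_i$, and since $\wt C\subset\Gamma$ this contradicts $\Fix(\ti f'\restrict\wt C)\ne\emptyset$. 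Hence $\wt R'_-\subset\wt C$, so $R'_-\subset G_r$ and its basepoint, being fixed by $\ti f'$ and lying in $\wt C$, projects into $\psi N$.
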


\begin{proof}  Assume  notation as in Definition~\ref{DefInducedNielsen}.   The terminal endpoint of $\wt R'_-$ is contained in $\partial \wt C$ and $\Fix(\ti f' \restrict \wt C) \ne \emptyset$.      It suffices to prove that $\wt R'_- \subset \wt C$ for its initial endpoint will then be in $\Fix(\ti f' \restrict \wt C)$ and so project to $\psi N$.  If this fails then the initial edge of $\wt R'_-$ is $\wt E_i$ for some non-fixed edge $E_i$ above $G_r$.  This contradicts Lemma~\ref{LemmaPointsAtInfinity} and the fact that $\Fix(\ti f' \restrict \wt C) \ne \emptyset$ and  so completes the proof.  
  \end{proof}

In what follows $\<\alpha, E\>$ is the algebraic intersection number of an edge  path $\alpha$ with an edge $E$.  

\begin{lemma}\label{first intersection number}   Suppose that $E$ is an   oriented edge  of $G$ with height greater than $r$.
\begin{enumerate}
\item $\<\rho, E\>= 0$  for every \iNp\  $\rho$ of height greater than $r$   
\item If $E$ is non-fixed then $\<\rho, E\>= 0$  for every Nielsen path  $\rho$.  
\item If $\tau \subset G$ is a circuit then  $\<\tau ,E\> = \<\psi_\#(\tau),E\>$.
\end{enumerate}
\end{lemma}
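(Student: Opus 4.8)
\textbf{Proof plan for Lemma~\ref{first intersection number}.}

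The three parts are about the algebraic intersection number $\<\alpha,E\>$ of an edge path with a high edge $E$, so the natural tool is relative homology: the quantity $\<\alpha,E\>$ only depends on the class of $\alpha$ in $H_1(G,G_r;\Z)$ together with the orientation of $E$. First I would set up this observation precisely: for a loop or path $\alpha$ with endpoints in $G_r$, its class $[\alpha] \in H_1(G,G_r;\Z)$ is a well-defined integer combination of the (classes of the) oriented edges $E_i$ of height $>r$, and the coefficient of $E_i$ is exactly $\<\alpha,E_i\>$; moreover $\<\alpha,E_i\>$ is a homotopy (straightening) invariant, so $\<\alpha,E\> = \<\alpha',E\>$ whenever $\alpha'$ is obtained from $\alpha$ by tightening.

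For part~(1), let $\rho$ be an indivisible Nielsen path of height $i>r$. By (NEG Nielsen Paths) in the definition of a \ct\ (together with the case analysis recalled in Notation~\ref{NotationSection8}), $\rho$ has the form $E_i w_i^p \overline E_i$ where $E_i$ is a linear edge of height $i$ and $w_i \subset G_{i-1}$ is a closed Nielsen path; more generally, an indivisible Nielsen path of height $>r$ either is of this form or is contained in some $G_j$ with $j \le $ (but this reduces to the previous case by induction on height, since $w_i$ itself splits into indivisible Nielsen pieces of strictly lower height). Since the subpath $w_i$ lies in $G_{i-1}$, its contribution to edges of height $\ge i$ vanishes, and $E_i$ followed later by $\overline E_i$ contributes $+1$ and then $-1$ to $\<\rho, E_i\>$; for any other high edge $E$ the contribution is visibly $0$. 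An induction on the height of $\rho$, using that each $w_i$ splits into indivisible Nielsen paths of lower height, then gives $\<\rho,E\>=0$ for every high edge $E$. Part~(2) extends this from indivisible to arbitrary Nielsen paths under the extra hypothesis that $E$ is non-fixed: by Fact~\ref{FactBasicNEGSplitting} a Nielsen path $\rho$ splits at the endpoints of occurrences of $E$ (and $\overline E$) into pieces, and also splits (Fact~\ref{all neg}, Fact~\ref{FactRelSplit}) into indivisible Nielsen pieces; a fixed edge $E$ of height $>r$ could occur with nonzero net count inside a Nielsen path, but a \emph{non-fixed} high edge $E$, by Fact~\ref{FactIsATerm}, can occur in a Nielsen path only as part of an indivisible term of the form $E w^p \overline E$ (the linear case) or not at all (the superlinear case), and in either situation its algebraic count is $0$ by part~(1) applied to that indivisible term.

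For part~(3): let $\tau$ be a circuit and $\tau' = \psi_\#(\tau)$. Since $\psi \in \IA_n(\Z/3)$, the classes $[\tau]$ and $[\tau']$ agree in $H_1(F_n;\Z/3) = H_1(G;\Z/3)$. Now push forward along $H_1(G;\Z/3) \to H_1(G,G_r;\Z/3)$: the coefficient of each high edge $E$ in $[\tau]$ reduces mod $3$ to $\<\tau,E\> \bmod 3$, and similarly for $\tau'$, so $\<\tau,E\> \equiv \<\psi_\#(\tau),E\> \pmod 3$. This is only a congruence, so to upgrade it to equality I would use that $\psi$ is an \emph{element of $\IA_n(\Z/3)$} — in particular, applying the same argument to the circuits representing $\phi = \psi^k$ is not enough; instead I would invoke that $\psi$ fixes the class of every circuit in $H_1(F_n;\Z)$. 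Wait — that is exactly what ``$\psi \in \IA_n(\Z/3)$'' does \emph{not} directly give; $\IA_n(\Z/3)$ is the kernel of the action on $H_1(F_n;\Z/3)$, not on $H_1(F_n;\Z)$. So the clean statement is the mod-$3$ one, and the main subtlety — and the step I expect to be the real obstacle — is justifying the passage from the mod-$3$ congruence to the integral equality. The resolution should be that in all the applications $\<\rho,E\>$ will be known to lie in a controlled range (indeed the relevant $\rho$ will be connecting Nielsen paths, for which parts~(1)–(2) already force the value to be $0$), so a congruence mod~$3$ between two such values is an equality; alternatively, and more robustly, one argues that $\psi_\#$ preserves the $\Z$-homology class of $\tau$ because $\psi$ is represented by a homotopy equivalence $h\from G\to G$ preserving the filtration element $G_r$ (using $\psi$-invariance of $\F'$ as in the proof of Lemma~\ref{LemmaInvariantGroups}), and such an $h$ induces a filtration-preserving map on homology whose effect on high edges, read mod $3$, is trivial by $\psi \in \IA_n(\Z/3)$ while its effect integrally on the summand spanned by the high edges is unipotent — and a unipotent integer matrix that is trivial mod $3$ and has finite order (here order $k$, since $\phi=\psi^k$ acts trivially on $\Z$-homology by Fact~\ref{LemmaFixedIsPeriodic}-type reasoning) must be the identity, by Lemma~\ref{finite order}. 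I would present part~(3) via this last route, since it gives the honest integral equality claimed in the statement.
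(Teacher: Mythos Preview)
Your treatment of (1) and (2) is essentially the paper's, though for (1) you can skip the induction: in the standing context of Notation~\ref{NotationSection8} every nonfixed edge $E_i$ of height $>r$ satisfies $f(E_i)=E_i u_i$ with $u_i\subset G_r$, so the twistor $w_i$ already lies in $G_r$ and the indivisible Nielsen path $E_i w_i^p \overline E_i$ visibly has $\langle\,\cdot\,,E\rangle=0$ for every high edge $E$. Your proposed height induction does not actually close without this, because $w_i$ could a priori contain \emph{fixed} high edges, which contribute to the count and are not covered by the inductive hypothesis on indivisible Nielsen paths.

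For (3) you are over-reading the statement. The paper's entire proof of (3) is the sentence ``Item (3) follows from the assumption that $\psi$ acts trivially on $H_1(F_n,\Z_3)$'', and every application (see Lemma~\ref{intersection number}) is explicitly taken mod~$3$. So the intended content is precisely the congruence $\langle\tau,E\rangle\equiv\langle\psi_\#(\tau),E\rangle\pmod 3$, which is exactly your first observation. The upgrade to an integral equality is unnecessary, and your sketch of it has gaps: you assert that $\psi_*$ acts unipotently on $H_1(G,G_r;\Z)$ without justification, and Lemma~\ref{finite order} concerns outer automorphisms of free groups rather than integer matrices, so it does not apply in the form you invoke. (For the record, the integral statement does happen to hold: since $f(E_i)\equiv E_i$ rel~$G_r$ for each high edge, $\phi=\psi^k$ acts trivially on $H_1(G,G_r;\Z)$, so $\psi_*$ has finite order there while being the identity mod~$3$; torsion-freeness of the kernel of $\GL_m(\Z)\to\GL_m(\Z/3)$ then forces $\psi_*=\mathrm{id}$. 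But the paper neither needs nor proves this.)
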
 

\begin{proof} If $\rho$ is an \iNp\ with height $s > r$ then $\rho=E_s \beta \overline E_s$ for some path $\beta_s \subset G_r$, by (\neg\ Nielsen Paths) in the definition of a \ct\ (\recognition\ Definition~4.7, or Definition~\refGM{DefCT}). This proves (1). Every Nielsen path decomposes as a concatenation of \iNp s of height greater than $r$, \iNp s of height at most $r$ and fixed edges.  Since the second and third types do not cross a non-fixed edge of height greater than $r$, (2) follows from (1). Item (3) follows from the assumption that $\psi$ acts trivially on $H_1(F_n,\Z_3)$.   
\end{proof}


\begin{lemma}  \label{intersection number} Suppose that $E$ has height greater than $r$, that $\gamma \in \Gamma$ and that $\sigma, \sigma'$ and $\sigma''$ are the central subpaths of $\gamma, \gamma'= \psi_\#(\gamma)$ and  $\gamma''= \psi_\#(\gamma')$ respectively.  Then
\begin{enumerate}
\item \label{item:intersection equality} If $E$ is non-fixed or if both ends of $\sigma$ are contained in the same component of $G_r$ then $ \<\sigma,E\> =\<\sigma',E\> = \<\sigma'',E\>  \mod 3$.
\item \label{item:intersection sum}If  the endpoints of $\sigma$ are contained in distinct components of $G_r$ then $2 \<\sigma',E\>  =\<\sigma,E\>+\<\sigma'',E\>\mod 3$.
\end{enumerate}
\end{lemma}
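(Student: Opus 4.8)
The plan is to track how the algebraic intersection number $\<\cdot, E\>$ behaves under $\psi_\#$ when applied to the central subpath of an element $\gamma \in \Gamma$. The key point is that $\gamma$ itself is a $\phi$-invariant line with both ends of height $< r$, so $\gamma$ closes up (in a homological sense) against the edge $E$ of height $>r$ only through its central subpath $\sigma$; the two complementary rays $R_-, R_+$ lie in $G_r$ and so contribute nothing to $\<\gamma, E\>$. First I would make precise the sense in which $\<\gamma, E\>$ is defined: since $E$ has height $>r$, only finitely many edges of $\gamma$ have height $>r$, all of them inside $\sigma$, so $\<\sigma, E\>$ is a well-defined integer, and reducing mod $3$ is what we care about because $\psi \in \IA_n(\Z/3)$.

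The core of the argument is to relate $\<\sigma', E\> = \<\psi_\#(\gamma)\text{'s central subpath}, E\>$ to $\<\sigma, E\>$. The natural device is Lemma~\ref{action of psi}: the highest edge splitting $\gamma = R_-^{-1}\rho R_+$ passes under $\psi_\#$ to a highest edge splitting $\gamma' = {R_-'}^{-1}\rho' R_+'$, with the rays $R_\pm'$ again lying in $G_r$ (and their basepoints in the appropriate $\psi$-images of the $G_r$-Nielsen classes containing the basepoints of $R_\pm$). So the difference between $\sigma$ and $\sigma'$ is entirely a matter of how the two ends are attached inside $G_r$. Concretely, I would close $\sigma$ up into a circuit by adjoining a path in $G_r$ joining its two endpoints: if the endpoints lie in the same component of $G_r$ there is a single such closing path $\delta$, giving a circuit $\tau = \sigma\delta$ with $\<\tau, E\> = \<\sigma, E\>$ since $\delta \subset G_r$ contributes nothing. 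Then $\psi_\#(\tau)$ is a circuit whose intersection with $E$ equals $\<\sigma', E\>$ plus the contribution of $\psi_\#$ applied to the closing path — but that closing path, being in the $\psi$-invariant $G_r$, maps into $G_r$ and contributes nothing; so by Lemma~\ref{first intersection number}(3) we get $\<\sigma, E\> = \<\tau, E\> = \<\psi_\#(\tau), E\> = \<\sigma', E\> \bmod 3$, and iterating gives the equality with $\<\sigma'', E\>$. If $E$ is non-fixed then by Lemma~\ref{first intersection number}(2) any Nielsen connecting path and any $G_r$-path contributes $0$, so the same argument works regardless of whether the ends lie in the same component.

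The case where the two endpoints of $\sigma$ lie in distinct components $C_1, C_2$ of $G_r$, with $E$ fixed, is the delicate one and is where I expect the main obstacle. Here one cannot close $\sigma$ into a single circuit inside $G$ without crossing $E$ again (or leaving $G_r$), so the clean ``closing path contributes nothing'' trick fails: a closing arc from one end to the other would have to traverse the height-$>r$ part, contributing to the intersection number. The fix is to work with the \emph{pair} of lines $\gamma$ and $\gamma' = \psi_\#(\gamma)$ simultaneously. Both have central subpaths ($\sigma$ and $\sigma'$) sharing the same middle structure (the edge $E$, recall $\<\rho, E\> = \<\rho', E\> = 0$ by Lemma~\ref{first intersection number}(1) since the connecting Nielsen paths contribute nothing, so $\<\sigma, E\>$ only sees the at-most-one-extra initial and terminal edge of height $>r$). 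Using Lemma~\ref{action of psi}, the basepoint-components of the ends of $\sigma'$ are $\psi C_1, \psi C_2$. The key observation is that $\psi$ permutes the components of $G_r$ via its action on $H_1(F_n;\Z/3)$ trivially — wait, more carefully: $\psi \in \IA_n(\Z/3)$ fixes each component of $G_r$ (Lemma~\ref{LemmaFFSComponent}), so $\psi C_i = C_i$ as free factor conjugacy classes, but the $G_r$-Nielsen classes may be permuted. The right bookkeeping is to form the ``doubled'' circuit: concatenate $\sigma$, a path in $C_2$ from the end of $\sigma$ to the start of $\overline{\sigma'}$ (arranging the lifts/basepoints to match up via the Nielsen-class data), then $\overline{\sigma'}$, then a path in $C_1$ back to the start of $\sigma$. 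This is a genuine circuit in $G$, its intersection with $E$ equals $\<\sigma, E\> - \<\sigma', E\>$, and applying $\psi_\#$ and Lemma~\ref{first intersection number}(3) yields $\<\sigma, E\> - \<\sigma', E\> \equiv \<\sigma', E\> - \<\sigma'', E\> \bmod 3$, i.e. $2\<\sigma', E\> \equiv \<\sigma, E\> + \<\sigma'', E\> \bmod 3$ — exactly statement (2). The obstacle to watch is ensuring the connecting $G_r$-paths used in the doubled circuit really do lie in $G_r$ (so contribute nothing to the $E$-intersection) \emph{and} really do close the circuit up correctly; this is where one must invoke Lemma~\ref{action of psi} to know that the appropriate ends of $\sigma$ and $\sigma'$ sit in the same components of $G_r$, and must be careful that in the ``distinct components'' case no Nielsen-class mismatch forces an extra crossing of $E$.
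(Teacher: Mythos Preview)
Your closing-circuit argument for the same-component half of~(1), and your doubled-circuit argument for~(2), are essentially the paper's own proof. One technical point you skate over: to equate $\<\psi_\#(\tau),E\>$ with $\<\sigma',E\>$ (and similarly with $\<\sigma',E\>-\<\sigma'',E\>$ for the doubled circuit) you must know that the maximal height-$>r$ subpath of $g_\#(\tau)$ is exactly $\sigma'$. The paper proves this by choosing $g\colon G\to G$ representing $\psi$ with $g(G_r)\subset G_r$ and observing that the tightening of $g(\gamma)$ can be done in three stages---first tighten $g(\sigma)$, then the complementary rays (which lie in $G_r$), then cancel at the two junctures---with no height-$>r$ edges touched in the last two stages. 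Your appeal to Lemma~\ref{action of psi} does not quite deliver this: that lemma is about the highest-edge-splitting rays $R_\pm$, which need not lie in $G_r$, rather than the rays complementary to the central subpath $\sigma$, which do.

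The genuine gap is the non-fixed-$E$, distinct-components subcase of~(1). Your claim that ``the same argument works regardless of whether the ends lie in the same component'' is false: when the endpoints of $\sigma$ lie in different components of $G_r$ there is no closing path inside $G_r$, and any closing path that leaves $G_r$ may well cross $E$, both before and after applying $\psi$, with no control on the difference mod~$3$. The paper's remedy is to first run the doubled-circuit argument---which yields~(2) for \emph{all} $E$, so your restriction there to fixed $E$ is unnecessary---and then combine~(2) with a value constraint. Since $E$ is non-fixed, $\<\rho,E\>=0$ by Lemma~\ref{first intersection number}(2); from Notation~\ref{principal ray} an added initial edge of $\sigma$ cannot be $E$ and an added terminal edge cannot be $\overline E$; and since the terminal vertex of the non-fixed edge $E$ lies in a single component of $G_r$, one of $C_1,C_2$ does not contain it, ruling out one further possibility for the added edges. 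This forces $\<\sigma,E\>\in\{0,1\}$ (or $\{0,-1\}$ in the symmetric subcase), and because $\psi$ preserves each component of $G_r$ the same bound holds for $\sigma'$ and $\sigma''$. Now $2b\equiv a+c\pmod 3$ with $a,b,c\in\{0,1\}$ forces $a=b=c$, giving~(1).
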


\begin{proof}  Choose a homotopy equivalence $g \from G \to G$ that represents $\psi$ and that preserves each component of $G_r$. Suppose that $\ell$ is a line in $G$ that has height greater than $r$ and whose ends each have height $< r$.  Let $\alpha$ be the maximal subpath of $\ell$ that begins and ends with an edge of height greater than $r$. We claim that the maximal subpath of $g_\#(\ell)$ that begins and ends with an edge of height  greater than $r$  is the same as the maximal subpath of $g_\#(\alpha)$ that begins and ends with an edge of height  greater than $r$. To see this, note that the tightening of  $g(\ell)$ to $g_\#(\ell')$ can be done in three stages. First tighten $g(\alpha)$ to $g_\#(\alpha)$, then tighten the $g$-images of the rays that are complementary to $\alpha$ and then cancel edges at the two juncture points. Since no edges with height $>r$ are cancelled during the second and third phases of this tightening, the claim follows.

Let $\beta$ be the maximal subpath of $g_\#(\sigma)$ that begins and ends with an edge of height greater than $r$. Applying the above claim  with $\ell = \gamma$ we conclude that $\sigma' = \beta$. If both endpoints of $\sigma$ are contained in the same component  $C$ of $G_r$, connect them by a path in $G_r$ to form a circuit $\tau$. A straightforward modification of the  above tightening argument shows that $\psi_\#(\tau)$ is the concatenation of $\beta$ and a path in $G_r$. Thus
$$
\< \sigma,E\> =  \<\tau,E\>  =\< \psi_\#(\tau),E\> = \< \beta,E\>  = \< \sigma',E\>  \qquad  \text{ mod(3)}
$$
where the second equality follows from Lemma~\ref{first intersection number}.   This same argument, with $\gamma$ replaced by $\gamma'$, yields $\<\sigma',E\> = \<\sigma'',E\>$ mod(3). This completes the proof of the second part of (1).

 Let $\beta'$  be the maximal subpath of $g_\#(\sigma')$ that begins and ends with an edge of height $> r$. If the endpoints of $\sigma$ are    contained in distinct components  of $G_r$ let $\tau$ be a circuit that factors as $\tau =\sigma \mu \bar \sigma'\nu$ where $\mu,  \nu \subset G_r$.  This is always possible because each component of $G_r$ is non-contractible and $g$-invariant.  Arguing as above,  
$$\<\sigma,E\> -\<  \sigma' ,E\>=\< \tau,E\> =\<   \psi_\#(\tau),E\>  =   \< \beta,E\> -  \< \beta' ,E\> = \<\sigma',E\> -\<\sigma'',E\> \qquad \text{ mod(3)}
$$
and hence 
$$ 2 \< \sigma',E\>= \< \sigma,E\> +\<  \sigma'',E\>  \text{ mod(3)}$$
which verifies (2).

Finally, suppose that $E$ is non-fixed and that the components $C_1$ and $C_2$ of $G_r$ that contain the initial and terminal endpoints of $\sigma$ respectively are distinct.   As a first subcase, assume that the   terminal endpoint of $E$ is not in $C_1$.  Our second claim is that for any line $\ell  \in \Gamma$ whose central subpath $\alpha$ has initial point in $C_1$ and terminal point in $C_2$, the quantity $\<\alpha,E\>$ takes values in $\{0,1\}$.  From the description of central subpaths in Notation~\ref{principal ray}, it follows that $\alpha$  is obtained from a Nielsen path $\rho$ by adding at most one initial edge and  one terminal edge.  Moreover, an added 
initial edge cannot be $E$ and an added terminal edge cannot be $\overline E$.   Since the initial endpoint of $\sigma$ is in $C_1$ and the terminal endpoint of $E_1$ is not in $C_1$, an added initial 
edge cannot be $\overline E$.   If $E$ is added to $\rho$ as a terminal edge then $\<\alpha, E\>  =  \<\rho, E\>+1$; otherwise, $\<\alpha, E\> =  \<\rho, E\>$.  The second claim therefore follows from the fact (Lemma~\ref{first intersection number})   that $\<\rho,E\> = 0$.  Since 
$\psi_\#$ preserves $\Gamma, C_1$, and $C_2$, the second claim applies to each of the 
lines $\gamma, \gamma'$ and $\gamma''$, and so $\<\sigma,E\>, \<\sigma',E\>$ and 
$\<\sigma'',E\>$ each have value in $\{0,1\}$. Combined with \pref{item:intersection sum}  it follows that $\<\sigma,E\> = \<\sigma',E\> = \<\sigma'',E\>$ 
as desired.
  
   In the second and final subcase,  the terminal endpoint of $E$ is not in $C_2$, in which case a 
completely symmetric argument holds with $\{0,1\}$ replaced by $\{0,-1\}$.
\end{proof}

\begin{cor} \label{higher edge case}  Let  $\gamma = R^{-1}_- \rho R_+$ be the highest edge splitting of $\gamma \in \Gamma$ and let  $\gamma' = {R'_- }^{-1}\rho' R'_+$ be the highest edge splitting of $\gamma' = \psi_\#(\gamma)$. If  $R_+$ begins with a (necessarily non-fixed) edge $E$  of height greater than $r$ and $R_-$ does not begin with $E$  then $R_+ = R_+'$.   Moreover, $\ti \gamma$ is a lift of $\gamma$ and   $\ti f$ is the unique lift such that $\ti f_\#(\ti \gamma) = \ti \gamma $ then the terminal endpoint of $R_+$ is in $\Fix_+(\hat f)$ if and only if $E$ is non-linear. The analogous result also holds for $R_-$. 
\end{cor}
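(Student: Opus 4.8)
\textbf{Proof plan for Corollary~\ref{higher edge case}.} The strategy is to pin down $R_+$ entirely by a single edge-intersection number and then invoke the lemmas already established about how $\psi_\#$ acts on central subpaths and on $G_r$-Nielsen classes. First I would analyze the structure of $R_+$ using the highest edge splitting: since $R_+$ begins with the non-fixed edge $E$ of height $>r$, the central subpath $\sigma$ of $\gamma$ has $E$ as a terminal edge appended to $\rho$ (in the language of Notation~\ref{principal ray}, $R_+ \not\subset G_r$), and similarly $\sigma'$ for $\gamma'=\psi_\#(\gamma)$. The key observation is that because $E$ is non-fixed, Lemma~\ref{intersection number}~\pref{item:intersection equality} gives $\langle\sigma,E\rangle = \langle\sigma',E\rangle \bmod 3$, regardless of whether the endpoints of $\sigma$ lie in one or two components of~$G_r$. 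Combined with Lemma~\ref{first intersection number}(1), which says $\langle\rho',E\rangle = 0$ for the connecting Nielsen path $\rho'$ of $\gamma'$, this forces $R'_+$ to also begin with $E$: indeed if $R'_+ \subset G_r$ then $\sigma' = \rho'$ (up to at most one initial edge, which cannot be $E$ by the hypothesis $R_- \neq E$ passing through $\psi_\#$-invariance of $\Gamma$ via Lemma~\ref{psi invariance}), contradicting $\langle\sigma',E\rangle = \langle\sigma,E\rangle \neq 0$.

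Once we know $R'_+$ begins with $E$, I would invoke Corollary~\ref{higher edge case}'s own setup combined with Lemma~\ref{action of psi}: that lemma handles the case $R_- \subset G_r$, telling us the basepoint behavior, but here the relevant point is to iterate the ``highest edge splitting'' analysis. Since $R_+$ begins with a non-fixed edge $E$ of height $>r$, Lemma~\ref{LemmaPointsAtInfinity} applies to the lift $\wt E$ and the principal lift $\ti f$ fixing the initial endpoint of $\wt E$: the terminal endpoint of $\wt R_+$ is the endpoint of the ray in $\wt G$ generated by $\wt E$, which lies in $\Fix_N(\hat f) \cap \partial\Gamma_E$ where $\Gamma_E$ is the component of the full preimage of $G_{s-1}$ (with $s = \height(E)$) containing the terminal endpoint of $\wt E$. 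By Lemma~\ref{LemmaPointsAtInfinity}~\pref{ItemNonlinearPointsAtInfinity}, if $E$ is non-linear this endpoint is the unique point of $\Fix_N(\hat f) \cap \partial\Gamma_E$ and lies in $\Fix_+(\hat f)$; by~\pref{ItemLinearPointsAtInfinity}, if $E$ is linear the set $\Fix_N(\hat f) \cap \partial\Gamma_E = \partial A$ where $A$ is an infinite cyclic group of covering translations commuting with $\ti f$, so the endpoint of $\wt R_+$ lies in $\partial\Fix(\Phi)$ and hence not in $\Fix_+(\hat f)$. This is exactly the ``moreover'' clause. Applying this same analysis to $\gamma'$ with its lift: since $R'_+$ begins with the same edge $E$, and the initial endpoint of the lift of $E$ inside $\wt R'_+$ is fixed by the principal lift corresponding to $\Phi' = \Psi\Phi\Psi^{-1}$, the ray $\wt R'_+$ past $E$ is forced to be the unique $\ti f'_\#$-invariant ray with initial direction $E$ whose endpoint lies in $\Fix_N(\hat\Phi')$ — but that ray, projected to $G$, is determined by $E$ alone via Fact~\refGM{FactSingularRay}, hence $R'_+ = R_+$.

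The main obstacle I anticipate is the step establishing $R'_+ = R_+$ as actual equality of rays in $G$ rather than merely ``both begin with $E$.'' The subtlety is that $\wt R_+$ and $\wt R'_+$ live in (a priori different) lifts, and one needs the ray generated by $E$ under $f_\#$ — which is $R_+ = E\cdot u \cdot f_\#(u) \cdot f^2_\#(u)\cdots$ where $f(E) = Eu$ by Fact~\refGM{FactNEGEdgeImage} — to be an intrinsic feature of the edge $E$ in the \ct\ $f$, independent of which $\gamma$ it came from. This is true because the highest edge splitting forces $R_+$ (past its initial edge $E$) to be a ray in $G_r$ that is $f_\#$-invariant with endpoint in $\Fix_N$ of the appropriate principal automorphism, and by Lemma~\ref{LemmaPointsAtInfinity} together with Fact~\refGM{FactSingularRay} such a ray is unique. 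So $R_+$ is determined by $E$, and likewise $R'_+$ is determined by $E$; since they begin with the same edge $E$, they are equal. The symmetric statement for $R_-$ is proved by the identical argument with the roles of the two ends of $\gamma$ interchanged, using the hypothesis $R_- \neq E$ only to guarantee that the cancellation at the splitting point of $\psi_\#(\gamma)$ does not destroy the edge $E$ at the positive end.
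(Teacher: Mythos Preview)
Your argument for the first step---showing that $R'_+$ begins with $E$ via the congruence $\langle\sigma', E\rangle \equiv \langle\sigma, E\rangle = 1 \pmod 3$ from Lemma~\ref{intersection number}~\pref{item:intersection equality} and Lemma~\ref{first intersection number}---matches the paper's approach, and your treatment of the ``moreover'' clause via Lemma~\ref{LemmaPointsAtInfinity} is correct.

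The genuine gap is in the passage from ``$R'_+$ begins with $E$'' to ``$R'_+ = R_+$'' in the \emph{linear} case. You assert that ``such a ray is unique'' and that ``$R_+$ is determined by $E$,'' citing Lemma~\ref{LemmaPointsAtInfinity} and Fact~\refGM{FactSingularRay}. But Lemma~\ref{LemmaPointsAtInfinity}~\pref{ItemLinearPointsAtInfinity} says precisely the opposite when $E$ is linear: $\Fix_N(\hat f) \cap \partial\wt C$ consists of \emph{two} points, namely $\partial A$ for an infinite cyclic group $A = \langle a\rangle$ of covering translations commuting with $\ti f$. Hence there are two $\ti f_\#$-invariant rays with initial edge $\wt E$ and endpoint in $\Fix_N(\hat f)$, and nothing you have written rules out $\wt R_+$ and $\wt R'_+$ ending at different ones. (You even observe this two-point phenomenon yourself when treating the ``moreover'' clause, so the uniqueness claim in your third paragraph contradicts your second.)

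The paper closes this gap as follows. One chooses the lift $\wt R'_+ \subset \ti\gamma'$ so that its initial edge is the \emph{same} lift $\wt E$ as that of $\wt R_+$; Lemma~\ref{uniqueness of Phi} then forces the \emph{same} principal lift $\ti f$ (not a conjugate $\ti f'$) to preserve $\wt R'_+$, so both terminal endpoints $Q, Q'$ lie in $\Fix_N(\hat f) \cap \partial\wt C = \partial A$. Now pick $\Psi$ representing $\psi$ with $\wh\Psi(Q) = Q'$. If $Q \neq Q'$ then $\wh\Psi$ interchanges the two endpoints of the axis of $L_a$, which means $[a]$ is not $\psi$-invariant---contradicting Theorem~\ref{ThmPeriodicConjClass}. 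This appeal to Theorem~\ref{ThmPeriodicConjClass} is the missing ingredient; without it the linear case does not go through.
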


\begin{proof}     Let $\sigma$ and $\sigma'$ be the central subpaths of $\gamma$ and $\gamma'$ and write   $\sigma = \mu \rho E$ where $\mu$ is either trivial or  $\overline E_1$  for a non-fixed edge $E_1$ of height greater than $r$.  By hypothesis $E_1 \ne E$.   Since $\<\rho,E\>=0$ by Lemma~\ref{first intersection number},\  $\<\sigma,E\>=1$.    Lemma~\ref{intersection number}~\pref{item:intersection equality}  implies that $\<\sigma',E\>=1$ mod(3).  Reversing the argument we conclude that $E$ is the terminal edge of $\sigma'$ and hence the initial edge of  $R_+'$.  

Let $\wt R_+ \subset \ti \gamma$ be the lift of $R_+$ into $\ti \gamma$, let $\wt E$ be the initial edge of $\wt R_+$ and let $\wt R_+'\subset \ti \gamma'$  be the lifts  of $R_+'\subset  \gamma'$ such that the  initial edge of $\wt R_+'$ is $\wt E$.
   Let $Q$ and $Q'$ be terminal endpoints of $\wt R_+$ and $\wt R_+'$ respectively.  Lemma~\ref{uniqueness of Phi} implies that $\ti f$ preserves  $\wt R_+'$   and so  $Q,Q' \in \Fix_N(\hat f)$.    To prove that $R_+ = R_+'$ it suffices  to show that $Q = Q'$.  
            
Let $\wt C$ be the component of the full pre-image of $G_r$ that contains the terminal vertex of $\wt E$ and hence contains $\wt R_+$  and $\wt R_+'$.   In particular, $Q, Q' \in  \Fix_N(\hat f) \cap\partial \wt C$.   Let $\Psi$ be an automorphism representing $\psi$ such that $\wh\Psi(Q) =Q'$. Lemma~\ref{LemmaPointsAtInfinity} implies that if $E$ is non-linear then $\Fix_N(\hat f) \cap\partial \wt C$ is a single point in $\Fix_+(\hat f)$ and that if $E$ is linear then $\Fix_N(\hat f) \cap\partial \wt C$ is the endpoint set of a non-trivial covering translation $L_a$. In the former case we are done so assuming the latter, we must show that $Q \in \Fix(\Psi)$. If this fails then  $\Psi$  interchanges the endpoints of the axis of $L_a$, which implies that $[a]$ is not $\psi$-invariant. This contradiction to Theorem~\ref{ThmPeriodicConjClass} completes the proof.    
  \end{proof}

    \begin{lemma} \label{invariant Nielsen class}   If a Nielsen class  contains more than one element of $\cN$ then each  element of $\cN$  that it contains is  $\psi$-invariant.  
\end{lemma}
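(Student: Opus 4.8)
The plan is to unpack what it means for a Nielsen class $\mathcal{Z}$ to contain two distinct elements $N_1 \neq N_2$ of $\cN$, and to show this forces a twistor-type coincidence that is insensitive to the $\psi$-action. First I would fix representatives $x_1 \in N_1$ and $x_2 \in N_2$, both principal vertices contained in the same component $C$ of $G_r$ (they lie in the same component because a Nielsen path connecting them lies in $G$, but being $G_r$-Nielsen classes they are in $G_r$, hence in a common component). Choose a lift $\ti C$ of $C$ and lifts $\ti x_1, \ti x_2 \in \ti C$, and let $\ti f_1, \ti f_2$ be the principal lifts of $f$ fixing $\ti x_1, \ti x_2$ respectively, with corresponding principal automorphisms $\Phi_1, \Phi_2 \in P(\phi)$. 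The hypothesis that $N_1, N_2$ are the $G_r$-Nielsen classes of $x_1, x_2$ and that these are \emph{distinct} as $G_r$-Nielsen classes, while being the \emph{same} Nielsen class of $f$ (i.e.\ $x_1, x_2$ are joined by a Nielsen path in $G$ that necessarily leaves $G_r$ and crosses $E^*$), should translate — after adjusting $\ti x_2$ in its $F_n$-orbit so that $\Fix(\ti f_1 \restrict \ti C)$ and $\Fix(\ti f_2 \restrict \ti C)$ overlap inside $\ti C$ — into the statement that $\Fix(\Phi_1) \intersect \Fix(\Phi_2)$ restricted to $\ti C$ is nonempty while $\Phi_1 \neq \Phi_2$.

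Next I would apply Fact~\ref{FactTwistor}~\pref{ItemIntersectingFixed} (or the $\ti C$-relative analogue, noting that $[\pi_1 C]$ is a free factor and $\Phi_1 \restrict \pi_1 C$, $\Phi_2 \restrict \pi_1 C$ are principal lifts of $\phi \restrict [\pi_1 C]$ by Remark~3.3 of \recognition): since two distinct principal automorphisms have intersecting fixed sets, there is a root-free $a \in F_n$ with $[a]_u \in \Twist(\phi)$ such that the relevant fixed-set intersection is $\<a\>$, and in particular $N_1$ and $N_2$ each contain (or are pinned down by) the fixed point associated to this twistor axis. The key point to extract is: the distinctness of $N_1$ and $N_2$ inside a common Nielsen class is witnessed by a twistor $[a]_u$ of $\phi$ lying in $C$, and correspondingly the $G_r$-Nielsen classes in $\cN$ contained in $\mathcal{Z}$ are exactly those carrying the relevant endpoints of this twistor axis. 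Then, for $\psi$-invariance: by Theorem~\ref{ThmPeriodicConjClass}, the $\phi$-periodic (indeed $\phi$-fixed, since $[a]_u$ is a twistor of $\phi$) conjugacy class $[a]$ is $\psi$-fixed. I would choose the automorphism $\Psi$ representing $\psi$ that preserves $\partial \ti C$ (possible by Lemma~\ref{LemmaFFSComponent}~\pref{ItemIAThreeFFS}, since $\psi$ preserves $[\pi_1 C]$) and, using $\psi$-fixedness of $[a]$, further arrange (as in the proof of the lemma following Definition~\ref{DefInducedNielsen}, the case $\Fix(\Phi) \neq \emptyset$, or as in Lemma~\ref{fixed class}) that $\Psi$ commutes with $\Phi_1$ — or at least fixes the axis of $L_a$ inside $\ti C$. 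This forces $\ti f_1' = \ti f_1$, i.e.\ $\psi N_1 = N_1$; the same argument applied with $x_2$ in place of $x_1$ gives $\psi N_2 = N_2$, and since this works for every element of $\cN$ contained in $\mathcal{Z}$ we are done.

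The main obstacle I expect is the bookkeeping at the first step: making precise the claim that ``$N_1 \neq N_2$ in $\cN$ but $x_1, x_2$ lie in the same Nielsen class of $f$'' implies the existence of a common fixed point of $\ti f_1 \restrict \ti C$ and $\ti f_2 \restrict \ti C$ after an appropriate translation, and hence $\Phi_1 \neq \Phi_2$ with overlapping fixed sets. One has to argue that the Nielsen path $\mu$ in $G$ joining $x_1$ to $x_2$, when lifted starting at $\ti x_1$, ends at a translate $L_b \ti x_2$ with $L_b$ commuting appropriately, so that $\Phi_1$ and $i_b \Phi_2 i_b^\inv$ both fix the relevant point in $\ti C$ while remaining distinct (distinctness coming precisely from $N_1 \neq N_2$, i.e.\ $\Fix(\ti f_1 \restrict \ti C) \neq \Fix(\ti f_2 \restrict \ti C)$ even after translation). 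A secondary subtlety is handling the possibility that the twistor axis exits $\ti C$: here one uses that $N_1, N_2 \subset C$ are $G_r$-Nielsen classes so the relevant fixed subgroup data is genuinely internal to $[\pi_1 C]$, and applies Fact~\ref{FactTwistor} and Lemma~\ref{LemmaInseparablyFixed} to the restricted automorphisms. Once the twistor $[a]_u$ is identified and shown to be $\psi$-fixed via Theorem~\ref{ThmPeriodicConjClass}, the commuting-$\Psi$ argument is essentially a repeat of arguments already carried out after Definition~\ref{DefInducedNielsen} and in Lemma~\ref{fixed class}, so the remaining steps should be routine.
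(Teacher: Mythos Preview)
There is a genuine gap in your approach. The crux of your argument is the claim that two distinct $G_r$-Nielsen classes $N_1, N_2$ lying in a common Nielsen class forces principal automorphisms $\Phi_1 \neq \Phi_2$ with $\Fix(\Phi_1) \cap \Fix(\Phi_2)$ nontrivial, and hence a twistor via Fact~\ref{FactTwistor}~\pref{ItemIntersectingFixed}. But the hypothesis does not give you this. Having fixed \emph{points} $\ti x_1, \ti x_2$ in the same component $\wt C$ for two different lifts $\ti f_1, \ti f_2$ says nothing about the fixed \emph{subgroups} $\Fix(\Phi_1), \Fix(\Phi_2) \subgroup F_n$; in particular nothing rules out $\Fix(\Phi_1)$ being trivial. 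And the case $\Fix(\Phi)$ nontrivial is exactly what the lemma following Definition~\ref{DefInducedNielsen} already disposes of --- so the content of Lemma~\ref{invariant Nielsen class} lies precisely in the situation where your twistor argument cannot start. A concrete instance: in case~(a) of the analysis after Notation~\ref{NotationSection8}, $E_{r+1}$ is a fixed edge whose endpoints lie in distinct $G_r$-Nielsen classes, and at least one of them is not the basepoint of any closed Nielsen path in $G_r$; there is no obstruction to both endpoints having this property, in which case the relevant $\Fix(\Phi_i)$ are trivial and no twistor appears. There is also a smaller error upstream: your claim that $x_1, x_2$ must lie in the same component of $G_r$ is a non sequitur --- the Nielsen path joining them leaves $G_r$, so nothing forces the endpoints into a common component of~$G_r$.

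The paper's proof is along entirely different lines and never invokes twistors. It embeds a shortest Nielsen path $\rho$ between $N$ and $M$ as the central subpath of some $\gamma \in \Gamma$ via Lemma~\ref{LemmaExistenceOfGamma}, then builds an auxiliary graph $K$ whose vertices are the elements of $\cN$ together with the vertices of $G \setminus G_r$ and whose edges record certain fixed edges of height $>r$. One then compares algebraic intersection numbers of $\rho$ and of $\rho'$ (the central subpath of $\psi_\#(\gamma)$) with edges of $K$ incident to the vertex $z$ corresponding to $\psi N$. The equality of these sums mod~$3$, supplied by Lemma~\ref{intersection number}, is where the hypothesis $\psi \in \IA_n(\Z/3)$ actually enters, and it pins $z$ down as the initial vertex of $\rho_K$, i.e.\ $\psi N = N$. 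The mechanism is homological and combinatorial rather than through fixed subgroups.
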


\begin{proof}  Let $\E$ be the set  of fixed edges of $G$ of height $>r$ and whose endpoints, which may or may not be in $G_r$, are distinct and do not belong to the same $G_r$-Nielsen class.

Given distinct $M,N \in \cN$  in the same Nielsen class  choose a  shortest Nielsen path $\rho$ with one endpoint $x \in N$ and the other $y \in M$. Property (\neg\ Nielsen Paths) in the definition of a \ct\ (\recognition\ Definition~4.7, or Definition~\refGM{DefCT}) implies that $\rho$ begins and ends with edges in $\E$. Lemma~\ref{LemmaExistenceOfGamma} implies that $\rho$ is the central subpath of some $\gamma \in \Gamma$.  Let $\gamma = R_-^\inv \rho R^+$ be the highest edge spitting and let  $\gamma'= {R'}_-^{-1}\rho' R'_+ $ be the highest edge splitting of  $\gamma' = \psi_\#(\gamma)$. Lemma~\ref{action of psi}  implies  that $\rho'$ is the central subpath of $\gamma'$  and that the initial vertex $x'$ of $\rho'$ belongs to $\psi N$.   It suffices to show that  $x' \in N$.  

 Towards this end we construct an auxillary graph $K$.  Each vertex of $G \setminus G_r$ and each element of $\cN$ determine a vertex of $K$.  There is one edge for each element of $\E$. The edges of $K$ are attached to the vertices of $K$ in the obvious way.  Note that no edge of $K$ is a loop and that some vertices of $K$ may have valence zero.   
 
If a Nielsen path $\mu_k$ is either contained in $G_r$ or has the form $E_j \alpha \overline E_j$  where $E_j$ is a linear edge  above $G_r$ and $\alpha \subset G_r$ then both endpoints of $\mu_k$ correspond to the same vertex in $\Gamma$. Property (\neg\ Nielsen Paths) in the definition of a \ct\ (\recognition\ Definition~4.7, or Definition~\refGM{DefCT}) therefore implies  that each Nielsen path $\mu \subset G$  induces a path $\mu_{K} \subset K$ whose endpoints in $K$ correspond to the endpoints of $\mu$. Moreover, the algebraic crossing number of $\mu_{K}$ with the edge in $K$ corresponding to $E \in \E$ equals $\<\mu, E\>$.
 
Orient the edges of $\E$ so that the vertex $z$ of $K$ corresponding to $\psi N$  is the initial vertex of every edge in $\Gamma$ that contains it.   If $\mu_{K}$ has distinct endpoints then $S(z,\mu_{K})$, the sum   mod $3$  of the  algebraic crossing numbers of $\mu_{K}$ with the edges of $K$  incident to $z$, is $0$ if $z$ is not an endpoint of $\mu_{K}$, is $+1$ is $z$ is the initial endpoint of $\mu_{K}$ and is $-1$ if $z$ is the terminal endpoint of $\mu_{K}$.
 
If $x$ and $y$ belong to the same component of $G_r$ then (recalling from above that $\rho$ and $\rho'$ are the central subpaths of $\gamma$ and $\gamma'$) we have $\<\rho,E\> = \<\rho',E\>$ mod $3$ for all edges $E$ with height greater than $r$ by Lemma~\ref{intersection number}.    It follows that $S(z,\rho_{K}) = S(z,\rho'_{K})$.  Since $z$ is the initial endpoint of $\rho'_{K}$ it is also the initial endpoint of $\rho_{K}$ which means that $\psi N =  N$ as desired.

Suppose then that  $x$ and $y$ belong to distinct components of $G_r$.   Let $\gamma''= {R''}_-^{-1}\rho'' R''_+ $ be the highest edge splitting of  $\gamma'' = \psi^2_\#(\gamma)$.  Lemma~\ref{action of psi}  implies  that $\rho''$  is the central subpath of $\gamma''$.       Lemma~\ref{intersection number} implies that 
$2 S(z,\rho'_{K}) = S(z,\rho_{K})+ S(z,\rho''_{K})$.
 Since $z$ is the initial endpoint of $\rho'_{K}$ it is also the initial endpoint of both $\rho_{K}$  and $\rho''_{K}$ and so again  $\psi N =  N$.
 \end{proof}

We need one more lemma before proving Proposition~\ref{PropSpecialCase}.

\begin{lemma} \label{Theta}  Suppose that $\ti f$ is a principal lift of $f$, that $\Phi \in P(\phi)$ is the automorphism corresponding to $\ti f$ and that $\Psi$ is an automorphism representing $\psi$ such that $\Psi^m = \Phi$. Then the following hold.
\begin{enumerate}
\item \label{item:equal fix} $\Fix(\Psi) = \Fix(\Phi)$.
\item \label{item:fixing P} If $P \in \Fix_+(\wh\Phi)$  and the $F_n$-orbit of $P$ is fixed by $\psi$ then $P \in \Fix_N(\wh\Psi)$.  
\item \label{item:preserving tilde C}Suppose that  the $G_r$-Nielsen class $N \in \cN$ of $x$ is preserved by $\psi$, that $\ti x \in \Fix(\ti f)$ and that $\wt C$ is the component of the full pre-image of $G_r$ that contains $\ti x$.  Then $\partial \wt C$ is $\wh\Psi$-invariant. 
\end{enumerate}
\end{lemma}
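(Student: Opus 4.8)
\textbf{Proof plan for Lemma~\ref{Theta}.}

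The plan is to prove the three items more or less in order, since \pref{item:fixing P} and \pref{item:preserving tilde C} will use \pref{item:equal fix}. For \pref{item:equal fix}, the inclusion $\Fix(\Phi) \subset \Fix(\Psi)$ is nontrivial: if $a \in \Fix(\Phi)$ then $[a]$ is $\phi$-fixed, hence $\psi$-periodic, hence (by Theorem~\ref{ThmPeriodicConjClass}) $\psi$-fixed, so $\Psi(a) = i_c(a)$ for some $c \in F_n$; I must upgrade this to $\Psi(a) = a$. Here I would use the standard commuting-root argument already rehearsed in the proof of the lemma following Definition~\ref{DefInducedNielsen}: from $\Psi(a) = c a c^\inv$ one gets $\Phi(a) = \Psi^m(a) = c' a {c'}^\inv$ with $c'$ built from $c$ via iterated application of $\Psi$, but $\Phi(a) = a$ forces $c' \in \<a\>$ (Fact~\ref{FactItsOwnNormalizer}, since $\Fix(\Phi)$ is its own normalizer and $c'$ normalizes $\<a\> \subset \Fix(\Phi)$ — actually one argues $c'$ commutes with $a$), and then a uniqueness-of-roots argument, together with replacing $\Psi$ by its composition with the appropriate inner automorphism if needed, pins down $\Psi(a) = a$. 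The reverse inclusion $\Fix(\Psi) \subset \Fix(\Phi)$ is immediate since $\Phi = \Psi^m$. I should be careful that the first inclusion is proved for \emph{this particular} $\Psi$ with $\Psi^m = \Phi$, not after modifying $\Psi$ — so the cleaner route is: for $a \in \Fix(\Phi)$, write $\Psi(a) = c a c^\inv$; then $a = \Phi(a) = \Psi^m(a) = (c\,\Psi(c)\,\Psi^2(c)\cdots\Psi^{m-1}(c))\, a\, (\cdots)^\inv$, so the product $w = c\,\Psi(c)\cdots\Psi^{m-1}(c)$ commutes with $a$ in a free group, hence $w$ and $a$ lie in a common cyclic subgroup; combined with $\Psi$ fixing the conjugacy class of every power of $a$ and the malnormality/uniqueness-of-roots facts, one deduces $c$ itself commutes with $a$, hence $c \in \<a\>$ (it lies in the maximal cyclic group containing $a$ and fixes it under $\Phi$), hence $\Psi(a) = c a c^\inv = a$.

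For \pref{item:fixing P}: $P \in \Fix_+(\wh\Phi) \subset \bdy F_n$ and its $F_n$-orbit $\xi = [P]$ is $\psi$-fixed, so $\wh\Psi(P) = \wh{i_c}(P)$ for some $c \in F_n$, i.e. $\wh\Psi(P) = c\cdot P$ in $\bdy F_n$ (in the natural left action). Since $P \in \Fix_+(\wh\Phi)$, it is not the fixed point of any nontrivial inner automorphism (Fact~\refGM{FactFPBasics} / Fact~\refGM{LemmaFixPhiFacts}), so after rechoosing $\Psi$ within its isogredience class — replacing $\Psi$ by $i_c^\inv \Psi$ — we may assume $\wh\Psi(P) = P$; but we must check this rechoice is consistent with $\Psi^m = \Phi$. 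Here is the subtlety, and it is where item \pref{item:equal fix} enters: modifying $\Psi$ changes $\Psi^m$ by an inner automorphism, so a priori the new $\Psi$ no longer satisfies $\Psi^m = \Phi$. The resolution is that we do not actually need $\Psi^m = \Phi$ to survive the rechoice — we only need to conclude $P \in \Fix_N(\wh\Psi)$, and $\Fix_N$ is preserved under isogredience in the sense that $\Fix_N(\wh{i_c \Psi i_c^\inv}) = \wh{i_c}(\Fix_N(\wh\Psi))$ (Lemma~\ref{LemmaFixedIsPeriodic}~\pref{ItemFixPtsNatural}). So I would instead argue directly: the original $\Psi$ satisfies $\Psi^m = \Phi$, and $\wh\Psi(P) = c\cdot P$; then $\wh\Phi(P) = \wh\Psi^m(P) = (c\,\Psi(c)\cdots\Psi^{m-1}(c))\cdot P$, and since $P \in \Fix(\wh\Phi)$ and $P$ is not fixed by any nontrivial inner automorphism, the product $c\,\Psi(c)\cdots\Psi^{m-1}(c)$ is trivial; this forces $c \cdot P = P$ (a short argument: if $c \cdot P \ne P$ then iterating gives a contradiction with the product being trivial, using that $\<c\>$ acts on $\bdy F_n$ with $P$ in a single $\<c\>$-orbit only if $c$ fixes $P$) — hence $\wh\Psi(P) = P \in \Fix(\wh\Psi)$, and since $P \in \Fix_+(\wh\Phi)$ is an attracting fixed point it is in particular in $\Fix_N(\wh\Psi)$ as the relevant non-repelling condition transfers.

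For \pref{item:preserving tilde C}: this is essentially a restatement of the construction underlying Lemma~\ref{fixed class}. Since the $G_r$-Nielsen class $N$ of $x$ is $\psi$-invariant and $\ti x \in \Fix(\ti f)$ with $\wt C$ the component of the full preimage of $G_r$ containing $\ti x$, Lemma~\ref{fixed class} produces \emph{an} automorphism $\Psi_0$ representing $\psi$ with $\wh\Psi_0(\partial\wt C) = \partial\wt C$ and $\Psi_0$ commuting with $\Phi$. I then need to conclude the same for our given $\Psi$ with $\Psi^m = \Phi$. Now $\Psi_0$ and $\Psi$ differ by an inner automorphism $i_b$, and $\Psi_0$ commutes with $\Phi = \Psi^m$ — but a cleaner path is to observe that $\partial\wt C$ is $\wh\Phi$-invariant (since $\ti f$ preserves $\wt C$, as $\wt C$ is a component of the $\ti f$-invariant full preimage of $G_r$ containing the fixed point $\ti x$), and then apply \pref{item:fixing P}-type reasoning to the \emph{pair} of attracting fixed points in $\Fix_N(\wh\Phi) \cap \partial\wt C$ together with $\psi$-invariance of $N$: specifically, $\psi$-invariance of $N$ says (via Definition~\ref{DefInducedNielsen}) that there is a covering translation $L_a$ with $L_a\wh\Psi(\partial\wt C) = \partial\wt C$, i.e. $i_a\Psi$ preserves $\partial\wt C$; set $\Psi' = i_a \Psi$, so $\Psi'^m = i_{a'}\Phi$ for the appropriate $a'$ built from $a$, and $\Psi'$ preserves $\partial\wt C$; since $\Phi$ also preserves $\partial\wt C$ and $\Psi'^m$ does too, $i_{a'}$ preserves $\partial\wt C$, forcing $a' \in \Stab(\wt C) = \pi_1 C$ (as a subgroup of $F_n$), which has trivial action issue... at this point I would invoke Lemma~\ref{fixed class}'s conclusion that in fact one can take $a' = 1$, i.e. $\Psi_0 = i_a\Psi$ already commutes with $\Phi$ and preserves $\partial\wt C$; since both $\Psi$ and $\Psi_0$ represent $\psi$ and $\Psi_0^m = \Phi = \Psi^m$, we get $i_a^m$-type relations pinning $a$ down, and ultimately $\wh\Psi(\partial\wt C) = \wh\Psi_0(\partial\wt C) = \partial\wt C$ provided $a \in \Stab(\wt C)$, which holds because $i_a = \Psi_0\Psi^\inv$ sends $\partial\wt C$ to $\wh\Psi_0\wh\Psi^\inv(\partial\wt C)$ — and since $\wh\Psi^\inv(\partial\wt C)$ is some $\partial\wt C''$ with $\Psi_0$ identifying it back to $\partial\wt C$.

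\textbf{Main obstacle.} The genuine difficulty is bookkeeping the interaction between (a) the constraint $\Psi^m = \Phi$, which is rigid and not preserved under rechoosing $\Psi$ in its isogredience class, and (b) the various arguments that naturally want to rechoose $\Psi$ by an inner automorphism to fix a point or preserve a subtree boundary. The key technical device that makes everything go through is the commuting-roots phenomenon (if $\Psi^m = \Phi$ fixes $a$ then $\Psi$ itself fixes $a$, because a root of an element fixing $a$ must fix $a$ in a free group by uniqueness of roots) — this is exactly what \pref{item:equal fix} packages, and items \pref{item:fixing P} and \pref{item:preserving tilde C} are then obtained by running the analogous argument at the level of $\bdy F_n$ and of boundaries of covering subtrees, respectively, feeding in Theorem~\ref{ThmPeriodicConjClass} (for \pref{item:fixing P}, via $\psi$-fixing of the relevant conjugacy class) and the construction of Definition~\ref{DefInducedNielsen} together with Lemma~\ref{fixed class} (for \pref{item:preserving tilde C}). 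I expect \pref{item:preserving tilde C} to require the most care, precisely because one must simultaneously respect $\Psi^m = \Phi$ and the $\partial\wt C$-invariance; the safe route is to derive it as a corollary of \pref{item:equal fix} applied to a generator of $\Stab(\wt C)$ (or, when $\Fix(\Phi)$ is trivial, to the pair of attracting points supplied by \recognition~Lemma~2.20(4) and Lemma~3.16 as in the proof following Definition~\ref{DefInducedNielsen}) rather than by ad hoc manipulation of inner automorphisms.
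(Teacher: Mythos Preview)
Your proposal has a recurring gap that affects all three items. In each case you reach a ``twisted product'' $w = c\,\Psi(c)\cdots\Psi^{m-1}(c)$ (or its reverse) and know $w$ is trivial or commutes with something, and you then want to conclude that $c$ itself is trivial or commutes with that thing. This inference is not valid in general: for instance if $m=2$ and $\Psi(c)=c^{-1}$ then $w=1$ regardless of~$c$. The paper avoids this trap by a single device you are missing: \emph{first} show that the relevant element lies in $\Fix(\Phi)$, then invoke \pref{item:equal fix} to place it in $\Fix(\Psi)$, which collapses the twisted product to the honest power $c^m$, and torsion-freeness of $F_n$ finishes.

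Concretely: for \pref{item:equal fix} the paper does not argue element by element. Since $\Psi$ commutes with $\Psi^m=\Phi$, it preserves $\Fix(\Phi)$ as a set; Theorem~\ref{ThmPeriodicConjClass} then gives that $\Psi\restrict\Fix(\Phi)$ is a \emph{single} inner automorphism $i_c$ with $c\in\Fix(\Phi)$, so $\Phi\restrict\Fix(\Phi)=i_{c^m}$ is trivial and (in rank $\ge 2$) $c^m=1$, hence $c=1$. For \pref{item:fixing P}, the paper inserts the computation $i_{\Phi(a)}P=\wh\Phi(i_aP)=i_aP$, which forces $\bar a\Phi(a)$ to fix $P$; since $P\in\Fix_+(\wh\Phi)$ is not fixed by nontrivial inner automorphisms, this gives $a\in\Fix(\Phi)=\Fix(\Psi)$, and only \emph{then} does the product become $a^m$ with $i_{a^m}P=\wh\Phi(P)=P$ forcing $a=1$. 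For \pref{item:preserving tilde C}, the paper writes $\Psi=i_b\Psi_0$ with $\Psi_0$ from Lemma~\ref{fixed class}; since $\Phi$ commutes with both $\Psi$ and $\Psi_0$ it commutes with $i_b$, so $b\in\Fix(\Phi)=\Fix(\Psi)$, hence $i_b$ commutes with $\Psi_0$ and $\Psi^m=i_b^m\Psi_0^m$; as $\Psi_0$ preserves $\partial\wt C$ one gets $i_b^m(\partial\wt C)=\partial\wt C$, and since $i_b$ and $i_{b^m}$ have the same boundary fixed points, $i_b$ (and hence $\Psi$) preserves $\partial\wt C$. Your sketch for \pref{item:preserving tilde C} circles around this but never lands the key step $b\in\Fix(\Phi)$.
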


\begin{proof}  Obviously, $\Fix(\Psi) \subset \Fix(\Psi^m) = \Fix(\Phi)$ so to prove \pref{item:equal fix} it suffices to prove $\Fix(\Phi) \subset \Fix(\Psi)$.  Since $\Psi^m = \Phi$ it follows that $\Psi$ preserves $\Fix(\Phi)$. Theorem~\ref{ThmPeriodicConjClass} implies that $\psi$ restricts to the trivial outer automorphism of $\Fix(\Phi)$ and hence that $\Psi$ acts on $\Fix(\Phi)$ by~$i_c$ for some $c \in\Fix(\Phi)$. If $\rank(\Fix(\Phi)) <2$ then all inner automorphisms act trivially and we are done.  Suppose then that $\rank(\Fix(\Phi)) \ge 2$. Since the action of $\Phi$ on $\Fix(\Phi)$ is given by $i_{c^m}$ it must be that $c^m$ is trivial. It follows that $c$ is trivial and that the action of $\Psi$ on $\Fix(\Phi)$ is trivial. In other words, $\Fix(\Phi) \subset \Fix(\Psi)$. This completes the proof of \pref{item:equal fix}.

Suppose next that $P$ is as in \pref{item:fixing P}. Since $\wh\Psi$  preserves $\Fix_N(\wh\Phi)$ and the $F_n$-orbit of $P$ is fixed by $\psi$, there exists $a \in F_n$ such that   $i_aP = \wh\Psi(P)  \in \Fix_N(\wh\Phi)$. Thus 
$$i_{\Phi(a)} P = i_{\Phi(a)}\wh\Phi(P)=\wh\Phi(i_a P) = i_aP
$$
and so $i_{\bar a \Phi(a)}(P) = P$.  Since $P \in \Fix_+(\wh\Phi)$, Facts~\refGM{FactFPBasics} and~\refGM{LemmaFixPhiFacts} together imply that $P$ is not fixed by any non-trivial inner automorphism. Thus  $a \in \Fix(\Phi) = \Fix(\Psi)$. By induction, 
$$\wh\Psi^k(P) = \wh\Psi^{k-1}\wh\Psi(P) = \wh\Psi^{k-1}i_aP = i_a \wh\Psi^{k-1}(P) = i_a i_a^{k-1}P = i_a^kP
$$
for all $k \ge 1$ so   
$$i_a^m P = \wh\Psi^m(P) = \wh\Phi(P) = P
$$ 
It follows that $a$ is trivial. This completes the proof of \pref{item:fixing P}.
  

It remains to prove (3). By Lemma~\ref{fixed class} there exists $\Psi_0$ representing $\psi$ that commutes with $\Phi$ and satisfies $\wh\Psi_0(\partial \wt C) = \partial \wt C$.  There exists $b \in F_n$ such that $\Psi = i_b \Psi_0$.   Since $\Phi$ commutes with both $\Psi$ and $\Psi_0$ it also commutes with $i_b$ which implies that $b \in \Fix(\Phi) = \Fix(\Psi)$ and hence that $b \in \Fix(\Psi_0)$ and $i_b$ commutes with $\Psi_0$.    Thus $$\partial \wt C =  \wh\Phi(\partial \wt C)= \wh\Psi^m(\partial \wt C) = (i_b\Psi_0)^m\partial \wt C  = i_b^m \Psi_0^m(\partial \wt C) = i_b^m \partial \wt C $$
Since an inner automorphism preserves $\partial \wt C$ if and only if its fixed points are contained in $\partial \wt C$ and since $i_b$ and $i_{b^m} = i_b^m$ have the same fixed points, 
$$\Psi(\partial \wt C) = i_b \Psi_0(\partial \wt C) = i_b(\partial \wt C) = \partial \wt C
$$ 
\end{proof}

\subsection{Proof of Proposition~\ref{PropSpecialCase}}    
Following Notation~\ref{NotationSection8} and Definition~\ref{DefEStar}, $\F' \sqsubset \F$ are realized by core filtration elements $G_r \subset G_s$ respectively;   $E^* = G_s \setminus G_r$  is a topological arc with endpoints in $G_r$ and interior disjoint from $G_r$; either $s=r+1$ and $E^* = E_{r+1}$ or $s=r+2$ and $E^* = \overline E_{r+2}E_{r+1}$. Following Definition~\ref{DefInducedNielsen}, $\cN$ is the set of $G_r$-Nielsen classes of principal vertices of $G$ contained in~$G_r$. We may assume without loss of generality that one of the following is satisfied  
\begin{description}
  \item [(a)]  $E^*=E_{r+1}$ is a  fixed or linear edge whose endpoints are  
contained in distinct  elements of $\cN$  and  whose initial endpoint   is not  the base point of a closed Nielsen path in $G_r$.    
\item [(b)]    $E^* = \overline E_{r+2}E_{r+1}$ where  $E_{r+1}$ and  $E_{r+2}$ are non-fixed and $E_{r+2}$ is non-linear.
  \item [(c)]  $E^*=E_{r+1}$ is neither fixed nor linear. 
 \end{description}  
 (Deriving this from the cases ``One stratum'' and ``Two strata'' of Notation~\ref{NotationSection8}   may require reversing the orientation on $E_{r+1}$ if it is a fixed edge in the case ``One stratum'' or interchanging $E_{r+1}$ and $E_{r+2}$ in the case ``Two strata''.)  
 
  Let $m>0$ satisfy $\psi^m = \phi$. 
 
By Corollary~\ref{LemmaExistenceOfGamma},  there exists $\gamma \in \Gamma$ with central subpath $E^*$.   We will show that  $\gamma' = \psi_\#(\gamma)$ is carried by $\F$ or equivalently that  $\gamma' \subset G_s$.   The smallest free factor system carrying $\F'$ and $\gamma$ is $\phi$-invariant and so is equal to $\F$.  It follows that   $\psi_\#\F$ is the smallest free factor system carrying $\F'$ and $\gamma'$ and hence that $\psi_\#\F \sqsubset  \F$.  This implies   that  $\psi_\#\F=  \F$ as desired.

Let $\gamma = R_-^{\inv}\rho R_+$ and $\gamma' = {R_-'}^\inv \rho' R_+'$ be the highest edge splittings of $\gamma$ and $\gamma' = \psi_\#(\gamma)$. Choose a   lift $\ti \gamma =  {\wt R_-}^\inv \ti \rho\wt R_+$ of $\gamma$,  let  $\ti f$ be the lift of~$f$ satisfying $\ti f_\#(\ti \gamma) = \ti \gamma$ and let $\Phi$ be the automorphism corresponding to $\ti f$. Let  $P_-$ and $P_+$  be the initial and terminal endpoints of $\ti \gamma$ respectively.   If $E^* = E_{r+1}$ then the initial endpoints of $R_+$ and $R_-$ are contained in $G_r$ and we  let $\wt C_\pm$ be the component of the full pre-image of $G_r$ that contains the initial endpoint of $\wt R_\pm$.  We break into cases depending on~$E_{r+1}$.

\textbf{Case~1: $E_{r+1}$ is nonfixed and nonlinear.} Thus $E_{r+1}$ is the initial edge of $R_+$ and  (a) does not hold.  We divide the argument into two subcases, one  in which (b) holds and one in which (c) holds. In both subcases, Corollary~\ref{higher edge case} implies that $P_+ \in \Fix_+(\wh\Phi)$ and that there exists $\Psi$ representing $\psi$ so that   $\ti \gamma' = \wh\Psi_\#(\ti \gamma)$ has highest edge splitting $\ti \gamma' =\wt R'_- \ti \rho' \wt R_+$. In particular, $P_+$ is fixed by~$\wh\Psi$. Also, since $E_{r+1}$ is nonfixed and nonlinear, it follows from Lemma~\ref{LemmaEigenrayDefs} that the point of $\bdy F_n / F_n$ represented by $P_+ \in \bdy F_n$ is an eigenray of $\phi$. Since $\Psi^m$, $\Phi$ are both representatives of $\phi$, both of which fix $P_+ \in \bdy F_n$, it follows by Definition~\ref{DefEigenraysInvariant} that $\Psi^m=\Phi$.
     
    If   (b) is satisfied then Corollary~\ref{higher edge case} implies that  that $P_- \in \Fix_+(\wh\Phi)$ and that    $R_-' = R_-$; the latter  implies that   the $F_n$-orbit of $P_-$ is fixed by $\psi$.     Lemma~\ref{Theta}\pref{item:fixing P} therefore implies that $P_- \in \Fix(\wh\Psi)$.  In this subcase  $\gamma' = \gamma$ and we are done.     
    
For the second subcase, (c) is satisfied.  Since $E_{r+1}$ is the central subpath of $\gamma$, $\rho$ is trivial.  Let $N$ be the element of $\cN$ determined by  the initial vertex of $E_{r+1}$, which is also the common initial vertex of $R_+$ and $R_-$.  Lemma~\ref{action of psi} implies that $\psi N$ contains the initial vertex of $R'_-$.  Since $\rho'$ is a Nielsen path connecting the initial vertex of $R_-'$ to the initial vertex of $R_+$, $N$ and $\psi N$ belong to the same Nielsen class.  Lemma~\ref{invariant Nielsen class} implies that $\psi N = N$ and  Lemma~\ref{Theta}\pref{item:preserving tilde C} implies that  $\wh\Psi(\partial \wt C_-) = \partial \wt C_-$. It follows that  the line connecting the initial endpoint $P_-'$ of $\ti \gamma'$ to the initial endpoint $P_-$ of $\ti \gamma$ projects into $G_r$. Since $P_+$ is the   terminal endpoint of both $\ti \gamma$ and $\ti \gamma'$ and since $\ti \gamma$ projects into $G_s$ it follows that $\ti \gamma'$ projects into $G_s$. This completes the proof in the case that the initial edge of $R_+$ has height greater than $r$ and is non-linear.
  
\textbf{Case 2: $E_{r+1}$ is linear.} As in Case~1, $E_{r+1}$ is the initial edge of $R_+$ and $R_+ = R_+'$. As a first subcase we assume that (a) is satisfied.  The rest of this paragraph and the next paragraph make no reference to   $E_{r+1}$ and so will also apply when (a) is satisfied and   $E_{r+1}$ is fixed. Since $\rho$ is trivial, the initial vertex  $v$ of $R_+$ is also the initial vertex of $R_-$.  If $N$ is the element of $\cN$  that contains  $v$ then by Lemma~\ref{action of psi}, $\psi N$ is the  element of $\cN$ that contains the initial endpoint $v'$ of $R_-'$.    Since $v$ and $v'$ are connected by the   Nielsen path $\rho'$, they are in the same Nielsen class and   Lemma~\ref{invariant Nielsen class} implies that $\psi N =N$.   
  
  By  Lemma~\ref{fixed class}, there exists $\Psi$ representing $\psi$ that commutes with $\Phi$ and preserves $\wt C_-$.    Since $\Phi$ also  preserves $\wt C_-$, $\Phi$ and $\Psi^m$  differ by an inner automorphism $i_c$ that   preserves $\wt C_-$.  Since $\Phi$ commutes with $\Psi^m$ it also commutes with $i_c$.  We claim that $i_c$ is trivial.  If not then the covering translation $L_c$ commutes with $\ti f$ and preserves $\wt C_-$.  The path connecting $\ti v$ to $L_c(\ti v)$ projects to a closed Nielsen path in $G_r$ that is based at $v$. This contradicts the hypotheses of case (a) and so verifies the claim. Thus $\Psi^m  = \Phi$.   
  
    Lemma~\ref{LemmaPointsAtInfinity} implies that $\Fix_N(\wh\Phi) \cap \partial \wt C_+$, which contains $P_+$, is a pair of points, each of which is contained in $\partial \Fix(\Phi)$.  Lemma~\ref{Theta}\pref{item:equal fix}  therefore implies that $P_+ \in \partial \Fix(\Psi)$.  We now know that $\Psi(P_-) \in \wt C_-$ and $P_+ \in \wt C_+$ are the endpoints of $\wh\Psi_\#(\ti \gamma)$.  It follows that $\gamma' \subset G_s$ as desired.  This completes the first subcase of the case that $E_{r+1}$ is linear.
  
     The second and last subcase is that (b) is satisfied.  Arguing as in the first case, with the roles of $E_{r+1}$ and $P_+$ being replaced by $E_{r+2}$ and $P_-$, we conclude that $P_- \in \Fix_+(\Phi)$ and that there exists $\Psi$ representing $\psi$ that fixes $P_-$ and that satisfies $\Psi^m = \Phi$.  As in the first subcase of this second case, Lemma~\ref{LemmaPointsAtInfinity}  and  Lemma~\ref{Theta}\pref{item:equal fix} imply that $\Psi$ fixes $P_+$ and hence that $\gamma' = \gamma$.  This completes the proof of the case that the initial edge of $R_+$ is linear.
     
\textbf{Case 3: $E_{r+1}$ is fixed,} and so (a) is satisfied.   The elements of $\cN$ containing the endpoints of $E_{r+1}$ are distinct and belong to the same Nielsen class.   Lemma~\ref{invariant Nielsen class} implies that both of the $G_r$-Nielsen classes are $\psi$-invariant. Arguing as in the first subcase of the second case, we conclude that there is an automorphism $\Psi$ representing $\psi$ that preserves $\partial \wt C_-$ and satisfies    $\Psi^m  = \Phi$.  Lemma~\ref{Theta}~\pref{item:preserving tilde C} implies that $\partial \wt C_+$  is preserved by $\Psi$.   Thus  $\Psi_\#(\ti \gamma)$ has one endpoint in $\partial \wt C_-$ and the other in    $\partial \wt C_+$.  The projected image $\gamma'$ is therefore contained in $G_s$. \qed

\section{Limit Trees} 
\label{SectionLimitTrees}

This section is devoted to material needed for the proofs of Propositions~\ref{PropFCarriesAll} and~\ref{PropNielsenPairsExist} given in later sections. Given a proper free factor system $\F$, we consider a certain rotationless $\phi \in \Out(F_n)$ that fixes $\F$, and we study the actions of $\phi$ on certain simplicial $F_n$-trees $T$ for which each element of $F_n$ carried by $\F$ is elliptic, with the goal of finding $\phi$-invariant trees by taking limits. We assume at all times that $\F$ carries both $\L(\phi)$ and $\Eigen(\phi)$, but we make no assumption that $\Twist(\phi)$ are carried by $\F$, and so twistors of $\phi$ are allowed to have positive translation length on~$T$ (throughout this section we use without comment the results on twistors in Section~\ref{SectionLaminationsAndTwistors}, namely Definitions~\ref{DefTwistorCT},~\ref{DefTwistorInvariant} and Fact~\ref{FactTwistor}). One of our chief goals is to get greater control over twistors. We gain this control by taking limits of trees: limits with scaling in Proposition~\ref{prop:grower} ``Iteration of a grower'', and limits without scaling in Proposition~\ref{prop:nongrower} ``Iteration of nongrower'', each of which gives information regarding how twistors behave under limits. Both of these results play a role in the proof of Proposition~\ref{PropFCarriesAll}; see  Notation~\ref{notn:generators} and  Lemma~\ref{LemmaFinitelyMany}. Proposition~\ref{prop:nongrower} is also used implicitly in the proof of Proposition~\ref{PropNielsenPairsExist}, towards the end of Section~\ref{SectionNielsenPairsExist} where we borrow from \BookTwo\ Sections~5.5 and~5.6, but to make this borrowing work we must apply Proposition~\ref{prop:nongrower} appropriately.
 
\paragraph{Standing Notation for Section~\ref{SectionLimitTrees}:}  We assume throughout this section that  $\phi$ is rotationless, that $\F$ is a proper $\phi$-invariant free factor system that carries both $\L(\phi)$ and $\eigen(\phi)$ and that  $\fG$ is a \ct\ representing $\phi$ in which  $\F$ is represented by a core filtration element~$G_r$.  Let $\ti f \from \wt G \to \wt G$ be a lift of $f$ to the universal cover $\wt G$ of $G$.

By Fact~\refGM{FactNEGEdgeImage},  each stratum $H_s$ above $G_r$ is a single \neg\ edge $E_s$ which is either fixed or satisfies $f(E_s) = E_s \cdot u_s$ for some closed path $u_s \subset G_{s-1}$.  Lemma~\ref{LemmaEigenrayCarried} and our assumption that $\F$ carries $\eigen(\phi)$ imply that if $u_s$ is not contained in $G_r$ then $E_s$ is fixed or linear.   When $E_s$ is linear we follow Definition~\ref{DefTwistorCT}, writing $f(E_s) = E_s w_s^{d_s}$ where $w_s$ is a Nielsen path that determines a twistor of $f$ in $G$ and $d_s \ne 0$; exceptional paths associated to $E_s$ have the form $E_s w_s^p \overline E_t$ where $p \in \Z$,  $E_t$ is a linear edge in the same linear family as $E_s$ meaning that $w_s=w_t$ and $f(E_t) = E_t w_t^{d_t}$.
     
Let $\T_\F$ be the space of very small simplicial $F_n$-trees in which each element carried by $\F$ is elliptic. Given $T \in \T_\F$, let $L_T$ to denote both the length function on paths in $T$ and the induced translation length function on conjugacy classes and on unoriented conjugacy classes, let $\A_+(T;\phi)$ denote the set of all twistors $[c]_u \in \Twist(\phi)$ such that $L_T[c]_u > 0$, and let $\ti h = \ti h_T \from \wt G \to T$ denote an equivariant map that takes vertices to vertices and maps each component of the full pre-image of  $G_r$ to a point. If $\A_+(T;\phi) \ne \emptyset$ then we refer to $T$ as a \emph{grower} and in Proposition~\ref{prop:grower} we prove convergence of the normalized sequence $\frac{1}{k} T \phi^k$. If $\A_+(T;\phi) = \emptyset$ then $T$ is a \emph{nongrower} and in Proposition~\ref{prop:nongrower} we prove convergence of the unnormalized sequence $T \phi^k$.

\subsection{Iteration of growers}

The following lemma is true regardless of whether $T$ is a grower, although if not then the quantity $L_G(\sigma)$ is zero and the lemma gives less useful information.

 
\begin{lemma} \label{LemmaGrowerFormula}  Assume the Standing Notation for Section~\ref{SectionLimitTrees}.  For any $T \in \T_\F$ the following hold:  
\begin{enumerate}
\item\label{ItemLGsigma}
For each path $\sigma \subset G$ and lift $\ti \sigma \subset \wt G$ the following limit exists:
$$L_G(\sigma) =\lim_{k\to \infty} \frac{1}{k}L_T(\ti h_\#\ti f^k_\#(\ti \sigma))
$$
\item \label{ItemLGFormula}
If $\sigma$ has a complete splitting rel $G_r$ 
then 
$$L_G(\sigma) =   \sum_s M_s \abs{d_s} L_T([w_s]) + \sum_s N_s \abs{d_s- d_t} L_T([w_s]) 
$$
where both sums are taken over the set of all $s$ such that $[w_s]_u \in \A_+(T;\phi)$, and for each such $s$ the complete splitting of $\sigma$ has $M_s$ linear edge terms of the form $\sigma_i = E_s$ or $\overline E_s$ and $N_s$ exceptional path terms of the form $\sigma_i = E_s w^p_s \overline E_t$.
\end{enumerate}
\end{lemma}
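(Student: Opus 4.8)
The statement asserts existence of a limit and, for completely split paths rel $G_r$, an explicit formula for it; my strategy is to first establish the formula on the ``atomic'' pieces of a complete splitting rel $G_r$, then bootstrap to arbitrary paths by an additivity-plus-bounded-cancellation argument. The key mechanism is that on $T \in \T_\F$ every sub-path living in $G_r$ collapses to a point (since $\ti h$ collapses components of the full preimage of $G_r$), so only the ``high'' terms of a complete splitting rel $G_r$ can contribute length, and among those only the ones that grow — namely linear edges and exceptional paths whose twistor is in $\A_+(T;\phi)$ — survive in the $\frac1k$ limit.

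First I would record the behavior of $\ti h_\# \ti f^k_\#$ on each of the four term types allowed in a complete splitting rel $G_r$ (Fact~\ref{FactRelSplit}~\pref{ItemRelSplitDef}): a sub-path in $G_r$, a single edge of height $>r$, an indivisible Nielsen path of height $>r$, an exceptional path of height $>r$. A sub-path of $G_r$ maps to a point under $\ti h$, hence contributes $0$ for all $k$. For a single edge $E = E_s$ of height $>r$: if $E_s$ is fixed or its image $u_s$ lies in $G_r$ then by Lemma~\ref{LemmaEigenrayCarried} and the Standing Notation $E_s$ is fixed or linear; the genuinely non-trivial case is $E_s$ linear with $f(E_s) = E_s w_s^{d_s}$, in which case $f^k_\#(E_s) = E_s w_s^{d_s(\text{something growing linearly in }k)}$ — more precisely, since $w_s$ is a Nielsen path, $f^k_\#(E_s)$ picks up $k d_s$ copies of $w_s$ (up to a bounded correction at the initial end), so $\frac1k L_T(\ti h_\# \ti f^k_\#(\wt E_s)) \to |d_s| L_T([w_s])$, which is positive iff $[w_s]_u \in \A_+(T;\phi)$. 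For an indivisible Nielsen path $\mu$ of height $>r$: by (\neg\ Nielsen Paths) $\mu = E_i w_i^p \overline E_i$, so $f_\#$ fixes $\mu$ and the $\frac1k$ limit is $0$. For an exceptional path $\sigma_i = E_s w_s^p \overline E_t$ with $E_s,E_t$ in a common linear family: $f^k_\#(\sigma_i) = E_s w_s^{p + k(d_s - d_t)} \overline E_t$ (up to bounded corrections at the two ends), so $\frac1k L_T(\ti h_\# \ti f^k_\#) \to |d_s - d_t| L_T([w_s])$, positive iff $[w_s]_u \in \A_+(T;\phi)$.

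Second I would assemble these. For $\sigma$ with a complete splitting rel $G_r$, write $\sigma = \sigma_1 \cdots \sigma_\ell$; then $\ti f^k_\#(\ti\sigma) = \ti f^k_\#(\ti\sigma_1) \cdots \ti f^k_\#(\ti\sigma_\ell)$, but this concatenation may not be reduced at the juncture points. However, by the bounded cancellation lemma for the equivariant map $\ti h \from \wt G \to T$ (Fact~\ref{FactBCCVerySmall}) together with bounded cancellation in $\wt G$ itself, the amount of cancellation at each of the $\ell - 1$ junctures — and between $\ti h$-images — is bounded independent of $k$. Hence $L_T(\ti h_\# \ti f^k_\#(\ti\sigma)) = \sum_i L_T(\ti h_\# \ti f^k_\#(\ti\sigma_i)) + O(1)$, where the $O(1)$ error is uniform in $k$. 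Dividing by $k$ and letting $k \to \infty$, the error dies, each term converges by the atomic analysis, and summing the contributions over all linear-edge terms and exceptional-path terms — organized by which twistor $[w_s]$ they carry, and discarding those with $[w_s]_u \notin \A_+(T;\phi)$ since those contribute $0$ — gives exactly the formula in~\pref{ItemLGFormula}. This simultaneously proves existence of the limit in~\pref{ItemLGsigma} for completely-split-rel-$G_r$ paths. For a general path or circuit $\sigma$, apply Fact~\ref{FactRelSplit}~\pref{ItemRelSplitIterate}: for all large $j$, $f^j_\#(\sigma)$ has a complete splitting rel $G_r$, and since $\frac1k L_T(\ti h_\# \ti f^{k}_\#) = \frac{1}{k} L_T(\ti h_\# \ti f^{k-j}_\# \ti f^j_\#)$, replacing $k$ by $k+j$ and using the already-established case for $f^j_\#(\sigma)$ shows the limit exists and equals $L_G(f^j_\#(\sigma))$; this is the general case of~\pref{ItemLGsigma}.

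\textbf{Main obstacle.} The delicate point is the uniform (in $k$) control of cancellation when concatenating the $\ti h$-images of the $f^k_\#(\ti\sigma_i)$ — one must be sure that cancellation between $\ti h_\# \ti f^k_\#(\ti\sigma_i)$ and $\ti h_\# \ti f^k_\#(\ti\sigma_{i+1})$ in $T$ stays bounded even though both factors grow linearly in $k$. This follows from two applications of bounded cancellation: first, the splitting $\sigma = \sigma_1 \cdots \sigma_\ell$ being a splitting means $f^k_\#(\ti\sigma) = f^k_\#(\ti\sigma_1) \cdots f^k_\#(\ti\sigma_\ell)$ reduced (no cancellation) in $\wt G$; then the only cancellation in $T$ arises from the failure of $\ti h$ to be an isometric embedding, which is governed by $\bcc(\ti h)$ via Fact~\ref{FactBCCVerySmall}, a constant independent of $k$. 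One must also verify the minor claim that $\ti h$ sends the whole ray/line in $\wt G$ spanned by $w_s^{d_s k}$ to a path of length $\sim k|d_s| L_T([w_s])$ and not less — i.e.\ that $\ti h$ does not unboundedly fold the axis of $w_s$ — which again is exactly what $\bcc(\ti h) < \infty$ and minimality of $T$ guarantee.
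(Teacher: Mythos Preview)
Your proposal is correct and follows essentially the same approach as the paper: reduce to the completely-split-rel-$G_r$ case via Fact~\ref{FactRelSplit}~\pref{ItemRelSplitIterate}, compute the $\frac{1}{k}$-limit explicitly on each term type of the splitting (only linear edges and exceptional paths with twistor in $\A_+(T;\phi)$ survive), and assemble via bounded cancellation for $\ti h$. The paper organizes the computation by further decomposing each $f^k_\#(\sigma_i)$ into a fixed number of elementary subpaths $\mu_{j,k}$ of five types before applying bounded cancellation, whereas you apply bounded cancellation directly at the splitting-term level and then analyze each term; this is a purely organizational difference, not a substantive one.
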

  
\begin{proof}  After replacing $\sigma$ by some $f^l_\#(\sigma)$, we may assume by Fact~\ref{FactRelSplit}~\pref{ItemRelSplitIterate} that $\sigma$ has a complete splitting  rel $G_r$,   $\sigma =\sigma_1 \cdot \ldots \cdot \sigma_m$. 

For each $\sigma_i$ and $k > 0$, we choose $J_i \ge 1$, independent of $k$, and a decomposition of $f^k_\#(\sigma_i)$ into $J_i$  subpaths as follows:
\begin{itemize}
\item If $\sigma_i$ has height $\le r$ or is a fixed edge or Nielsen path of 
height $> r$, then $J_i=1$ and the decomposition is the trivial one.
\item If  $\sigma_i = E$ [resp.\ $\overline E]$ for some oriented non-fixed   non-linear edge $E$ of height $>r$ then $J_i = 2$. One of the terms is $E$ [resp. $\overline E$] and the other is a (possibly trivial) path in $G_r$.
\item If $\sigma_i = E_s$ [resp.\ $\overline E_s]$ for some oriented linear edge $E_s$ of height $>r$ then $J_i = 2$. One of the terms is $E_s$ and the other is $w_s^{\pm k d_s}$.
\item If $\sigma_i = E_s w^p_s \overline E_t$ is an exceptional path of height $>r$  then $J_i = 4$ and the decomposition is $f^k_\#(\sigma_i) =  (E_s) (w^p_s) (w_s^{(d_s-d_t)k}) (\overline E_t)$.
\end{itemize}

It follows that $f^k_\#(\sigma)=f^k_\#(\sigma_1) \cdot \ldots \cdot 
f^k_\#(\sigma_m)$ can be decomposed into $J=J_1+...+J_m$ subpaths 
$\mu_{1,k},...,\mu_{J,k}$ such that   for each $j=1,\ldots,J$  one of the following holds for all $k \ge 0$:
 \begin{description}
  \item [($i$)]   $\mu_{j,k} \subset G_r$.
 \item [($ii$)]  $\mu_{j,k}=E$ of $\overline E$  for some oriented edge $E$.   
 \item [($iii$)] $\mu_{j,k} = w_s^{\pm kd_s}$ for some   $s$. 
 \item [($iv$)] $\mu_{j,k}= w_s^{(d_s -d_t)k}$ for some $s,t$.
 \item[($v$)]  $\mu_{j,k} = \rho$ for some Nielsen path $\rho$.
  \end{description}
Moreover, there is one type ($iii$) term for each $\sigma_j $ that, up to a change of orientation, is a single linear edge $E_s$ with $w_s \not \subset G_r$ and one type ($iv$) term for each $\sigma_j$ that is an exceptional path $E_sw_s^p \overline E_t$ with $w_s \not \subset G_r$.

The bounded cancellation lemma (Fact~\ref{FactBCCVerySmall})  implies that 
$$  \lim_{k\to \infty} (\frac{1}{k}L_T(\ti h_\#\ti f^k_\#(\ti \sigma)) -\sum_{j=1}^J  \frac{1}{k}L_T(\ti h_\#(\ti \mu_{j,k})))=0
$$
and that
$$  \lim_{k\to \infty} \frac{1}{k}L_T(\ti h_\#(\ti w_s^{kq})) = \abs{q} L_T([w_s])
$$
Since $h$ maps components of $\wt G_r$ to vertices of $T$,  
$$  \lim_{k\to \infty} \frac{1}{k}L_T(\ti h_\#(\tau_k)) = 0
$$
for any sequence of paths $\tau_k \subset G_r$. The last displayed equation is also true if $\tau_k$ is independent of $k$,  for example a constant edge $E$ or Nielsen path $\rho$. The lemma follows immediately.
\end{proof}

\begin{proposition}[Iteration of a grower]
\label{prop:grower} Assume the Standing Notation for Section~\ref{SectionLimitTrees}. For any $T \in \T_\F$, if $\A_+(T;\phi)$ is nonempty then 
\begin{enumerate}
\item $T \phi^\infty := \lim_{k\to \infty} \frac{1}{k}T\phi^k$ is a well defined $\phi$-invariant element of $\T_\F$. 
\item If a cyclic subgroup $\langle e\rangle $ is the stabilizer of some edge in $T \phi^\infty$ then $[e]_u \in \A_+(T;\phi)$.
\item If a conjugacy class $[c]$ is represented by a circuit with a complete splitting rel $G_r$ and if $\sigma$ is a closed path obtained from this circuit by subdividing at a vertex between terms of the splitting, then (in the notation of Lemma~\ref{LemmaGrowerFormula}) $L_{T\phi^\infty}[c] = L_G(\sigma)$.
\end{enumerate}
\end{proposition}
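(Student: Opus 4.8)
The plan is to establish all three items of Proposition~\ref{prop:grower} by using Lemma~\ref{LemmaGrowerFormula} to control length functions, then invoking the compactness and structure theory for very small simplicial $F_n$-trees. First I would set up the normalized length functions: for each $k$ let $L_k = \frac{1}{k} L_{T\phi^k}$, so that $L_k[c] = \frac{1}{k} L_T(\phi^k[c]) = \frac{1}{k} L_T(\ti h_\# \ti f^k_\#(\ti\sigma))$ up to bounded error, where $\sigma$ is a circuit realizing $[c]$ (using that $\ti h$ collapses the pre-image of $G_r$, and the bounded cancellation lemma Fact~\ref{FactBCCVerySmall} to pass between $\phi^k[c]$ and the straightened iterate $f^k_\#(\sigma)$). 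By Lemma~\ref{LemmaGrowerFormula}\pref{ItemLGsigma} the pointwise limit $L_\infty[c] := \lim_k L_k[c] = L_G(\sigma)$ exists for every conjugacy class $[c]$; replacing $\sigma$ by a high iterate (Fact~\ref{FactRelSplit}\pref{ItemRelSplitIterate}) we may assume $\sigma$ is completely split rel $G_r$ and hence the explicit formula of Lemma~\ref{LemmaGrowerFormula}\pref{ItemLGFormula} applies. Since $\A_+(T;\phi) \ne \emptyset$, some twistor $[w_s]_u$ has $L_T[w_s] > 0$, and choosing $[c]$ to be (a power of) $w_s$ shows $L_\infty$ is not identically zero; moreover $L_\infty$ is bounded below on its positive values because the formula expresses $L_\infty[c]$ as a nonnegative-integer combination of the finitely many positive numbers $\{L_T[w_s] : [w_s]_u \in \A_+(T;\phi)\}$ together with differences $|d_s - d_t| L_T[w_s]$, so its positive values are bounded below by $\min_s L_T[w_s] > 0$.

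Next I would promote the pointwise limit to a genuine tree. Since each $T\phi^k$ is a very small simplicial $F_n$-tree, normalizing by $\frac{1}{k}$ keeps it very small simplicial, and the limit length function $L_\infty$ has nonempty positive values bounded below; one then applies the compactness/realization results for the space of $F_n$-trees (as in Fact~\ref{LimitIsSimplicial}, together with the general theory of \cite{CullerMorgan:Rtrees} and \cite{BestvinaFeighn:OuterLimits} that the limit of very small trees with lengths bounded below is again very small simplicial) to conclude that $L_\infty = L_{T'}$ for a well-defined very small simplicial $F_n$-tree $T' =: T\phi^\infty$. That $T' \in \T_\F$ follows because any element carried by $\F$ is elliptic in every $T\phi^k$ (as $\F$ is $\phi$-invariant and $T \in \T_\F$), hence has $L_\infty = 0$, hence is elliptic in $T'$. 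Finally $\phi$-invariance: $L_{T'\phi}[c] = L_{T'}(\phi[c]) = L_G(f_\#(\sigma)) = L_G(\sigma) = L_{T'}[c]$, where the middle equality uses that $L_G$ as defined by the limit in Lemma~\ref{LemmaGrowerFormula}\pref{ItemLGsigma} is manifestly unchanged by applying $f_\#$ to $\sigma$ (it only shifts the iteration index by one, which washes out in the $\frac{1}{k}$ normalization); thus $T'\phi$ and $T'$ have the same length function and so are equivalent. This proves item~(1), and item~(3) is then immediate: $L_{T\phi^\infty}[c] = L_\infty[c] = L_G(\sigma)$ by construction, for any completely-split-rel-$G_r$ representative.

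For item~(2), suppose a cyclic subgroup $\langle e \rangle$ is the stabilizer of an edge of $T\phi^\infty$; I would argue that $[e]_u$ must be one of the finitely many twistors with positive $T$-length. The point is that the formula in Lemma~\ref{LemmaGrowerFormula}\pref{ItemLGFormula} shows that the set of positive values of $L_{T\phi^\infty}$ on conjugacy classes lies in the additive semigroup generated by $\{L_T[w_s] : [w_s]_u \in \A_+(T;\phi)\}$ (the difference terms $|d_s-d_t|L_T[w_s]$ being integer multiples of $L_T[w_s]$), and more precisely that the ``directions'' in $T\phi^\infty$ along which length accumulates come exactly from the linear edges whose twistors lie in $\A_+(T;\phi)$. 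Concretely: in the limiting tree, an edge with nontrivial (cyclic) stabilizer $\langle e\rangle$ arises as the limit of segments in $T\phi^k$ that are being stretched; tracking through the decomposition of $f^k_\#(\sigma)$ into the pieces $(i)$--$(v)$ in the proof of Lemma~\ref{LemmaGrowerFormula}, the only pieces whose $\ti h$-images grow linearly in $k$ are the powers $w_s^{\pm k d_s}$ and $w_s^{(d_s-d_t)k}$ with $[w_s]_u \in \A_+(T;\phi)$; a covering translation fixing a limiting edge must therefore be conjugate into $\langle w_s \rangle$ for such an $s$, forcing $[e]_u = [w_s]_u \in \A_+(T;\phi)$ (using that the twistors are root-free, so the cyclic edge stabilizer is exactly $\langle w_s \rangle$). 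The main obstacle I anticipate is making this last argument rigorous --- passing from the pointwise convergence of length functions to a clean statement about which group elements can stabilize edges of the limit tree. I would handle it by combining the length-function formula with the standard fact that for simplicial $F_n$-trees an edge stabilizer $\langle e \rangle$ satisfies $L[e^k] = 0$ for all $k$ while there exist conjugacy classes $[c]$ whose axes cross the edge and whose lengths ``detect'' $e$; the Lemma~\ref{LemmaGrowerFormula} formula then pins down $[e]_u$ among the $[w_s]_u$ with $L_T[w_s] > 0$.
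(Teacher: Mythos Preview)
Your treatment of items~(1) and~(3) is essentially the same as the paper's: use Lemma~\ref{LemmaGrowerFormula} and bounded cancellation to show pointwise convergence of the normalized length functions, observe that the positive values are bounded below by $\min\{L_T[w_s] : [w_s]_u \in \A_+(T;\phi)\}$, invoke Fact~\ref{LimitIsSimplicial}, and read off $\phi$-invariance and membership in $\T_\F$ from the formula. No issue there.

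The gap is in item~(2), and you have correctly identified it yourself. The length function $L_{T\phi^\infty}$ determines the tree up to equivariant isometry, but it does not give you direct access to edge stabilizers: knowing that $L_{T\phi^\infty}[e]=0$ tells you only that $e$ is elliptic, not that it fixes an edge, and conversely the formula in Lemma~\ref{LemmaGrowerFormula}\pref{ItemLGFormula} gives no grip on which elliptic elements fix arcs. Your heuristic about ``a covering translation fixing a limiting edge must be conjugate into $\langle w_s\rangle$'' presupposes a geometric model of the limit that you have not built; you only have the limit via Culler--Morgan realization of a length function. The sentence about detecting $e$ via conjugacy classes whose axes cross the edge does not close the gap either, since the explicit formula only records contributions from linear edges and exceptional paths and does not single out a particular $w_s$ as commuting with a given elliptic element.

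The paper handles item~(2) by a different route: it reduces to the case $\L(\phi)=\emptyset$, where Remark~4.38 of \BookTwo\ gives an explicit construction of $T\phi^\infty$ from which edge stabilizers can be read off directly. The reduction is made by defining an auxiliary homotopy equivalence $g\from G\to G$ that agrees with $f$ on those linear edges whose twistor lies in $\A_+(T;\phi)$ and is the identity on every other edge; one checks (citing \cite{FeighnHandel:abelian}) that $g$ is a homotopy equivalence, that each relevant $w_s$ is a Nielsen path for $g$, and that the outer automorphism $\psi$ represented by $g$ satisfies $T\psi^\infty = T\phi^\infty$ (because the formula in Lemma~\ref{LemmaGrowerFormula}\pref{ItemLGFormula} depends only on the linear edges with twistor in $\A_+(T;\phi)$). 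Since $\L(\psi)=\emptyset$, the explicit construction from \BookTwo\ applies to $\psi$ and yields~(2). If you want to salvage your approach without this reduction, you would need to build the limit tree concretely enough to see its edge stabilizers; short of that, the paper's reduction is the missing idea.
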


\begin{proof} If $[c]$ and $\sigma$ are as in (3) then the bounded cancellation lemma (Fact~\ref{FactBCCVerySmall}) implies  
$$\lim_{k\to \infty} \left( \left( \frac{1}{k}L_{T}(\phi^k[c]) \right) -  \frac{1}{k}L_T\!\left(\ti h_\#(\ti f^k_\#(\ti \sigma))\right) \right)=0
$$
and hence
$$\lim_{k\to \infty} \frac{1}{k}L_{T\phi^k}[c] = \lim_{k\to \infty} \frac{1}{k}L_{T}(\phi^k[c]) = \lim_{k\to \infty}\frac{1}{k}L_T\!\left(\ti h_\#(\ti f^k_\#(\ti \sigma))\right) = L_G(\sigma)
$$
where the last equality follows from  Lemma~\ref{LemmaGrowerFormula}. This completes the proof of (3).


Given an arbitrary conjugacy class $[c]$, applying Fact~\ref{FactRelSplit}~\pref{ItemRelSplitIterate} choose $k_0 >0$ so that $\phi^{k_0}[c]$ is represented by a circuit that has a complete splitting rel $G_r$ and let $\sigma$ be a closed path obtained from this circuit by subdividing at a vertex between terms of the splitting. Item (3) implies that 
$$\lim_{k\to \infty} \frac{1}{k}L_{T}(\phi^k[c]) = L_G(\sigma) \qquad\qquad(*)
$$
In particular,  the non-zero values taken by these limits is a non-empty set that is bounded below by the minimum $T$-length of an element of $\A_+(T;\phi)$. Applying Fact~\ref{LimitIsSimplicial}, the limiting tree $T \phi^\infinity$ exists satisfying $L_{T\phi^\infinity}[c]=L_G[c]$ for all $[c]$, and is clearly $\phi$-invariant. If $[c]$ is supported by~$\F$ then so is $\phi^k[c]$ for all $k$, because $\phi(\F)=\F$; it follows that $L_{T}(\phi^k[c])=0$ for all~$k$, implying that $L_G(\sigma)=0$. This proves that $T\phi^\infinity \in \T_\F$, finishing the proof of~(1).

If $\L(\phi) = \emptyset$ then (2) is Remark 4.38 of \BookTwo, which follows from an explicit construction of  $T \phi^\infty$ (pages 37 and 38 of that paper).    The construction of the tree and the verification that it is $T \phi^\infty$ carries over to our context without change.     

For a  reference to a statement rather than  a construction,   define    $g : G \to G$ by $g(E_s) = f(E_s) = E_sw_s^{d_s}$  if $E_s$ is a linear edge for $f$ and the twistor determined by $w_s$ is contained in $\A_+(T;\phi)$ and by $g(E) = E$ for all other edges of $G$.    Then $g$ is    a homotopy equivalence by Lemma 6.7 of \cite{FeighnHandel:abelian} and each $w_s$ as above is a Nielsen path for $g$ by Lemma~6.13~(1) of \cite{FeighnHandel:abelian}.    Applying Lemma~\ref{LemmaGrowerFormula} and items (1) and (3) of this lemma to the outer automorphism $\psi$ determined by $g$, we see that $T \psi^\infty$ is well defined and equal to $T \phi^\infty$.   Hence there is no loss in replacing $\phi$ by $\psi$.  Since $\L(\psi) = \emptyset$, Remark 4.38 of \BookTwo\   can be quoted directly to prove (2). 
\end{proof}

\subsection{Iteration of nongrowers}

We now turn to the case that $T$ is a nongrower, that is, $\A_+(T;\phi) = \emptyset$ and so each twistor of $\phi$ is elliptic in~$T$. For a simplicial tree $T$, let $\Arc(T)$ denote the set of conjugacy classes of stabilizers of edges in $T$.

\begin{lemma} \label{arc stabilzers} Suppose that $\phi$ is rotationless, that $T \in \T_\F$ and that $T \phi^\infty := \lim_{n\to \infty} T\phi^k$ is a well defined element of $T_{\F}$.  Then $\Arc(T \phi^\infty ) \subset \Arc(T)$ and elements of $\Arc(T \phi^\infty)$ are $\phi$-invariant.
\end{lemma}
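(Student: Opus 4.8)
The plan is to prove both assertions together by analyzing how edge stabilizers of the limit tree $T\phi^\infty$ arise. Since $T\phi^\infty = \lim_k T\phi^k$ is a limit without scaling, the first step is to recall that convergence of length functions $L_{T\phi^k} \to L_{T\phi^\infty}$ means $L_T(\phi^k[c]) \to L_{T\phi^\infty}[c]$ for every conjugacy class $[c]$. The key observation is that for a conjugacy class $[c]$ which is elliptic in $T$, the number $L_T(\phi^k[c]) = L_{T\phi^k}[c]$ is controlled: combining Fact~\ref{FactRelSplit}~\pref{ItemRelSplitIterate} (so that $\phi^k[c]$ eventually has a complete splitting rel $G_r$) with Lemma~\ref{LemmaGrowerFormula}~\pref{ItemLGFormula}, the growth of $L_T(\phi^k[c])$ is governed by the exponents $\abs{d_s}$ and $\abs{d_s - d_t}$ appearing in linear-edge and exceptional-path terms, weighted by $L_T([w_s])$. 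Since $\A_+(T;\phi) = \emptyset$, every twistor $[w_s]_u$ has $L_T([w_s]) = 0$, so after passing to a high enough iterate the $T$-length of $\phi^k[c]$ becomes constant in $k$. Hence for each $[c]$ the limit $L_{T\phi^\infty}[c]$ equals $L_T(\phi^{k}[c])$ for all sufficiently large $k$, and in particular $L_{T\phi^\infty}[c] > 0$ implies $L_T(\phi^{k}[c]) > 0$ for large $k$, i.e.\ $[c]$ is nonelliptic in $T$.

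Next I would use this to prove $\Arc(T\phi^\infty) \subset \Arc(T)$. Let $\langle e \rangle$ be the stabilizer of an edge of $T\phi^\infty$. Every element of a small tree that stabilizes an edge lies in a maximal cyclic subgroup, so we may take $e$ root-free; the conjugacy class $[e]$ is hyperbolic in any tree where $\langle e\rangle$ is not an edge or vertex stabilizer. Since $\langle e\rangle$ stabilizes an edge of $T\phi^\infty$ we have $L_{T\phi^\infty}[e] = 0$, hence by the previous paragraph $L_{T}(\phi^{k}[e]) = 0$ for all large $k$; because $T$ is a very small $F_n$-tree and $\phi^{k}[e]$ is a conjugacy class of infinite order elliptic in $T$, it fixes a point and in fact (by very-smallness and the characterization of arc stabilizers) $\langle e\rangle$ — or a conjugate — stabilizes an edge or is contained in a vertex group of $T$. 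One must then rule out the vertex-group-without-edge possibility: this is where the structure of $\T_\F$ and the simplicial hypothesis enter, using that an infinite cyclic subgroup carried by a vertex group system but stabilizing no edge would, together with Lemma~\ref{LemmaGrowerFormula}, contribute nothing and cannot be the stabilizer of an edge created in the limit; alternatively one quotes the known description of arc stabilizers of limits of trees along an iterated automorphism from \BookTwo. The cleanest route is to observe that $T\phi^\infty$ is obtained as a limit of the trees $T\phi^k$, each $F_n$-equivariantly mapped to the next, and arc stabilizers can only shrink (never be created) under equivariant morphisms of trees that collapse no edge orbit essentially — so $\Arc(T\phi^\infty) \subseteq \Arc(T\phi^k) = \Arc(T)$, the last equality because $\phi$ acting on $T$ relabels edges but $\Arc$ is a conjugacy-class invariant.

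For $\phi$-invariance of the elements of $\Arc(T\phi^\infty)$: the tree $T\phi^\infty$ is $\phi$-invariant, meaning $T\phi^\infty \phi = T\phi^\infty$ up to $F_n$-equivariant isometry. Therefore $\phi$ permutes the finite set $\Arc(T\phi^\infty)$. To upgrade "permutes" to "fixes", I would invoke that $\phi$ is rotationless: by the rotationless analogue of the invariance properties — specifically, a rotationless outer automorphism fixes each periodic conjugacy class and each periodic element of its asymptotic data (Fact~\refGM{FactPeriodicIsFixed}, \recognition\ Lemma~3.30) — any $\phi$-periodic root-free unoriented conjugacy class is $\phi$-fixed. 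Since each element of $\Arc(T\phi^\infty)$ determines a root-free conjugacy class (edge stabilizers in very small trees are maximal cyclic) which is $\phi$-periodic by the permutation action on the finite set, it is $\phi$-fixed, hence the edge stabilizer itself is $\phi$-invariant.

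\textbf{Main obstacle.} The delicate point is the containment $\Arc(T\phi^\infty) \subset \Arc(T)$: one must be sure that the limiting process does not manufacture new edge stabilizers that were merely vertex subgroups (or hyperbolic elements barely becoming elliptic) in $T$. This is exactly where the hypothesis $\A_+(T;\phi) = \emptyset$ does the work — via Lemma~\ref{LemmaGrowerFormula} it forces $T$-lengths of all iterates to stabilize, so the geometry of $T\phi^\infty$ is a "degeneration" of $T$ with no new length phenomena, and the standard theory of such morphisms (collapsing limits, as in \BookTwo\ Section~4) gives the inclusion. The proof should cite the relevant structural statement from \BookTwo\ rather than reconstruct it; the burden here is to check that the hypotheses of that statement — rotationless $\phi$, $T \in \T_\F$, $\F$ carrying $\L(\phi)$ and $\Eigen(\phi)$, and all twistors elliptic — are precisely what is assumed.
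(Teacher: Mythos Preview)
Your third paragraph---$T\phi^\infty$ is $\phi$-invariant, hence $\phi$ permutes the finite set $\Arc(T\phi^\infty)$, and rotationless implies each periodic conjugacy class is fixed---is correct and is exactly the ingredient the paper singles out: the paper's proof is a citation of Lemma~5.9 of \BookTwo, with the sole modification that the step ``$\phi$-periodic conjugacy classes are $\phi$-fixed'' is justified by \recognition\ Lemma~3.30 (rotationless) in place of the UPG argument used in \BookTwo.

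Your argument for the inclusion $\Arc(T\phi^\infty)\subset\Arc(T)$, however, has a genuine gap. First, you repeatedly invoke $\A_+(T;\phi)=\emptyset$, but this is not a hypothesis of the lemma; the lemma only assumes that the unnormalized limit exists in $\T_\F$. More seriously, the ``cleanest route'' you propose is incorrect: the equality $\Arc(T\phi^k)=\Arc(T)$ is false in general. Under the twisted action, the stabilizer in $T\phi^k$ of an edge is $\Phi^{-k}$ applied to its stabilizer in $T$, so $\Arc(T\phi^k)=\phi^{-k}\bigl(\Arc(T)\bigr)$ as sets of conjugacy classes, and there is no reason for $\phi$ to permute $\Arc(T)$. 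Likewise the heuristic that ``arc stabilizers can only shrink under equivariant morphisms'' does not directly apply: $T\phi^\infty$ is a length-function limit, not the target of a morphism from $T$, and you have not constructed such a map.

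The actual proof in \BookTwo\ proceeds differently: both conclusions are obtained together from an analysis of how edge stabilizers arise in the limit, using that each $[e]\in\Arc(T\phi^\infty)$ is $\phi$-periodic (hence $\phi$-fixed) and then that $\phi$-fixedness forces $[e]$ to already stabilize an arc in $T$. In other words, the $\phi$-invariance is established first and is \emph{used} to get the inclusion, rather than the two being independent as in your outline. You should consult that argument rather than try to reconstruct the inclusion from the approximation $T\phi^k\to T\phi^\infty$ alone.
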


\begin{proof}  This is Lemma 5.9 of \BookTwo\ with slightly different hypotheses. The proof uses the fact that a conjugacy class that is periodic under the action of $\phi$ is fixed by $\phi$. In \BookTwo\ this is established by Proposition 3.16 of that paper which does not apply in our context because we do not assume the hypothesis that $\phi$ is \upg. Instead we use our hypothesis that $\phi$ is rotationless and we quote \recognition\ Lemma~3.30 which gives the same conclusion.  Other than that, the proof carries over to our context without change.  
\end{proof}


\begin{proposition}[Iteration of a nongrower]
\label{prop:nongrower}  
Assume the Standing Notation for Section~\ref{SectionLimitTrees}. Given $T \in \T_\F$, if $\A_+(T;\phi)$ is empty and if no element of $\Arc(T)$ is carried by~$\F$ then:
\begin{enumerate}
\item \label{ItemLengthEventuallyConstant}
For all $c \in F_n$, \ $L_T(\phi^k[c])$ is eventually constant.
\item \label{ItemTInfinityInvariant}
$T \phi^\infty := \lim_{n\to \infty} T\phi^k$ is a well defined $\phi$-invariant element of $\T_\F$.
\item \label{ItemArcsStabilizersDontGrow}
$\Arc(T\phi^\infty) \subset \Arc(T)$.
\end{enumerate}  
\end{proposition}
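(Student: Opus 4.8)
\textbf{Plan of proof for Proposition~\ref{prop:nongrower}.} The plan is to reduce everything to item~\pref{ItemLengthEventuallyConstant}, the eventual constancy of $L_T(\phi^k[c])$, from which items \pref{ItemTInfinityInvariant} and \pref{ItemArcsStabilizersDontGrow} follow by soft arguments already available in the excerpt. First I would reduce to the case that $[c]$ is represented by a circuit with a complete splitting rel~$G_r$: by Fact~\ref{FactRelSplit}~\pref{ItemRelSplitIterate} some $\phi^{k_0}_\#[c]$ has such a splitting, and since replacing $[c]$ by $\phi^{k_0}[c]$ only shifts the index, it suffices to prove eventual constancy for such a $[c]$. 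Having fixed such a circuit, I would track the behaviour of $L_T(\phi^k[c])$ using the complete splitting rel $G_r$ of $f^k_\#(\sigma)$ and the bounded cancellation lemma (Fact~\ref{FactBCCVerySmall}) exactly as in the proof of Lemma~\ref{LemmaGrowerFormula}, but now \emph{without} the normalization by $1/k$.

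The key point is the following. Decompose $f^k_\#(\sigma)$ into the subpaths $\mu_{j,k}$ of the five types $(i)$--$(v)$ listed in the proof of Lemma~\ref{LemmaGrowerFormula}. The terms of types $(i)$, $(ii)$, $(v)$ are bounded independently of $k$ (they lie in $G_r$, which $\ti h_T$ collapses to vertices, or they are single edges, or they are fixed Nielsen paths). The only terms that could grow with $k$ are types $(iii)$ and $(iv)$, namely powers $w_s^{\pm kd_s}$ of twistor paths $w_s$ and powers $w_s^{(d_s-d_t)k}$ arising from exceptional-path terms. But the hypothesis $\A_+(T;\phi)=\emptyset$ says precisely that every twistor $[w_s]_u$ has $L_T([w_s]_u)=0$, so $L_T(\ti h_\#(\ti w_s^{m}))$ is bounded independently of $m$ (the collapsed image of any translate of the axis of $w_s$ is a point, so $\ti h_T$ of the whole power $w_s^m$ moves one a bounded amount — here one invokes Fact~\ref{FactBCCVerySmall} with the arc $[x, w_s^m x]$ in $\wt G$, whose image is bounded since its endpoints map to the same vertex). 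Consequently $L_T(f^k_\#(\sigma))$ is bounded independently of~$k$; since it is a non-negative integer (translation lengths on a simplicial tree with integer edge lengths — or at worst it takes finitely many values by boundedness), and $L_T(\phi^k[c]) = L_{T\phi^k}[c]$ differs from $L_T(\ti h_\#\ti f^k_\#\ti\sigma)$ by a bounded amount via Fact~\ref{FactBCCVerySmall}, the sequence $L_T(\phi^k[c])$ lies in a finite set. To upgrade ``finite set'' to ``eventually constant'' I would use monotonicity: a more careful bookkeeping of the $\mu_{j,k}$ terms shows that the combinatorial type of the complete splitting rel $G_r$ of $f^k_\#(\sigma)$ stabilizes — each linear edge $E_s$ with $w_s\not\subset G_r$ contributes a single type~$(iii)$ block $w_s^{\pm k d_s}$ whose $T$-image length is eventually constant in $k$ (being $L_T$ of a bounded-distance perturbation of a point), and likewise for type~$(iv)$ blocks; so $L_T(f^k_\#(\sigma))$ itself is eventually constant. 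This proves~\pref{ItemLengthEventuallyConstant}.

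Given \pref{ItemLengthEventuallyConstant}, item~\pref{ItemTInfinityInvariant} follows: the pointwise limit $L(c) := \lim_k L_T(\phi^k[c])$ exists for every $c$, its positive values are bounded below (the values lie in a discrete subset of $\reals$, being limits of eventually-constant integer sequences), so $L$ is non-negative and — provided it is not identically zero — by Fact~\ref{LimitIsSimplicial} (applied after noting $L$ is realized as a length function, e.g. because $T\phi^k$ converges in $\reals^\C$) there is a very small simplicial $F_n$-tree $T\phi^\infty$ with $L_{T\phi^\infty}=L$. If $L\equiv 0$ were to occur, then every conjugacy class would be elliptic in $T\phi^\infty$, which is impossible for an $F_n$-tree; but in fact $L$ is not identically zero because $T\in\T_\F$ and $\F$ is proper, so some conjugacy class $[c]$ not carried by $\F$ has $L_T(\phi^k[c])>0$ for all $k$ — here one uses that no element of $\Arc(T)$ is carried by $\F$, so the collapse map $\ti h_T$ does not collapse the $T$-axis of such a $[c]$. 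Invariance $L_{T\phi^\infty}(\phi[c]) = L_{T\phi^\infty}[c]$ is immediate from the definition of the limit. That each $[c]$ carried by $\F$ is elliptic in $T\phi^\infty$ follows because $\phi(\F)=\F$ forces $\phi^k[c]$ to remain carried by $\F$, hence $L_T(\phi^k[c])=0$ for all $k$, hence $L(c)=0$; so $T\phi^\infty \in \T_\F$. Finally item~\pref{ItemArcsStabilizersDontGrow}, $\Arc(T\phi^\infty)\subset\Arc(T)$, is exactly Lemma~\ref{arc stabilzers} applied to the tree $T\phi^\infty$ we have just constructed (its hypothesis, that $T\phi^\infty$ is a well-defined element of $\T_\F$, is what \pref{ItemTInfinityInvariant} provides).

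\textbf{Main obstacle.} The delicate step is passing from ``$L_T(\phi^k[c])$ takes finitely many values'' to ``$L_T(\phi^k[c])$ is eventually constant'': a priori the length could oscillate within its finite range. Handling this requires showing that the only $k$-dependence in the decomposition of $f^k_\#(\sigma)$ is concentrated in the twistor-power blocks $w_s^{\pm kd_s}$ and $w_s^{(d_s-d_t)k}$, and that the $T$-length contribution of each such block is eventually monotone (indeed eventually constant) in $k$ — this is where one must genuinely use that the $w_s$ are \emph{closed Nielsen paths} so their high powers all behave uniformly, together with the fact that $\A_+(T;\phi)=\emptyset$ makes their images collapse. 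This is essentially the nongrower analogue of the grower length formula in Lemma~\ref{LemmaGrowerFormula}, and I would expect to borrow the structure of that proof wholesale, replacing the ``$\frac1k$ times a growing power'' estimates by ``bounded perturbation of a point'' estimates.
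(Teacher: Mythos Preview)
Your deductions of \pref{ItemTInfinityInvariant} and \pref{ItemArcsStabilizersDontGrow} from \pref{ItemLengthEventuallyConstant} are essentially right and match the paper, except that the paper obtains nontriviality of the limit via Serre's criterion (some basis product $a^i a^j$ is hyperbolic) rather than via the $\Arc(T)$ hypothesis as you suggest. Your boundedness argument for $L_T(\phi^k[c])$---using the five-type decomposition from the proof of Lemma~\ref{LemmaGrowerFormula} together with ellipticity of all twistors---is also correct, and does yield that the sequence takes only finitely many values.

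The genuine gap is the passage from ``finitely many values'' to ``eventually constant''. You argue that each block $\mu_{j,k}$ has eventually constant $L_T(\ti h_\#(\ti\mu_{j,k}))$ and conclude that $L_T(\phi^k[c])$ is itself eventually constant. But the translation length is \emph{not} the sum of the block lengths: it is the length of the \emph{tightened} image in $T$ of a fundamental domain, and the cancellation in $T$ between the images of consecutive blocks can vary with $k$ even when each individual block length is constant. Crucially, your argument for \pref{ItemLengthEventuallyConstant} never invokes the hypothesis that no element of $\Arc(T)$ is carried by $\F$, and this hypothesis is exactly what controls that cancellation. The paper's proof is substantially different here: it subdivides the edges of $\wt G$ into \emph{edgelets} that $\ti h$ maps to single edges or vertices of $T$, produces via Sublemma~\ref{nongrower decomposition} a decomposition $f^k_\#(\sigma)=H_1\nu_{1,k}\cdots H_J\nu_{J,k}$ with constant horizontal edgelets $H_j$ and elliptic pieces $\nu_{j,k}$, and then shows that for all large $k$ the $T$-edge path $\ti h(\ti H_{1,k})\cdots\ti h(\ti H_{J,k})$ is already reduced, so the translation length equals the constant $\sum_j L_T(\ti h(\ti H_{j,k}))$. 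The reducedness argument is precisely where the $\Arc(T)$ hypothesis enters: if an edge of $T$ has nontrivial stabilizer $\langle b\rangle$, then since $[b]$ is not carried by $\F$ the axis of $L_b$ in $\wt G$ has only bounded subpaths projecting into $G_r$, which prevents the growing pieces of $\nu_{j,k}$ from tracking that axis and forcing $\ti h(\ti H_{j+1,k})=\ti h(\ti H_{j,k})^{-1}$ for infinitely many~$k$. Without this, oscillation of $L_T(\phi^k[c])$ within its finite range is not ruled out.
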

 
The proof of the proposition follows some preliminary notation and a sublemma.
 
Assume the Standing Notation for Section~\ref{SectionLimitTrees}. In particular, $\ti h: \wt G \to T$ is an equivariant map that sends vertices to vertices and sends each component of  the full pre-image of $G_r$ to a vertex. Subdivide the edges of $\wt G$ into \emph{edgelets} that are mapped by $\ti h$ to either vertices  in $T$ or single edges in $T$.  Paths with endpoints in this subdivision are called \emph{edgelet paths}. An edgelet path $\ti \nu$ is \emph{elliptic} if $\ti h_\#(\ti \nu)$ is trivial or equivalently if both endpoints of $\ti \nu$ have the same image under $\ti h$. An edgelet is \emph{horizontal} if its image under $\ti h$ is an edge. The edgelet subdivision is equivariant, as are the notions of elliptic and horizontal, and so by projection these notions pass to~$G$.
 
Given a linear edge $E_s$ with height greater than $r$, let $\ti w_s$ be a  lift of $w_s$ and let $L_a$ be the covering translation that maps the initial endpoint of $\ti w_s$ to  the terminal endpoint of~$\ti w_s$. Thus $\ti w_s$ is a fundamental domain for action of $L_a$ on its axis. The action of $a$ on $T$ has a nontrivial fixed point set $Z$  because $[w_s]$ is elliptic in $T$. Since the $\ti h_\#$-image of the axis of $L_a$ is invariant under  the action of $a$ on $T$,  $\ti h_\#(\ti w_s)$  intersects $Z$.  Subdivide $\ti w_s$ at the first point   in  $\ti w_s \cap \ti h^{-1}(Z)$ and let $\ti w_s  =\ti w_s' \ti w_s''$ be the induced decomposition.  Define $\ti w^*_s = \ti w''_s L_a(\ti w'_s)$. Thus $\ti w^*_s$ is  an elliptic edgelet subpath that is a  fundamental domain for action of $L_a$ on its axis. 

 
   \begin{sublem} \label{nongrower decomposition} Assuming notation as above,  suppose that $\sigma \subset G$ is a circuit and that  there exist arbitrarily large $k$ such that  $ f_\#^k(\sigma)$ is not elliptic in $T$.   Then for all sufficiently large~$k$ the circuit $ f_\#^k(\sigma)$ has a cyclic decomposition into edgelet subpaths $f_\#^k(\sigma) = H_1\nu_{1,k}H_2 \nu_{2,k} \ldots H_J\nu_{J,k}$  so that the following hold:
  \begin{enumerate}
 \item $J$ is independent of $k$.
 \item  each $H_j$ is a horizontal edgelet that  is independent of $k$.
 \item for each $1 \le j \le J$ there exist arbitrarily large $k$ such that $ H_j\nu_{j,k}H_{j+1}$
is not elliptic, where $j$ is taken mod $J$.
  \item  each  $\nu_{j,k}$ is (a possibly trivial) elliptic edgelet subpath that  is a concatenation $\nu_{j,k} = \eta_{j,k,1}\dots\eta_{j,k,P_j}$ of edgelet subpaths  where the number of terms $P_j$ in this concatenation  depends only on $j$ and each term $\eta_{j,k,l}$ satisfies one of the following:
      \begin{enumerate}
  \item   $\eta_{j,k,l}$ does not depend on $k$; or
  \item  $\eta_{j,k,l} \subset G_r$ for all $k$ and $ \lim_{k \to \infty}\abs{\eta_{j,k,l}} = \infty$; or
  \item $\eta_{j,k,l}$ is an iterate of some $w^*_s$  for all $k$ where $s$ does not depend on $k$ and  where $\lim_{k \to \infty}\abs{\eta_{j,k,l}} = \infty$.
\end{enumerate} 
\end{enumerate}
\end{sublem}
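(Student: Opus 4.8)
\textbf{Proof plan for Sublemma~\ref{nongrower decomposition}.}

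The plan is to start from a complete splitting rel $G_r$ of $f^k_\#(\sigma)$, which exists for all large $k$ by Fact~\ref{FactRelSplit}~\pref{ItemRelSplitIterate}, and to refine it to the desired edgelet decomposition by tracking where ``horizontal'' behavior can occur. First I would observe that by Fact~\ref{FactIsATerm}, in the complete splitting rel $G_r$ the terms of height $>r$ are of three kinds: a single non-fixed non-linear edge $E$; a single linear edge $E_s$ or its inverse; or an exceptional path $E_s w^p_s \overline E_t$; together with fixed edges and Nielsen paths of height $>r$ (the last two being $\phi$-invariant), and subpaths lying in $G_r$. Since $T$ is a nongrower, each $[w_s]_u$ is elliptic in $T$, so each $w_s$ (and each iterate $w^{*}_s$) is an elliptic edgelet path; and each subpath in $G_r$ is elliptic because $\ti h$ collapses components of the preimage of $G_r$. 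Thus the only edgelets of $f^k_\#(\sigma)$ that are horizontal (i.e.\ map to an edge of $T$ under $\ti h$) must lie inside the single-edge terms $E$, $E_s$, $\overline E_s$ or inside the ``two end edges'' $E_s, \overline E_t$ of the exceptional-path terms; moreover only finitely many such terms occur, and each such edge has a $k$-independent edgelet subdivision. I would collect the (finitely many) horizontal edgelets arising this way, list them in cyclic order as $H_1,\dots,H_J$ (so $J$ and each $H_j$ are independent of $k$, giving (1) and (2)), and take $\nu_{j,k}$ to be the elliptic edgelet path of $f^k_\#(\sigma)$ lying between $H_j$ and $H_{j+1}$.

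Next I would unwind the structure of each $\nu_{j,k}$ to get (4). The path $\nu_{j,k}$ is a concatenation of pieces coming from: the constant (elliptic) parts of single-edge terms $E$, $E_s$, $\overline E_s$ after removing the horizontal edgelet (these pieces sit in $G_r$ or are iterates of $w_s$/$w^{*}_s$, with the exponent growing like $k d_s$ or $k(d_s-d_t)$ so $|\eta|\to\infty$ — case (b) or (c)); the $G_r$-subpaths between consecutive splitting terms (some constant, some growing — case (a) or (b)); and the $\phi$-invariant fixed-edge and Nielsen-path terms (constant — case (a)). Because the complete splitting of $\sigma$ itself has a $k$-independent number of terms and the image under $f^k_\#$ of each term decomposes into a $k$-independent number of such pieces (exactly as in the proof of Lemma~\ref{LemmaGrowerFormula}), the number $P_j$ of pieces making up $\nu_{j,k}$ depends only on $j$, and one bookkeeping step produces the enumeration $\nu_{j,k}=\eta_{j,k,1}\cdots\eta_{j,k,P_j}$ with each $\eta$ of type (a), (b) or (c). I would also need to replace the raw iterates of $w_s$ by iterates of $w^*_s$ when crossing from $\ti h^{-1}(Z)$-markers, which only changes the decomposition by a bounded adjustment at the two ends of the relevant piece — this is exactly why $w^*_s$ was defined as a fundamental domain straddling the first point of $\ti w_s\cap\ti h^{-1}(Z)$.

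The remaining point, (3), says that for each $j$ there are arbitrarily large $k$ for which $H_j\nu_{j,k}H_{j+1}$ is non-elliptic. This I expect to be the main obstacle, since it is the one assertion that is not a formal consequence of the splitting bookkeeping; it is a statement about persistence of horizontal behavior under iteration. The hypothesis supplies that $f^k_\#(\sigma)$ itself is non-elliptic for arbitrarily large $k$, equivalently that $L_T(\phi^k[\sigma])>0$ for arbitrarily many $k$. Using the bounded cancellation lemma (Fact~\ref{FactBCCVerySmall}) applied to $\ti h$, and the fact that all the $\nu_{j,k}$-pieces are elliptic (so $\ti h_\#$ of $f^k_\#(\sigma)$ is, up to bounded error, the concatenation of the fixed images $\ti h_\#(H_1),\dots,\ti h_\#(H_J)$ linked by bounded ``jumps''), one shows that $\ti h_\#(f^k_\#(\ti\sigma))$ has length bounded independently of $k$; its translation length $L_T(\phi^k[\sigma])$ is therefore bounded, and since only finitely many ``combinatorial types'' of the linked word $H_1 * \cdots * H_J$ (with the bounded connecting data) can occur, some type recurs for arbitrarily large $k$, and for that recurring type the contribution of each $H_j$ to non-ellipticity is the same. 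If for some $j_0$ the loop $H_{j_0}\nu_{j_0,k}H_{j_0+1}$ were elliptic for all large $k$, one could amalgamate across $H_{j_0}$ and reduce $J$, contradicting minimality of the chosen horizontal-edgelet list; so choosing the decomposition with $J$ minimal forces (3). I would carry out (1),(2),(4) first as the routine part, then establish (3) by this minimality-plus-boundedness argument, which is where the nongrower hypothesis and Fact~\ref{FactBCCVerySmall} do the real work.
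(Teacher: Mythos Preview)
Your plan is essentially the paper's proof: replace $\sigma$ by an iterate so that $\sigma$ itself (not $f^k_\#(\sigma)$, despite your first sentence --- your later clarification is what you want) has a complete splitting rel~$G_r$, decompose $f^k_\#$ of each term into constant horizontal edgelets and elliptic pieces of types (a)/(b)/(c), concatenate and amalgamate adjacent elliptic pieces to get (1), (2), (4), and then amalgamate across any $H_j\nu_{j,k}H_{j+1}$ that is elliptic for all large $k$ to force (3).

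Two small corrections. First, horizontal edgelets can also lie inside fixed edges of height $>r$ and inside indivisible Nielsen paths $E_s w_s^p\overline E_s$ of height $>r$; both are $k$-constant, so your bookkeeping is unaffected, but your sentence locating all horizontal edgelets in the non-fixed single-edge and exceptional-path terms is not quite right. Second, the bounded-cancellation / ``finitely many combinatorial types'' paragraph is a detour that does no work: the amalgamation step alone gives (3), and this is exactly how the paper does it (``if there exists $j$ such that $H_j\nu_{j,k}H_{j+1}$ is elliptic for all sufficiently large $k$, rename this as a single elliptic term and increase the threshold value of $k$''). The nongrower hypothesis is used only to ensure each $w_s$ is elliptic so that the $w^*_s$-iterates land in type (c); Fact~\ref{FactBCCVerySmall} is not needed for the sublemma itself.
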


In the proof and later application of Sublemma~\ref{nongrower decomposition} we say that an edgelet path or parameter is \emph{constant} if it is independent of $k$.

\begin{proof} After replacing $\sigma$ with some $f^m_\#(\sigma)$ we  may assume by Fact~\ref{FactRelSplit}~\pref{ItemRelSplitIterate} that $\sigma$ has a complete splitting rel $G_r$, \  $\sigma = \sigma_1 \cdot \ldots \cdot \sigma_m$.      
The first step in the proof is to show that  for all $i$  and for all sufficiently large $k$, $ f_\#^k(\sigma_i)$ decomposes as a concatenation of a constant number of edgelet subpaths, each of which is either a constant horizontal edgelet or is elliptic and satisfies   (a), (b) or (c).  This is accomplished by a simple case analysis.

If $\sigma_i$ is a Nielsen path then  $f^k_\#(\sigma_i)$ is constant and so has a constant decomposition into constant horizontal edgelets and constant elliptic edgelet subpaths.  We may therefore assume that $\sigma_i$ is not a Nielsen path and hence (\recognition\ Lemma~4.13, Fact~\refGM{FactPNPFixed}) not a periodic Nielsen path. In particular, $\abs{f_\#^k(\sigma_i)} \to \infty$.
If $\sigma_i \subset G_r$   then (b) holds so we may assume that $\sigma_i$ has height greater than $r$.   If $\sigma_i =  E_s w_s^p \overline E_t$ is an exceptional path  then   $f^k_\#(\sigma_i) = E_sw_s^{p+(d_s-d_t)k} \overline E_t = E_s w_s' {w^*_s}^{p+(d_s-d_t )k-1} w_s'' E_t$ decomposes as a concatenation of two constant edgelet subpaths and one elliptic edgelet subpath that satisfies (c).  We are now reduced to the case that $\sigma_i$ is a single non-fixed edge $E_s$.    If $E_s$ is  linear  then  $f^k_\#(\sigma_i) =  E_s w_s' {w^*_s}^{kd_s-1} w_s'' $ decomposes as a concatenation of two constant edgelet subpaths and one elliptic edgelet subpath that satisfies (c).      If   $E_s$ is non-linear than  $f^k_\#(\sigma_i)$ decomposes as $E_s$ followed by an initial segment of its eigenray in $G_r$ and so decomposes as  a concatenation of a constant edgelet subpath and an elliptic edgelet subpath that satisfies (b).    This completes the first step in the proof.

Concatenate the decompositions of the $\sigma_i$'s to give a decomposition of $\sigma$. Amalgamate adjacent elliptic edgelet subpaths into a single elliptic edgelet subpath.   If there are adjacent horizontal edgelets insert a trivial elliptic edgelet subpath between them.   We now have a decomposition $\sigma_k = H_1\nu_{1,k}H_2 \nu_{2,k} \ldots H_J\nu_{J,k}$ satisfying (1), (2) and (4).  If there exists $j $ such that $ H_j\nu_{j,k} H_{j+1}$ is elliptic for all sufficiently  large $k$,  rename this as a single elliptic term and increase the threshold value of $k$ for which our decomposition is defined.  After finitely many amalgamation and renaming steps we obtain the desired decomposition.
\end{proof}

\begin{proof}[Proof of Proposition~\ref{prop:nongrower}.]  Assuming for now that (1) holds, we prove (2) and (3).
 
If $a^1,\dots,a^n$ is a basis for $F_n$ then there exists $K$ so that for all $i,j$ the quantity $L_T(\phi^k[a^i a^j])$ is constant, independent of $k \ge K$. By a result of Serre \cite{Serre:trees}, Section 6.5, Corollary 2, since the action of $F_n$ on $T$ has no global fixed point, there exist $i,j$ so that the stable value of $L_T(\phi^k[a^i a^j])$ is nonzero. It follows that as the conjugacy class $[c]$ varies, the values of the expression $\lim_{n\to \infty} L_T(\phi^k[c])$ are not all zero. Furthermore, the smallest positive value of this expression is greater than or equal to the smallest positive length of a conjugacy class in $T$. Fact~\ref{LimitIsSimplicial} implies  that $T\phi^k$ converges to a very small simplicial tree $T \phi^\infty$. The inclusion $T \phi^\infinity \in \T_\F$ is proved exactly as in Proposition~\ref{prop:grower}. This completes the proof of (2). Item (3) follows from  Lemma~\ref{arc stabilzers}.  
   
We now turn to the proof of (1). Given  $c \in F_n$ and $k \ge 0$ let  $[c_k] =  \phi^k_\#[c]$.    We may assume that there are arbitrarily large values of $k$ with $L_T([c_k]) \ne 0$. For all sufficiently large $k$, let  $\ldots \ti H_{j,k} \ti \nu_{j,k} \ti H_{j+1,k}\ldots$ be the decomposition of the axis of $L_{c_k}$ obtained  by lifting the decomposition of $f^k_\#(\sigma_k)$ given by Sublemma~\ref{nongrower decomposition}.   
 
We show below that for each $j$, if $k$ is sufficiently large  (depending on $j$), then $\ti h( \ti H_{j,k}) \ne \ti h(\ti H_{j+1,k})^{-1}$. This implies that  $\ti h( \ti H_{j,k}) \ne \ti h(\ti H_{{j'},k})^{-1}$ for any $j \ne j'$ and hence that $ \ti h(\ti H_{1,k})\ldots  \ti h(\ti H_{J,k})$ is a fundamental domain for the action of $c_k$ on $T$.  This proves that     $L_T([c_k]) $ is eventually constant with the stable value being   $\sum_{j=1}^J L_T(\ti h(\ti H_{j,k}))$ and so completes the proof of (1).  
 
 It remains to fix $j$ and show that $\ti h( \ti H_{j,k}) \ne \ti h(\ti H_{j+1,k})^{-1}$ for all sufficiently large $k$.   After possibly replacing $c_k$ with another element in its conjugacy class, we  may assume that   $\ti H_{j,k}$ is independent of $k$; denote this edgelet $\ti e$ and denote   $\ti H_{j+1,k}$ by $\ti e'_k$.     Let $\E$ be the set of edgelets that have the same $\ti h$-image as $\ti e^{-1}$ and note that  $H_{j,k}  \nu_{j,k}  H_{j+1,k}$ is elliptic if and only if    $\ti e'_k \in \E$.    Since there are only finitely many $F_n$-orbits of edgelets in $\wt G$, \ $\E$ is finite if the stabilizer of $\ti h(\ti e)$ is trivial and  is a finite union of $L_b$-orbits    if the stabilizer of $\ti h(\ti e)$ is a non-trivial cyclic group $\<b\>$ and $L_b$ is the covering translation of $\wt G$ corresponding to $b$. 
      
We must show that $\ti e'_k \not \in \E$ for sufficiently large $k$. By  Sublemma~\ref{nongrower decomposition}~(3) we may assume that $\nu_{j,k}$ is not constant and by (4) it follows that the length of $\nu_{j,k}$ goes to infinity.    We may therefore also assume that $\E$ is infinite and that  the stabilizer of $\ti h(\ti e)$ is a  non-trivial cyclic group $\<b\>$.    
 We argue by contradiction, assuming that   there exist $k_q\to \infty$  and $m_q$ such that  $\ti e'_{k_q} = L_{b^{m_q}} \ti e'_{k_1}$ and deriving a contradiction.    After replacing $b$ by $b^{-1}$ and passing to a further subsequence we may assume that $m_q \to \infty$.

     The path $\ti \mu_k =  [\ti \nu^{-1}_{j,k_1} \ti  \nu_{j,k}] $ connects  $\ti e'_{k_1}$ to $\ti e'_{k}$.   Denote its projection to $G$ by  $\mu_k$.     In the special case that $k = k_q$,   $\mu_{k_q}$ factors into  initial and terminal edgelet subpaths that do not depend on $k_q$ and a central edgelet subpath equal to  $m_q$ fundamental domains of $L_b$.  
   By hypothesis, $[b]$ is not carried by $\F$   and so there is an  upper bound to the length of an edgelet subpath of the axis of $L_b$ that projects into $G_r$.  It follows that there are no type 4(b) paths in the decomposition of  $ \nu_{j,k_q}$ provided by Sublemma~\ref{nongrower decomposition}.  If $\eta_{j,k,l}$ is a type 4(c) term,  then the axes of $\omega_s$ and $b$ have arbitrarily large overlaps  and so must be equal.   
    
    To summarize,  for all sufficiently large $k$, $\ti \mu_k$ has a decomposition into edgelet subpaths with a constant number of terms so that for certain arbitrarily large $k$ (the $k_q$'s) each  term in this decomposition whose projection into $G$ is not constant, is contained in the axis of $L_b$ and equals an integral number of fundamental domains  for the action of $L_b$ on its axis.  Moreover, fixing one such $k_q$, if $k > k_q$ then $\ti \mu_{k}$ is obtained from $\ti \mu_{k_q}$ by changing the number of fundamental domains of the axis of $L_b$ that are contained in the terms whose projection to $G$ is not constant.  Thus, for $k \ge k_q$,    $\ti \mu_k$ has a decompostion into initial and terminal subpaths whose projections into $G$ are constant and a central subpath that is an integral number of fundamental domains of the axis of $L_b$.  If follows that   the $L_b$-orbit of $\ti e'_k$ is independent of $k$ for $k \ge k_q$.   But then $\ti e'_k \in \E$ for all  $k > k_j$ in contradiction to item (3) of Sublemma~\ref{nongrower decomposition}.   This  contradiction completes the proof.
    
\end{proof}


\section{Carrying asymptotic data: Proof of Proposition~\ref{PropFCarriesAll}} 
\label{SectionFCarriesAll}

In this section we prove  

\medskip

\noindent {\bf Proposition~\ref{PropFCarriesAll}.} \ \ \emph{Suppose that $\F$ is a proper free factor system and  that $\h \subgroup \IA_n(\Z/3)$ is a finitely generated subgroup such that $\h \subset \PGF$ and $\h$ is irreducible rel $\F$. Then $\F$ carries $\Lam(\phi)$ for each rotationless $\phi \in \h$.}

\medskip

We assume through this section that $\h \subgroup \IA_n(\Z/3)$ is finitely generated and that $\F$ is a proper free factor system invariant under each element of $\h$. As needed in various lemmas we shall bring in the hypotheses that $\h \subset \PGF$ and that $\h$ is irreducible rel~$\F$. Let 
$$\Asym(\h) = \union_\phi \Asym(\phi)
$$
taken over all rotationless $\phi \in \h$. The hypothesis $\h \subset \PG^\F$ already implies that $\F$ carries all attracting laminations in $\Asym(\h)$, so our goal is to prove that $\F$ carries all eigenrays and twistors in $\Asym(\h)$. 

Our strategy is to use an eigenray or twistor in $\Asym(\h)$ that is not carried by $\F$ to construct something whose existence violates one of the hypotheses: either we construct an element of $\h$ whose attracting laminations are not all carried by~$\F$, contradicting the hypothesis that $\h \subset \PGF$; or we construct a proper $\h$-invariant free factor system that properly contains $\F$, contradicting that $\h$ is irreducible rel $\F$.

\subsection{Carrying eigenrays} 

The following lemma carries out the above strategy with respect to eigenrays.

\begin{lemma} \label{LemmaNonLinearCase} If $\h \subgroup \IA_n(\Z/3)$ is finitely generated, $\F$ is a proper free factor system, $\h \subset \PGF$, and $\h$ is irreducible rel $\F$, then the free factor system $\F$ carries every eigenray of every rotationless $\phi\in \h$.  More precisely, suppose that $\fG$ is a \ct\ representing $\phi \in \h$ and that   $\F$ is represented by a filtration element $G_r$.  Suppose further that $E_s$ is a superlinear \neg-edge of $G$ and that  $f(E_s) =E_s \cdot u_s$ for some non-trivial closed path $u_s \subset G_{s-1}$.  Then $u_s \subset G_r$.
\end{lemma}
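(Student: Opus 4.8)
The plan is to argue by contradiction: assume $u_s$ is not contained in $G_r$, with $E_s$ a superlinear \neg-edge at minimal height~$s$ among all such counterexamples across all rotationless $\phi \in \h$ and all \cts\ representing them with $\F$ realized by $G_r$. Since $\h \subset \PGF$, the free factor system $\F$ already carries $\union_\phi\L(\phi)$, so by Fact~\ref{all neg} every stratum above $G_r$ is \neg, and by Lemma~\ref{LemmaEigenrayCarried} the eigenray $\Eigen(E_s)$ fails to be carried by $\F=[\pi_1 G_r]$. The goal is to manufacture from this eigenray a proper $\h$-invariant free factor system strictly containing~$\F$, contradicting irreducibility rel~$\F$; the key tool will be the limit tree machinery of Section~\ref{SectionLimitTrees} applied to a tree $T \in \T_\F$ built to encode the eigenray.

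First I would set up an appropriate tree. Consider a simplicial $F_n$-tree $T_0 \in \T_\F$ in which each element carried by $\F$ is elliptic and which has a positive-length orbit of edges refining the filtration at height $s$; concretely, collapse $G_r$ in the universal cover (or pass to the Bass--Serre tree of the graph-of-groups decomposition of $F_n$ obtained by collapsing $G_r$ and the other edges), so that $E_s$ becomes an edge with trivial stabilizer. Using $h\from \widetilde G\to T_0$ as in the Standing Notation, I would iterate: because $E_s$ is superlinear, $f^k_\#(E_s)$ crosses $E_s$ exactly once and then follows longer and longer initial segments of the eigenray $R = E_s\cdot u_s\cdot f_\#(u_s)\cdots$, and the key point is that since $u_s\not\subset G_r$, the path $f^k_\#(u_s)$ repeatedly crosses $E_s$ (or another \neg-edge above $G_r$ that is itself a counterexample of height $\le s$, which minimality rules out). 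I would then invoke Proposition~\ref{prop:nongrower} (if $T_0$ can be arranged to be a nongrower, i.e.\ all twistors elliptic) or Proposition~\ref{prop:grower} (otherwise, after first handling twistors by the companion Lemma dealing with the linear case) to produce a well-defined $\phi$-invariant limiting tree $T\phi^\infty\in\T_\F$ with $\Arc(T\phi^\infty)\subset\Arc(T_0)$ and $\F(T\phi^\infty)$ a $\phi$-invariant free factor system that still carries $\F$ but, crucially, is proper --- because the eigenray direction contributes a vertex group strictly larger than the components of~$\F$.

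The main obstacle will be two-fold. First, one must show the resulting vertex-group / free-factor system is genuinely $\h$-invariant and not merely $\phi$-invariant: this requires that the construction be canonical enough to be respected by all of $\h$, which is where I would use that $\F$ is $\h$-invariant together with Theorem~\ref{ThmPeriodicFreeFactor} / Theorem~\ref{ThmPeriodicConjClass} to upgrade $\phi$-invariance (for rotationless powers of generators) to $\h$-invariance; this is the same ``if it's invariant under rotationless powers of the generators, it's invariant under $\h$'' strategy flagged in the introduction. Second, one must verify the new system properly contains $\F$ and is proper in $F_n$; properness on the large side follows from $\h\subset\PGF$ (so laminations stay down in $\F$, keeping $T\phi^\infty$ from being trivial), and strict containment follows from the eigenray not being carried by $\F$ so that the relevant vertex stabilizer acquires the class of~$E_s$'s endpoint data. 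Either way, the existence of such a system contradicts the hypothesis that $\h$ is irreducible rel~$\F$, so no superlinear counterexample $u_s\not\subset G_r$ can exist, and the lemma follows. I expect the delicate bookkeeping to be in choosing $T_0$ so that the twistor hypotheses of Propositions~\ref{prop:grower}/\ref{prop:nongrower} hold and in confirming that the limiting free factor system is both proper and strictly larger than~$\F$.
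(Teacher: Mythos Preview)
Your proposal has a genuine circularity gap. The limit tree machinery you want to invoke --- Propositions~\ref{prop:grower} and~\ref{prop:nongrower} --- is developed under the Standing Notation for Section~\ref{SectionLimitTrees}, which explicitly assumes that $\F$ carries both $\L(\phi)$ \emph{and} $\Eigen(\phi)$. That assumption is used essentially: e.g.\ in Sublemma~\ref{nongrower decomposition} the analysis of a non-linear \neg\ edge $E_s$ relies on the eigenray segment $f^k_\#(\sigma_i)\setminus E_s$ lying in $G_r$, and the Standing Notation records that if $u_s\not\subset G_r$ then $E_s$ must be fixed or linear. But $\F$ carrying $\Eigen(\phi)$ is precisely the conclusion of Lemma~\ref{LemmaNonLinearCase}, so you cannot use those propositions to prove it. There is also a smaller confusion: since $u_s\subset G_{s-1}$ and $G_{s-1}$ is $f$-invariant, neither $u_s$ nor any $f^k_\#(u_s)$ ever crosses $E_s$; the non-containment $u_s\not\subset G_r$ only says $u_s$ crosses some \neg\ edge of height strictly between $r$ and $s$, so your ``repeatedly crosses $E_s$'' claim is false as stated.

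The paper's proof follows a completely different strategy, modeled on Proposition~5.5 of \BookTwo. After reducing so that $E_s$ is the top stratum, it looks at the endpoint $P\in\bdy F_n/F_n$ of the eigenray. Either the free factor support of $\F$ together with the $\h$-orbit of $P$ is proper --- immediately contradicting irreducibility rel~$\F$ --- or some $\eta\in\h$ moves $P$ so that any ray ending at $\eta(P)$ crosses $E_s$ infinitely often. In the latter case one represents $\eta$ by $h\from G\to G$, finds inside $h_{\#\#}(f^{n_1}_\#(E_s))$ a subpath $\nu$ of the form $E_s\tau\overline E_s$ (or $E_s\tau_1 E_s\tau_2 E_s$), and then shows that under iteration of $g=hf^{n_2}$ the number of disjoint copies of $\nu$ or $\bar\nu$ doubles at each step. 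This exponential growth produces, via Lemma~\ref{LemmaFindingEG}, an attracting lamination for the element of $\h$ represented by $g$ that is not carried by $\F$, contradicting $\h\subset\PGF$. The key technical input is the superlinearity of $E_s$ through Fact~\ref{FactIsATerm}~\pref{FactSuperlinearEdgeTerm}, which guarantees that $E_s$ occurs as an isolated term (not trapped inside a Nielsen or exceptional path) in complete splittings, so the feedback mechanism does not stall.
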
    

\begin{proof}  This is Proposition~5.5 of \BookTwo\ with some   slight changes.  
We assume that $u_s \not \subset G_r$ and argue to a contradiction.
 
By  Facts~\ref{all neg} and \refGM{FactNEGEdgeImage}, every stratum above $G_r$ is a single \neg\ edge and for each such edge we adopt the notation $f(E_t) = E_t \cdot u_t$ from Fact~\refGM{FactNEGEdgeImage}. If there is an edge $E_t$  such that $f(E_t)$ crosses $E_s$ then $E_s$ or $\overline E_s$ is a term in the complete splitting rel $G_r$ of $u_t$ by Fact~\ref{FactIsATerm}~\pref{ItemLinearEdgeTerm}. It follows that $u_t$ is not a Nielsen path and hence that $E_t$ is superlinear. Obviously $u_t \not \subset G_r$ so there is no loss in replacing $E_s$ with $E_t$. After repeating this finitely many times, we may assume that there is no edge other than $E_s$ whose image contains $E_s$, so by reordering strata we may assume that $E_s$ is the highest edge in the filtration.   

Let $P \in \ray$ be the endpoint of  the eigenray $ E_s\cdot u_s\cdot f_\#(u_s)\cdot \ldots$ associated to $E_s$. By Lemma~\ref{LemmaEigenrayCarried}, $P$ is not  contained in the subset of $\ray$ determined by $\F$. The joint free factor support of $\F$ and the $\h$-orbit of~$P$ is $\h$-invariant by Fact~\refGM{FactFFSPolyglot}. If this free factor system is proper then we have constructed a contradiction to the hypothesis that $\h$ is irreducible rel~$\F$.
 
We may therefore assume that the free factor support of $\F$ and the $\h$-orbit of $P$ equals $\{[F_n]\}$. In particular, there exists $\eta \in \h$ such that $\eta(P)$ is not  carried by   the free factor system  corresponding to the subgraph $G \setminus E_s$.  Thus  any ray with endpoint $\eta(P)$ crosses $E_s$ infinitely many times.
  
Represent $\eta$ by a homotopy equivalence $h : G \to G$. By the bounded cancellation lemma there exists $n_1> 0$ so that $h_{\#\#}(f^{n_1}_\#(E_s)) = h_{\#\#}(E_su_sf_\#(u_s) \ldots f^{n_1-1}_\#(u_s))$ crosses $E_s$ at least three times.    It follows that  $h_{\#\#}(f^{n_1}_\#(E_s))$ contains a subpath $\nu$ that, up to a reversal of orientation, has either the form $E_s\tau\overline E_s$ or the form $E_s\tau_1 E_s \tau_2 E_s$ where $\tau, \tau_1,\tau_2$ have height~$<s$. 
      
Assume for now that $\nu = E_s \tau \overline E_s$.  By Fact~\ref{FactBasicNEGSplitting},  $\nu$ satisfies a universal splitting property:   if $\sigma$ is any path in $G$ then any decomposition into subpaths of the form  $\sigma = \alpha \nu \beta$ is a splitting with respect to $f$.   Thus $f^k_{\#\#}(\nu) = f^k_\#(\nu)$ for all $k\ge 1$. By Fact~\ref{FactRelSplit}~\pref{ItemRelSplitIterate} and Fact~\ref{FactIsATerm}~\pref{FactSuperlinearEdgeTerm}, there exists  $n_2>0$ so that $f_\#^{n_2}(\nu)$ has a complete splitting rel $G_r$  with at least one term being $E_s$ and at least one term being $\overline E_s$.   After increasing $n_2$, we may assume that $f_\#^{n_2}(\nu)$  contains $f^{n_1}_\#(E_s)$ and $f^{n_1}_\#(\overline E_s) $ as disjoint subpaths.   Lemma~\trefGM{LemmaDoubleSharpFacts}{ItemDblSharpComp} implies that $(hf^{n_2})_{\#\#}(\nu)$ contains $h_{\#\#}(f^{n_2}_{\#\#}(\nu))$ as a subpath and so by Lemma~\trefGM{LemmaDoubleSharpFacts}{ItemDisjointCopies}  has  a decomposition into subpaths in which at least one term is $\nu$ and at least one term is $\bar \nu$.    
   
    Let $g = hf^{n_2}$.  We claim that for all $K \ge 1$, if a path $\sigma$ contains $K$ disjoint subpaths, each of which is either $\nu$ or $\bar \nu$, then $g_{\#\#}(\sigma)$ contains $2K$ disjoint subpaths, each of which is either $\nu$ or $\bar \nu$.    The proof is by induction on $K$.  The base case $K=1$ follows from  Lemma~\trefGM{LemmaDoubleSharpFacts}{ItemDblSharpContain}  and the conclusion of the previous paragraph.  For the induction case, choose disjoint subpaths $\alpha$ and $\beta$ of $\sigma$, where $\alpha$ contains $K-1$ disjoint copies of $\nu$ or $\bar \nu$ and $\beta =$    $\nu$ or $\bar \nu$.   Lemma~\trefGM{LemmaDoubleSharpFacts}{ItemDisjointCopies}  implies that $g_{\#\#}(\sigma)$ contains  $g_{\#\#}(\alpha)$ and $g_{\#\#}(\beta)$ as disjoint subpaths so the inductive hypothesis completes the proof of the claim.  
    
Let $\psi \in \h$ be the outer automorphism represented by $g$. If $\tau$ is a circuit in $G$ containing $\nu$ as a subpath then $g^k_\#(\tau)$ contains  $g^k_{\#\#}(\nu)$ as a subpath by Lemma~\trefGM{LemmaDoubleSharpFacts}{ItemCircuit} and so contains $2^k$ disjoint subpaths that are copies of either $\nu$ or $\bar \nu$.  Applying Lemma~\ref{LemmaFindingEG} we have constructed an attracting lamination $\Lambda \in \L(\psi)$ which is not supported by $[\pi_1 G_r]=\F$, contradicting the assumption that $\h \subgroup \PGF$ and thereby completing the proof of the lemma in the case that $\nu = E_s \tau \overline E_s$.
   
If $\nu = E_s \tau_1E_s\tau_2E_s$ then the universal splitting property is that if   $\sigma = \alpha \nu \beta$ is a decomposition into subpaths  then $\sigma = \alpha \nu' \beta'$ is a splitting where $\nu' =E_s\tau_1E_s\tau_2$ and $\beta' = E_s \beta$.  Thus  $f^k_{\#\#}(\nu)$ contains $  f^k_\#(E_s\tau_1E_s\tau_2)$ as a subpath for all $k\ge 1$. The rest of the argument requires only  minor modifications that are left to the reader.
\end{proof} 

\subsection{The exponential growth digraph for twistors} 

In this section and the next we carry out our strategy with respect to twistors. The first thing one must understand is why the proof given for eigenrays does not immediately apply to twistors. Following the notation of Lemma~\ref{LemmaNonLinearCase} but assuming that $u_s$ is a Nielsen path, there still exists $\eta \in \h$ such that $\eta(u_s^n)$ crosses $E_s$ at least three times.  The problem is that each occurence of $E_s$ in $h_{\#\#}f_\#^{n_1}(E_s)$ might be contained in a Nielsen path and so the occurrences of $E_s$ do not increase under further iteration. In this situation the feedback mechanism needed to produce exponential growth breaks down and the proof fails (Fact~\ref{FactIsATerm}~\pref{ItemLinearEdgeTerm} is what prevents this breakdown in the superlinear case). Lemma~\ref{LemmaNoClosedEdgePaths} is designed to exploit this situation to set up a different kind of feedback mechanism using twistors.

Before defining the exponential growth digraph for twistors, we set up some notation needed for the definition.

Recall, given a rotationless $\phi \in \Out(F_n)$, the finite set $\Axes(\phi)$ of twistors of $\phi$, each twistor being a certain unoriented conjugacy class $[a]_u$ where $a$ is root-free. See Definition~\ref{DefTwistorInvariant} for the invariant definition $\Axes(\phi)$, Definition~\ref{DefTwistorCT} for the definition in the \ct\ context, and Fact~\ref{FactTwistor} for the equivalence of these two definitions. Given a free factor system $\F$ invariant under $\phi$, let $\Axes(\phi,\F)$ denote the set of those $[a]_u \in \Axes(\phi)$ such that $[a]_u$ is \emph{not} carried by $\F$.
 
\begin{notn}
\label{notn:generators}  
Consider a subgroup $\h \subgroup \IA_n(\Z/3)$, a proper free factor system $\F$ invariant under each element of $\h$, and a finite subset $\{\psi_1,\ldots,\psi_\gen\}$ of $\h$ (in Section~\ref{SectionFollowTheBouncingBall} this subset is a generating set; for now we allow any finite subset). For each $i=1,\ldots,\gen$, choose a rotationless power $\phi_i$ of $\psi_i$, and choose a \ct\ $f_i :G^i \to G^i$ representing $\phi_i$ in which $\F$ is represented by a filtration element denoted $G^i_{r(i)}$. For each linear edge we have $f_i(E^i_s) = E^i_s\cdot (w^i_s)^{d^i_s}$ where $w^i_s \subset G_{s-1}$ is a closed path (see Definition~\ref{DefTwistorCT}). Recalling from Section~\refGM{SectionGraphsPathsCircuits} that ``paths'' are parameterized by closed intervals and ``circuits'' are parameterized by circles, to help differentiate the path $w^i_s$ from its corresponding root-free circuit, we will denote the latter by $x^i_s$. Thus each twistor for $\phi_i$ is realized in $G^i$ by some $x_s^i$, and in this case we shall write that twistor as $[x_s^i]$. Note that $[x^i_s] \in \Twist(\phi_i,\F)$ if and only if the height of the path $w^i_s$ is greater than $r(i)$.
\end{notn}

\begin{definition}[The exponential growth digraph for twistors] 
\label{DefExpGrowthGraph}
We define the \emph{exponential growth digraph $\Gamma$}, associated to the data of Notation~\ref{notn:generators}, as follows. The vertex set is $\cup_{i=1}^\gen \Axes(\phi_i,\F)$. Given a vertex $[a]_u \in \cup_{i=1}^\gen \Axes(\phi_i,\F)$, an integer $i \in \{1,\ldots,\gen\}$ such that $[a]_u \in \Axes(\phi_i,\F)$, and an integer $s$ such that $E^i_s$ is a linear edge in the linear family of $f_i$ associated to the twistor $[a]_u = [x_s^i]$, we say that the ordered pair $(i,s)$ is a \emph{label} for the vertex $[a]_u$. Given two vertices $[a]_u, [a']_u \in \cup_{i=1}^\gen \Axes(\phi_i,\F)$, there is an edge in $\Gamma$ from $[a]_u$ to $[a']_u$ if and only if there exists $\eta \in \h$ and a label $(i,s)$ for $[a']_u$ such that the circuit $\sigma$ in $G_i$ representing $\eta [a]_u$ has 
a complete splitting rel $G^i_{r(i)}$, at least one term of which (up to a reversal of orientation) has  the form $E^i_s$ or $E^i_s {(w^i_s)}^p \overline E^i_{t}$ where $E^i_s\ne E^i_{t}$ are linear edges in the   linear family determined by $x^i_s$; we say in this case that the triple $(\eta,i,s)$ is a label for the edge from $[a]_u$ to $[a']_u$. Note that if this holds then $\sigma$ grows subpaths of the form $({w^i_s})^q$ for arbitrarily large $q$ under iteration by $f_i$.
\end{definition}

\noindent
\textbf{Remark:} One can show that $\Gamma$ depends only on $\h$, $\F$, and the set $\{\psi_1,\ldots,\psi_\gen\}$, not on the choice of rotationless powers nor on the choice of their \ct\ representatives. We do not need this fact and so we omit its proof.

\begin{lemma}  \label{LemmaNoClosedEdgePaths}  
Adopting Notation~\ref{notn:generators}, if $\h \subset \PG^\F$ then the exponential growth digraph $\Gamma$ has no closed oriented edge paths.
\end{lemma}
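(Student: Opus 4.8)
\textbf{Plan for the proof of Lemma~\ref{LemmaNoClosedEdgePaths}.} The plan is to argue by contradiction: suppose the exponential growth digraph $\Gamma$ contains a closed oriented edge path, say through vertices $[a_0]_u, [a_1]_u, \ldots, [a_{L-1}]_u, [a_L]_u = [a_0]_u$, with edge labels $(\eta_j, i_j, s_j)$ for the edge from $[a_{j-1}]_u$ to $[a_j]_u$. The goal is to compose these data into a single outer automorphism $\psi \in \h$ and a single circuit whose $\psi$-iterates grow exponentially, then invoke Lemma~\ref{LemmaFindingEG} to produce an attracting lamination of $\psi$ not carried by~$\F$, contradicting $\h \subset \PG^\F$. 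The mechanism is the same feedback loop used in the proof of Lemma~\ref{LemmaNonLinearCase}, but routed through twistors instead of through a superlinear edge: a twistor $[a_{j-1}]_u$ that is ``fed'' into $\eta_j$ and then iterated by $f_{i_j}$ produces, via the edge condition in Definition~\ref{DefExpGrowthGraph}, large powers $(w^{i_j}_{s_j})^q$; but $[x^{i_j}_{s_j}] = [a_j]_u$, so closing the loop means the growth of one twistor keeps triggering the growth of the next, and around a closed loop this becomes genuinely exponential.

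\textbf{Key steps, in order.} First I would make precise the ``feeding'' step: if $(\eta, i, s)$ is an edge label from $[a]_u$ to $[a']_u = [x^i_s]_u$, then by Definition~\ref{DefExpGrowthGraph} the circuit $\sigma$ in $G^i$ representing $\eta[a]_u$ has a complete splitting rel $G^i_{r(i)}$ with a term of the form $E^i_s$ or $E^i_s (w^i_s)^p \overline E^i_t$; applying $f_i$ repeatedly and using Fact~\ref{FactIsATerm}~\pref{ItemLinearEdgeTerm} together with Fact~\ref{FactRelSplit}, the iterate $f^k_{i,\#}(\sigma)$ contains a subpath $(w^i_s)^{q(k)}$ with $q(k) \to \infty$ linearly in $k$; since $w^i_s$ represents $[a']_u$, this says that for the outer automorphism $\phi_i \eta$, iterating the conjugacy class $[a]_u$ produces, inside the realization, arbitrarily long powers of a root-free representative of $[a']_u$. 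Second, I would chain these: composing the edge automorphisms around the loop, set $\psi$ to be a suitable power of $\phi_{i_L}\eta_L \cdots \phi_{i_1}\eta_1 \in \h$ (note $\h$ is a group, so this lies in $\h$). The point is that one pass around the loop converts a long subpath $(\cdot)^m$ representing $[a_0]_u$ into a subpath $(\cdot)^{m'}$ representing $[a_0]_u$ again, with $m' \ge \lambda m - C$ for some $\lambda > 1$ and constant $C$ coming from bounded cancellation (Fact~\ref{FactBCCSimplicial}, exactly as the constant $n_1$ and the doubling estimate are extracted in Lemma~\ref{LemmaNonLinearCase}). Third, starting from any circuit in some $G^{i_1}$ that crosses a linear edge realizing $[a_0]_u$ inside a twistor power, iterate $\psi$; the crossing counts of that linear edge then grow at least like $\lambda^k$, so by Lemma~\ref{LemmaFindingEG}~\pref{ItemPGFGraph} the element $\psi$ of $\h$ is not in $\PG^\F$ — more precisely, by the equivalence in Lemma~\ref{LemmaFindingEG} there is an attracting lamination of $\psi$ not supported by $\F$ — contradicting $\h \subset \PG^\F$.

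\textbf{Main obstacle.} The delicate point is making the composition estimate rigorous: when I feed the grown circuit from stage $j$ into $\eta_{j+1}$ and then into $f_{i_{j+1}}$, I need the long twistor power $(w^{i_j}_{s_j})^m$ to survive $\eta_{j+1}$ (up to bounded cancellation) as a long subpath representing the same unoriented conjugacy class $[a_j]_u$, and then I need the edge condition at stage $j+1$ to apply not just to $\eta_{j+1}[a_j]_u$ as an abstract conjugacy class but to this specific long subpath sitting inside a larger circuit. This is precisely the role played by the ``universal splitting property'' argument in Lemma~\ref{LemmaNonLinearCase} (the passage through $h_{\#\#}$, $\nu$, and Lemma~\refGM{LemmaDoubleSharpFacts}): once a subpath of the form $E^i_s(w^i_s)^p\overline E^i_t$ or $E^i_s$ appears as a term of a complete splitting rel $G^i_{r(i)}$, Fact~\ref{FactBasicNEGSplitting} and Fact~\ref{FactIsATerm} guarantee it is not absorbed under further iteration, so the feedback does not break down the way it does for non-superlinear edges in the eigenray argument. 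I expect the bulk of the write-up to be a careful bookkeeping of these subpaths around the loop and the resulting exponential lower bound on crossing numbers; the quantitative core is routine once the combinatorial picture is set up correctly, so I would keep that part brief and lean on the cited facts about $\#\#$ and complete splittings rel~$G_r$.
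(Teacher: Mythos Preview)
Your strategy is right in spirit—a closed loop in $\Gamma$ should yield a feedback mechanism producing exponential growth of some $\psi \in \h$, hence a lamination not carried by $\F$, contradicting $\h \subset \PG^\F$—but there is a real technical gap in the execution. The claim that one pass converts a single subpath $(w)^m$ into $(w)^{m'}$ with $m' \ge \lambda m - C$ is not correct: what one pass actually produces is roughly $m - O(1)$ \emph{disjoint} blocks, not one long block; the right quantity to track is the number of disjoint copies of a fixed-length power $(w)^A$. More seriously, the tools you cite do not suffice for the key step where you iterate $f_i^k$ on a subpath $\sigma^p$ sitting inside a larger circuit. Bounded cancellation for $f_i^k$ has a constant that grows with $k$, so it gives nothing uniform; and the ``universal splitting property'' you borrow from Lemma~\ref{LemmaNonLinearCase} is precisely what \emph{fails} for linear edges—that failure is the very reason a separate twistor analysis is needed (see the paragraph preceding Definition~\ref{DefExpGrowthGraph}). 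What is actually required is a \emph{limited cancellation} lemma (Lemma~\ref{limited cancellation} in the paper): there is a constant $C$, independent of $k$ and of $\sigma$, such that $f^k_{\#\#}(\sigma^p)$ contains $(f^k_\#(\sigma))^{p-C}$ whenever $\sigma$ comes from a completely split circuit by cutting between terms. Its proof is an induction on height via Fact~\ref{FactBasicNEGSplitting}, and it is the crux of the argument, not the routine bookkeeping you suggest.

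The paper's route also differs structurally from yours. Rather than chaining around the whole loop, it first proves that $\Gamma$ is \emph{transitive}: given edges $[a]_u \to [b]_u$ and $[b]_u \to [c]_u$ with labels $(\eta_1,i,s)$ and $(\eta_2,j,t)$, one manufactures an edge $[a]_u \to [c]_u$ labelled $(\phi_j^{m_2}\eta_2\phi_i^{m_1}\eta_1,\, j,\, t)$ for suitable $m_1,m_2$, using Lemma~\ref{limited cancellation} and Fact~\ref{FactIsATerm}~\pref{ItemLinearEdgeTerm} to verify that the required term appears in the complete splitting. Any closed path then collapses to a self-loop, and for a self-loop the doubling argument is clean: choose $A$ and $m$ so that $(f_i^m)_\#(\sigma)$ contains $(w^i_s)^A$, then show (again via Lemma~\ref{limited cancellation}) that $L$ disjoint copies of $(w^i_s)^A$ become $2L$ under $f_i^m g$. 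Your direct-composition scheme can be made rigorous once you have this lemma, but it then amounts to the same argument unrolled around the loop.
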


The proof of Lemma~\ref{LemmaNoClosedEdgePaths} follows some remarks and a further lemma.

\begin{remark} It is helpful to compare Lemma~\ref{LemmaNoClosedEdgePaths}  to the analogous step in analyzing 
subgroups of mapping class groups of surfaces. Consider a surface $S$, its 
mapping class group $\MCG(S)$, a subgroup $H <\MCG(S)$, and a subsurface $F 
\subset S$ that carries every stable lamination of the Thurston 
decomposition of every element of $\h$. Given $\phi \in H$, the Thurston 
decomposition of $\phi$ might contain a \lq twistor\rq, i.e. a Dehn twist curve $x$, 
that is not contained in  $F$. Given two such twistors $x_1$ and $x_2$ for 
two rotationless mapping classes $\phi_1,\phi_2$, one could define a 
relation (i.e.\ draw an arrow of a directed graph) where $x_1$ is related to 
$x_2$ if there exists $\eta \in \h$ such that $\eta(x_1)$ wraps around $x_2$ under 
iteration of $\phi_2$. Since this happens if and only if $\eta(x_1)$ and $x_2$ 
have nontrivial intersection, this relation is clearly symmetric, and it 
is clearly independent of $\phi_1,\phi_2$. If this relation holds then one 
shows that for sufficiently large $n$, the Thurston decomposition of 
$\phi_2^n\eta$ has a stable lamination that is not contained in 
$F$.

The analogous relation in our current situation is defined by the directed graph $\Gamma$, the definition of which reflects several subtleties of $\Out(F_n)$ not present in mapping class groups. First, the relation need not be symmetric, which is why $\Gamma$ is a digraph rather than an undirected graph. More importantly, the relation does not depend just on the 
axis $x$, it depends on the choice of $\phi \in \h$ for which $x \in \Axes(\phi,\F)$, because the set of conjugacy classes that wrap around $x$ under iteration of $\phi$ depends on $\phi$.
\end{remark}

The following technical lemma (c.f.\ Lemma~5.7.9 of \BookOne) improves the bounded cancellation lemma in a particular situation.  

\begin{lemma}   \label{limited cancellation} Let $\fG$ be a \ct, let $G_r$ be a filtration element, let $\sigma$ be a closed path in $G$, and suppose that the following hold:
\begin{enumerate}
\item If $E$ is a superlinear \neg\ edge with height $>r$ then $f(E) = Eu$ where $u \subset G_r$.
\item $\height(\sigma)> r$.
\item $\sigma$ is not a Nielsen path.
\item The path $\sigma$ is obtained from a circuit that is completely split rel $G_r$ by subdividing between two terms of this splitting.
\end{enumerate}
Then there is a constant $C$, independent of $\sigma$, so that
$f^k_{\#\#}(\sigma^p)$ contains $(f^k_\#(\sigma))^{p-C}$ as a subpath for all  $k\ge 1$ and all $p >C$.
 \end{lemma}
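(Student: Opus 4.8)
The plan is to reduce the statement to the core combinatorial phenomenon: a path $\sigma$ of height $>r$ that is completely split rel $G_r$ and is not a Nielsen path grows its top-height data linearly under iteration, and bounded cancellation only erodes a uniformly bounded amount of that data from each juncture. First I would invoke Fact~\ref{FactRelSplit}~\pref{ItemRelSplitIterate} together with hypothesis (4) to fix the complete splitting rel $G_r$ of the circuit underlying $\sigma$, so that $\sigma = \sigma_1 \cdot \ldots \cdot \sigma_m$ with each term a subpath of $G_r$, a single edge of height $>r$, an indivisible Nielsen path of height $>r$, or an exceptional path of height $>r$. Since $\sigma$ is not a Nielsen path (hypothesis (3)) and has height $>r$ (hypothesis (2)), at least one term $\sigma_j$ is not fixed and not a Nielsen path; using hypothesis (1), such a $\sigma_j$ is either a superlinear edge $E_s$ (whose image is $E_s u$ with $u \subset G_r$, so $f^k_\#(E_s)$ is an initial segment of an eigenray lying in $G_r$ after the first edge) or a linear edge $E_s$ or an exceptional path $E_s w_s^p \overline E_t$. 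In every case $\sigma$ crosses at least one edge of height $>r$, and under $f^k_\#$ these height-$>r$ edges persist as terms of the complete splitting rel $G_r$ of $f^k_\#(\sigma)$ by Fact~\ref{FactRelSplit}~\pref{ItemRelSplitDef} and Fact~\ref{FactIsATerm}, while nothing is cancelled between consecutive terms because $\sigma$ is completely split rel $G_r$.

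The key step is then the splitting analysis of $\sigma^p$. Because $\sigma$ is obtained from a circuit by cutting between two terms of its completely split rel $G_r$ decomposition, the concatenation $\sigma^p$ is itself completely split rel $G_r$ for all $p$ (the juncture between consecutive copies of $\sigma$ falls between terms of the circuit's splitting, hence is a legal splitting vertex). Applying $f^k_\#$ term-by-term, $f^k_\#(\sigma^p)$ has a decomposition into $p$ copies of $f^k_\#(\sigma)$, split at the iterated images of the juncture vertices; moreover, since the first and last terms of $\sigma$ have height $>r$ (the cut was made between two terms, and $\sigma$ has height $>r$ so its extreme terms are the height-$>r$ ones bordering the cut), the juncture points $x_j$ between consecutive copies are fixed points of $f$ lying at the endpoints of height-$>r$ edges, hence principal. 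Now I pass from $f^k_\#$ to $f^k_{\#\#}$: the difference is controlled by the bounded cancellation constant $\bcc(f^k)$, which a priori grows with $k$, so a naive bounded-cancellation estimate is not good enough — this is precisely the point of the lemma. The remedy is that the paths being cancelled at each juncture of $f^k_\#(\sigma^p)$ are, for $k$ large, contained in the ``stabilized'' part near the principal fixed point $x_j$: by Fact~\trefGM{FactSingularRay}{ItemRayHeight} (or the relevant ray-stabilization statements from \recognition\ Section~3.3) the germs of $f^k_\#(\sigma)$ at its two ends converge, as $k \to \infty$, to fixed rays, so the amount of cancellation at each juncture of $f^k_\#(\sigma^p)$ stabilizes to a constant $C_0$ independent of both $k$ and $p$. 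Hence $f^k_{\#\#}(\sigma^p)$ is obtained from the concatenation of $p$ copies of $f^k_\#(\sigma)$ by deleting at most $C_0$ from each side of each juncture, which destroys at most a bounded-length neighborhood of each of the $p-1$ junctures; whole undisturbed copies of $f^k_\#(\sigma)$ therefore survive, and there are at least $p - C$ of them for $C = 2C_0 + 2$ (say), giving the conclusion.

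The main obstacle, and the only place requiring real care, is making rigorous the claim that the cancellation at each juncture is bounded independent of $k$. This does not follow from Fact~\ref{FactBCCVerySmall} or Fact~\refGM{FactBCCSimplicial} directly; it requires the observation that cancellation at the juncture $f^k_\#(x_j)$ occurs between the terminal germ of one copy of $f^k_\#(\sigma)$ and the initial germ of the next, and that both germs are prefixes/suffixes of the forward $f_\#$-iterates of fixed edges with fixed direction based at the principal fixed point — so they are stabilizing to fixed rays $R_\pm$ emanating from $x_j$, and the cancellation between $\overline{R_-}$-prefix and $R_+$-prefix is exactly the (finite, $k$-independent) cancellation between the two fixed rays, which is bounded by $\bcc(f)$ applied once plus a constant. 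Concretely I would compare $f^k_\#(\sigma)$ near its ends with the appropriate forward iterates $f^k_\#(E')$ of the initial/terminal height-$>r$ edges $E'$ of $\sigma$, use that these are nested increasing (Fact~\refGM{FactNEGEdgeImage} and completeness of the splitting), and conclude the cancellation between consecutive copies equals the eventually-constant cancellation between these nested iterates. This is the analogue of \BookOne\ Lemma~5.7.9 referenced in the statement, and I would structure the estimate to parallel that argument, substituting the relative-splitting facts (Fact~\ref{FactRelSplit}, Fact~\ref{FactIsATerm}) for the absolute ones used there.
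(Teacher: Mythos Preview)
Your argument has a genuine gap at its core step. You correctly observe that $\sigma^p$ is completely split rel $G_r$, so $f^k_\#(\sigma^p) = (f^k_\#(\sigma))^p$ with \emph{no} cancellation at the internal junctures. But then your analysis of ``cancellation at each juncture'' between consecutive copies of $f^k_\#(\sigma)$ is analyzing something that equals zero --- the terminal germ of one copy and the initial germ of the next already fit together with no tightening needed. The actual content of passing from $f^k_\#$ to $f^k_{\#\#}$ is this: for an \emph{arbitrary} path $\alpha$ with $\alpha\sigma^p$ tight, how many copies of $f^k_\#(\sigma)$ can be consumed when $f^k(\alpha\sigma^p)$ is tightened? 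Your ``germs of $f^k_\#(\sigma)$ stabilize to fixed rays'' observation says nothing about this, because the cancellation is against $f^k$ applied to an arbitrary $\alpha$, not against another copy of $f^k_\#(\sigma)$. A naive $\bcc(f^k)$ bound does grow with $k$, and the point of the lemma is precisely that the number of whole copies lost does not.

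The paper's proof addresses exactly this: it first reformulates the statement as $f^k_\#(\alpha\sigma^p) = f^k_\#(\alpha\sigma^C)(f^k_\#(\sigma))^{p-C}$ for arbitrary $\alpha$, then uses hypothesis~(1) to strip off any superlinear edges (where Fact~\ref{FactBasicNEGSplitting} gives $C=1$), reducing to the case that every edge above $G_r$ is fixed or linear. It then inducts on $\height(\alpha)$: peeling off the leading height-$l$ edge $E_l$ gives $f^k_\#(E_l) = E_l u_l^k$ with $u_l$ a Nielsen path, and one must show that $[u_l^k \alpha' \sigma^p] = [u_l^k \alpha' \sigma^{p'}]\sigma^{p-p'}$ for some $p'$ independent of $k$. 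That step is where the real uniformity-in-$k$ comes from, and it uses that there are only finitely many possible $u_l$'s and a height comparison between $u_l$ and $\sigma$ --- ingredients absent from your outline. Your ray-stabilization idea does not supply a substitute for this induction.
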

  
\begin{proof}  By symmetry, it suffices to show that there is a constant $C$, independent of $\sigma$, so that if $\alpha \sigma^p$ is a path then $f^k_\#(\alpha \sigma^p) =f^k_\#(\alpha \sigma^{C})(f^k_\#( \sigma)^{p-C}) $ for all  $k\ge 1$ and all $p >C$.


We reduce to the case that every edge above $G_r$ is fixed or linear, as follows.  Suppose that $E'$ is a non-fixed, non-linear edge above $G_r$.   By (1), we may re-order the strata above $G_r$ so that $G_{r+1} = G_r \cup E'$.  Fact~\ref{FactIsATerm} implies that $E'$ is not contained in any Nielsen path.  Item  (1) therefore implies that $E'$ is not crossed by $f(E)$  for any edge  $E \ne E'$.  After re-ordering the strata, we may assume that $E'$ is the highest stratum in the filtration.  By Fact~\ref{FactBasicNEGSplitting}, no copies of $E'$ are ever cancelled when the image of a path is tightened.  Thus $C=1$ works if  $\sigma$ crosses  $E'$.    If $\sigma$ does not cross $E'$ then then  there is no loss in replacing $G_r$ by $G_{r+1}$ which reduces the number of edges above $G_r$.  After repeating this finitely many times, we may assume that every edge above $G_r$ is fixed or linear.  
  
 We complete the proof by induction on $l =\height(\alpha)$.  The case $l \le \height(\sigma)$ follows from Fact~\ref{FactBasicNEGSplitting}.

Suppose now that $l > \height(\sigma)$ and that we have a constant $C_{l-1}$ that works for paths of height $< l$.  We may reduce to the case that $\alpha = E_l \alpha'$ where $\alpha'$ has height $< l$, for if not then we may apply Fact~\ref{FactBasicNEGSplitting} to the copy of $E_l$ or $\overline E_l$ which is closest to $\sigma$, splitting $\alpha \sigma^p$ at the initial point of $E_l$ in that copy. What remains on the right of the split point therefore has one of the two forms $\alpha' \sigma^p$ or $E_l \alpha' \sigma^p$ where $\alpha'$ has height $<l$: in the case of the first form we are done by induction; and in the second case we have completed the reduction.

In what follows $[\tau_1\tau_2 \ldots \tau_m]$ is the path obtained from the concatenation of paths $\tau_i$ by tightening.   Applying $f^k_\#$ to the equation $\alpha \, \sigma^p = E_l \, \alpha' \, \sigma^p$, and using that $f^k_\#(E_l) = E_l u^k_l$ for some (possibly trivial) Nielsen path $u_l$, we get
\begin{align*}
f^k_\#(\alpha \sigma^p) &= \left[ f^k_\#(\alpha) \, f^k_\#(\sigma^p)   \right] \\
  &= \left[E_l \, u^k_l \, f^k_\#(\alpha') f^k_\#(\sigma^p) \right] \\
  &= \left[E_l \, f^k_\#(u^k_l) \, f^k_\#(\alpha') f^k_\#(\sigma^p) \right] \\
  &= E_l \, f^k_\#(u^k_l \alpha' \sigma^p) 
\end{align*}
where we may pull $E_l$ out since $f^k_\#(u^k_l \alpha' \sigma^p)$ has height $<l$.

We claim that there exists $p'$, independent of $k$, $\alpha'$, $\sigma$, and $p$, such that  $[u^k_l \alpha' \sigma^p] = [u^k_l \alpha' \sigma^{p'}]\sigma^{p-p'}$.   Assuming this for now, let $C_l = C_{l-1} + p'$ and   $\nu =  [u^k_l \alpha' \sigma^{p'}]$.  If $p > C_l$ then   
$$
f^k_\#(\alpha \sigma^p)  = E_l f^k_\#(\nu \sigma^q)  
$$
where $q >C_{l-1}$ and the inductive hypothesis completes the proof.



It remains to prove the existence of $p'$. Since $\alpha' \sigma^p$ is already tight, no edges of $\sigma^p$ are cancelled during the  tightening of $u^k_l \alpha' \sigma^p$ unless all of $\alpha'$ is cancelled when $u_l^k\alpha'$ is tightenened, say to a path $\mu$. We are therefore reduced to the case that $\mu$ is a subpath of $u_l^{k'}$ for some $k'$. If $\height(u_l) \le \height(\sigma)$ then $p'=1$ works. If $\height(u_l) > \height(\sigma)$ then at less than one full copy of $u_l$ is cancelled when $\mu \sigma^p$ is tightened. Since $u_l$ takes on only finitely many values, we may choose $p'$ so that the length of $\sigma^{p'}$ is greater than the length of $u_l$.  
\end{proof}
  
Much of the following proof is taken from page 47 of \BookTwo. 

\begin{proof}[Proof of Lemma~\ref{LemmaNoClosedEdgePaths}.] 
%
%
First, in the special case that $\Gamma$ has an oriented edge that begins and ends at the same vertex $[a]_u$, then we shall construct an element of $\h$ with an attracting lamination not supported by~$\F$.  Choose a label $(i,s)$ for $[a]_u$ and an $\eta \in \h$ such that the triple $(\eta,i,s)$ is a label for an edge from $[a]_u$ to itself, in particular $[a]_u = [x^i_s]$, and $x^i_s, E^i_s \not\subset G^i_{r(i)}$. Choosing any homotopy equivalence $g \from G^i \to G^i$ representing $\eta$, the following hold: the circuit $\sigma = g_\#(x^i_s) \subset G^i$ has a complete splitting rel $G^i_{r(i)}$ with respect to the \ct\ $f^i \from G^i \to G^i$; and at least one term of this splitting (up to a reversal of orientation) has the form $E^i_s$ or $E^i_s (w^i_s)^p \overline E^i_{t}$ where $E^i_s\ne E^i_{t}$ are linear edges in the same linear family. Note that $\sigma$ is not a Nielsen path of $f^i \from G^i \to G^i$, $\sigma$ is not contained in~$G^i_{r(i)}$, and $\sigma$ is the circuit freely homotopic to the closed path $g_\#(w^i_s)$. 

By subdividing $\sigma$ at one of the vertices between terms in its complete splitting rel $G_{r(i)}$, we may view $\sigma$ as a closed path with a complete splitting rel $G_{r(i)}$; for each $A \ge 1$ it follows that the closed paths $\sigma^A$ and $g_\#((w^i_s)^A)$ are freely homotopic. Choose $A$ so large that $g_{\#\#}((w^i_s)^A) $  contains the path ${\sigma}^{C_i+2}$  as a subpath where  $C_i$ is the constant of Lemma~\ref{limited cancellation} applied to $f_i$. As observed at the end of Definition~\ref{DefExpGrowthGraph}, we may choose $m$ so that  $(f_i^m)_\#(\sigma)$ contains  $(w^i_s)^A$ a subpath. The homotopy equivalence $f_i^m g$ represents $\mu = \phi^m \eta \in \h$.    

We claim that if a path $\rho\subset G^i$ contains $L$ disjoint subpaths of the form   $(w^i_s)^{A}$ then $(f_i^m)_\#g_\#(\rho)$ contains $2L$ disjoint subpaths of the form $(w^i_s)^{A}$. To see this, note that by Lemma~\trefGM{LemmaDoubleSharpFacts}{ItemDisjointCopies} and the obvious induction argument, disjoint subpaths of the form  $(w^i_s)^{A}$ in $\rho$ determine disjoint subpaths of the form $\sigma^{C_i+2}$ in $g_{\#}(\rho)$  which by Lemma~\ref{limited cancellation} determine disjoint  subpaths of the form   $((f_i^m)_\#(\sigma))^2$ in $(f_i^m)_\#g_\#(\rho)$. Since $(f_i^m)_\#(\sigma)$ contains a subpath of the form $(w^i_s)^{A}$, the claim follows. By Lemma~\ref{LemmaFindingEG} 
it follows that $\mu$ has an attracting lamination not supported by~$\F$, which gives a contradiction in the special case that $\Gamma$ does not contain a closed edge path with only one edge.

\smallskip
 
Having handled the special case, to complete the proof it suffices, by the obvious induction argument, to show that the oriented graph $\Gamma$ is transitive, that is: if there are oriented edges from a vertex $[a]_u$ to $[b]_u$ and from $[b]_u$ to $[c]_u$, then there is an oriented edge from $[a]_u$ to $[c]_u$. Choose $\eta_1,\eta_2 \in \h$ and choose labels $(i,s)$ for $[b]_u$ and $(j,t)$ for $[c]_u$ so that $(\eta_1,i,s)$ is a label for the edge from $[a]_u$ to $[b]_u$ and $(\eta_2,j,t)$ is a label for the edge from $[b]_u$ to $[c]_u$. We shall construct $\eta \in \h$ so that $(\eta,j,t)$ is a label of an edge from $[a]_u$ to $[c]_u$.

Let $\sigma_i$ be the circuit in $G^i$ representing $\eta_1[a]_u$ and let $\sigma_j$ be the circuit in $G^j$ representing $\eta_2[b]_u$. Let $\kappa \subset G^j$ be a closed path obtained by subdividing $\sigma_j$ at a vertex between terms in its complete splitting rel $G_r$. Let $g  \from G^i \to G^j$ be a homotopy equivalence representing~$\eta_2$. Using our choice of labels for the given edges we have $[x^i_s]=[b]_u$, and therefore letting $C$ be the constant of Lemma~\ref{limited cancellation} we may choose $B$ so large that ${g}_{\#\#}((w_s^i)^B)$ contains $\kappa^{C + 3}$ as a subpath. Choose $m_1$ so large that ${f_i^{m_1}}_\#(\sigma_i)$ contains $(w^i_s)^B$ as a subpath.

By Lemma~\ref{limited cancellation}, 
$(f_j^{m} g f_i^{m_1})_\#(\sigma_i)$ contains ${f_j^m}_\#(\kappa^3) = ({f_j^m}_\#(\kappa))^3$ as a subpath for all $m \ge 1$. Choose $m_2$ so large that the circuit 
$\sigma_3 = (f_j^{m_2}g f_i^{m_1})_\#(\sigma_i) \subset G^i$ 
is completely split rel $G_r$, let $\eta =\phi_j^{m_2}\eta_2\phi_i^{m_1}\eta_1 \in \h$ and consider the closed path $\tau = {f_j^m}_\#(\kappa)$. Letting $\sigma_3$ be the representative in $G^j$ of $\eta [a]_u$, it follows that $\sigma_3$ contains $\tau^3$ as a subpath. 

By construction, at least one term in the complete splitting of $\tau$ is (up to reversal of orientation) either $E^j_t$ or an exceptional path whose first edge is $E^j_t$. In particular, there exists $\tau_0$ a subpath of $\tau^3$, and therefore also a subpath of $\sigma_3$, such that
\begin{itemize}
\item  $\tau_0$ begins with $E^j_t$ (in the middle copy of $\tau^3$).
\item  $\tau_0$ ends with an edge in the same linear family as $E^j_t$ but otherwise does not cross any edge in the same linear family as $E^j_t$.
\item  $\tau_0$ is not a Nielsen path.
\end{itemize}
Lemma~\ref{FactIsATerm}~\pref{ItemLinearEdgeTerm} therefore implies that some term in the complete splitting rel $G_r$ of $\sigma_3$ is either $E^j_t$ or an exceptional path whose first edge is $E^j_t$. This completes the proof that there is an edge in $\Gamma$ from $[a]_u$ to $[c]_u$ as desired, labelled by $(\eta,j,t)$.

\end{proof}

\subsection{Bouncing sequences}  
\label{SectionFollowTheBouncingBall}

To complete the proof of Proposition~\ref{PropFCarriesAll} it remains to analyze the situation where the group $\h$ has at least one twistor not supported by~$\F$ and the exponential growth digraph $\Gamma$ has no closed oriented edge paths. The result of this analysis will be to construct, in various cases, an $\h$-invariant proper free factor system properly containing~$\F$.

Throughout this section we fix a finitely generated subgroup $\h \subgroup \IA_n(\Z/3)$ and a proper free factor $\F$ invariant under every element of $\h$ such that $\h$ is irreducible rel~$\F$ and $\h \subset \PG^\F$. Let $\Twist(\h,\F) = \union_{\psi \in \h} \Twist(\psi,\F)$. We may choose a generating set $\{\psi_1,\ldots,\psi_\gen\}$ for $\h$ which has \emph{maximally filling twistors rel $\F$} meaning that the following equation holds:
$$\F_\supp(\union_{i=1}^\gen \Twist(\phi_i,\F)) = \F_\supp(\Twist(\h,\F)) \qquad\qquad (*)
$$
where $\F_\supp(\cdot)$ stands for free factor support. If $(*)$ is not already true, choose a rotationless element $\psi_{\gen+1} = \phi_{\kappa+1} \in \h$ having a twistor not supported by the left hand side of $(*)$, and add $\psi_{\kappa+1}$ to the list of generators. After finitely many additions equation $(*)$ holds, because there is an upper bound to the length of a strictly ascending chain of free factor systems. 

We also adopt Notation~\ref{notn:generators} throughout this section, in particular a rotationless power $\phi_i$ of each $\psi_i$ with representative \ct\ $f_i \from G^i \to G^i$ and associated notation for twistors of $f_i$. Let $\Gamma$ be the exponential growth digraph associated to this data. 

\begin{definition} 
\label{DefBouncingSequence}
Consider $T_0$, a minimal simplicial $F_n$ tree with trivial edge stabilizers and with $\F(T_0) = \F$. For all $i \ge 1$, inductively define $T_i = T_{i-1}\phi_i^\infty$, using either Proposition~\ref{prop:grower} or Proposition~\ref{prop:nongrower} depending on whether or not $\A_i = \A_+(T_{i-1};\phi_i)$---the set of twistors of $\phi_i$ having positive translation length in $T_{i-1}$---is nonempty or empty, where the subscript on $\phi_i$ is taken mod $\gen$. Note that $\A_i \subset \Axes(\phi_i,\F) \subset \Axes(\h,\F)$. We refer to $T_0,T_1,T_2,\ldots$ as the \emph{bouncing sequence} starting with~$T_0$ with respect to $\{\phi_1,\ldots,\phi_\gen\}$.
\end{definition}

The very last line of the proof of the following lemma is where we use apply Lemma~\ref{LemmaNoClosedEdgePaths} and the hypothesis that $\h \subset \PG^\F$.
 
\begin{lemma}\label{LemmaFinitelyMany} For any $T_0$, letting $T_0,T_1,T_2\ldots$ be the bouncing sequence started by $T_0$ with respect to $\{\phi_1,\ldots,\phi_\gen\}$, there exist $0 \le a \le b$ so that the following hold for all $l \ge b$. 
\begin{enumerate}\label{item:Ai is empty}  \item  $\A_l = \emptyset$.
\item \label{item:Ti independent of i}The set of edge stabilizers of $T_l$ is independent of $l$.
\item \label{item:edge stabilizer} If $\langle e\rangle $ is the non-trivial stabilizer of some edge in $T_l$ then $[e]\in \A_a$. 
\end{enumerate}
\end{lemma}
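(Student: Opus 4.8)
The three conclusions are intertwined, so I would prove them together by a ``bootstrapping'' argument along the bouncing sequence. The starting observation is that the set of edge stabilizers can only get smaller as we pass from $T_{l-1}$ to $T_l = T_{l-1}\phi_l^\infty$: if $\A_l \ne \emptyset$ this is conclusion~(2) of Proposition~\ref{prop:grower}, while if $\A_l = \emptyset$ we must first check the hypothesis of Proposition~\ref{prop:nongrower} that no element of $\Arc(T_{l-1})$ is carried by~$\F$, after which conclusion~\pref{ItemArcsStabilizersDontGrow} of that proposition gives $\Arc(T_l) \subset \Arc(T_{l-1})$. To see that no edge stabilizer of $T_{l-1}$ is carried by $\F$ one uses that $\F(T_{l-1}) = \F$ — more precisely, by the inclusion $T_l \in \T_\F$ guaranteed by both propositions, every element carried by~$\F$ is elliptic in each $T_l$, so an edge stabilizer carried by $\F$ would have to be trivial; this uses that the starting tree $T_0$ has $\F(T_0)=\F$ and hence $T_0 \in \T_\F$, and then inductively $T_l \in \T_\F$. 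Also, by Proposition~\ref{prop:grower}(2) or Lemma~\ref{arc stabilzers} (as quoted through Proposition~\ref{prop:nongrower}), every element of $\Arc(T_l)$ is $\phi_l$-invariant.

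Next I would track where a surviving edge stabilizer ``comes from''. Suppose $\<e\>$ is a nontrivial edge stabilizer of $T_l$ for some $l \ge 1$. Since $\Arc(T_l) \subset \Arc(T_{l-1}) \subset \cdots \subset \Arc(T_0)$ but $\Arc(T_0)$ is trivial (trivial edge stabilizers), there is a first index $m$ with $1 \le m \le l$ such that $\<e\> \in \Arc(T_m)$ but $\<e\>$ is not an edge stabilizer of $T_{m-1}$; that is, $\<e\>$ is a ``new'' edge stabilizer of $T_m = T_{m-1}\phi_m^\infty$. By Proposition~\ref{prop:grower}(2) applied at stage $m$ this forces $\A_m \ne \emptyset$ and $[e]_u \in \A_m \subset \Axes(\phi_m,\F)$, i.e. $[e]_u$ is a vertex of the exponential growth digraph $\Gamma$. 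Moreover $\<e\>$ remains an edge stabilizer of every $T_j$ for $m \le j \le l$, and since each such $\<e\>$ is $\phi_j$-invariant (being in $\Arc(T_j)$), the conjugacy class $[e]$ — hence $[e]_u$ — is $\phi_j$-invariant for all $j$ in that range.

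Now I would derive the key growth dichotomy: if $[e]_u$ is an edge stabilizer of $T_{j-1}$ that becomes newly created as a positive-translation-length twistor at stage $j'$ downstream, one gets an arrow in $\Gamma$. Concretely, suppose $[e]_u \in \Arc(T_{j-1})$ (so $L_{T_{j-1}}[e]_u = 0$) but $[e]_u \in \A_{j'} = \A_+(T_{j'-1};\phi_{j'})$ for some later $j' > j-1$, i.e. $L_{T_{j'-1}}[e]_u > 0$. Between stages $j-1$ and $j'-1$ the translation length of $[e]_u$ went from $0$ to positive, so there is an intermediate stage $k$ (with $j-1 \le k < j'-1$, so $\phi_{k}$ one of the generators) at which $L_{T_k}[e]_u > 0$ while $L_{T_{k-1}}[e]_u = 0$. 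Using the length formula of Lemma~\ref{LemmaGrowerFormula}\pref{ItemLGFormula} together with Proposition~\ref{prop:grower}(3) (applied to the grower $T_{k-1} \to T_k = T_{k-1}\phi_k^\infty$), the only way $[e]_u$ can acquire positive length is for the circuit representing some $\phi_k$-iterate of $[e]_u$ in the \ct\ $G^k$ to grow a subpath $(w^k_s)^q$ for some linear edge $E^k_s$ with $[x^k_s]_u \in \A_+(T_{k-1};\phi_k)$; unwinding this through the definition gives a label $(\eta, k, s)$ of an edge in $\Gamma$ from $[x^k_s]_u$ to... here I need to be careful about the direction, but the upshot is an oriented edge of $\Gamma$ connecting the twistor that was ``growing'' in $T_{k-1}$ to the twistor $[e]_u$ being fed (the precise matching of Definition~\ref{DefExpGrowthGraph} to the length formula is the delicate bookkeeping). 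Iterating this reasoning down the finite segment of the bouncing sequence produces an oriented edge path in $\Gamma$; if the sequence of ``new twistor'' stages were to recur — which it must, if infinitely many $T_l$ had nonempty $\A_l$ or non-stabilizing $\Arc(T_l)$, since there are only finitely many vertices of $\Gamma$ and finitely many generators — we would close up an oriented loop in $\Gamma$, contradicting Lemma~\ref{LemmaNoClosedEdgePaths}. Hence only finitely many stages have $\A_l \ne \emptyset$ and only finitely many stages change $\Arc(T_l)$; taking $b$ past all of them gives (1) and (2), and taking $a$ to be the last stage at which an edge stabilizer of $T_b$ was newly created gives (3) via the argument of the second paragraph.

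\textbf{Main obstacle.} The routine parts (monotonicity of $\Arc$, the $\T_\F$-membership, $\phi$-invariance of edge stabilizers, finiteness of vertices of $\Gamma$) are all handed to us by the cited results. The genuinely delicate step is the third paragraph: extracting a \emph{bona fide oriented edge of $\Gamma$} — with a legitimate label $(\eta,k,s)$ in the sense of Definition~\ref{DefExpGrowthGraph}, involving a complete splitting rel $G^k_{r(k)}$ with a term of the form $E^k_s$ or $E^k_s(w^k_s)^p\overline E^k_u$ — out of the mere fact that a twistor's translation length jumped from $0$ to positive under application of $\phi_k^\infty$. This requires combining the explicit length formula of Lemma~\ref{LemmaGrowerFormula} with Fact~\ref{FactIsATerm}\pref{ItemLinearEdgeTerm} to identify which linear-edge or exceptional-path term is responsible, and then matching that to the ``$\eta$'' that conjugates $[e]_u$ into position — essentially reconstructing the combinatorial witness demanded by the digraph's edge definition. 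I expect this to be where most of the real work lies, and I would model the bookkeeping on the corresponding argument in \BookTwo\ (around Remark~4.38 and the pages cited in Proposition~\ref{prop:grower}), adapted to the relative setting by keeping careful track of the filtration element $G^k_{r(k)}$ realizing $\F$.
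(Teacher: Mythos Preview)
Your overall strategy is right: reduce everything to showing that only finitely many $\A_l$ are nonempty, and derive this from Lemma~\ref{LemmaNoClosedEdgePaths} by producing a long oriented path in $\Gamma$ whenever there are many nonempty $\A_l$'s. The bootstrapping in your first two paragraphs (monotonicity of $\Arc$, tracing edge stabilizers back to the last grower stage) is essentially correct, modulo one glitch: your verification that no element of $\Arc(T_{l-1})$ is carried by $\F$ is wrong as written --- edge stabilizers are always elliptic, so ``elliptic $\Rightarrow$ trivial'' says nothing. The correct argument is inductive: after a grower step all edge stabilizers lie in $\A_l \subset \Axes(\phi_l,\F)$ by Proposition~\ref{prop:grower}(2), hence are not carried by~$\F$; after a nongrower step $\Arc$ only shrinks.

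The real gap is in your third paragraph. Your plan is to track an individual class $[e]_u$ and find a ``jump stage'' where its translation length goes from $0$ to positive, extracting an edge of $\Gamma$ there. But (a) there is no reason why a given edge stabilizer $[e]_u$ of some $T_{j-1}$ should ever reappear in a later $\A_{j'}$, so the hypothesis you set up may simply never hold; and (b) even granting a jump stage $k$, if $\A_k=\emptyset$ the nongrower formula $L_{T_k}[e]_u = L_{T_{k-1}}(\phi_k^n[e]_u)$ gives no information about complete splittings in $G^k$ and hence no edge of~$\Gamma$. You would have to chase further back through additional nongrower stages, and that chase is exactly what the paper packages cleanly into one estimate.

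The paper's device is a telescoping approximation: for any $i<j$, any $[c]$, and any $\epsilon>0$ there exist $\eta = \phi_{i+1}^{n_{i+1}}\cdots\phi_j^{n_j} \in \h$ and $N>0$ with
\[
\Bigl|\,L_j[c] - \tfrac{1}{N}L_i(\eta[c])\,\Bigr| < \epsilon,
\]
proved by straightforward induction on $j-i$ from the defining limits in Propositions~\ref{prop:grower} and~\ref{prop:nongrower}. Now given $i<j$ with $\A_i,\A_j$ both nonempty and $[x^j_s]\in\A_j$, this formula (with the appropriate index shift) transports the positivity $L_{j-1}[x^j_s]>0$ back to $L_i(\eta[x^j_s])>0$; since $T_i$ is $\phi_i$-invariant one may replace $\eta$ by $\phi_i^{n}\eta$ with $n$ large enough that the representing circuit is completely split rel $G^i_{r(i)}$, and then Proposition~\ref{prop:grower}(3) together with Lemma~\ref{LemmaGrowerFormula}\pref{ItemLGFormula} forces a term $E^i_q$ or an exceptional path with $[x^i_q]_u\in\A_i$ --- precisely an edge of $\Gamma$ from $[x^j_s]_u$ to $[x^i_q]_u$. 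Thus $N$ nonempty stages yield a directed path of length $N-1$ in the finite acyclic digraph $\Gamma$, a contradiction. This telescoping estimate is the missing ingredient in your sketch; it bypasses entirely the need to follow individual conjugacy classes through jump stages and resolves the directional ambiguity you flagged.
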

 
\begin{proof} We denote $L_i$ for the translation length function $L_{T_i}$, which associates to each conjugacy class $[c]$ the translation length $L_i[c]$ of $c$ acting on $T_i$.

If $\A_l = \emptyset$ for all $l$ then the edge stabilizers of each $T_l$ are trivial by Proposition~\ref{prop:nongrower} and there is nothing to prove.  We may therefore assume that not all $\A_l$ are empty.  We prove below that only finitely many $\A_l$ are non-empty.  Assuming this for now we complete the proof. 

Let $a$ be the largest value such that $\A_a \ne \emptyset$. Then (1) is satisfied and (3) follows from Propositions~\ref{prop:grower} and \ref{prop:nongrower}.   Since $T_i$ is very small, its  non-trivial edge stabilizers  are primitive infinite cyclic groups and  any two such are equal or have trivial intersection.  Proposition~\ref{prop:nongrower}  therefore implies that there are only finitely many values of $l$ for which the edge stabilizers of $T_l$ are different than the edge stabilizers of $T_{l-1}$. Choosing $b$ greater than the last such value of $l$ establishes (2).
   
It remains to show that only finitely many $\A_l$ are non-empty.

Propositions~\ref{prop:nongrower} and \ref{prop:grower} imply that if $\A_{i+1} = \emptyset$ then    for all $c \in F_n$ there exists $n_{i+1}>0$ so that 
$$
L_{i+1}[c]  = L_{i}(\phi_{i+1}^{n_{i+1}}[c])
$$ 
 and  that if $\A_{i+1} \ne  \emptyset$ then for all $c \in F_n$ and all $\epsilon > 0$ there exists $n_{i+1} >0$ so that 
$$
\abs{L_{i+1}[c] -\frac{1}{n_{i+1}} L_{i}(\phi_{i+1}^{n_{i+1}}[c])} < \epsilon
$$ 

We claim more generally that for all $i < j$,  all $c \in F_n$ and all $\epsilon > 0$ there exists positive integers $n_{i+1},\ldots, n_{j} $ and $N$ so that     
 \begin{equation} \label{finding eta} 
\abs{L_j[c] -\frac{1}{N} L_{i}(\phi_{i+1}^{n_{i+1}}\ldots \phi_{j-1}^{n_{j-1}}\phi_{j}^{n_{j}}[c])} < \epsilon
\end{equation}   
The proof is  by induction on $j\ge i+1$ with the base case $j=i+1$ already established.   Assuming that the claim holds for   $j$, we prove it for $j+1$. 

 If $\A_{j+1} = \emptyset$ apply  Proposition~\ref{prop:nongrower} to choose $n_{j+1}$ so that $$L_{j+1}[c]  = L_{j}(\phi_{j+1}^{n_{j+1}}[c])$$  Applying the inductive hypothesis to $i,j,[c']=\phi_{j+1}^{n_{j+1}}[c]$ and $\epsilon$ to produce $n_{i+1},\ldots, n_{j} $ and $N$, we have 
 $$ \abs{L_{j+1}[c]  -\frac{1}{N} L_{i}(\phi_{i}^{n_{i+1}}\ldots \phi_{j}^{n_{j}}\phi_{j+1}^{n_{j+1}}[c])} = \abs{L_{j}([c']) - \frac{1}{N} L_{i}(\phi_{i}^{n_{i+1}}\ldots \phi_{j}^{n_{j}} [c'])} < \epsilon
 $$

If $\A_{j+1} \ne \emptyset$ apply  Proposition~\ref{prop:grower} to choose $n_{j+1}$ so that 
$$\abs{L_{j+1}[c] -\frac{1}{n_{j+1}} L_{j}(\phi_{j+1}^{n_{j+1}}[c])} < \epsilon/2
$$  
Apply the inductive hypothesis to $i,j,c'=\phi_{j+1}^{n_{j+1}}[c]$ and $\epsilon' = \epsilon/2$ to produce $n_{i+1},\ldots, n_{j} $ and $N'$.  Letting $N = n_{j+1}N'$,  we have
\begin{align*}
 \bigg| L_{j+1}[c]  &- \frac{1}{N} L_{i}(\phi_{i}^{n_{i+1}} \ldots \phi_{j}^{n_{j}}\phi_{j+1}^{n_{j+1}}[c]) \bigg|    \\  
& \le \abs{L_{j+1}[c] -\frac{1}{n_{j+1}} L_{j}( [c'])}\  +   \frac{1}{n_{j+1}} \abs{L_{j}([c']) - \frac{1}{N'} L_{i}(\phi_{i+1}^{n_{i+1}}\ldots \phi_{j}^{n_{j}} [c'])}  \\ &   < \epsilon/2 + \epsilon/(2n_{j+1}) \le \epsilon
\end{align*}
This completes the proof of the claim.
  
Suppose now that $i < j$,   that  $[x^j_s]  \in \A_j$ and that $\A_{i} \ne \emptyset$.  Applying  \pref{finding eta} with $[c] = [x^j_s]$ and  $\epsilon = L_j[c]$  we have   
$$L_{i}((\phi_{i+1}^{n_{i+1}}\ldots \phi_{j}^{n_{j}})_\#([x^j_s])) 
$$
Since $L_i$ is invariant under the action of $\phi_i$,
$$L_{i}((\phi_i^n\phi_{i+1}^{n_{i+1}}\ldots \phi_{j}^{n_{j}})_\#([x^j_s]))  > 0
$$
 is a positive number independent of $n$. Choosing $n=n_i$ sufficiently large, and letting    $\eta_{ij} = \phi_{i}^{n_{i}}\ldots \phi_{j-1}^{n_{j-1}}\phi_{j}^{n_{j}}$, we conclude that the circuit $\sigma \subset G_i$ that realizes $\eta_{ij}([x^j_s])$ has a  complete splitting rel $G_r$ and that $L_i(\sigma) > 0$. By  item (3) of Proposition~\ref{prop:grower} and item (2) of Lemma~\ref{LemmaGrowerFormula},  some term in the complete splitting of $\sigma$ has  the form  $E^{i}_q$ or $E^{i}_q{w^{i}_q}^p\overline E^{i}_t$ where $E^{i}_q$ is an edge in $G^{i}$ with  $[x^{i}_q]  \in \A_{i}$ and $E^{i}_t \subset G^{i}$ an edge in the same linear family as $E^{i}_q$.    
  
  
To summarize, we have shown that if $i<j$ and if $\A_j$ and $\A_{i}$ are non-empty then for all $[x^j_s] \in \A_j \subset \Twist(\phi_j,\F)$ there exists $[x^i_q] \in \A_{i} \subset \Twist(\phi_i,\F)$ such that $\Gamma$ has an oriented edge from the vertex $[x^j_s]$ to the vertex $[x^i_q]$, that edge being labelled by the triple $(\eta_{ij},i,q)$. It follows that if there are $N$ values of $l$ for which $\A_l$ is non-empty, then there is an oriented edge path of length $N-1$ in $\Gamma$. Lemma~\ref{LemmaNoClosedEdgePaths} implies that $\Gamma$ has no closed oriented edge paths, and so there is an upper bound to the length of all oriented edge paths in $\Gamma$, implying that $\A_l$ is non-empty for only finitely many values of~$l$.
\end{proof}

\begin{proof}[Proof of Proposition~\ref{PropFCarriesAll}]  
In light of Lemma~\ref{LemmaNonLinearCase}, it suffices to show that $\Axes(\h,\F) =  \emptyset$. Pick a generating set $\{\psi_1,\ldots,\psi_\gen\}$ for $\h$ with maximally filling twistors rel~$\F$, pick rotationless powers $\{\phi_1,\ldots,\phi_\gen\}$, and assume Notation~\ref{notn:generators} for representative \cts\ and their twistors. Since the sets $\Axes(\h,\F)$ and $\union_{j=1}^\gen \Axes(\phi_j,\F)$ have the same free factor support, 
it suffices to show that $\Axes(\phi_j,\F) = \emptyset$ for $1 \le j \le \gen$.

Choosing $T_0$ to be a very small simplicial $F_n$ tree with trivial edge stabilizers and with \hbox{$\F(T_0) = \F$}, let $T_0,T_1,T_2,\ldots$ be the bouncing sequence of $T_0$ as in Definition~\ref{DefBouncingSequence}, and let $\A_i = \A_+(T_{i-1};\phi_i)$. Let $a \le b$ be as in Lemma~\ref{LemmaFinitelyMany}. If $\langle e \rangle$  stabilizes  an edge in $T_b$ then it stabilizes an edge in $T_i$ for all $i \ge b$.  Since $T_i$ is $\phi_i$-invariant it follows that $[e]$ is $\phi_i$-invariant, and hence $\psi_i$-periodic. Since $\psi_i \in \IA_n(\Z/3)$, Theorem~\ref{ThmPeriodicConjClass} implies that $[e]$ is $\psi_i$-invariant.  As this holds for each $\psi_j$, $j=1,\ldots,\gen$, it follows that $[e]$ is $\h$-invariant. The smallest free factor system $\F'$ that carries $\F$ and $[e]$ is $\h$-invariant. Lemma~\ref{LemmaFinitelyMany}~\pref{item:edge stabilizer} implies that  $[e]$ and $\F$ are carried by a proper subgraph of $G^a$ and hence that $\F'$ is proper. By irreducibility,  $\F' = \F$. Thus  $[e]$ is carried by $\F$  in contradiction to the fact (Lemma~\ref{LemmaFinitelyMany}~\pref{item:edge stabilizer}) that $[e] \in \A_a$. We conclude that  $T_b$, and hence $T_i$ for all $i \ge b$,  have trivial edge stabilizers.

For each $i \ge b$, choose a finite collection $\C_{i}$ of conjugacy classes    so that $\F(T_{i})= \cffs(\C_{i})$. Since $\A_{i+1} = \emptyset$ and each $\alpha \in \C_{i+1}$ is elliptic in $T_{i+1}$, Proposition~\ref{prop:nongrower} implies that there exists $K > 0$ so that  $\phi_{i+1}^K(\alpha)$ is elliptic in $T_i$ and hence carried by $\F(T_i)$ for each $\alpha \in \C_i$. It follows that for each $i \ge b$ we have:
$$ \F(T_{i+1}) = \phi_{i+1}^{K}(\F(T_{i+1}))= \phi_{i+1}^{K}(\cffs(\C_{i+1})) =\cffs(\phi_{i+1}^K(\C_{i+1})) \sqsubset \F(T_i)
$$
We conclude that:
\begin{itemize}
\item $\{\F(T_i): i \ge b\}$ is a nested sequence of free factor systems and is hence eventually constant, say $\F(T_i) $ is independent of $i$ for all $i \ge c \ge b$.   
\end{itemize}
For $i \ge c$, each element of $\Axes(\phi_{i+1},\F)$ is elliptic in $T_i$  (because $\A_{i+1} = \emptyset$) and hence carried by $\F(T_i) = \F(T_c)$. This proves that the smallest free factor system carrying $\F$ and $\cup_{j=1}^\gen\Axes(\phi_j,\F)$ is proper. By irreducibility, $\cup_{j=1}^\gen\Axes(\phi_j,\F) = \emptyset$  as desired.
\end{proof}

\bigskip   

\section{Finding Nielsen pairs: Proof of Proposition~\ref{PropNielsenPairsExist}} \label{SectionNielsenPairsExist}

Recall that $\Lam(\phi)$ is the union of the twistors, eigenrays and attracting laminations of~$\phi$. In this section we prove

\medskip

\noindent {\bf Proposition~\ref{PropNielsenPairsExist}} \emph{Suppose that $\k \subgroup \IA_n(\Z/3)$ is generated by a finite number of rotationless elements, that $\F$ is a proper $\k$-invariant free factor system, and that $\F$ carries $\Asym(\phi)$ for each rotationless $\phi \in \k$. Then there is a Nielsen pair for $\k$ associated to~$\F$.  Moreover, following  the notation of Definition~\ref{DefNielsenPairs}, one may choose the tree $T$ and Nielsen pair $(V,W)$ so that  $\ti \gamma$  is an edge of~$T$.} 

\medskip

A proof in the case that  $\F =\emptyset$ is given in sections 5.4 and 5.5 of \BookTwo.   Most of that proof applies in our context. The main exception is Lemma~5.14, which we replace   with Lemma~\ref{choose N} below. Given the amount of notation that has to be introduced, it made sense to us to give a complete self-contained proof of the main statement of the proposition. For the ``Moreover'' sentence, we depend more heavily on the relevant two pages of \BookTwo.

We begin by recalling the setup of section 5.4 of \BookTwo, a setup which using our earlier results we are able to mimic.

\begin{notn} \label{NotationSectionSeven} Let $\phi_1,\ldots,\phi_\gen$  be rotationless generators of $\k$. Knowing that $\F$ carries $\Asym(\phi_i)$ and is $\phi_i$-invariant, we may may apply Lemma~\ref{lem:cofinal}, allowing us to choose, for each $1 \le i \le \gen$,  a \ct\ $f_i :G^i \to G^i$ representing $\phi_i$ and a filtration element $G^i_{r(i)}$ realizing $\F$  such that each stratum above $r(i)$ is a single edge    $E$  satisfying $f_i(E) =E$ or  $f_i(E) =E\cdot u$ for some   closed path $u \subset G^i_{r(i)}$.    

For each $1 \le i \ne j \le \gen$,  let  $h_{ij} : G^i \to G^j$ be a homotopy equivalence that preserves the markings, maps vertices to vertices and restricts to a homotopy equivalence $G^i_{r(i)} \to G^j_{r(j)}$.  
  
The non-trivial elements of $\k$ that we consider are compositions of positive iterates of the generators $\phi_1,\ldots,\phi_\gen$.  Such elements are realized by  homotopy equivalences between the various marked graphs that are compositions of the $h_{ij}$'s and iterates of the $f_i$'s.    For example,   $\phi_2^9\phi_1^4$ is realized by $h_{23}f_2^9 h_{12}
f_1^4 :G_1 \to G_3$.

  
A circuit $P \subset G^i$ that that crosses edges in both  $G^i_{r(i)}$ and its complement has a \emph{cyclic decomposition} 
$$P = \nu_0 \, H_1 \, \nu_1 \, H_2 \ldots H_p
$$ 
where the $\nu_j$'s are the maximal, possibly trivial, subpaths in $G^i_{r(i)}$.  Thus each \emph{vertical element} $\nu_j$ is a (possibly trivial) path in $G^i_{r(i)}$   and each \emph{horizontal element} $H_j$ is  a  non-trivial path that intersects  $G^i_{r(i)}$ exactly in its endpoints. Note that all edges in a horizontal element $H_j$, other than perhaps the first and last, are fixed and that $f_i(H_j) = u_j H_j v_j$ for some (possibly trivial) paths $u_j,v_j \subset G^i_{r(i)}$. 

Given a subpath $\sigma = \nu_{a}H_a \ldots H_b \nu_b$  of $P$, choose a lift $\ti \sigma$ to the universal cover $\wt G^i$.  Let $\wt C_{s}$ and $\wt C_t$ be the components of  the full pre-image $\wt G^i_{r(i)}$ of  $G^i_{r(i)}$ that contain the initial and terminal endpoints of $\ti \sigma$ respectively and let $V$ and $W$ be the corresponding elements of $\wt\V$ (see Definition~\ref{DefNielsenPairs}). The element $[[V,W]]$ of $\V^{(2)}$   is well defined and  \emph{ determined by $\sigma$}.
\end{notn}  

 \noindent{\bf Strategy and Outline.} Our strategy is to show that either  there is a Nielsen pair for $\k$ satisfying the properties of the proposition or there is an element $\theta \in \k$ and a conjugacy class $[a]$ such that the number of horizontal elements in the cyclic decomposition of the circuit in $G^1$ representing $\theta^l([a])$ grows exponentially in $l$. Applying Lemma~\ref{LemmaFindingEG} we obtain a lamination in $\L(\theta)$ not carried by $[\pi_1 G^1_{r(1)}] = \F$, contradicting the hypothesis and thus completing the proof of Proposition~\ref{PropNielsenPairsExist}.

Corollary~\ref{CorInactiveIsNielsen} gives a sufficient condition for a given element  of $\V^{(2)}$ to be a Nielsen pair for~$\phi_i$.  This is promoted in Lemma~\ref{event}  to a sufficient condition for a given element  of $\V^{(2)}$ to be a Nielsen pair for $\k$. Under the assumption that no Nielsen pair exists, and so in particular all of these sufficient conditions fail, the definition of $\theta$ is given in Remark~\ref{RemarkMoreExplicitStrategy} following the statement of Lemma~\ref{choose N}. The proof of Proposition~\ref{PropNielsenPairsExist} that finishes this section gives the exponential growth property for $\theta$ under the assumption of no Nielsen pairs.

\medskip
Having made no assumptions on how elements of $\k$ act on $\F$, there is not much we can say about the  vertical elements of cyclic decompositions. We will in fact simply treat them as place holders which---some of the time, when we can manage to apply bounded cancellation, see Lemma~\ref{LemmaProperHoldsItsPlace}---prevent adjacent horizontal elements from canceling each other. This motivates the following definition.

\begin{definition} \label{defn:place holder}   Suppose that $P $ is  a circuit in $G^i$ that crosses edges in both $G^i_{r(i)}$ and its complement, that $\nu$ is a vertical element in the cyclic decomposition of $P$ and that  $g :G^i \to G^j$ is a homotopy equivalence such that $g(G^i_{r(i)}) =  G^j_{r(j)}$ where  $1 \le i,j \le \gen$.  Choose a path  $\ti \nu$ that is a lift of $\nu$, a line   $\ti P  $ that is a lift of $P$ and that contains $\ti \nu$  and a lift $\ti g:\wt G^i \to \wt G^j$.    We say that  $\ti \nu$ is a vertical element of $\ti P$ and that the element $V$ of $\wt\V$ corresponding to the  component of  $\wt G^i_{r(i)}$ that contains $\ti \nu$  is \emph{has non-trivial intersection $\ti \nu$ with $\ti P$ }.    Since $\ti g$ induces a bijection between the set of components of $\wt G^i_{r(i)}$ and the set of components of $\wt G^j_{r(j)}$, it induces a self map $\ti g_\#$ of $\wt\V$ (which agrees with  the self map of $\wt\V$ induced by the automorphism of $F_n$ corresponding to $\ti g$). 


Recall that a subpath $\ti \tau$ of  $\ti P$ is pre-trivial  if $\ti g_\#(\ti \tau)$ is the trivial path or equivalently if the $\ti g$-images of the endpoints of $\ti \tau$ are equal.   Note that if $\ti \tau$ is maximal (with respect to inclusion) among all pre-trivial subpaths of $\ti P$ then $\ti g_\#(\ti \tau)$ is a point in $\ti g_\#(\ti P)$.

Suppose that  $\ti \nu$ is a vertical element of $\ti P$ and that $\Gamma$ is the component   of $\wt G^i_{r(i)}$ that contains $\ti \nu$.  We say that   $\ti \nu$    \emph{holds its place} in $\ti P$ \emph{with respect to $\ti g$} if every  pre-trivial subpath  of $\ti P$ that contains $\ti \nu$  has $\ti g$-image in   $\ti g(\Gamma)$.  In this case, $\ti g_\#(\ti P)$ has non-trivial intersection, say $\ti \nu'$, with $\ti g( \Gamma)$ because the tightened image of every pre-trivial subpath of $\ti P$ that intersects $\ti \nu$ is contained in $\ti g(\Gamma)$.  We say that $\ti \nu'$ is \emph{determined by $\ti \nu$ and $\ti g$}. 
We also say that $\nu$ \emph{holds its place in} $P$ \emph{with respect to $g$} and \emph{determines} $\nu'$, the projection of $\ti \nu'$; we write $\nu \to \nu'$ if $g$ and $P$ are understood. 
\end{definition}  

\begin{lemma} \label{circular order}  With notation as in Definition~\ref{defn:place holder}, the 
determination relation $\nu \rightarrow \nu'$ between the subset of 
vertical elements of P that hold their place with respect to $g$ and the subset of vertical 
elements in $g_\#(P)$ that they determine is a bijection that preserves  circular order. 
\end{lemma}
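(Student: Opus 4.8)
The plan is to prove Lemma~\ref{circular order} by carefully unwinding Definition~\ref{defn:place holder}. First I would fix the data: a circuit $P \subset G^i$ crossing edges in both $G^i_{r(i)}$ and its complement, the lift $\ti P$, the lift $\ti g \from \wt G^i \to \wt G^j$, and the induced bijection $\ti g_\#$ on $\wt\V$. The key structural observation is that a vertical element $\ti \nu$ of $\ti P$ that holds its place determines a canonical maximal pre-trivial subpath $\ti \tau(\ti \nu)$ of $\ti P$ containing it: namely the union of all pre-trivial subpaths of $\ti P$ that contain $\ti \nu$. By the ``holds its place'' hypothesis this subpath has $\ti g$-image lying in $\ti g(\Gamma)$ where $\Gamma$ is the component of $\wt G^i_{r(i)}$ containing $\ti \nu$, and $\ti g_\#(\ti \tau(\ti \nu))$ collapses to the single vertical element $\ti \nu'$ of $\ti g_\#(\ti P)$ determined by $\ti \nu$. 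So the first step is to show the assignment $\ti \nu \mapsto \ti \tau(\ti \nu)$ is well-defined — i.e.\ that the union of a family of pre-trivial subintervals all containing the common point $\ti \nu$ is again a subinterval (automatic, since these are nested-at-$\ti\nu$ subintervals of the line $\ti P$) and is again pre-trivial (its two endpoints have equal $\ti g$-image because each is the endpoint of a pre-trivial subpath through $\ti\nu$, and the $\ti g$-image of every such endpoint equals $\ti g(\ti \nu)$).

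Next I would establish injectivity of $\ti\nu \mapsto \ti\nu'$. If $\ti\nu_1 \neq \ti\nu_2$ are two vertical elements holding their place and determining the same $\ti\nu'$, then both maximal pre-trivial subpaths $\ti\tau(\ti\nu_1)$ and $\ti\tau(\ti\nu_2)$ map into the same component $\ti g(\Gamma')$ and collapse to the same point of $\ti g_\#(\ti P)$; since $\ti g_\#(\ti P)$ is a line (tightened image) and the pre-images of a point under the collapse of pre-trivial pieces form a single subinterval, $\ti\tau(\ti\nu_1)$ and $\ti\tau(\ti\nu_2)$ must be comparable, hence their union is pre-trivial and contains both $\ti\nu_1,\ti\nu_2$; but a single pre-trivial subpath of $\ti P$ meets at most one component of $\wt G^i_{r(i)}$ in a vertical element (its interior is collapsed), forcing $\Gamma_1 = \Gamma_2$ and in fact $\ti\nu_1 = \ti\nu_2$ since the maximal vertical subpath of $\ti P$ inside a fixed component is unique. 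For surjectivity onto the set of vertical elements of $\ti g_\#(\ti P)$ that are ``determined'': this set is by definition the image, so surjectivity onto it is immediate; the genuine content is that \emph{every} vertical element of $\ti g_\#(\ti P)$ arises this way, which follows because $\ti g_\#(\ti P)$ is obtained from $\ti g(\ti P)$ by tightening, and any maximal subpath of $\ti g_\#(\ti P)$ in a component $\ti g(\Gamma)$ pulls back to a subpath of $\ti P$ whose $\ti g$-image lies in $\ti g(\Gamma)$; that pulled-back subpath meets $\wt G^i_{r(i)}$ in a nonempty vertical element (since components of $\wt G^j_{r(j)}$ are $\ti g$-images of components of $\wt G^i_{r(i)}$ and $\ti g$ sends the complement of $\wt G^i_{r(i)}$ off of $\ti g(\Gamma)$ except possibly at endpoints), and that vertical element holds its place by construction.

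Finally, for preservation of circular (equivalently, after lifting, linear) order: both $\ti P$ and $\ti g_\#(\ti P)$ are lines on which $F_n$-translation acts cocompactly, the determination relation is equivariant, and it descends to the circular orders on $P$ and $g_\#(P)$. Along the line $\ti P$, the vertical elements holding their place occur in a definite linear order, and passing to $\ti g_\#(\ti P)$ is effected by collapsing the maximal pre-trivial subpaths $\ti\tau(\ti\nu)$ to points and tightening the complementary horizontal pieces; collapsing and tightening a line preserves the linear order of the surviving marked points, so $\ti\nu_1$ precedes $\ti\nu_2$ iff $\ti\nu_1'$ precedes $\ti\nu_2'$. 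Descending to $P$ and $g_\#(P)$ gives the circular-order statement. I expect the main obstacle to be the bookkeeping in the injectivity/surjectivity step — precisely, verifying that a pre-trivial subpath of $\ti P$ cannot straddle two distinct components of $\wt G^i_{r(i)}$ in a way that merges two distinct vertical elements, and dually that the tightening in forming $\ti g_\#(\ti P)$ does not create spurious vertical elements; both reduce to the fact that $\ti g$ carries $\wt G^i_{r(i)}$ onto $\wt G^j_{r(j)}$ componentwise (from $g(G^i_{r(i)}) = G^j_{r(j)}$), together with the elementary combinatorics of collapsing pre-trivial subintervals of a line.
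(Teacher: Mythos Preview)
Your proposal has the right instincts---work in the universal cover, use the maximal pre-trivial subpaths, and exploit the bijection between components of $\wt G^i_{r(i)}$ and of $\wt G^j_{r(j)}$---but several of the steps do not hold as stated, and the crucial order-preservation argument is missing its content.

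\textbf{The union of pre-trivial subpaths.} Your claim that the union $\ti\tau(\ti\nu)$ of all pre-trivial subpaths containing $\ti\nu$ is pre-trivial is false in general. ``Pre-trivial'' means the two endpoints have the same $\ti g$-image \emph{as each other}, not that they both equal $\ti g$ of some fixed reference point. Given two pre-trivial intervals $[a_1,b_1]$ and $[a_2,b_2]$ with $a_1<a_2\le b_1<b_2$, we know $\ti g(a_1)=\ti g(b_1)$ and $\ti g(a_2)=\ti g(b_2)$, but nothing forces $\ti g(a_1)=\ti g(b_2)$. So $\ti\tau(\ti\nu)$ is not a well-defined pre-trivial subpath, and the later arguments that rely on collapsing it to a single point break down.

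\textbf{Surjectivity.} The lemma only asserts a bijection onto the set of vertical elements that are \emph{determined}, i.e.\ onto the image of the relation; surjectivity onto that set is tautological. Your attempt to prove more---that every vertical element of $g_\#(P)$ arises this way---uses the assertion that $\ti g$ sends the complement of $\wt G^i_{r(i)}$ off of $\ti g(\Gamma)$, but this is not given: $g$ restricts to a homotopy equivalence on the filtration subgraphs, but edges outside $G^i_{r(i)}$ may well be mapped across $G^j_{r(j)}$.

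\textbf{Order preservation.} This is the real content of the lemma, and your argument ``collapsing and tightening a line preserves the linear order of the surviving marked points'' is precisely the statement to be proved, not a proof of it. The paper's argument supplies exactly what is missing: list the place-holding vertical elements in order as $\ldots,\ti\nu_i,\ti\nu_{i+1},\ldots$ with $\ti\nu_i\subset\Gamma_i$, and show that no pre-trivial subpath of $\ti P$ can reach from $\ti\nu_i$ into $\ti\nu_{i+1}$. The reason is the one fact you never explicitly use for order: since $\ti\nu_i$ holds its place, any pre-trivial subpath containing it has $\ti g$-image inside $\ti g(\Gamma_i)$; but if that subpath reached $\ti\nu_{i+1}$ its $\ti g$-image would also meet $\ti g(\Gamma_{i+1})$, and $\ti g(\Gamma_i)\cap\ti g(\Gamma_{i+1})=\emptyset$ because $\ti g_\#$ is a bijection on components of the pre-image of $G_r$. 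Once you know the relevant maximal pre-trivial subpaths between consecutive $\ti\nu_i,\ti\nu_{i+1}$ are disjoint, their tightened images $\ti g_\#(\ti\tau)\in\ti g(\Gamma_i)$ and $\ti g_\#(\ti\tau')\in\ti g(\Gamma_{i+1})$ land in the correct order along $\ti g_\#(\ti P)$, giving both injectivity and order preservation at once.
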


\begin{proof}  We continue with the notation of Definition~\ref{defn:place holder}.   List the vertical elements that hold their place in order along $\ti P$ as $\ldots \ti \nu_{-1}, \ti\nu_0, \ti \nu_1, \ti \nu_2,\ldots$ and let $\Gamma_i$ be the component of $\wt G^i_{r(i)}$ that contains $\ti \nu_i$.    For each $i$ let $\ti \tau^-_i$ be the  maximal pre-trivial path that contains the initial endpoint of $\ti \tau_i$ and let $\ti \tau^+_i$ be the maximal  maximal  pre-trivial path that contains the terminal endpoint of $\ti \tau_i$.   We claim that   $\ti \tau^-_i \cap \ti \tau^+_{i+1} = \emptyset$.    The lemma then follows from the fact that $\ti g_\#(\ti \tau^-_i) \in \ti g(\Gamma_i)$ precedes $\ti g_\#(\ti \tau^+_{i+1}) \in \ti g(\Gamma_{i+1})$.

To prove the claim, note that $\ti \tau^-_i$ can not contain or have terminal endpoint in $\ti \nu_{i+1}$.  If it did, it would have $\ti g$-image that is contained in $\ti g(\Gamma_i)$ and intersects $\ti g(\Gamma_{i+1})$ which is impossible.  Similarly $\ti \tau^+_{i+1}$ can not contain or have initial endpoint in $\ti \nu_{i}$.  We are therefore reduced to considering the case that   $\ti \tau^-_i$ contains  $\ti \nu_{i}$ and $\ti \tau^+_{i+1}$ contains  $\ti \nu_{i+1}$.  In this case  $\ti g(\ti \tau^-_i) \subset \ti g(\Gamma_i)$ and $\ti g(\ti \tau^+_i) \subset \ti g(\Gamma_{i+1})$  so the claim is clear.  
\end{proof}




In the next two lemmas we consider the determination relation $\nu \to \nu'$ of Definition~\ref{defn:place holder} in various situations: the general situation in Lemma~\ref{LemmaProperHoldsItsPlace}; the special situation of powers of $f_i$ in Lemma~\ref{the f case}.
      
Let $\abs{\nu}$ be the combinatorial length of the edge path $\nu$.  

\begin{lemma}\label{LemmaProperHoldsItsPlace}   Assume notation as in Definition~\ref{defn:place holder}.   
\begin{enumerate}
\item  \label{item:holds place}  There is a constant $C=C(g)$, independent of $P$, so that if $\abs{\nu} \ge C$ then $\nu$ holds its place in $P$ with respect to $g$. 
\item \label{item:lower bound} For all $L$ there exists   $K=K(g,L) \ge C$,  independent of $P$,  so that $\abs{\nu} \ge K \implies  \abs{\nu'} \ge L$.   
\item \label{item:upper bound}  For all $K\ge C$ there exists    $L=L(g,K)$,  independent of $P$,  so that $\abs{\nu} \le K \implies  \abs{\nu'} \le L$.
\end{enumerate}
\end{lemma}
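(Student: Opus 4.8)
\textbf{Plan of proof for Lemma~\ref{LemmaProperHoldsItsPlace}.}

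The whole statement is a packaging of Cooper's bounded cancellation lemma (Fact~\refGM{FactBCCSimplicial}, or rather its graph version) applied to the homotopy equivalence $g \from G^i \to G^j$, together with the fact that $g$ restricts to a homotopy equivalence $G^i_{r(i)} \to G^j_{r(j)}$. Let me set $B = \bcc(\ti g)$ be the bounded cancellation constant for a lift $\ti g \from \wt G^i \to \wt G^j$ (equivalently, for $g$ relative to the chosen markings), so that for any concatenation of two tight paths the tightening of the $\ti g$-image cancels at most $B$ edges on each side of the juncture. I would also want a bound coming from $g$ restricted to $G^i_{r(i)}$: let $B_0$ be a Lipschitz-type bound so that any edge path $\mu \subset G^i_{r(i)}$ has $\abs{g_\#(\mu)} \le B_0 \abs{\mu}$, and let $\lambda > 0$ be a lower-bound constant so that $\abs{g_\#(\mu)} \ge \lambda\abs{\mu} - B_1$ for $\mu$ reduced in $G^i_{r(i)}$ (this second estimate follows because $g \restrict G^i_{r(i)}$ is a $\pi_1$-injective homotopy equivalence of finite graphs, so reduced paths map to paths whose length, after tightening, grows at least linearly with a universal additive error — this is itself a consequence of bounded cancellation applied to $g\restrict G^i_{r(i)}$ and its homotopy inverse).

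For item~\pref{item:holds place}: take $C = 2B+1$. Let $\ti\nu$ be a vertical element of $\ti P$ with $\abs{\ti\nu} \ge C$, sitting in the component $\Gamma$ of $\wt G^i_{r(i)}$, and let $\ti\tau \supseteq \ti\nu$ be any pre-trivial subpath of $\ti P$; I must show $\ti g(\ti\tau) \subset \ti g(\Gamma)$. Write $\ti\tau = \ti\alpha\,\ti\nu\,\ti\beta$ where $\ti\alpha$ ends and $\ti\beta$ begins at the endpoints of $\ti\nu$. Since $\ti\nu$ is a maximal subpath of $\ti P$ inside $\wt G^i_{r(i)}$, the path $\ti\alpha$ does not enter $\Gamma$ past the first vertex of $\ti\nu$, and likewise $\ti\beta$; so applying bounded cancellation to the two junctures, the tightened image $\ti g_\#(\ti\tau)$ contains the "middle portion" of $\ti g_\#(\ti\nu)$ obtained by deleting at most $B$ edges from each end. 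Because $\abs{\ti g_\#(\ti\nu)} \ge \lambda\abs{\ti\nu} - B_1 > 2B$ once $C$ is chosen large enough (enlarging $C$ if needed to absorb $\lambda, B_1$), this middle portion is a nonempty subpath of $\ti g(\Gamma)$; but $\ti g_\#(\ti\tau)$ is a point (as $\ti\tau$ is pre-trivial) that meets $\ti g(\Gamma)$, and a point of $\ti g_\#(\ti P)$ meeting $\ti g(\Gamma)$ lies in $\ti g(\Gamma)$ — so in fact the argument shows $\ti g_\#(\ti\tau)$ is trivial and its location is forced into $\ti g(\Gamma)$, i.e. $\ti\nu$ holds its place. (The correct bookkeeping: one shows the image of the juncture of $\ti\alpha$ with $\ti\nu$, after cancellation, still lies inside $\ti g(\Gamma)$, and symmetrically for $\ti\beta$, hence the whole pre-trivial image sits in $\ti g(\Gamma)$.)

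For items~\pref{item:lower bound} and~\pref{item:upper bound}: once $\abs{\ti\nu} \ge C$, the determined element $\ti\nu'$ is, up to deleting at most $B$ edges from each end, equal to the tight path $\ti g_\#(\ti\nu)$, which is a reduced path in $\ti g(\Gamma)$. Hence $\abs{\ti\nu'} \ge \abs{g_\#(\ti\nu)} - 2B \ge \lambda\abs{\ti\nu} - B_1 - 2B$ and $\abs{\ti\nu'} \le \abs{g_\#(\ti\nu)} \le B_0 \abs{\ti\nu}$. For~\pref{item:lower bound}, given $L$, choose $K = \max\{C, \lceil (L + B_1 + 2B)/\lambda\rceil\}$; for~\pref{item:upper bound}, given $K \ge C$, choose $L = B_0 K$. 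All three constants depend only on $g$ (through $B, B_0, B_1, \lambda$) and on the given thresholds, not on $P$ or on which vertical element $\nu$ we picked, which is exactly the uniformity claimed.

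\textbf{Expected main obstacle.} The genuinely delicate point is \pref{item:holds place}: making precise that a pre-trivial subpath $\ti\tau \supseteq \ti\nu$ cannot "escape" $\ti g(\Gamma)$. The subtlety is that $\ti\tau$ may be much longer than $\ti\nu$, reaching through many other strata and other components of $\wt G^i_{r(i)}$, yet still be pre-trivial; one must argue that the long surviving image of $\ti g_\#(\ti\nu)$ inside $\ti g(\Gamma)$ "pins" the rest. The clean way is: if $\ti\tau$ is a maximal pre-trivial subpath, then $\ti g_\#(\ti\tau)$ is a single point $x \in \ti g_\#(\ti P)$; bounded cancellation at the junctures of $\ti\nu$ inside $\ti\tau$ forces a nonempty piece of $\ti g_\#(\ti\nu) \subset \ti g(\Gamma)$ to survive in the reduced image of any slightly-larger-than-$\ti\nu$ subpath, and since the image of all of $\ti\tau$ collapses to $x$, that $x$ must be the collapse point, hence $x \in \ti g(\Gamma)$. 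Non-maximal pre-trivial $\ti\tau$ are contained in maximal ones, so the conclusion propagates. I would write this as the core computation and keep the estimates in \pref{item:lower bound}, \pref{item:upper bound} terse, since they are immediate from the same bounded-cancellation picture plus the linear growth estimate for $g \restrict G^i_{r(i)}$.
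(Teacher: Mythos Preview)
Your arguments for \pref{item:holds place} and \pref{item:lower bound} are essentially the paper's: set $C_1 = 2\bcc(g)$, observe that once $\abs{g_\#(\nu)} > C_1$ bounded cancellation forces $\nu$ to hold its place and gives $\abs{\nu'} \ge \abs{g_\#(\nu)} - C_1$, and then use that $\abs{g_\#(\nu)} \to \infty$ as $\abs{\nu} \to \infty$ (your linear lower bound $\lambda\abs{\nu} - B_1$ is a fine way to make this explicit). The writing for \pref{item:holds place} is a bit tangled---you speak of $\ti g_\#(\ti\tau)$ ``containing'' a nonempty subpath when $\ti\tau$ is pre-trivial---but the parenthetical at the end (``the image of the juncture \ldots still lies inside $\ti g(\Gamma)$'') is the right content.

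There is, however, a genuine gap in your argument for \pref{item:upper bound}. You assert that ``the determined element $\ti\nu'$ is, up to deleting at most $B$ edges from each end, equal to the tight path $\ti g_\#(\ti\nu)$'' and conclude $\abs{\nu'} \le \abs{g_\#(\nu)}$. Bounded cancellation only gives one direction of this: it tells you $\ti g_\#(\ti\nu)$ minus $B$ at each end survives inside $\ti\nu'$, hence the lower bound. But $\ti\nu'$ is by definition the full intersection of $\ti g_\#(\ti P)$ with the component $\ti g(\Gamma)$, and this can be strictly longer than $\ti g_\#(\ti\nu)$. The adjacent horizontal element $H$ has all its edges outside $G^i_{r(i)}$, but nothing in the hypotheses prevents $g$ from sending such an edge to a path with a long initial segment in $G^j_{r(j)}$; that segment, after tightening, contributes to $\ti\nu'$. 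There is no a priori bound on this contribution in terms of $\abs{\nu}$ and the Lipschitz constant $B_0$ alone.

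The paper closes this gap not by a direct estimate but by a duality trick: choose a homotopy inverse $g' \from G^j \to G^i$ with $g'(G^j_{r(j)}) = G^i_{r(i)}$, and apply the already-proved items \pref{item:holds place} and \pref{item:lower bound} to $g'$. If $\abs{\nu'}$ were too large, then $\nu'$ would hold its place in $g_\#(P)$ with respect to $g'$ and determine a long vertical element of $P$---but that element is $\nu$, whose length is bounded by $K$. This contrapositive yields \pref{item:upper bound}. You already invoke the homotopy inverse to justify the growth estimate in \pref{item:lower bound}; using it again here is the missing step.
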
 

 \begin{proof} Let $C_1 =2\bcc(g)$.  It follows immediately from the definitions and bounded cancellation (Lemma~\ref{FactBCCVerySmall}) that if  $\abs{g_\#(\nu)} > C_1$  then $\nu$ holds its place in $P$ with respect to $g$ (in fact $\ti g( \ti P^+_{\ti \nu})$ and $\ti g( \ti P^-_{\ti \nu})$ are disjoint) and  $ \abs{\nu'} \ge \abs{g_\#(\nu)} -C_1 $. Since $\abs{g_\#(\nu)} \to \infty $ as $ \abs{\nu} \to \infty$, we may choose $C$ so that  $\abs{g_\#(\nu)} > C_1$ whenever $\abs{\nu} \ge C$.   This proves \pref{item:holds place}; item~\pref{item:lower bound} now also follows from the fact  that $\abs{g_\#(\nu)} \to \infty $ as $ \abs{\nu} \to \infty$. For~\pref{item:upper bound} choose a homotopy inverse $g':G^j \to G^i$ for $g$ satisfying $g'(G^j_{r(j)}) = G^i_{r(i)}$.   Applying \pref{item:holds place} to $g'$, we see that  if $\abs{\nu'}$ is sufficiently large then $\nu'$ holds its place in $g_\#(P)$ with respect to $g'$.  By construction the vertical element of $P$ that it determines is $\nu$.  Item~\pref{item:upper bound} therefore follows from \pref{item:lower bound} applied to~$g'$.  
 \end{proof}

\begin{corollary} \label{C}  Assume that $h_{ij} : G^i\to G^j$, $1 \le i\ne j \le \gen$, are as in Notation~\ref{NotationSectionSeven}.   Then for all $M$ there exist positive constants $C_0\le C_1\le C_2 \le \ldots \le C_{M}$ so that the following hold for all $1 \le i\ne j \le \gen$ and  $1 \le m \le M$.
\begin{enumerate}
\item \label{C0}If $\nu$ is a vertical element of a circuit $P$ in   $G^i$ and ${\nu} \ge C_0$ then $\nu$ holds its place in $P$ with respect to $h_{ij}$ determining a vertical element $\nu'$ in ${h_{ij}}_\#(P)$.
\item \label{CM}If $\abs{\nu} \ge C_{m}$ then $\abs{\nu'} \ge C_{m-1}$.
\item\label{not too short}  There is a circuit $P \subset G_1$ crossing edges in both $G^1_{r(1)}$ and its complement such that each vertical element $\nu$ of $P$ satisfies $\abs{\nu} \le C_1$.
\end{enumerate}
\end{corollary}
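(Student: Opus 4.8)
\textbf{Proof proposal for Corollary~\ref{C}.}
The plan is to obtain the constants $C_0 \le C_1 \le \cdots \le C_M$ by iterating Lemma~\ref{LemmaProperHoldsItsPlace} through the finitely many maps $h_{ij}$, taking worst cases at each stage since the index pairs $(i,j)$ range over a finite set. First I would set $C_0 = \max_{i \ne j} C(h_{ij})$, where $C(g)$ is the constant from Lemma~\ref{LemmaProperHoldsItsPlace}\pref{item:holds place}; then for any vertical element $\nu$ of a circuit $P$ in $G^i$ with $\abs{\nu} \ge C_0 \ge C(h_{ij})$, Lemma~\ref{LemmaProperHoldsItsPlace}\pref{item:holds place} gives that $\nu$ holds its place with respect to $h_{ij}$ and determines a vertical element $\nu'$ of ${h_{ij}}_\#(P)$, which is conclusion~\pref{C0}.

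Next I would construct $C_1, \ldots, C_M$ recursively so that passing through any single $h_{ij}$ decreases the relevant length threshold by one index. Concretely, having defined $C_{m-1} \ge C_0$, apply Lemma~\ref{LemmaProperHoldsItsPlace}\pref{item:lower bound} with $L = C_{m-1}$ to each map $h_{ij}$ to obtain a constant $K(h_{ij}, C_{m-1}) \ge C(h_{ij})$; set $C_m = \max\{C_{m-1},\ \max_{i \ne j} K(h_{ij}, C_{m-1})\}$. This is a nondecreasing sequence by construction, and for $1 \le m \le M$, if $\abs{\nu} \ge C_m \ge K(h_{ij}, C_{m-1})$ then Lemma~\ref{LemmaProperHoldsItsPlace}\pref{item:lower bound} gives $\abs{\nu'} \ge C_{m-1}$, which is conclusion~\pref{CM}. (One subtlety to record: item~\pref{CM} as stated quantifies only over a single application of some $h_{ij}$, so no further iteration is needed inside the proof of~\pref{CM} itself; the ``decreasing index'' phrasing is what makes these constants useful when one later composes $m$ of the $h_{ij}$'s, as in the proof of the Proposition.)

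Finally, for~\pref{not too short} I would produce the circuit $P \subset G^1$ directly. Since $\F = [\pi_1 G^1_{r(1)}]$ is a \emph{proper} free factor system, $G^1_{r(1)}$ is a proper core subgraph of $G^1$, so there is at least one edge $E$ of $G^1$ of height $> r(1)$. Pick a circuit in $G^1$ that crosses $E$; after subdividing it into its cyclic decomposition $\nu_0 H_1 \nu_1 \cdots H_p$, the vertical elements $\nu_j$ are particular paths in $G^1_{r(1)}$ of some bounded lengths. Now enlarge $C_1$ if necessary: redefine $C_1$ to be the maximum of the value obtained in the previous paragraph and $\max_j \abs{\nu_j}$ for this fixed $P$ (and then, to keep monotonicity, replace each $C_m$ for $m \ge 1$ by $\max\{C_m, C_1\}$, which does not disturb~\pref{C0} or~\pref{CM} since those only require the thresholds to be \emph{large enough}). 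With this adjustment every vertical element $\nu$ of $P$ satisfies $\abs{\nu} \le C_1$, giving~\pref{not too short}.

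The only mild obstacle is bookkeeping: making sure that the three clauses are compatible, i.e.\ that inflating $C_1$ (and hence $C_2, \ldots, C_M$) to accommodate the chosen circuit $P$ does not break~\pref{C0} or~\pref{CM}. This is harmless because both of those clauses are monotone in the constants --- they assert that \emph{sufficiently long} vertical elements hold their place or have long images --- so enlarging any $C_m$ only strengthens the hypotheses under which the conclusions are claimed. There are no genuine analytic difficulties here; everything reduces to finitely many invocations of Lemma~\ref{LemmaProperHoldsItsPlace} together with the fact that $\F$ being proper forces $G^1 \setminus G^1_{r(1)}$ to be nonempty.
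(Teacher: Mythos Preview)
Your approach is essentially the same as the paper's: obtain $C_0$ from Lemma~\ref{LemmaProperHoldsItsPlace}\pref{item:holds place} by taking the worst constant over the finitely many $h_{ij}$, then build $C_1,\ldots,C_M$ inductively from Lemma~\ref{LemmaProperHoldsItsPlace}\pref{item:lower bound}, and accommodate a fixed circuit $P$ by enlarging constants.

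There is, however, a small bookkeeping slip in your handling of~\pref{not too short}. You propose to enlarge $C_1$ after the fact and then set $C_m^{\mathrm{new}}=\max\{C_m,C_1^{\mathrm{new}}\}$, claiming monotonicity makes this harmless. But item~\pref{CM} is \emph{not} monotone in $C_{m-1}$: enlarging $C_{m-1}$ strengthens the conclusion $\abs{\nu'}\ge C_{m-1}$, so one needs a correspondingly larger $C_m$ to guarantee it, and merely taking $\max\{C_m,C_1^{\mathrm{new}}\}$ does not ensure $C_m^{\mathrm{new}}\ge K(h_{ij},C_{m-1}^{\mathrm{new}})$. The paper avoids this by enlarging $C_0$ \emph{before} running the induction (so that any $C_1\ge C_0$ already satisfies~\pref{not too short}); alternatively you could simply re-run your inductive construction of $C_2,\ldots,C_M$ starting from the enlarged $C_1$. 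Either fix is immediate, and with it your proof is complete and matches the paper's.
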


\begin{proof}  Keeping in mind that we are only considering finitely many maps $h_{ij}$, the existence of $C_0$ satisfying \pref{C0} follows from Lemma~\ref{LemmaProperHoldsItsPlace}~\pref{item:holds place}.  By increasing $C_0$ we may assume that \pref{not too short} is satisfied by any $C_1 \ge C_0$.   The existence of $C_m$ for $m\ge 1$ satisfying \pref{CM} follows from Lemma~\ref{LemmaProperHoldsItsPlace}~\pref{item:lower bound}  by the obvious induction argument.
\end{proof}

In the next lemma we assume that  $g =f_i^N$ for some $N\ge 1$ and show that for all $P$, every vertical element $\nu$ holds its place in $P$.   In this special case, we denote the vertical element that $\nu$ determines by   $\nu^{(N)}$ and say that a vertical element $\nu$ of a circuit $P \subset G^i$ is \emph{inactive} if $\nu^{(N)}$  is independent of $N$ and is \emph{active} otherwise.  Note that  a trivial $\nu$ can be active and that these definitions   depend on $P$.  The terms active and inactive only apply in the special case that $g =f_i^N$.

\begin{lemma} \label{the f case} 
  Suppose that $P = \nu_0H_1\nu_1H_2\ldots H_p$ is the cyclic decomposition of a circuit $P$ contained in   $G^i$ for some $1 \le i \le \gen$. Then the following hold  for all $N\ge 1$. 
\begin{enumerate}
\item \label{preserves horizontal elements}The  cyclic decomposition of ${f_i}^N_\#(P)$ is given by ${f_i}^N_\#(P) = \nu^{(N)}_0H_1\nu^{(N)}_1H_2\ldots \nu^{(N)}_{p-1}H_p$ where  $\nu_j^{(N)}$ is the vertical element of ${f_i}^N_\#(P)$ determined by  $\nu_j$ and $f_i^N$.      In particular, each vertical element $\nu_j$ holds its place in $P$ with respect to $f_i^N$ and $\nu_j \rightarrow \nu_j^{(N)}$ induces a bijection between the vertical elements of  $P$ and the vertical elements of ${f_i}^N_\#(P)$.
 \item\label{grows with N}  For all $K_1$ and $K_2$  there exists $N' = N'(f_i)$, independent of $P$ and $i$, so that if $\nu_j$   is active    and  $|\nu_j| \le K_1$ then $ |\nu_j^{(N)}| >K_2$ for all $N \ge N'$.
\end{enumerate}
\end{lemma}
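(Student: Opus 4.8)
Both assertions follow from tracking how the image $f_i^N(E)$ of each edge $E$ of height $>r(i)$ behaves, using the structure of a \ct{} above the filtration element $G^i_{r(i)}$ and bounded cancellation. Recall from Notation~\ref{NotationSectionSeven} that every edge $E$ above $G^i_{r(i)}$ is a single edge satisfying $f_i(E)=E$ or $f_i(E)=E\cdot u_E$ with $u_E\subset G^i_{r(i)}$ a closed path. Consequently $f_i^N(E)$ is again a concatenation of $E$ with a path in $G^i_{r(i)}$ (the latter being $u_E\cdot {f_i}_\#(u_E)\cdots {f_i}_\#^{N-1}(u_E)$, tightened), and crucially no occurrence of an edge of height $>r(i)$ is ever cancelled when iterating $f_i$, by Fact~\ref{FactBasicNEGSplitting} applied to each such \neg{} edge. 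I would first establish this ``edge preservation'' statement precisely: writing out the cyclic decomposition $P=\nu_0H_1\nu_1\cdots H_p$, the horizontal elements $H_j$ begin and end with edges of height $>r(i)$ (or are single edges of height $>r(i)$), and by the basic splitting property the decomposition of $P$ obtained by cutting at the initial vertex of each occurrence of each height-$>r(i)$ edge is a splitting with respect to $f_i$. Therefore ${f_i}_\#^N(P)$ is the tightened concatenation ${f_i}_\#^N(\nu_0)\cdot {f_i}_\#^N(H_1)\cdot {f_i}_\#^N(\nu_1)\cdots$, and since each $H_j$ maps to $u'_j H_j v'_j$ with $u'_j,v'_j\subset G^i_{r(i)}$ (with $u'_j$, $v'_j$ possibly changing with $N$ but the \emph{core} horizontal data $H_j$ unchanged), after tightening we get exactly ${f_i}_\#^N(P)=\nu_0^{(N)}H_1\nu_1^{(N)}H_2\cdots\nu_{p-1}^{(N)}H_p$ where $\nu_j^{(N)}$ is the tightening of $v'_j{f_i}_\#^N(\nu_j)u'_{j+1}$ (indices mod $p$). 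This is precisely the claim that each $\nu_j$ holds its place with respect to $f_i^N$ and that $\nu_j\to\nu_j^{(N)}$ is the induced bijection of Definition~\ref{defn:place holder}; part~\pref{preserves horizontal elements} follows, with the ``holds its place'' clause justified because any pre-trivial subpath of a lift $\ti P$ containing the lift $\ti\nu_j$ cannot reach past the adjacent horizontal edges (which survive under $\ti f_i^N$), hence has $\ti f_i^N$-image inside $\ti f_i^N$ of the component of $\wt G^i_{r(i)}$ containing $\ti\nu_j$.

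For part~\pref{grows with N}, the point is that if $\nu_j$ is \emph{active} then $\nu_j^{(N)}$ genuinely depends on $N$, and I want to show its length then grows without bound, uniformly. Fix lifts: let $\ti\nu_j$ lie in the component $\wt C$ of $\wt G^i_{r(i)}$ with stabilizer (conjugate of) $\pi_1 G^i_{r(i)}$-component $B$, and let $\ti f_i$ be the principal lift fixing... actually more simply, the path $\nu_j^{(N)}$ is obtained by tightening $v'_j{f_i}_\#^N(\nu_j)u'_{j+1}$, a path lying in a single component $C$ of $G^i_{r(i)}$. Using Corollary~3.2.2 of \BookOne{} (applied to the homotopy equivalence $f_i\restrict C$, which is the restriction of $f_i$ to the $f_i$-invariant subgraph $G^i_{r(i)}$ — invariance holding because $\F$ is $\phi_i$-invariant and its components are fixed by Lemma~\ref{LemmaFFSComponent}), the length $|{f_i}_\#^N(\nu_j)|$ either is eventually constant in $N$ (when $\nu_j$ straightens to a path on which $f_i\restrict C$ acts by bounded-length correction — the ``inactive'' regime, forced when $\nu_j^{(N)}$ is eventually constant) or tends to infinity. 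Being active forces us into the second regime: if $|{f_i}_\#^N(\nu_j)|$ were bounded then, since there are only finitely many paths in $G^i$ of bounded length, $\nu_j^{(N)}$ would be eventually periodic in $N$, and then by the bijection of part~\pref{preserves horizontal elements} together with the fact that $f_i$ is a \ct{} (so $f_i$-periodic paths in $G^i_{r(i)}$ are fixed, Fact~\refGM{FactPNPFixed}) one concludes $\nu_j^{(N)}$ is eventually constant, contradicting activity. Hence $|{f_i}_\#^N(\nu_j)|\to\infty$, and since $|v'_j|,|u'_{j+1}|$ are bounded by $\bcc$-type estimates while tightening $v'_j{f_i}_\#^N(\nu_j)u'_{j+1}$ costs at most $2\bcc(f_i\restrict C)$ in length, $|\nu_j^{(N)}|\ge |{f_i}_\#^N(\nu_j)|-2\bcc(f_i)\to\infty$. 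Finally I must get \emph{uniformity}: the threshold $N'$ should depend only on $f_i$ (equivalently on $\max_i$ over the finite family), not on $P$ or on which $\nu_j$. This is where the bound $|\nu_j|\le K_1$ enters — there are only finitely many edge-paths $\nu$ in $G^i$ with $|\nu|\le K_1$, for each such $\nu$ that is ``active in some $P$'' one has $|{f_i}_\#^N(\nu)|\to\infty$, so there is $N'(\nu)$ with $|{f_i}_\#^N(\nu)|>K_2+2\bcc(f_i)$ for $N\ge N'(\nu)$, and one takes $N'$ to be the maximum of these finitely many thresholds over all $\nu$ with $|\nu|\le K_1$ and over $i=1,\ldots,\gen$; crucially whether $\nu$ (appearing as $\nu_j$ in some $P$) is active is detected by $\nu$ alone via the dichotomy above, not by the ambient $P$, once we observe that the ``inactive'' behavior — $\nu^{(N)}$ eventually constant — is equivalent to $|{f_i}_\#^N(\nu)|$ being bounded, a property of $\nu$.

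\textbf{Main obstacle.} The delicate point is pinning down that ``active'' is a property of the vertical element $\nu$ together with its two bounding horizontal edges alone — not of the whole circuit $P$ — so that the uniform threshold $N'$ makes sense; the definition in Lemma~\ref{the f case} makes activity depend on $P$, and one must check via Corollary~3.2.2 of \BookOne{} and finiteness of short paths that $\nu_j^{(N)}$ being eventually constant is equivalent to $|{f_i}_\#^N(\nu_j)|$ being eventually bounded, which \emph{is} intrinsic to $\nu_j$ (the tightening corrections $u'_j, v'_j$ at the ends contribute only bounded length and so cannot change boundedness, and the bijectivity from part~\pref{preserves horizontal elements} prevents ``eventually periodic but nonconstant'' behavior by appealing to the \ct{} property that periodic Nielsen paths are fixed). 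Everything else is a routine application of bounded cancellation (Fact~\ref{FactBCCVerySmall} / Corollary~3.2.2 of \BookOne) and the basic splitting property of \neg{} edges (Fact~\ref{FactBasicNEGSplitting}).
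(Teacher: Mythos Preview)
Your argument for part~\pref{preserves horizontal elements} is correct and matches the paper's: the basic splitting property (Fact~\ref{FactBasicNEGSplitting}) guarantees that no edge of height~$>r(i)$ is cancelled when tightening $f_i^N(P)$, so each $H_j$ survives intact and the cyclic decomposition of ${f_i^N}_\#(P)$ has the stated form.

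Your argument for part~\pref{grows with N}, however, has a genuine gap. You write $\nu_j^{(N)}=[v'_j\,{f_i^N}_\#(\nu_j)\,u'_{j+1}]$ and then assert that $|v'_j|$ and $|u'_{j+1}|$ are bounded by ``bcc-type estimates''. This is false. If the terminal edge $\omega_j$ of $H_j$ is a non-fixed \neg\ edge $E_a$ in its positive orientation, then $f_i^N(E_a)=E_a\cdot[u_a\,{f_i}_\#(u_a)\cdots{f_i^{N-1}}_\#(u_a)]$ and the suffix $v'_j$ grows without bound in~$N$; symmetrically $u'_{j+1}$ grows if $\alpha_{j+1}=\overline{E}_b$ for a non-fixed~$E_b$. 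Consequently your characterization ``$\nu_j$ active $\Leftrightarrow$ $|{f_i^N}_\#(\nu_j)|$ unbounded'' is wrong: a \emph{trivial} $\nu_j$ (for which $|{f_i^N}_\#(\nu_j)|=0$ for all~$N$) can be active --- the paper explicitly notes this just before the lemma --- precisely because the growth comes entirely from the bounding edges. Your uniformity argument, which counts only the finitely many paths $\nu$ with $|\nu|\le K_1$, therefore collapses.

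The paper repairs this by absorbing the bounding edges into the test path. Depending on the orientations of $\omega_j$ and $\alpha_{j+1}$ (four cases), it defines $\sigma$ to be one of $\nu_j$, $E_a\nu_j$, $\nu_j\overline{E}_b$, or $E_a\nu_j\overline{E}_b$, and observes that $\nu_j^{(N)}$ equals $f_i^N{}_\#(\sigma)$ with the bounding edges stripped off. Then $\nu_j$ is active if and only if $\sigma$ is not a Nielsen path, and in the active case $|f_i^N{}_\#(\sigma)|\to\infty$ since periodic Nielsen paths are fixed (Fact~\refGM{FactPNPFixed}). Uniformity now follows because for $|\nu_j|\le K_1$ the path $\sigma$ has length at most $K_1+2$, so takes only finitely many values. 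Your ``main obstacle'' paragraph gestures at the right idea --- activity depends on $\nu_j$ \emph{together with the two bounding edges} --- but then undercuts it by claiming equivalence with a property of $\nu_j$ alone; the correct resolution is to make the bounding edges part of the data.
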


  \begin{proof}     There is no loss in assuming that all of our circuits $P$ are contained in a single $G^i$.   To simplify notation we drop the $i$ superscripts and subscripts. 
  
 Considering $H_j$ as an edge path, denote the first and last edges   by $\alpha_j$ and $\omega_j$  respectively.  Recall from Notation~\ref{NotationSectionSeven} that all other edges of $H_j$ are fixed by $f$ and that there are (possibly trivial) subpaths $u_j, v_j \subset G_r$ such that $f(H_j) = u_j H_j v_j$. Fact~\ref{FactBasicNEGSplitting} implies that ${f_i}^N_\#(P) = \nu^{(N)}_0H_1\nu^{(N)}_1H_2\ldots \nu^{(N)}_{p-1}H_p$ for some vertical elements $\nu^{(N)}_j$.
 
 To be more explicit, we  consider  four cases depending on the orientation of $\omega_j$ and $\alpha_{j+1}$.  
  \begin{itemize}
  \item If $\omega_j = E_a$ and $\alpha_{j+1} = \overline E_b$ then $\nu_j^{(N)}$ is the path obtained from $f^N_\#(E_a\nu_j \overline E_b)$ by removing the initial $E_a$ and the terminal $\overline E_b$.   In this case let $\sigma = E_a\nu_j \overline E_b$.
  \item  If $\omega_j = \overline E_a$ and $\alpha_{j+1} = \overline E_b$ then $\nu_j^{(N)}$ is the path obtained from $f^N_\#(\nu_j \overline E_b)$ by removing the  terminal $\overline E_b$. In this case let $\sigma = \nu_j \overline E_b$.

  \item  If $\omega_j = \overline E_a$ and $\alpha_{j+1} =  E_b$ then $\nu_j^{(N)} = f^N_\#(\nu_j )$.  In this case let $\sigma =  \nu_j  $.

  \item   If $\omega_j =  E_a$ and $\alpha_{j+1} =  E_b$ then $\nu_j^{(N)}$ is the path obtained from $f^N_\#(E_a \nu_j ) $ by removing the  initial $ E_a$. In this case let $\sigma = E_a\nu_j $.

\end{itemize}

 In all the cases, each edge that is cancelled when ${f_i}^N(P)$ is tightened to ${f_i}^N_\#(P)$ is contained in $G_r$.  This implies that each $\nu_j$ holds its place in $P$ with respect to $f_i^N$ and so completes the proof of \pref{preserves horizontal elements}.
    
Also in all the cases, $\nu_j$ is active if and only if $\sigma$ is not  a Nielsen path. In the active case, $\sigma$ is not a periodic Nielsen path (\recognition\ Lemma~4.13, Fact~\refGM{FactPNPFixed}) so the length of $\nu_j^{(N)}$ goes to infinity with $N$.  Item \pref{grows with N} now follows  from the fact that $\sigma$ takes on only finitely many values for  $|\nu_j| \le K_1$ .  
\end{proof}

 The following immediate corollary of Lemma~\ref{the f case} relates Nielsen pairs to inactive vertical elements.

\begin{cor}   \label{CorInactiveIsNielsen} Suppose that $P = \nu_0H_1\nu_1H_2\ldots H_p $ is the cyclic decomposition of a circuit in $G^i$  for some $1 \le i \le \gen$ and that for some $0 \le a <b\le p$ and    all $a+1 \le j \le b-1$   , $\nu_j$ is inactive.   Then the element $[[V,W]] \in \V^{(2)}$ determined by $\sigma =  \nu_{a} H_{a+1} \ldots  H_{b}  \nu_{b}$   is a Nielsen pair for $\phi_i$. 
\end{cor}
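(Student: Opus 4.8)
The plan is to read off the conclusion directly from Lemma~\ref{the f case}, which does essentially all the work. Fix $1 \le i \le \gen$ and write the \ct\ as $f_i : G^i \to G^i$. Given the circuit $P = \nu_0 H_1 \nu_1 H_2 \ldots H_p$ with the inactive hypothesis on $\nu_{a+1},\ldots,\nu_{b-1}$, I want to show that the pair $(V,W)$ determined by the subpath $\sigma = \nu_a H_{a+1} \nu_{a+1} \ldots H_b \nu_b$ (as in the last paragraph of Notation~\ref{NotationSectionSeven}) satisfies $\phi_i[[V,W]] = [[V,W]]$, i.e.\ is a Nielsen pair for $\phi_i$ in the sense of Definition~\ref{DefNielsenPairs}.

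The key step is to identify $\phi_i$ acting on $[[V,W]]$ with the combinatorial effect of $f_i$ on $\sigma$. Choose a lift $\ti\sigma \subset \wt G^i$ and a lift $\ti f_i$ of $f_i$; let $\wt C_s, \wt C_t$ be the components of the full preimage of $G^i_{r(i)}$ containing the initial and terminal endpoints of $\ti\sigma$, with corresponding free factors $V, W \in \wt\V$. By Lemma~\ref{the f case}\pref{preserves horizontal elements}, for each $N \ge 1$ the cyclic decomposition of $f_{i\#}^N(P)$ is $\nu_0^{(N)} H_1 \nu_1^{(N)} H_2 \ldots H_p$, and each $\nu_j$ holds its place in $P$ with respect to $f_i^N$; lifting, the subpath $\ti f_{i\#}^N(\ti\sigma)$ has initial endpoint in the component $\ti f_i(\wt C_s)$ and terminal endpoint in $\ti f_i(\wt C_t)$ (these are well-defined components of the preimage of $G^i_{r(i)}$ since $f_i$ preserves $G^i_{r(i)}$, as $\F$ is realized by $G^i_{r(i)}$ and is $\phi_i$-invariant). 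Thus the element of $\V^{(2)}$ determined by $f_{i\#}(\sigma)$ is exactly $\phi_i[[V,W]]$, by the definition of the $\phi_i$-action on $\V^{(2)}$ and of the "element determined by a subpath".

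Now I use inactivity. Since each of $\nu_{a+1},\ldots,\nu_{b-1}$ is inactive, $\nu_j^{(1)} = \nu_j$ for $a+1 \le j \le b-1$, so the central part of the decomposition of $f_{i\#}(\sigma)$ agrees with that of $\sigma$; only the two outermost vertical elements $\nu_a, \nu_b$ may change to $\nu_a^{(1)}, \nu_b^{(1)}$. But $\nu_a^{(1)}$ and $\nu_a$ lie in the same component of $\wt G^i_{r(i)}$ — this is precisely the content of "$\nu_a$ holds its place" from Lemma~\ref{the f case}\pref{preserves horizontal elements} — and likewise for $\nu_b$. Hence the component of the preimage of $G^i_{r(i)}$ containing the initial (resp.\ terminal) endpoint of a lift of $f_{i\#}(\sigma)$ is a covering translate of $\wt C_s$ (resp.\ $\wt C_t)$, so $f_{i\#}(\sigma)$ determines the same unordered pair $[[V,W]] \in \V^{(2)}$ as $\sigma$ does. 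Combining with the previous paragraph, $\phi_i[[V,W]] = [[V,W]]$, which is exactly the statement that $(V,W)$ is a Nielsen pair for $\phi_i$.

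I expect no serious obstacle here; the only care needed is bookkeeping between the path-level statements of Lemma~\ref{the f case} and the $\V^{(2)}$-level definitions in Definition~\ref{DefNielsenPairs}, and in particular checking that "holds its place" really does give the claimed equality of components up to covering translation (rather than merely up to the ambiguity already built into passing from $\wt\V$ to $\V^{(2)}$). A short remark will note that the case $b = a+1$ (no interior vertical elements) is vacuously covered, and that the statement is symmetric under reversal of orientation of $\sigma$, so the choice of which endpoint is "initial" is immaterial.
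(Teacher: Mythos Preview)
Your plan---read the result off from Lemma~\ref{the f case}---is exactly what the paper intends by calling it an ``immediate corollary'', and the first half of your argument (that $f_{i\#}(\sigma)$ determines $\phi_i[[V,W]]$) is fine. The gap is in the second half.

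The sentence ``$\nu_a^{(1)}$ and $\nu_a$ lie in the same component of $\wt G^i_{r(i)}$ --- this is precisely the content of `$\nu_a$ holds its place'\,'' misreads Definition~\ref{defn:place holder}: what ``holds its place'' gives is that the lift of $\nu_a^{(1)}$ inside $\ti f_{i\#}(\ti P)$ lies in $\ti f_i(\wt C_s)$, not in $\wt C_s$. Your next sentence then draws a non sequitur: knowing only that the initial endpoint of a lift of $f_{i\#}(\sigma)$ lies in \emph{some} translate of $\wt C_s$ and the terminal endpoint in \emph{some} translate of $\wt C_t$ does \emph{not} imply that $f_{i\#}(\sigma)$ determines $[[V,W]]$. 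For that you need a \emph{single} covering translation $L_c$ carrying $(\wt C_s,\wt C_t)$ to $(\ti f_i(\wt C_s),\ti f_i(\wt C_t))$; separate translates could land the pair in a different diagonal $F_n$-orbit.

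The correct argument is already implicit in your use of inactivity and does not require the ``holds its place'' step at the outer verticals at all. Since the central path $\mu = H_{a+1}\nu_{a+1}\cdots\nu_{b-1}H_b$ is literally the same in $\sigma$ and in $f_{i\#}(\sigma)$, its lift inside $\ti\sigma$ (running from $\wt C_s$ to $\wt C_t$) and its lift inside $\ti f_{i\#}(\ti\sigma)$ (running from $\ti f_i(\wt C_s)$ to $\ti f_i(\wt C_t)$) are two lifts of the \emph{same} downstairs path, hence differ by a single covering translation $L_c$. This $L_c$ simultaneously carries $\wt C_s$ to $\ti f_i(\wt C_s)$ and $\wt C_t$ to $\ti f_i(\wt C_t)$, giving $\phi_i[[V,W]]=[[V,W]]$. (A side point: your claim $\nu_j^{(1)}=\nu_j$ for inactive $\nu_j$ is true, but it comes from the Nielsen-path characterization inside the \emph{proof} of Lemma~\ref{the f case}, not from the bare definition of ``inactive'', which only says $\nu_j^{(N)}$ is independent of $N\ge 1$.)
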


 The folowing result is used to detect Nielsen pairs for $\k$. 
 
  \begin{lemma} \label{induces same pairs}     Suppose that  vertical elements $\nu_a,\nu_b$ of a circuit $P \subset G^i$  hold their place in $P$ with respect to $h_{ij} :G^i \to G^j$, determining vertical elements  $\nu'_{a'},\nu'_{b'}$ of $P' = {h_{ij}}_\#(P) \subset G^j$.      Then the subpath of $P$ beginning with $\nu_a$ and ending with $\nu_b$ determines the same element of $\V^{(2)}$ as the subpath  of $P'$ beginning with $\nu'_{a'}$ and ending with $\nu'_{b'}$.
 \end{lemma}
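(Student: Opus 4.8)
The plan is to unwind the definitions of the determination relation from Definition~\ref{defn:place holder} and connect them to the definition of the element of $\V^{(2)}$ determined by a subpath as set up in Notation~\ref{NotationSectionSeven}. First I would choose a lift $\ti P \subset \wt G^i$ of $P$ containing lifts $\ti\nu_a, \ti\nu_b$ of $\nu_a, \nu_b$, let $\wt C_s, \wt C_t$ be the components of the full pre-image of $G^i_{r(i)}$ containing $\ti\nu_a, \ti\nu_b$ respectively, and let $V, W \in \wt\V$ be the corresponding elements. Then, by definition, the subpath of $P$ beginning with $\nu_a$ and ending with $\nu_b$ determines $[[V,W]] \in \V^{(2)}$. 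Next I would fix a lift $\ti h_{ij} \from \wt G^i \to \wt G^j$. Since $\nu_a$ holds its place in $P$ with respect to $h_{ij}$, the component $\wt C'_{s'} := \ti h_{ij}(\wt C_s)$ of the full pre-image of $G^j_{r(j)}$ contains the vertical element $\ti\nu'_{a'}$ determined by $\ti\nu_a$ and $\ti h_{ij}$; similarly $\wt C'_{t'} := \ti h_{ij}(\wt C_t)$ contains $\ti\nu'_{b'}$. The corresponding elements of $\wt\V$ are $\ti h_{ij\#}(V)$ and $\ti h_{ij\#}(W)$ in the notation of Definition~\ref{defn:place holder}.

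The key observation is then that the element of $\V^{(2)}$ determined by the subpath of $P' = {h_{ij}}_\#(P)$ beginning with $\nu'_{a'}$ and ending with $\nu'_{b'}$ is, by definition, $[[\ti h_{ij\#}(V), \ti h_{ij\#}(W)]]$, the image of the pair in $\wt\V^{(2)}$ under the diagonal action determined by the automorphism $\Phi$ of $F_n$ corresponding to $\ti h_{ij}$ (recall from Definition~\ref{defn:place holder} that $\ti h_{ij\#}$ on $\wt\V$ agrees with the self-map induced by this automorphism). But $[[V,W]]$ and $[[\Phi(V), \Phi(W)]]$ represent the same orbit class: indeed $[[V,W]]$ is by definition the image of $(V,W)$ in the quotient of $\wt\V^{(2)}$ by the diagonal $F_n$-action, and since $h_{ij}$ is a marking-preserving homotopy equivalence, $\Phi$ is a genuine automorphism of $F_n$, so it permutes $\wt\V^{(2)}$ respecting the $F_n$-orbit decomposition and hence induces the identity on... wait, that is not right: $\Phi$ need not be inner. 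Rather, the point is that $\V^{(2)}$ is an invariantly defined set attached to $\F$, and $h_{ij}$, being a marking-preserving homotopy equivalence, realizes the \emph{identity} outer automorphism; thus the induced permutation of $\wt\V^{(2)}$ coming from $\Phi$ is exactly the one coming from an inner automorphism of $F_n$, which acts trivially on the quotient $\V^{(2)}$. Therefore $[[V,W]] = [[\Phi(V),\Phi(W)]]$ in $\V^{(2)}$, which is the desired conclusion.

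The main obstacle I expect is bookkeeping the lifts carefully: ensuring that the same lift $\ti h_{ij}$ is used consistently when identifying the components $\ti h_{ij}(\wt C_s)$ and $\ti h_{ij}(\wt C_t)$ with the components determined by $\ti\nu'_{a'}$ and $\ti\nu'_{b'}$, and checking that the choice of lift $\ti P$ of $P$ indeed yields, via $\ti h_{ij}$, the lift $\ti P' = \ti h_{ij\#}(\ti P)$ of $P'$ containing the relevant lifts of $\nu'_{a'}, \nu'_{b'}$ in the correct cyclic position — here Lemma~\ref{circular order} guarantees the order-preserving bijection of vertical elements, so $\nu'_{a'}$ and $\nu'_{b'}$ are the first and last vertical elements of the subpath of $P'$ that is the image of the chosen subpath of $P$. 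Once these identifications are pinned down, the fact that $h_{ij}$ preserves the marking (so the induced action on $\V^{(2)}$ is trivial, because $\V^{(2)}$ depends only on $\F$ up to the $\Out(F_n)$-action and $h_{ij}$ realizes the identity) finishes the argument immediately.
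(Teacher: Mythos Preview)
Your proposal is correct and follows essentially the same approach as the paper: the paper's proof is a one-liner stating that the result is ``an immediate consequence of the definitions and the fact that $h_{ij}$ preserves the markings and restricts to a homotopy equivalence $G^i_{r(i)} \to G^j_{r(j)}$,'' and you have simply unpacked those definitions carefully. Your key observation---that the lift $\ti h_{ij}$ carries $\wt C_s,\wt C_t$ to the components containing $\ti\nu'_{a'},\ti\nu'_{b'}$, and that the automorphism $\Phi$ corresponding to $\ti h_{ij}$ is inner (since $h_{ij}$ preserves markings) and hence acts trivially on $\V^{(2)}$---is exactly what the paper's one-line proof is invoking.
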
 
 
 \begin{proof}  This is an immediate consequence of the definitions and the fact that $h_{ij}$ preserves the markings  and restricts to a homotopy equivalence  $G^i_{r(i)} \mapsto G^j_{r(j)}$.
 \end{proof}

  We now come to our main construction.
 
 \begin{notn} \label{notn:M} For the remainder of the section we let $M =7\gen$ and we let $C_0\le C_1\le C_2 \le \ldots \le C_{M}$ be the positive constants produced by Corollary~\ref{C}. 
 \end{notn}
    
\begin{definition}\label{generation}  Suppose that  positive integers $N_1,\ldots, N_M$  have been chosen. When considering $\phi_i, f_i, h_{ij}, G^i$  we take $i$ and $j$  mod $\gen$;  when considering  $N_s$ we take $s$ mod~$M$.     
  
Choose, once and for all, a circuit $P_1$ in $G^1$ that  crosses edges in both $G^1_{r(1)}$ and its complement  and such that  $$ \abs{\nu} \le C_1$$  for each vertical element $\nu$ of $P_1$.  The existence of $P_1$ is guaranteed by Corollary~\ref{C}~\pref{not too short}. Inductively define the infinite sequence $P_s$ in $G_s$ ($s \ge 1$) by  
$$P_{s+1} =(h_{s,s+1}f_s^{N_s})_\#(P_s)
$$
We call this the  \emph{descendant sequence} determined by  $N_1,\ldots, N_M$.  (We suppress the dependence of the descendant sequence on $P_1$ because  $P_1$ is now a fixed parameter.)
  
By  item \pref{preserves horizontal elements} of  Lemma~\ref{the f case},    each      vertical element  $\nu$ of $P_s$  determines a vertical element    $\nu^{(N_s)}$ in  ${f_s^{N_s}}_\#(P_s)$.    If $|\nu^{(N_s)}| \ge  C_0$ then say that  $\nu$ \emph{stays alive for at least one generation}, in which case it follows from Corollary~\ref{C}~\pref{C0} that   $\nu^{({N_s})}$ holds its place with respect to $h_{s, s+1}$  and determines a  vertical element $\nu^{(N_s)} \to \nu'$ in   $P_{s+1}$.  We also say  that     \emph{$\nu'$ is the first successor} to $\nu$ and  that $\nu$ \emph{ gives rise to $\nu'$}. 
        For example, if $\nu$ is inactive then $\nu$ stays alive for at least one generation if and only if $\abs{\nu} \ge C_0$.   

If  $\nu$ gives rise to $\nu'$ and  $\nu'$ gives rise to $\nu''$ then we say that $\nu$ \emph{stays alive for at least two  generations} and that  $\nu''$ is the \emph{second successor} to $\nu$.  This can be iterated in the obvious way. 
\end{definition}


The following  lemma allows us to trace the generations backward.

\begin{lemma}\label{trace back}  If  $\nu'$ is a vertical element of $P_{s+1}$ and $\abs{\nu'} \ge C_1$ then there is a vertical  element $\nu$ of $P_s$ that gives rise to $\nu'$.
\end{lemma}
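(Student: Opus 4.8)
The statement to prove is Lemma~\ref{trace back}: if $\nu'$ is a vertical element of $P_{s+1} = (h_{s,s+1} f_s^{N_s})_\#(P_s)$ with $\abs{\nu'} \ge C_1$, then some vertical element $\nu$ of $P_s$ gives rise to $\nu'$. The natural approach is to run the construction of the descendant sequence backwards in two steps, corresponding to the two maps composed to form $h_{s,s+1}f_s^{N_s}$: first ``undo'' $h_{s,s+1}$, then ``undo'' $f_s^{N_s}$. The key technical inputs are the invertibility statements already proved: Lemma~\ref{LemmaProperHoldsItsPlace}\itemref{item:upper bound} (applied via a homotopy inverse $h_{s+1,s}$ of $h_{s,s+1}$ preserving the relevant filtration elements), Corollary~\ref{C}\itemref{C0}, and Lemma~\ref{the f case}\itemref{preserves horizontal elements}, which asserts that $f_s^{N_s}$ induces a bijection between the vertical elements of $P_s$ and those of $(f_s^{N_s})_\#(P_s)$.

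First I would set $Q_s = (f_s^{N_s})_\#(P_s) \subset G^s$, so that $P_{s+1} = (h_{s,s+1})_\#(Q_s)$. Since $\abs{\nu'} \ge C_1 \ge C_0$, I apply Lemma~\ref{LemmaProperHoldsItsPlace}\itemref{item:holds place} to the homotopy inverse $h_{s+1,s}$: because $C_1 \ge C_0 = C(h_{s+1,s})$ (here one uses that $C_0$ from Corollary~\ref{C} is chosen to work for all the maps $h_{ij}$ simultaneously, including inverses — and I would note that the same increase of $C_0$ used there covers $h_{s+1,s}$), the element $\nu'$ holds its place in $P_{s+1}$ with respect to $h_{s+1,s}$ and determines a vertical element $\bar\nu$ of $(h_{s+1,s})_\#(P_{s+1})$. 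But $(h_{s+1,s})_\#(P_{s+1})$ is freely homotopic to $Q_s$, and by the bijectivity built into the ``holds its place'' bijection (Lemma~\ref{circular order} together with the fact that $h_{s+1,s}$ is a homotopy inverse of $h_{s,s+1}$), $\bar\nu$ is precisely the vertical element $\eta$ of $Q_s$ with $\eta \to \nu'$ under $h_{s,s+1}$; moreover $\abs{\eta} \ge C_0$ since $\nu'$ is its image and we may run Lemma~\ref{LemmaProperHoldsItsPlace}\itemref{item:lower bound}/\itemref{item:upper bound} to bound lengths below. Concretely: the $C_m$ hierarchy of Corollary~\ref{C} is designed so that $\abs{\nu'} \ge C_1$ forces the preimage vertical element $\eta$ in $Q_s$ to satisfy $\abs{\eta} \ge C_0$.

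Next I undo $f_s^{N_s}$. By Lemma~\ref{the f case}\itemref{preserves horizontal elements}, the map $\nu \mapsto \nu^{(N_s)}$ is a bijection from the vertical elements of $P_s$ onto the vertical elements of $Q_s = (f_s^{N_s})_\#(P_s)$. Hence there is a unique vertical element $\nu$ of $P_s$ with $\nu^{(N_s)} = \eta$. Now I must check that $\nu$ actually ``stays alive for at least one generation'' and ``gives rise to $\nu'$'' in the sense of Definition~\ref{generation}: this requires $\abs{\nu^{(N_s)}} = \abs{\eta} \ge C_0$, which we established in the previous step, and then Corollary~\ref{C}\itemref{C0} guarantees that $\eta = \nu^{(N_s)}$ holds its place with respect to $h_{s,s+1}$ and determines the vertical element $\nu'$ of $P_{s+1}$ — which is exactly what $\eta \to \nu'$ meant. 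Therefore $\nu$ gives rise to $\nu'$, as desired.

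\textbf{Main obstacle.} The delicate point is bookkeeping with the constant hierarchy $C_0 \le C_1 \le \cdots \le C_M$: I must make sure that the threshold $C_1$ in the hypothesis is large enough, relative to the bounded-cancellation constants of both $h_{s,s+1}$ and its homotopy inverse $h_{s+1,s}$, to force the preimage under $h_{s,s+1}$ to have length at least $C_0$ (so that Corollary~\ref{C}\itemref{C0} applies on the way forward). This is precisely what Corollary~\ref{C}\itemref{CM} provides (with $m=1$): $\abs{\nu'} \ge C_1$ implies the $h_{s,s+1}$-preimage vertical element has length $\ge C_0$. So the proof is really an assembly of Lemma~\ref{the f case}\itemref{preserves horizontal elements} (invert $f_s^{N_s}$ via its induced bijection on vertical elements) with Corollary~\ref{C}\itemref{C0}--\itemref{CM} (invert $h_{s,s+1}$ and control lengths), and the only real work is confirming that the constants line up and that the two ``holds its place'' bijections compose correctly — which follows from Lemma~\ref{circular order} and the fact that $h_{s+1,s}h_{s,s+1}$ is homotopic to the identity rel the filtration elements.
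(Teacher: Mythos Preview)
Your proposal is correct and follows essentially the same route as the paper: go back through $h_{s+1,s}$ using Corollary~\ref{C} (which applies to all $h_{ij}$ including the inverse direction) to find a vertical element of $f_s^{N_s}{}_\#(P_s)$ of length $\ge C_0$, then invoke the bijection of Lemma~\ref{the f case}\pref{preserves horizontal elements} to pull it back to a vertical element $\nu$ of $P_s$, and use Lemma~\ref{circular order} to see that $\nu^{(N_s)} \to \nu'$ under $h_{s,s+1}$. The paper's proof is the same argument stated more tersely; your extra discussion of the constant hierarchy and of why the two ``holds its place'' relations are mutually inverse is accurate but not additional content.
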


\begin{proof}   Since $f^{N_s}_\#(P_s) = (h_{s+1,s})_\#(P_{s+1})$,  Corollary~\ref{C} implies that $\nu'$ determines a vertical element of $f^{N_s}_\#(P_s) $ with length $\ge C_0$.   Item \pref{preserves horizontal elements} of Lemma~\ref{the f case} implies that this vertical element  is
$\nu^{(N_s)}$  for some vertical element $\nu$ of $P_s$.   The determination relation induced by $h_{s+1,s}$ is a bijection by   Lemma~\ref{circular order}  so $\nu^{(N_s)} \to \nu'$ with respect to $h_{s,s+1}$.  Thus $\nu$ gives rise to $\nu'$. 
\end{proof}

 
 \begin{lemma}  \label{defn of D} For all $B>0$ there exists $D(B)> 0$, independent of the $N_s$'s,  so that if $\nu$ is an inactive vertical element of a circuit $P$ in some  $ G^i$ and if  $\abs{\nu} \ge D(B)$ then $\nu$ gives rise to $\nu'$ and $\abs{\nu'} > B$.
 \end{lemma}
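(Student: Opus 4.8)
\textbf{Proof plan for Lemma~\ref{defn of D}.}

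The statement is purely about a single map $f_i^{N_s}$ followed by a single homotopy equivalence $h_{s,s+1}$, so the constant $D(B)$ must be manufactured uniformly from the finitely many maps $f_1,\dots,f_\gen$ and $h_{ij}$ without reference to the exponents $N_s$. The plan is to chain together the length-control results already in hand: Lemma~\ref{the f case}~\pref{preserves horizontal elements} (an inactive $\nu$ always holds its place under $f_i^N$, and its successor $\nu^{(N)}$ equals the fixed value $\nu^{(1)}$ for all $N\ge 1$), and Lemma~\ref{LemmaProperHoldsItsPlace}~\pref{item:lower bound} applied to $h_{i,i+1}$. So first I would recall that, $\nu$ being inactive, there is a path $\sigma$ (one of the four explicit forms in the proof of Lemma~\ref{the f case}, built from $\nu$ by appending at most one edge at each end) which is a Nielsen path, and $\nu^{(N_s)}$ is obtained from $\sigma=f_\#^{N_s}(\sigma)$ by deleting those appended edges; hence $\abs{\nu^{(N_s)}}\ge \abs{\nu}-2$, independently of $N_s$.

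Next, given the target bound $B$, apply Lemma~\ref{LemmaProperHoldsItsPlace}~\pref{item:lower bound} to each of the finitely many maps $h_{i,i+1}$ with the value $L=B$, obtaining constants $K(h_{i,i+1},B)$, and set $K_0=\max_i K(h_{i,i+1},B)$, also taking $K_0\ge C_0$ so that ``holds its place'' is guaranteed (Corollary~\ref{C}~\pref{C0}). Then define $D(B)=K_0+2$. The verification is immediate: if $\nu$ is inactive in $P\subset G^i$ with $\abs{\nu}\ge D(B)$, then $\abs{\nu^{(N_s)}}\ge \abs{\nu}-2\ge K_0\ge C_0$, so $\nu$ stays alive for at least one generation and its first successor $\nu'$ in $P_{s+1}$ is the vertical element determined by $\nu^{(N_s)}$ under $h_{s,s+1}$; since $\abs{\nu^{(N_s)}}\ge K_0\ge K(h_{s,s+1},B)$, Lemma~\ref{LemmaProperHoldsItsPlace}~\pref{item:lower bound} gives $\abs{\nu'}\ge B$, hence $\abs{\nu'}>B$ after an innocuous adjustment (take $L=B+1$ in the application above, or note strictness is harmless). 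Replacing $\gen$-indexed quantities by their maxima is exactly what makes $D(B)$ independent of the $N_s$.

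The only subtlety — and the one place to be careful rather than the ``main obstacle,'' since there is no real obstacle here — is making sure the bound $\abs{\nu^{(N_s)}}\ge\abs{\nu}-2$ is genuinely uniform in $N_s$: this is precisely the content of inactivity, namely that $\nu^{(N_s)}=\nu^{(1)}$ does not change with $N_s$ because the relevant $\sigma$ is a (genuine, not merely periodic) Nielsen path and so is $f_\#$-fixed. Once that is noted, everything else is a bookkeeping composition of the two cited length-control lemmas over a finite index set, and $D(B)=\max_i K(h_{i,i+1},B+1)+2$ (enlarged if necessary to exceed $C_0$) works.
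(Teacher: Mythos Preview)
Your proposal is correct and follows the same approach as the paper: the paper's proof is the single sentence ``This is an immediate consequence of the definitions, Lemma~\ref{LemmaProperHoldsItsPlace}\pref{item:lower bound} and the fact that there are only finitely many marking homotopy equivalences $h_{ij}$,'' and you have simply unpacked those three ingredients. One small simplification: in the inactive case the path $\sigma$ is a genuine Nielsen path, so $f^{N_s}_\#(\sigma)=\sigma$ and removing the same appended edges returns exactly $\nu$; hence $\nu^{(N_s)}=\nu$ on the nose (not merely $\abs{\nu^{(N_s)}}\ge\abs{\nu}-2$), and you can take $D(B)=\max_i K(h_{i,i+1},B+1)$ without the ``$+2$''.
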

 
 \begin{proof}  This is an immediate consequence of the definitions, Lemma~\ref{LemmaProperHoldsItsPlace}\pref{item:lower bound} and the fact that there are only finitely many marking homotopy equivalences $h_{ij}$.
 \end{proof}
 
 \begin{notn}     For any $B>0$ let $D_0(B) = B$ and then inductively define $D_l(B) = D(D_{l-1}(B))$.
 \end{notn}


\begin{lemma}  \label{choose N}  
The positive integers $N_1,\ldots, N_M$ of Definition~\ref{generation} can be chosen so that, letting $P_1,P_2,\ldots$ be the descendent sequence determined by $N_1,\ldots, N_M$,  for any $s \ge 1$ and  any active vertical element $\nu$ in $P_s$, if $\abs{\nu} \le C_M$ then $\nu$ stays alive for at least $M$ generations.
\end{lemma}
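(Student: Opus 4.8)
The statement asserts that we can choose the iteration parameters $N_1,\ldots,N_M$ so that \emph{every} active vertical element in every descendant circuit $P_s$ that is not too long (length $\le C_M$) survives for at least $M$ generations. The plan is to choose the $N_s$ one at a time, using Lemma~\ref{the f case}\pref{grows with N} to force active vertical elements to grow arbitrarily long after applying $f_s^{N_s}$, and then using the descending chain of constants $C_0 \le \cdots \le C_M$ from Corollary~\ref{C} together with Corollary~\ref{C}\pref{CM} to guarantee that once a vertical element is long enough, it keeps regenerating. The key bookkeeping observation is this: if a vertical element $\nu$ in $P_s$ is active with $|\nu| \le C_M$, then after applying $f_s^{N_s}$ (with $N_s$ chosen large) its determined successor $\nu^{(N_s)}$ has length $> C_M$ by Lemma~\ref{the f case}\pref{grows with N}; then by Corollary~\ref{C}\pref{C0},\pref{CM} passing through $h_{s,s+1}$ produces a first successor $\nu'$ in $P_{s+1}$ with $|\nu'| \ge C_{M-1}$. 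Now $\nu'$ may itself be active or inactive in $P_{s+1}$, but in either case, as long as its length stays at or above $C_1$ it will continue to regenerate: an inactive long vertical element regenerates by Definition~\ref{generation} and the fact that $C_0 \le C_1$, while an active one of length $\ge C_1$ in particular is not too short.

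More precisely, here is the order of steps I would carry out. First, fix once and for all the initial circuit $P_1$ with all vertical elements of length $\le C_1$, as in Corollary~\ref{C}\pref{not too short}. Second, choose $N_1$ (and inductively $N_s$) using Lemma~\ref{the f case}\pref{grows with N} applied with $K_1 = C_M$ and $K_2 = C_M$: this gives a threshold $N'(f_s)$, independent of the circuit, such that any active vertical element $\nu$ of $P_s$ with $|\nu| \le C_M$ has $|\nu^{(N_s)}| > C_M$ once $N_s \ge N'(f_s)$. Since there are only $\gen$ distinct maps $f_s$, a single uniform choice $N_s \ge \max_i N'(f_i)$ works for all $s$; set all $N_s$ equal to this value. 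Third, verify the survival claim by induction on the generation count $m = 1,\ldots,M$: given an active $\nu$ in $P_s$ with $|\nu| \le C_M$, its first successor $\nu'$ in $P_{s+1}$ has $|\nu'| \ge C_{M-1} \ge C_1$, so $\nu'$ stays alive for at least one more generation, and its successor has length $\ge C_{M-2} \ge C_1$, and so on. The descending chain $C_M, C_{M-1}, \ldots$ has length $M+1 > M$, so the length stays $\ge C_1 \ge C_0$ through at least $M$ applications, which is exactly what "stays alive for at least $M$ generations" requires. Here the role of $M = 7\gen$ being large is simply that we have enough room in the chain of constants; the factor $7\gen$ presumably matches downstream needs in the proof of Proposition~\ref{PropNielsenPairsExist}, but for this lemma any $M$ would do once the $C_s$ are produced for that $M$.

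The main obstacle, and the place requiring care, is the interplay between "active/inactive" status and the fact that this status \emph{depends on the circuit}: a vertical element that is active in $P_s$ may give rise to a successor in $P_{s+1}$ that is inactive there (and vice versa), so one cannot simply iterate Lemma~\ref{the f case}\pref{grows with N} $M$ times. The resolution is precisely that the growth we extract in the \emph{first} generation (pushing length above $C_M$) gives enough of a buffer that \emph{subsequent} generations need only the weaker regeneration guarantee — valid for both active and inactive long elements — that length $\ge C_1$ at stage $t$ yields a successor of length $\ge C_{M-(t-s)}$... wait, more carefully: once length is $\ge C_M$ after the first $f^{N}$-step, passing through $h_{s,s+1}$ drops it to $\ge C_{M-1}$, and then each further generation (one $f^N$-step, which does not decrease length among vertical elements that are long — indeed inactive ones of length $\ge C_0$ regenerate trivially and active ones grow, plus one $h$-step which drops by at most one index in the $C$-chain by Corollary~\ref{C}\pref{CM}) costs at most one index. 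So after $M-1$ further generations the length is still $\ge C_0$, and we have survived $M$ generations total. The only subtlety to nail down rigorously is that a vertical element of length $\ge C_1$ in some $P_t$, whether active or inactive, does give rise to a first successor of length $\ge C_0$: this follows from Lemma~\ref{trace back} combined with Corollary~\ref{C}\pref{CM} in the inactive case and from the growth lemma in the active case. Assembling these pieces gives the uniform choice of $N_1 = \cdots = N_M$ claimed.
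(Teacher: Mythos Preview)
Your argument has a genuine gap in the inductive step. You propose a uniform choice $N_1=\cdots=N_M=N$, picked via Lemma~\ref{the f case}\pref{grows with N} with $K_1=K_2=C_M$, and then claim that each generation drops the $C$-index by at most one. But Lemma~\ref{the f case}\pref{grows with N} has the hypothesis $|\nu|\le K_1$: it says nothing about active vertical elements whose length \emph{exceeds} $K_1$. After the very first step, the successor $\nu'$ in $P_{s+1}$ certainly satisfies $|\nu'|\ge C_{M-1}$, but you have no upper bound on $|\nu'|$ --- indeed $|\nu^{(N_s)}|$ can be enormous (its size depends on $N_s$), and hence so can $|\nu'|$. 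If $\nu'$ happens to be active in $P_{s+1}$ with $|\nu'|>C_M$, your growth lemma does not apply, and there is no reason $|\nu'^{(N_{s+1})}|\ge C_0$: for a very long active $\nu'$ the iterates $f^{N}_\#$ may well shrink it below $C_0$ for your particular fixed $N$ (recall $G_r$ carries all the EG strata, so cancellation inside $G_r$ is uncontrolled). Your appeal to ``the growth lemma in the active case'' at the end is exactly where this fails.

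The paper's proof confronts this directly by tracking \emph{both} lower and upper bounds on successor lengths. The $N_m$ are chosen sequentially: having fixed $N_1,\ldots,N_m$, one records an upper bound $A_m$ on the $(m{+}1{-}s_M)$-successor (via Lemma~\ref{LemmaProperHoldsItsPlace}\pref{item:upper bound}), and then chooses $N_{m+1}$ large enough to push \emph{any} active element of length $\le A_m$ (or $\le C_M$) past the required threshold. This breaks the circularity you run into. There is a second complication you did not address: since $N_s$ is periodic mod $M$, a sequence starting at arbitrary $s$ does not see $N_1,\ldots,N_M$ in order. The paper handles this by proving an auxiliary statement $(2_m)$ about survival from the ``synchronized'' circuits $P_{jM+1}$, together with the iterated constants $D_l(B)$ from Lemma~\ref{defn of D}, and then splicing: from $P_s$ first survive $M{+}1{-}s_M$ generations to reach some $P_{jM+1}$ (via $(1_M)$), then survive the remaining $s_M{-}1$ generations (via $(2_M)$). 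A uniform $N$ would sidestep this wrap-around issue, but as explained above a uniform $N$ cannot be made to work.
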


\begin{remark}\label{RemarkMoreExplicitStrategy}
We can now be more explicit about our strategy  for proving Proposition~\ref{PropNielsenPairsExist}. We will assume that that there are no Nielsen pairs for $\k$ associated to~$\F$ and show that $\theta= \phi_M^{N_M}\cdots\phi_2^{N_2}\phi_1^{N_1}$   must have at least one attracting lamination not  carried by $\F$.   In very brief outline, using the descendant sequence of Lemma~\ref{choose N}, we will study how the set of active vertical elements in $P_s$ grows as $s$ increases: either the growth is sufficiently moribund that we can detect a Nielsen pair for $\k$ associated to~$\F$, or it is sufficiently rapid that we can detect the desired attracting lamination for $\theta$.
\end{remark}

\begin{proof}   Set $B_0 =1$ and $s_M = s $ mod $ M \in \{1,\ldots,M\}$.   
 
 In addition to $N_1,\ldots, N_M$, we will define constants $A_1,...,A_M$ and  $B_1,...,B_M$ satisfying the following 
properties for each $1 \le m \le M$:  
\begin{enumerate}
\item [($1_m$)] If  $1 \le s_M  \le m$ and if  $\nu$ is an active vertical element in  $P_s$  such that $\abs{\nu} \le C_M$  then $\nu$ stays alive for at least $ m+1-s_M$ generations.  The $(m+1-s_M)$ successor $\mu$ to $\nu$ satisfies  $A_m \ge  | \mu| \ge D_{M-m}(B_{s_M-1})$.
\item [($2_m$)]  If   $\eta \subset G^1$ is a vertical element  of  $P_{jM+1}$   for some $j$, and if   $|\eta| \ge B_{m}$, then  $\eta$ stays alive  for at least $m$ generations.   As the length of $\eta$ goes to $\infinity$, the length of the $m^{th}$ successor to $\eta$ goes to infinity.
\end{enumerate}

 The  lemma follows easily from the  $m=M$ case:   by ($1_M$), $\nu$ stays alive for at least $ M+1-s_M$ generations and  its $(M+1-s_M)$ successor $\mu$  satisfies  $  | \mu| \ge D_0(B_{s_M-1}) =  B_{s_M-1}$. Since  $s+(M +1 -s_M)$ mod $M = 1$, item  ($2_M$)  implies that $\mu$ stays alive for at least $s_M-1$ generations and hence that $\nu$ stays alive for at least $  M$ generations.  

The  constants $N_m,A_m$ and $B_m$ are chosen inductively.  For the base case $m=1$ we allow  $N_2,\ldots,N_M$ to be arbitrary,    choose $N_1$ relative to $B_0$ and  then choose  $A_1$ and $B_1$ relative to $N_1$.  In the inductive step, we use the already chosen $N_1,\ldots, N_m$ and  allow  $N_{m+2},\ldots,N_M$ to be arbitrary.  After choosing $N_{m+1}$ relative to the previously chosen constants,     we choose $A_{m+1}$ and $B_{m+1}$   relative to the previously chosen constants and relative to $N_{m+1}$.  

 Throughout the induction argument  $\nu$ is an active vertical element in some $P_s$ and    $\abs{\nu} \le C_M$.  
 
The base case  is $m =1$ and hence $s_M = 1$.    By item \pref{grows with N} of Lemma~\ref{the f case}  applied to $f_1$ and items \pref{item:holds place} and \pref{item:lower bound} of Lemma~\ref{LemmaProperHoldsItsPlace} applied to $h_{12}$, we may choose   $N_1$   so  large  that  if  $\nu$ is an active vertical element in $P_s$ and $\abs{\nu} \le C_M$ then $\nu$ gives rise to $\mu$ satisfying $|\mu| \ge D_{M-1}(B_0)$.    By   Lemma~\ref{LemmaProperHoldsItsPlace}\pref{item:upper bound}  applied to $h_{12}f_1^{N_1}$, there is  an upper bound $A_1$ to  $|\mu|$.  This verifies $(1_1)$.  For $(2_1)$, suppose that $\eta$ is a vertical element of $P_{jM+1}$ for some $j$.  If $\eta$ is inactive and $|\eta| \ge C_0$ then  $\eta$ stays alive  for at least $1$ generation.  By Lemma~\ref{the f case}\pref{grows with N}  applied to $ f_1$,  there exists $B_1 \ge C_0$ so that if $\eta$ is active and $|\eta| \ge B_1$ then    $\eta$ stays alive  for at least $1$ generation.  This proves the first statement in $(2_1)$. The second statement follows from Lemma~\ref{LemmaProperHoldsItsPlace}\pref{item:lower bound} applied first  to $f_1^{N_1}$ and then to $h_{12}$. This completes the base case.

  For the inductive step, we may assume that  constants $N_1,\ldots, N_m$   and   $A_1,...,A_m$ and  $B_1,...,B_m$ have been defined  satisfying $(1_m)$ and $(2_m)$ for arbitrary $N_{m+1}\ldots,N_M$.   For $(1_{m+1})$, we assume that   $1 \le s_M \le m+1$ and that $\nu$ is an active vertical element  in $P_s$ such that $\abs{\nu} \le C_M$.  We define  a vertical element  $\nu'$ of $P_{s+m+1-s_M} \subset G^{m+1}$ with uniformly bounded length as follows:    if $s_M \le m$ then  the $(m+1-s_M)$ successor $\nu'$ to $\nu$ is defined and satisfies $A_m \ge \abs{\nu'} > D_{M-m}(B_{s_M-1})$ by $(1_m)$;  if $s_M = m+1$ then we let $\nu' = \nu$ and note that $\abs{\nu'} \le  C_M$.        By    Lemma~\ref{the f case}\pref{grows with N}  applied to $f_{m+1}$ and items \pref{item:holds place} and \pref{item:lower bound} of Lemma~\ref{LemmaProperHoldsItsPlace} applied to $h_{m+1,m+2}$, we may choose   $N_{m+1}$   so  large  that   if $\nu'$  is active then $\nu'$ gives rise to $\mu$ satisfying $|\mu| \ge D_{M-(m+1)}(B_{s_M-1})$ so the second inequality in ($1_{m+1}$) is satisfied for active $\nu'$. 
  
  If $\nu'$ is inactive then $s_M \le m$ and $\abs{\nu'} > D_{M-m}(B_{s_M-1})$.  Lemma~\ref{defn of D} implies that   $\nu'$ gives rise to $\mu$ satisfying $\abs{\nu} \ge D_{M-m-1}(B_{s_M-1}) = D_{M-(m+1)}(B_{s_M-1})$   so the second inequality in ($1_{m+1}$) is satisfied in this case as well.  An upper bound $A_{m+1}$ for  $|\mu|$ comes from the upper bound for $\abs{\nu'}$ and   Lemma~\ref{LemmaProperHoldsItsPlace}\pref{item:upper bound} .   This completes the proof of ($1_{m+1}$).

  For $(2_{m+1})$, suppose that $\eta$ is a vertical element of $P_{jM+1}$ for some $j$.     By $(2_m)$ and      Lemma~\ref{the f case}\pref{grows with N} applied to $f_m$, we may choose $B_{m+1} \ge B_m$  so  that if $|\eta| \ge B_{m+1}$ then   $\eta$ stays alive for at least $m$ generations and so that the $m^{th}$ successor $\eta'$  of $\eta$  satisfies   $|\eta'| \ge C_0$ and    $|\eta'^{(N_{m+1})}| \ge C_0$.      It follows that $\eta'$ lives at least one generation and hence that $\eta$ lives for at least $m+1$ generations.    The second statement in $(2_{m+1})$ is proved similarly.           This completes the induction step and so the proof of the lemma.
\end{proof}

We now label (c.f.\ Section 5.4  of \BookTwo) certain vertical elements of those $P_s$'s  with $s = j\gen+1$ for some $j $.   In $P_1$ we assign the label $1$ to each vertical element that lives for at least $6\gen$ generations. For $j\ge 1$,  labels in $P_{j\gen+1}$ are defined inductively in two stages.  First, all the labelled elements in $P_{(j-1)\gen+1}$ that live for at least $\gen$ generations (measured from  $P_{(j-1)\gen+1}$) determine elements in $P_{j\gen+1}$ and we label these by increasing their previous label by $1$.  Any other vertical element in $P_{j\gen+1}$ that lives at least $6\gen$ generations (measured from  $P_{j\gen+1}$) is then labelled $1$.  Note that none of elements labelled $1$ can be traced back $\gen$ generations to $P_{(j-1)\gen+1}$ because their ancestors would have lived at least $7\gen$ generations and so be labelled.   

\begin{lemma}  \label{segue} Suppose that $P_1,P_2,\ldots$ is the descendent sequence determined by the constants $N_1,\ldots, N_M$ of Lemma~\ref{choose N}. If a vertical element $\nu$ in some $P_{j\gen+1}$ is not labelled then $\nu$ is inactive and  $\abs{\nu} \le C_{6 \gen}$. 
\end{lemma}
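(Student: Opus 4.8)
The statement has two parts: if a vertical element $\nu$ of some $P_{j\gen+1}$ is unlabelled then (a) $\nu$ is inactive, and (b) $\abs{\nu} \le C_{6\gen}$. I plan to prove the contrapositive of each part using the results already established, principally Lemma~\ref{choose N}, Lemma~\ref{trace back}, and the labeling rules.

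For part (a), suppose $\nu$ is active. Since the constants $N_1,\ldots,N_M$ were chosen as in Lemma~\ref{choose N} (with $M = 7\gen$), and since $\abs{\nu} \le C_M$ would give that $\nu$ stays alive for at least $M = 7\gen \ge 6\gen$ generations, I would first want to dispose of the case $\abs{\nu} > C_M$. But that case is subsumed by part (b) for active $\nu$: actually the cleanest route is to prove (b) first for all unlabelled $\nu$ without reference to activity, and then observe that any unlabelled $\nu$ has $\abs{\nu} \le C_{6\gen} \le C_M$, so by Lemma~\ref{choose N}, if such a $\nu$ were active it would stay alive for at least $M = 7\gen$ generations — in particular for at least $6\gen$ generations — and then by the labeling rule it would be labelled $1$ (in $P_{j\gen+1}$ it gets a fresh label $1$ unless it can be traced back, in which case it gets an even higher label; either way it is labelled), a contradiction. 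So (a) follows from (b).

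For part (b), I argue by contradiction: suppose $\nu$ is an unlabelled vertical element of $P_{j\gen+1}$ with $\abs{\nu} > C_{6\gen}$. The idea is to trace $\nu$ backwards through generations using Lemma~\ref{trace back}. Since $\abs{\nu} > C_{6\gen} \ge C_1$, Lemma~\ref{trace back} produces an ancestor in $P_{j\gen}$; and since the $C_m$ are increasing with $C_m \ge C_{m-1}$ whenever we go down one generation (Corollary~\ref{C}\pref{CM} controls length loss: $\abs{\nu'} \ge C_m \implies \abs{\nu} \ge C_{m-1}$ going forward, so going backward from length $> C_{6\gen}$ the ancestor has length $> C_{6\gen - 1}$, etc.), I can iterate this $6\gen$ times to trace $\nu$ back through $6\gen$ generations, reaching an ancestor $\nu^{(0)}$ in $P_{j\gen+1 - 6\gen}$. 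Concretely: if a vertical element in $P_{s+1}$ has length $> C_m$ for some $m \ge 1$, then by Lemma~\ref{trace back} it has an ancestor in $P_s$, and by the length-control of Corollary~\ref{C} that ancestor has length $> C_{m-1}$; iterating, $\nu$ has an ancestor of length $> C_0$ in $P_{j\gen+1-6\gen}$, and in fact at each of the $6\gen$ intermediate stages the element has positive (hence $\ge C_0$) length, so it "stays alive" and gives rise to its successor. This means the ancestor $\nu^{(0)}$ stays alive for at least $6\gen$ generations and its $6\gen$-th successor is $\nu$ itself (in $P_{j\gen+1}$). Now the labeling rule: an element of $P_{(j-1)\gen+1}$ that is labelled and lives $\gen$ generations gets its label increased and passed to $P_{j\gen+1}$; an element living $\ge 6\gen$ generations from $P_{(j-1)\gen+1}$ would have been labelled (label $1$) in $P_{(j-1)\gen+1}$. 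More to the point, I should trace back to land exactly on a graph of the form $P_{j'\gen+1}$: since $6\gen$ is a multiple of $\gen$, the ancestor $\nu^{(0)}$ lives in $P_{j\gen+1-6\gen} = P_{(j-6)\gen+1}$, which is of the required form. That ancestor lives for at least $6\gen$ generations, so by the labeling rule it receives a label (at least $1$) in $P_{(j-6)\gen+1}$; and since it lives $6\gen \ge \gen$ generations, its label propagates forward (increasing at each multiple-of-$\gen$ stage) all the way to $\nu$ in $P_{j\gen+1}$. Hence $\nu$ is labelled, contradicting the hypothesis.

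\textbf{Main obstacle.} The delicate point is the backward-tracing bookkeeping: I must verify that tracing back $6\gen$ generations from an element of length $> C_{6\gen}$ never "falls below $C_0$" prematurely, using the precise monotonicity $C_0 \le C_1 \le \cdots \le C_M$ and the length-loss bound of Corollary~\ref{C}\pref{CM}, and I must check that "stays alive for $\ge 6\gen$ generations" as defined via Definition~\ref{generation} matches the quantity the labeling rule in the paragraph before Lemma~\ref{segue} uses. A secondary subtlety is making sure the case $6\gen < 7\gen = M$ is used correctly: Lemma~\ref{choose N} gives survival for $M$ generations for active elements of length $\le C_M$, which comfortably covers the $6\gen$ threshold in the labeling rule; I just need $C_{6\gen} \le C_M$, which is immediate from monotonicity. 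Once these indexing facts are pinned down, both parts follow quickly.
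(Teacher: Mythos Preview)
Your overall architecture is right and matches the paper: reduce (a) to (b) via Lemma~\ref{choose N}, then for (b) trace back $6\gen$ generations to an ancestor in $P_{(j-6)\gen+1}$ that lives $\ge 6\gen$ generations, hence is labelled, hence propagates a label forward to $\nu$.

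The gap is in your backward length-control step. You assert that if $\nu'$ in $P_{s+1}$ has $\abs{\nu'} > C_m$ then ``by the length-control of Corollary~\ref{C} that ancestor has length $> C_{m-1}$.'' But Corollary~\ref{C}\pref{CM} only controls length under the map $h_{s+1,s}$: it tells you the element $\nu^{(N_s)}$ in $(f_s^{N_s})_\#(P_s)$ determined by $\nu'$ has length $\ge C_{m-1}$. Going from $\nu^{(N_s)}$ back to $\nu$ in $P_s$ is a separate step, and if $\nu$ is \emph{active} then $\abs{\nu}$ can be arbitrarily small even though $\abs{\nu^{(N_s)}}$ is large. So your length-drop claim fails for active ancestors.

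The paper closes this by a case split you do not make. If the ancestor $\nu_{-k}$ is inactive then $\nu_{-k} = \nu_{-k}^{(N_s)}$ and Corollary~\ref{C} gives the bound directly. If $\nu_{-k}$ is active, one uses the contrapositive of Lemma~\ref{choose N}: an active element of length $\le C_M$ survives at least $M = 7\gen$ generations, but since $\nu$ is unlabelled it survives fewer than $6\gen$ generations, so $\nu_{-k}$ survives fewer than $6\gen + k$ generations; for $k \le \gen$ this is $\le 7\gen$, forcing $\abs{\nu_{-k}} > C_M$. This survival bound must be \emph{reset} at each $P_{(j-l)\gen+1}$ by observing that the ancestor there is itself unlabelled (else its label would propagate forward to $\nu$), so it too survives fewer than $6\gen$ generations, and the argument repeats for the next block of $\gen$ steps. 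Without tracking these survival bounds the active case cannot be handled and the induction breaks.
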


\begin{proof}  Assume that $\nu$ is not labelled.  If $\abs{\nu} \le C_{6 \gen} \le C_M$ then $\nu$ is inactive by  Lemma~\ref{choose N}.      We may therefore assume that $\abs{\nu} > C_{6 \gen}$ and argue to a contradiction;  as we are assuming (see Definition~\ref{generation}) that each vertical element of $P_1$ has length at most $C_1$,  $j \ge 1$.  Lemma~\ref{trace back} implies that   $\nu$ traces back one generation to a vertical element $ \nu_{-1}$ which survives for fewer than $6\gen+1$ generations.      If $\nu_{-1}$ is inactive, then $\nu_{-1} = \nu_{-1}^{(N_{j\gen})}$ is the vertical element of ${h_{1,\gen}}_\#(P_{j\gen+1}) = f_\gen^{N_{j\gen}}(P_{j \gen})$  determined by $\nu$ and $h_{1,\gen}$.  In this case, Corollary~\ref{C}   implies that $| \nu_{-1}| \ge  C_{6 \gen-1}$.   If $\nu_{-1}$ is active, then  Lemma~\ref{choose N} implies that   $| \nu_{-1}| \ge  C_M \ge C_{6 \gen-1}$.     We conclude that $| \nu_{-1}| \ge  C_{6 \gen-1}$ in all cases.     Iterating this argument $\gen-1$ times shows that $\nu$ traces back $\gen$ generations to a vertical element $\nu_{-\gen}$ of $P_{(j-1)\gen+1}$ such that $|\nu_{-\gen}| \ge C_{5\gen}$ and such that $\nu_{-\gen}$ survives fewer than $5\gen$ generations.    Iterating this two step argument five more times shows that $\nu$ traces back $6\gen$ generations which contradicts the assumption that $\nu$ is unlabelled.     
\end{proof}

Let $\E_j$ be the set of labelled vertical elements in $P_{j\gen+1}$.  The circular order on all vertical elements of $P_{j\gen+1}$ restricts to a circular order on the elements of $\E_j$.   Adjacency in the following lemma refers to this induced circular order.

\begin{lemma} \label{event}  Assume that $N_1,\ldots,N_M$ are as in Lemma~\ref{choose N}.    Suppose that $\nu_a,\nu_b $ are adjacent elements of $\E_j$  that survive for at least $\gen$ generations.     Suppose further that
the $\kappa^{th}$ successors to $\nu_a,\nu_b $ are adjacent in $\E_{j+1}$.    Then the element $[[V,W]]$  of $\wt\V^{(2)}$ determined by the subpath $\sigma$ of $P_{j\gen+1}$ beginning with $\nu_a$ and ending with $\nu_b$ (see Notation~\ref{NotationSection8}) is a Nielsen pair for $\k$ associated to~$\F$. 
\end{lemma}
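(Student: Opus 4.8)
\textbf{Plan for the proof of Lemma~\ref{event}.}
The goal is to show that the element $[[V,W]]$ determined by the subpath $\sigma$ of $P_{j\gen+1}$ running from $\nu_a$ to $\nu_b$ is $\phi_i$-invariant for each generator $\phi_i$, where $i$ is taken mod $\gen$; granting $\phi_i$-invariance for all $i$ gives that $[[V,W]]$ is a Nielsen pair for $\k$. The strategy is to track the subpath $\sigma$ and its determined element of $\V^{(2)}$ through one ``cycle'' of $\gen$ generations, i.e.\ from $P_{j\gen+1}$ in $G^1$ through the composition $h_{\gen,1} f_\gen^{N_{(j+1)\gen}} \cdots h_{1,2} f_1^{N_{j\gen+1}}$ back into $P_{(j+1)\gen+1}$ in $G^1$, and to use three facts already established: (i) Corollary~\ref{CorInactiveIsNielsen} says that a subpath whose intermediate vertical elements are all inactive determines a Nielsen pair for the relevant $\phi_i$; (ii) Lemma~\ref{induces same pairs} says that applying an $h_{ij}$ to a path preserves the element of $\V^{(2)}$ determined by two vertical elements that hold their place; and (iii) Lemma~\ref{the f case} together with Corollary~\ref{C} says that applying $f_i^{N}$ or $h_{ij}$ permutes vertical elements and preserves horizontal elements in the appropriate sense.

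\medskip

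First I would set up notation: write $\sigma = \nu_a H_{a+1} \nu_{a+1} \cdots H_b \nu_b$ for the subpath of $P_{j\gen+1}$ between $\nu_a$ and $\nu_b$, and consider its image under successive applications of $f_s^{N_s}$ and $h_{s,s+1}$ for $s = j\gen+1, \ldots, (j+1)\gen$. Since $\nu_a,\nu_b$ survive at least $\gen$ generations, by definition they give rise to successive successors through this cycle, so their $\gen$-th successors in $P_{(j+1)\gen+1}$ are defined; call the resulting subpath $\sigma'$ of $P_{(j+1)\gen+1}$. By Lemma~\ref{the f case}\pref{preserves horizontal elements} each $f_s^{N_s}$ preserves the list of horizontal elements between the tracked vertical endpoints and only modifies vertical elements, and by Corollary~\ref{C}\pref{C0} the same holds for each $h_{s,s+1}$ applied to any vertical element of length $\ge C_0$; using Lemma~\ref{circular order} the determination relation is an order-preserving bijection at each step. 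By the hypothesis that the $\kappa$-th successors to $\nu_a,\nu_b$ are adjacent in $\E_{j+1}$, no element that is labelled (hence long-lived) lies strictly between them after this cycle, so by Lemma~\ref{segue} every intermediate vertical element of $\sigma'$ is inactive and short. Now push $\sigma'$ one more step under $f_1^{N_{(j+1)\gen+1}}$: since all its interior vertical elements are inactive, Corollary~\ref{CorInactiveIsNielsen} applies with $\phi_i = \phi_1$, giving that the element of $\V^{(2)}$ determined by $\sigma'$ is $\phi_1$-invariant. Then I would run the same argument but beginning the cycle at $P_{j\gen+2} \subset G^2$, $P_{j\gen+3} \subset G^3$, etc., to obtain $\phi_i$-invariance for each $i = 1, \ldots, \gen$; at each starting point one uses Lemma~\ref{induces same pairs} to transport the element of $\V^{(2)}$ between graphs via the $h_{ij}$'s without changing it. Finally, since $\{\phi_1,\ldots,\phi_\gen\}$ generate $\k$ (they are iterates of the generators $\psi_1,\ldots,\psi_\gen$ and the action on $\V^{(2)}$ factors through a quotient where iterating does not matter for invariance), $[[V,W]]$ is $\k$-invariant, hence a Nielsen pair for $\k$ associated to $\F$.

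\medskip

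The main obstacle I anticipate is bookkeeping the identification of the element $[[V,W]]\in\V^{(2)}$ as it moves between the different graphs $G^1,\dots,G^\gen$ and back: one must check that at every application of an $f_s^{N_s}$ and every application of an $h_{s,s+1}$ the two tracked vertical endpoints genuinely ``hold their place'' (so that Lemmas~\ref{circular order}, \ref{LemmaProperHoldsItsPlace}, \ref{the f case} and \ref{induces same pairs} all apply), which requires their lengths to stay above the relevant thresholds $C_0,\dots,C_M$ throughout the cycle. This is precisely what the labelling scheme and the surviving-for-$\gen$-generations hypothesis are designed to guarantee, but marshalling those guarantees into a clean argument — in particular verifying that ``adjacent in $\E_{j+1}$'' translates into ``all interior vertical elements inactive'' via Lemma~\ref{segue}, and that the endpoints themselves need not be inactive for Corollary~\ref{CorInactiveIsNielsen} to apply — is the delicate point. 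I expect the verification of the additional ``adjacency'' hypothesis feeding into Corollary~\ref{CorInactiveIsNielsen} to be the crux; everything else is a transcription of the corresponding argument in \BookTwo\ Section~5.4, adapted to carry the extra vertical-element data.
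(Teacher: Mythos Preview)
Your overall shape is right—track the subpath, show the intermediate vertical elements are inactive, apply Corollary~\ref{CorInactiveIsNielsen}—but there is a genuine gap in how you obtain $\phi_i$-invariance for $i\ge 2$. Lemma~\ref{segue} is stated only for circuits $P_{j\gen+1}$, where the labelling and the sets $\E_j$ are defined; it says nothing about $P_{j\gen+2},\ldots,P_{(j+1)\gen}$. Your plan to ``run the same argument beginning the cycle at $P_{j\gen+2}$'' therefore has no substitute for segue at those intermediate stages, and you have not explained how to conclude that the intermediate vertical elements in $P_{j\gen+2}$ are inactive with respect to~$f_2$.

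The paper closes this gap by never going around a full cycle. First, adjacency of $\nu_a,\nu_b$ in $\E_j$ plus Lemma~\ref{segue} shows immediately that the intermediate vertical elements of $\sigma$ in $P_s=P_{j\gen+1}$ itself are inactive and have length $\le C_{6\gen}$; Corollary~\ref{CorInactiveIsNielsen} then gives $\phi_1$-invariance right there (your detour through $P_{(j+1)\gen+1}$ for $\phi_1$ is unnecessary). Next, transport $[[V,W]]$ one step to $P_{s+1}$ via Lemma~\ref{induces same pairs}. At $P_{s+1}$ the tool is not segue but Lemma~\ref{choose N} in contrapositive form: traceback (Lemmas~\ref{trace back} and~\ref{circular order}) bounds the length of any intermediate $\nu'$ by $C_{6\gen+1}\le C_M$, and the adjacency hypothesis in $\E_{j+1}$ bounds its survival time, since if $\nu'$ survived $7\gen-1$ generations then its $(\gen-1)$-th successor in $P_{(j+1)\gen+1}$ would survive $6\gen$ generations, hence be labelled, and would lie strictly between the $\gen$-th successors of $\nu_a,\nu_b$. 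As $7\gen-1<M$, Lemma~\ref{choose N} forces $\nu'$ inactive, whence $\phi_2$-invariance. Iterating $\gen-2$ more times—the length bound climbing one $C$-index and the survival bound dropping by one at each step, both remaining within range—yields $\phi_3,\ldots,\phi_\gen$.
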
  

\begin{proof}  Let $s = j\gen+1$. The vertical elements in $P_s$ between $\nu_a$ and $\nu_b$ are unlabelled and so are inactive  and have length at most $C_{6 \gen+1} \le C_{7\kappa} = C_M$ by   Lemma~\ref{segue}.   Corollary~\ref{CorInactiveIsNielsen}  implies that  $[[V,W]]$ is a Nielsen pair for $\phi_i$.    The subpath  of  $f^{N_s}_\#(P_s)$ beginning with $\nu_a^{(N_s)}$ and ending with  $\nu_b^{(N_s)}$ determines $[[V,W]]$.  Let   $\nu'_{a'}$ and  $\nu'_{b'}$ be the vertical elements of $P_{s+1}$ determined by    $\nu_a^{(N_s)}$ and    $\nu_b^{(N_s)}$  respectively.   By definition, these are the first successors to  $\nu_a$ and   $\nu_b$ respectively.  Lemma~\ref{induces same pairs} implies that the subpath $\sigma'$ of $P_{s+1}$ beginning with $\nu'_{a'}$ and ending with $\nu'_{b'}$ determines $[[V,W]]$. 

Suppose that $\nu'$ is a vertical element of $P_{s+1}$ between $\nu'_{a'}$ and  $\nu'_{b'}$. If $\abs{\nu'} \ge C_1$ then Lemmas~\ref{trace back} and \ref{circular order} imply that $\nu'$ pulls back to a vertical element $\nu$ of $P_s$ between $\nu_a$ and $\nu_b$. Since $\abs{\nu} \le C_{6 \gen}$, it follows that $\abs{\nu'} \le C_{6 \gen +1}$. Thus every vertical element of $P_{s+1}$ between $\nu'_{a'}$ and  $\nu'_{b'}$ has length at most  $C_{6 \gen +1}$. Since the $\gen^{th}$ successors to $\nu_a$ and   $\nu_b$ are adjacent in $\E_{j+1}$,  no vertical element  in $P_{s+1}$ between $\nu'_{a'}$ and   $\nu'_{b'}$ survives for at least $7\gen-1$ generations.  Lemma~\ref{choose N} implies that each of these vertical elements is inactive so Corollary~\ref{CorInactiveIsNielsen} implies that  $[[V,W]]$ is a Nielsen pair for $\phi_{i+1}$. Iterating this argument $\gen-2$ more times shows that  $[[V,W]]$ is a Nielsen pair for $\phi_1,\ldots, \phi_\gen$ and hence a Nielsen pair for~$\k$.
\end{proof}

\paragraph{Proof of Proposition~\ref{PropNielsenPairsExist}.} We continue to adopt Notation~\ref{NotationSectionSeven}. Let $N_1,\ldots,N_M$ be as in Lemma~\ref{choose N} and let $\theta= \phi_M^{N_M}\cdots\phi_2^{N_2}\phi_1^{N_1}$, so $\theta \in \IA_n(\Z/3)$ and $\F$ carries $\Asym(\theta^k)$ for any rotationless power of $\theta$. In particular we have $\theta \in \PGF$. Furthermore $\theta$ is represented by the homotopy equivalence
$$h_{M,M+1} \circ f_M^{N_M} \circ \ldots \circ h_{2,3} \circ f_2^{N_2} \circ h_{1,2} \circ f_1^{N_1} \from G^1 \to G^1
$$

Applying Lemma~\ref{LemmaFindingEG}, using this homotopy equivalence and the circuit $P_1$, it follows that the number of horizontal edges in $P_{j\kappa+1}$ does not 
grow exponentially in $j$, which implies that the number of vertical 
elements in $P_{j\kappa+1}$ does not grow exponentially. Assuming that $\k$ has 
no Nielsen pairs associated to~$\F$ we shall derive a contradiction.

As above, let $\E_j$ be the set of labelled vertical elements in $P_{j\gen+1}$. 
 
Suppose that $\nu_1,\nu_2,\nu_3 $ are consecutive elements of $ \E_j$ with labels $\ell_1,\ell_2,\ell_3$. We claim that if  $\ell_2=2$ then $\ell_1$ and $\ell_3$ are either $1$ or $2$ and if $\ell_2 \ge 3$ then $\ell_1=\ell_3=1$. The $\nu_1$ and $\nu_3$ cases are symmetric so it suffices to verify the claim for $\ell_1$.   
  
  The claim is obvious if $j=0$ since $1$ is the only label that occurs.  Assume that the claim holds for $\E_0,\ldots,\E_{j-1}$.   Assuming without loss that $\ell_2 \ne 1$, $\nu_2$ traces back $\gen$ generations to $\nu'_2 \in \E_{j-1}$.   Let $\nu_1'$ and $\nu_3'$ be the elements of  $\E_{j-1}$ so that    $\nu'_1, \nu_2', \nu'_3$ are adjacent in  $\E_{j-1}$.     There are two cases to consider.   If $\nu'_1$ survives $\gen$ generations, then  its $\gen^{th}$ successor is not adjacent to $\nu_2$ by Lemma~\ref{event} and our assumption that there do not exist Nielsen pairs for $\k$ associated to~$\F$.   By definition of the labeling process, the labels of all elements of $\E_j$ between the $\gen^{th}$ successor to $\nu'_1$ and $\nu_2$ are $1$'s; thus $\ell_1 = 1$ and we are done.   
  
  The second case is that   $\nu'_1$ does not survive $\gen$ generations and so its label is at least~$6$.  The inductive hypothesis implies that both $\nu'_2$ and the other element $\nu'_0$  of $\E_{j-1}$ that is adjacent to    $\nu'_1$ are labelled $1$.  It follows that  $l_2 =2$ and that the   $\gen^{th}$ successor  to $\nu'_0$ is labelled $2$; if the latter  equals $\nu_1$ we are done.  Otherwise, the elements between the   $\gen^{th}$ successor   to $\nu'_0$ and $\nu_2$  are all labelled $1$.  This completes the proof of the claim. 

Let $A_j(l)$ be the number of elements of $\E_j$ that are   labelled $l$.     The above claim implies 
$$A_j(1) \ge A_j(3)+ A_j(4)
$$   
It is an immediate consequence of the definitions that 
$$A_j(1) = A_{j+1}(2) = A_{j+2}(3) = A_{j+3}(4)
$$  
Thus 
\begin{align*} 
A_{j+5}(1) &\ge  A_{j+5}(3)+ A_{j+5}(4) \\
&= A_{j+3}(1) + A_{j+2}(1)\\
&\ge A_{j+3}(3) + A_{j+3}(4) + A_{j+2}(3) + A_{j+2}(4) \\
&= A_{j+1}(1) + A_j(1) + A_j(1) +A_{j-1}(1) \\
&\ge 2 A_j(1)
\end{align*}
  
 This proves that $A_j(1)$ grows exponentially in $j$ and hence that the number of vertical edges in $P_{j\gen+1}$ grows exponentially in $j$, and as explained earlier we obtain a contradiction which shows that $\k$ does indeed have a Nielsen pair associated to~$\F$.
 
\bigskip

To complete the proof, using the existence of one Nielsen pair for $\k$ associated to~$\F$ we shall prove:
\begin{description}
\item[$(*)$] There exists a Nielsen pair $(V,W)$ for $\k$ associated to~$\F$, and an $F_n$-tree $T$ with trivial edge stabilizers and $\F(T)=\F$, such that if $\ti\gamma \subset T$ is the path connecting the vertices with stabilizers $V,W$, respectively, then the interior of $\ti\gamma$ does not contain any vertex with nontrivial stabilizer and is disjoint from all of its translates. 
\end{description}
Once $(*)$ is proved, if $\ti\gamma$ is an edge then we are done. Otherwise, let $\ti \gamma_0$ be an initial segment of $\ti \gamma$ that contains all but the last edge of $\ti \gamma$ and collapse each component of the union of all translates of $\ti \gamma_0$ to a point. The resulting tree (still called $T$) has trivial edge stabilizers and $\F(T) =\F$. It is now true that the path in $T$ connecting the vertex with stabilizer $V$ to the vertex with stabilizer $W$ is a single edge. 

After a bit of setup using what we have already proved, the proof of $(*)$ very closely follows Sections~5.5 and~5.6 on pages 54--56 of \BookTwo, in particular we follow closely the proof of Theorem~5.20 in Section~5.6. Fix a Nielsen pair $V_0,W_0$ for $\k$. Choose $T_0$ having trivial edge stabilizers and satisfying $\F(T_0)=\F$. Let $T_0,T_1,T_2,\ldots$ be the bouncing sequence of~$T_0$. Applying Propositions~\ref{prop:grower} and~\ref{prop:nongrower} inductively it follows that $\F \sqsubset \F(T_i)$ for each $i$. Since $\Asym(\phi_i)$ is carried by $\F$ for all $i$, it follows that $\A_+(T_{i-1};\phi_i) = \emptyset$, so $T_{i-1}$ is a nongrower and Proposition~\ref{prop:nongrower} applies to $T_{i-1}$. Proposition~\ref{prop:nongrower}~\pref{ItemLengthEventuallyConstant} says for each conjugacy class $[c]$ in $F_n$ that $L_{T_{i-1}}(\phi^k[c])$ is eventually equal to $L_{T_{i}}[c]$; assuming by induction that $\F(T_{i-1}) = \F$, if $[c]$ is not elliptic in $T_{i-1}$ then $[c]$ is not carried by $\F$, so no iterate $\phi^k[c]$ is carried by~$\F$, so $[c]$ is not elliptic in $T_i$, and it follows that $\F(T_i)=\F$. Proposition~\ref{prop:nongrower}~\pref{ItemArcsStabilizersDontGrow} tells us that every edge stabilizer of $T_i$ is an edge stabilizer of $T_{i-1}$, and so by induction starting with the fact that $T_0$ has trivial edge stabilizers it follows that each $T_i$ has trivial edge stabilizers.

Lemmas~5.18 and~5.19 of \BookTwo\ are a study of the distances between vertices with nontrivial stabilizers in each tree $T_i$. The proofs of those lemmas apply verbatim in our situation, with $\mathcal{O}_i$ replaced by $\phi_i$, and citing our Proposition~\ref{prop:nongrower} where appropriate in order to inductively verify properties of the tree $T_{i+1} = T_i \phi_{i+1}^\infinity$. Applying Lemma~5.18, for any Nielsen pair $(V,W)$ the distance in $T_i$ between the vertices fixed by $V,W$ is a constant independent of~$i$. Letting $D_i$ be the set of all natural numbers of the form $d(v,w)$ where $v \ne w \in T_i$ have nontrivial stabilizer, applying Lemma~5.19~(2) it follows that $D_i \subset D_{i+1}$ for all $i$, and so $\min D_i \le \min D_{i+1}$. But $\min D_i$ is bounded above for all $i$ by the value of $D_i$ determined by the vertices fixed by the Nielsen pair $(V_0,W_0)$, and so $\min D_i$ is constant for sufficiently large $i$, say $i \ge c$. Let $V,W$ be two nontrivial vertex stabilizers in $T_c$ that realize $\min D_c$. By Lemma~5.19~(4) and~(6), $(V,W)$ is a Nielsen pair for every $\phi_i$, and hence $(V,W)$ is a Nielsen pair for $\k$ associated to~$\F$. Let $\ti\gamma$ be the path in $T_c$ between the vertices stabilized by $V,W$, so $\ti\gamma = \min D_c$. Each vertex in the interior of $\ti\gamma$ has trivial stabilizer and the projection of $\ti\gamma$ to the quotient graph of groups $T_c / F_n$ is an embedding except perhaps at its endpoints, and so $(*)$ is proved.  
\qed

%

\bibliographystyle{amsalpha} 
\bibliography{mosher} 

\end{document}